\documentclass[12pt]{article}

\usepackage{amssymb,amsmath,amsthm,thmtools,mathtools,tikz}
\usepackage[hidelinks]{hyperref}
\usepackage[nameinlink]{cleveref}
\hypersetup{
 pdftitle={Graphs with Minimum Algebraic Connectivity II: Regular Graphs of Even Degree},
 pdfsubject={Algebraic connectivity of regular graphs of even degree},
 pdfauthor={Maryam Abdi and Ebrahim Ghorbani},
 pdfkeywords={algebraic connectivity, spectral gap, regular graphs, diameter}
}
\newtheorem{theorem}{Theorem}[section]
\newtheorem{corollary}[theorem]{Corollary}

\newtheorem{lemma}[theorem]{Lemma}

\newcommand{\x}{{\bf x}}
\newcommand{\y}{{\bf y}}
\newcommand{\z}{{\bf z}}
\newcommand{\bu}{{\bf u}}
\newcommand{\bv}{{\bf v}}
\newcommand{\1}{{\bf 1}}
\newcommand{\0}{{\bf 0}}

\newcommand{\dm}{{\rm diam}}
\newcommand{\G}{{\cal G}}

\newcommand{\e}{\epsilon}
\newcommand{\E}{\mathcal E}
\tikzstyle{vertex}=[circle, draw, inner sep=0pt, minimum size=3pt]
\newcommand{\vertex}{\node[vertex]}

\begin{document}
\title{Graphs with Minimum Algebraic Connectivity II: \\ Regular Graphs of Even Degree}
\author{ Maryam Abdi$^{\,\rm a}$ \quad  Ebrahim Ghorbani$^{\,\rm b}$
	\\[.3cm]
	{\sl\normalsize $^{\rm a}$School of Mathematics, Institute for Research in Fundamental Sciences (IPM),}\\
	{\sl\normalsize P. O. Box 19395-5746, Tehran, Iran }\\
	{\sl\normalsize $^{\rm b}$Hamburg University of Technology, Institute for Algorithms and Complexity, Germany} \\
	{\tt\small m.abdi@ipm.ir\qquad  ebrahim.ghorbani@tuhh.de} }
\date{}
\maketitle
\begin{abstract}
	Aldous and Fill (2002) conjectured the asymptotic  maximum relaxation time of a random walk on a connected regular graph. Since the relaxation time of a $d$-regular graph $G$ is $d/\mu(G)$, where $\mu(G)$ denotes its algebraic connectivity, this conjecture is closely related to the problem of minimizing algebraic connectivity among regular graphs.		
Guiduli and Mohar (1996) conjectured that, for every fixed minimum degree $\delta=d\ge3$ and all sufficiently large orders, graphs with minimum algebraic connectivity are path-like and, apart from bounded portions near their two ends, have a prescribed block structure. Abdi and Ghorbani (2024) proposed an analogous structural conjecture for $d$-regular graphs with minimum algebraic connectivity and fixed degree $d\ge3$.
In Part~I, we proved the Aldous--Fill conjecture, the Guiduli--Mohar conjecture, and for odd degrees, the Abdi--Ghorbani conjecture. In this paper, we settle the remaining even-degree case, thereby completing the structural characterization of regular graphs with minimum algebraic connectivity.
 We also prove that, for every fixed even $d\ge4$, the minimum algebraic connectivity at order $n$ is $2(d-2)\pi^2/n^2+O_d(n^{-3})$, and that every minimizing graph has diameter $3n/(d+1)+O_d(1)$. For every fixed even $d\ge6$, $d$-regular graphs whose algebraic connectivity is asymptotically minimum have asymptotically maximum diameter. Finally, we obtain a sharp normalized-gap bound for all even regular degrees, including degrees that grow with $n$.

\par\vspace{4mm}
\noindent {\bf Keywords:} Algebraic connectivity, Regular graph,
Relaxation time of random walk, Maximum diameter\\[.1cm]
\noindent {\bf AMS Mathematics Subject Classification\,(2020):}
05C50, 05C35
\end{abstract}

\section{Introduction}\label{sec:intro}

All graphs considered in this paper are simple, that is, undirected
without loops or multiple edges, and connected. The \emph{relaxation
time} of the random walk on a graph $G$ is $1/(1-\eta_2)$, where
$\eta_2$ is the second largest eigenvalue of its transition matrix
$\Delta^{-1}A$, where $A$ is the adjacency matrix and $\Delta$ is
the diagonal matrix of vertex degrees. If $G$ is $d$-regular, then
$ \tau(G)=d/\mu(G),$
where $\mu(G)$ is the \emph{algebraic connectivity} of $G$, namely,
the second smallest eigenvalue of its Laplacian matrix
$L(G)=\Delta-A$. Thus the problem of maximizing relaxation time on
regular graphs is directly related to minimizing algebraic
connectivity.

Aldous and Fill \cite[p.~217]{AldousFill} conjectured that, among all regular graphs on $n$ vertices,
$\max \tau\le (1+o(1))\frac{3n^2}{2\pi^2}$,
with asymptotic equality for even $n$. This is closely related to the problem of minimizing algebraic connectivity under a fixed degree condition. Guiduli and Mohar conjectured that graphs with minimum algebraic connectivity have a path-like structure with prescribed blocks; see \Cref{fig:Mohar}. Abdi and Ghorbani \cite{AbGhDiam} proposed the corresponding conjecture for regular graphs. For odd $d$, the conjectured structure is the same as in the Guiduli--Mohar conjecture. For even $d$, however, a $d$-regular graph cannot contain a bridge, and hence a different structure is required; see \Cref{fig:d-even}.

Part~I~\cite{AbGhCompanion} proves the Aldous--Fill conjecture, the Guiduli--Mohar conjecture, and the regular case for fixed odd degree. An independent proof of the Aldous--Fill conjecture has also been announced by Zhu~\cite{Zhu}. In this paper, we address the remaining case of $d$-regular graphs for fixed even $d$ and, in particular, settle the even-degree case of the Abdi--Ghorbani conjecture.

\begin{figure}[tbp]
	\centering
	\begin{tikzpicture}[scale=.9]
		\draw (1,0) ellipse (.25 and 1.28);
		\vertex[fill] (0) at (-.3,0) [] {};
		\vertex[fill] (1) at (1,1) [] {};
		\vertex[fill] (2) at (1,.6) [] {};
		\vertex[fill] (3) at (1,-1) [] {};
		\vertex[fill] (4) at (2.3,0) [] {};
		\vertex[fill] (5) at (3.1,0) [] {};
		\draw (4.4,0) ellipse (.25 and 1.28);
		\vertex[fill] (6) at (4.4,1) [] {};
		\vertex[fill] (7) at (4.4,.6) [] {};
		\vertex[fill] (8) at (4.4,-1) [] {};
		\vertex[fill] (9) at (5.6,0) [] {};
		\vertex[fill] (12) at (7.4,0) [] {};
		\draw (8.7,0) ellipse (.25 and 1.28);
		\vertex[fill] (13) at (8.7,1) [] {};
		\vertex[fill] (14) at (8.7,.6) [] {};
		\vertex[fill] (15) at (8.7,-1) [] {};
		\vertex[fill] (16) at (10,0) [] {};
		\vertex[fill] (17) at (10.8,0) [] {};
		\draw (12.1,0) ellipse (.25 and 1.28);
		\vertex[fill] (18) at (12.1,1) [] {};
		\vertex[fill] (19) at (12.1,.6) [] {};
		\vertex[fill] (20) at (12.1,-1) [] {};
		\vertex[fill] (00) at (13.4,0) [] {};
		\tikzstyle{vertex}=[circle, draw, inner sep=.3pt, minimum size=.3pt]
		\vertex[fill] () at (1,-.1) [] {};
		\vertex[fill] () at (1,-.2) [] {};
		\vertex[fill] () at (1,-.3) [] {};
		\vertex[fill] () at (4.4,-.1) [] {};
		\vertex[fill] () at (4.4,-.2) [] {};
		\vertex[fill] () at (4.4,-.3) [] {};
		\vertex[fill] () at (6.4,0) [] {};
		\vertex[fill] () at (6.5,0) [] {};
		\vertex[fill] () at (6.6,0) [] {};
		\vertex[fill] () at (8.7,-.1) [] {};
		\vertex[fill] () at (8.7,-.2) [] {};
		\vertex[fill] () at (8.7,-.3) [] {};
		\vertex[fill] () at (12.1,-.1) [ ] {};
		\vertex[fill] () at (12.1,-.2) [ ] {};
		\vertex[fill] () at (12.1,-.3) [] {};
		\tikzstyle{vertex}=[circle, draw, inner sep=0pt, minimum size=0pt]
		\vertex[] (s) at (-.7,0) [label=left:\footnotesize{}] {};
		\vertex[] (ss) at (13.8,0) [label=left:\footnotesize{}] {};
		\vertex[] () at (1,1.3) [label=above:\footnotesize{$K_{d-1}$}] {};
		\vertex[] () at (4.4,1.3) [label=above:\footnotesize{$K_{d-1}$}] {};
		\vertex[] (10) at (6.0,0) [label=left:\footnotesize{}] {};
		\vertex[] (11) at (7,0) [label=left:\footnotesize{}] {};
		\vertex[] () at (8.7,1.3) [label=above:\footnotesize{$K_{d-1}$}] {};
		\vertex[] () at (12.1,1.3) [label=above:\footnotesize{$K_{d-1}$}] {};
		\path
		(1) edge (0)
		(2) edge (0)
		(3) edge (0)
		(1) edge (4)
		(2) edge (4)
		(3) edge (4)
		(4) edge (5)
		(6) edge (5)
		(7) edge (5)
		(8) edge (5)
		(6) edge (9)
		(7) edge (9)
		(8) edge (9)
		(9) edge (10)
		(11) edge (12)
		(12) edge (13)
		(12) edge (14)
		(12) edge (15)
		(16) edge (13)
		(16) edge (14)
		(16) edge (15)
		(16) edge (17)
		(17) edge (18)
		(17) edge (19)
		(17) edge (20) 
		(18) edge (00)
		(19) edge (00)
		(20) edge (00)
		(s) edge (0)
		(ss) edge (00);
	\end{tikzpicture}
	\caption{Middle-block structure of $\mu$-minimal graphs with minimum degree at least $d$, and of $\mu$-minimal regular graphs of odd degree $d$, for fixed $d\ge3$ and sufficiently large order. Only a bounded number of vertices at the two ends are omitted. Here $K_l$ is the complete graph of order $l$.}\label{fig:Mohar}
\end{figure}

\begin{figure}[tbp]
	\centering
	{\begin{tikzpicture}[scale=.9]
			\draw (-1.9+.2,0) ellipse (.25 and 1.28);
			\vertex[fill] (05) at (-.6+.2,.4) [ ] {};
			\vertex[fill] (04) at (-.6+.2,-.4) [ ] {};
			\vertex[fill] (03) at (-1.9+.2,1) [ ] {};  
			\vertex[fill] (02) at (-1.9+.2,.6) [ ] {};
			\vertex[fill] (01) at (-1.9+.2,-1) [ ] {};
			\vertex[fill] (0) at (-3+.2,0) [ ] {};
			\draw (1+.2,0) ellipse (.25 and 1.28);
			\vertex[fill] (1) at (-.1+.2,0) [ ] {};
			\vertex[fill] (5) at (2.3+.2,.4) [ ] {};
			\vertex[fill] (55) at (2.3+.2,-.4) [ ] {};
			\vertex[fill] (555) at (2.8+.2,0) [ ] {};
			\vertex[fill] (2) at (1+.2,1) [ ] {};
			\vertex[fill] (3) at (1+.2,.6) [ ] {};
			\vertex[fill] (4) at (1+.2,-1) [ ] {};

			\draw (5.7-.25,0) ellipse (.25 and 1.28); 
			\vertex[fill] (6) at (4.6-.25,0) [ ] {};
			\vertex[fill] (07) at (5.7-.25,1) [ ] {};
			\vertex[fill] (8) at (5.7-.25,.6) [ ] {};
			\vertex[fill] (9) at (5.7-.25,-1) [ ] {};
			\vertex[fill] (10) at (7-.25,.4) [ ] {};
			\vertex[fill] (11) at (7-.25,-.4) [ ] {};
			\vertex[fill] (12) at (7.5-.25,0) [ ] {};
			\draw (8.6-.25,0) ellipse (.25 and 1.28);
			\vertex[fill] (13) at (8.6-.25,1) [ ] {};
			\vertex[fill] (14) at (8.6-.25,.6) [ ] {}; 
			\vertex[fill] (15) at (8.6-.25,-1) [ ] {};
			\vertex[fill] (16) at (9.9-.25,.4) [ ] {}; 
			\vertex[fill] (17) at (9.9-.25,-.4) [ ] {};
			\vertex[fill] (18) at (10.4-.25,0) [ ] {};
			\tikzstyle{vertex}=[circle, draw, inner sep=0pt, minimum size=0pt]  
			\vertex[fill] (a) at (10.7-.25,.27) [ ] {}; 
			\vertex[fill] (b) at (10.7-.25,.15) [ ] {}; 
			\vertex[fill] (c) at (10.7-.25,-.27) [ ] {}; 
			\vertex[fill] (055) at (-3.3+.2,.27) [ ] {};
			\vertex[fill] (0555) at (-3.3+.2,-.27) [ ] {};
			\vertex[fill] (7) at (3.1+.2,.27) [ ] {};  
			\vertex[fill] (7a) at (3.1+.2,.15) [ ] {}; 
			\vertex[fill] (77) at (3.1+.2,-.27) [ ] {};
			\vertex[fill] (777) at (4.3-.25,.27) [ ] {};  
			\vertex[fill] (7777) at (4.3-.25,-.27) [ ] {};
			\vertex[fill] () at (1+.2,1.3) [label=above:\footnotesize{$K_{d-2}$}] {};
			\vertex[fill] () at (-1.9+.2,1.3) [label=above:\footnotesize{$K_{d-2}$}] {};
			\vertex[fill] () at (5.7-.25,1.3) [label=above:\footnotesize{$K_{d-2}$}] {};
			\vertex[fill] () at (8.6-.25,1.3) [label=above:\footnotesize{$K_{d-2}$}] {};
			\tikzstyle{vertex}=[circle, draw, inner sep=.1pt, minimum size=.1pt]
			\vertex[fill] () at (3.1+.2,-.01) [ ] {};  
			\vertex[fill] () at (3.1+.2,-.06) [ ] {}; 
			\vertex[fill] () at (3.1+.2,-.11) [ ] {};
			\vertex[fill] () at (10.7-.25,-.01) [ ] {};  
			\vertex[fill] () at (10.7-.25,-.06) [ ] {}; 
			\vertex[fill] () at (10.7-.25,-.11) [ ] {};
			\tikzstyle{vertex}=[circle, draw, inner sep=.3pt, minimum size=.3pt]
			\vertex[fill] () at (1+.2,-.060) [ ] {};
			\vertex[fill] () at (1+.2,-.160) [ ] {};
			\vertex[fill] () at (1+.2,-.260) [ ] {};
			\vertex[fill] () at (-1.9+.2,-.060) [ ] {};
			\vertex[fill] () at (-1.9+.2,-.160) [ ] {};
			\vertex[fill] () at (-1.9+.2,-.260) [ ] {};
			\vertex[fill] () at (5.7-.25,-.060) [ ] {};
			\vertex[fill] () at (5.7-.25,-.160) [ ] {};
			\vertex[fill] () at (5.7-.25,-.260) [ ] {};
			\vertex[fill] () at (8.6-.25,-.060) [ ] {};
			\vertex[fill] () at (8.6-.25,-.160) [ ] {};
			\vertex[fill] () at (8.6-.25,-.260) [ ] {};
			\vertex[fill] () at (3.6,0) [ ] {};  
			\vertex[fill] () at (3.7,0) [ ] {};  
			\vertex[fill] () at (3.8,0) [ ] {};  
			\path
			(18) edge (a)
			(18) edge (b)
			(18) edge (c)
			(0) edge (01)
			(0) edge (055)
			(0) edge (0555)
			(0) edge (02)
			(0) edge (03)
			(04) edge (01)
			(04) edge (02)
			(04) edge (03)
			(05) edge (01)
			(05) edge (02)
			(05) edge (03)
			(04) edge (1)
			(05) edge (1) 
			(05) edge (04)  
			(1) edge (2) 
			(1) edge (3) 
			(1) edge (4) 
			(5) edge (2)
			(5) edge (3)
			(5) edge (4)
			(55) edge (2)
			(55) edge (3)
			(55) edge (4)
			(55) edge (5)
			(555) edge (5)
			(555) edge (55)
			(555) edge (7)
			(555) edge (7a)
			(555) edge (77)
			(6) edge (777)
			(6) edge (7777)
			(6) edge (07)
			(6) edge (8)
			(6) edge (9)
			(10) edge (07)
			(10) edge (8)
			(10) edge (9)
			(11) edge (07)
			(11) edge (8)
			(11) edge (9)
			(11) edge (10)
			(12) edge (10)
			(12) edge (11)
			(12) edge (14)
			(12) edge (15)
			(12) edge (13)
			(16) edge (14)
			(16) edge (15)
			(16) edge (13)
			(17) edge (14)
			(17) edge (15)
			(17) edge (13)
			(17) edge (16)
			(17) edge (18)
			(16) edge (18) ;                     
	\end{tikzpicture}}\caption{
		Middle-block structure of $\mu$-minimal regular graphs of fixed even degree $d\ge4$ and sufficiently large order. Consecutive blocks share a singleton vertex. Only a bounded number of vertices at the two ends are omitted.}\label{fig:d-even}
\end{figure}

\subsection{The main results}
Our first theorem is the even-degree counterpart of the structural
theorem in Part~I. No assumption about diameter or block structure is
made.

\begin{theorem}\label{thm:evenfull}
For every fixed even $d\ge4$, there are constants $N_d,B_d$ such that
every $\mu$-minimal $d$-regular graph $G$ of order $n\ge N_d$ is
path-like. Outside two connected portions at the ends of its
block-tree, containing at most $B_d$ vertices altogether, it is an
uninterrupted chain of $M_d$ blocks. Consecutive blocks share their
singleton endpoints with consistent orientation. In particular,
\begin{equation}\label{eq:even-fulldiameter}
 \dm(G)=\frac{3n}{d+1}+O_d(1).
\end{equation}
\end{theorem}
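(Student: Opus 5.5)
The plan follows the strategy of Part~I \cite{AbGhCompanion}, adapted to bridgeless even-degree graphs, where the cut structure consists of 2-edge cuts and cut vertices rather than bridges. The proof runs in three stages.

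\textbf{Stage 1: upper and lower bounds on $\mu$.} First, I will build an explicit comparison graph. It is a chain of $M_d$ blocks (a $K_{d-2}$ together with three singleton-type vertices, arranged as in \Cref{fig:d-even}), sharing cut vertices, closed off at both ends by bounded gadgets so that the graph is $d$-regular. Plugging a discretized cosine test vector into the Rayleigh quotient gives $\mu\le 2(d-2)\pi^2/n^2+O_d(n^{-3})$. Consequently any $\mu$-minimal $G$ satisfies this bound. Next, I will use the Fiedler vector $f$ and the standard diameter inequality $\mu\ge c/(n\,\dm(G))$ to conclude $\dm(G)=\Theta_d(n)$. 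I will then prove the path-like property. Consider the level structure of $f$, or equivalently the BFS layers from an extremal vertex. All but $O_d(1)$ layers must be "thin" (of bounded size), since otherwise the Rayleigh quotient would exceed the upper bound. Thin layers force cut vertices, because in a $d$-regular graph with $d$ even every edge cut has even size, and a small even cut between consecutive thin sections that is not a cut vertex costs too much. This yields a block-tree whose non-path parts have bounded size.

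\textbf{Stage 2: local optimality via a weighted-path reduction.} I will model $G$ as a weighted path. Each block $B$ between consecutive cut vertices contributes a length (its number of vertices) and an effective conductance (its effective resistance between the two cut vertices). The Fiedler value is then approximated by the first Dirichlet-type eigenvalue of a string with mass density and stiffness given by these quantities, with error $O_d(n^{-3})$. For a single block I will solve a finite optimization problem: among all $d$-regular-compatible blocks with two attachment vertices of degree $d/2$ in the block, maximize (vertices per block)$\times$(effective resistance per block). This product is the quantity governing $\pi^2/(\text{mass}\cdot\text{resistance})$. I expect the maximum to be attained uniquely by the block $M_d$ of \Cref{fig:d-even}, with $d+1$ vertices, of which $d-2$ form the clique, and with resistance giving the ratio $3/(d+1)$ cut vertices per vertex. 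Uniqueness, together with a strict quantitative gap, is the crux. I will prove it by bounding effective resistance through the cut structure. A block with two distinct 2-edge-cuts or more vertices has strictly smaller resistance density. This is a finite but fiddly case analysis, since the blocks may be arbitrary, and I need a uniform gap $\gamma_d>0$.

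\textbf{Stage 3: exchange argument for exactness, and the diameter.} Suppose some non-end block differs from $M_d$. I will replace a bounded window containing it by copies of $M_d$, adjusting the count at the ends to preserve $n$ and regularity. The Fiedler vector in the middle region has gradient of order $1/n$ and is non-negligible in magnitude. So the strict gap $\gamma_d$ gives a decrease of $\mu$ of order $\gamma_d n^{-3}$, beating the $O(n^{-3})$ error terms provided the defect is placed in the bulk. Defects located within $O_d(1)$ of the ends are absorbed into $B_d$. This is the main obstacle: the gain and the error terms are of the same order $n^{-3}$. I will try first to handle it with a precise perturbation formula, $\delta\mu\approx\sum_{e}\delta w_e\,(f_u-f_v)^2$ normalized, where $f$ is approximated by a sine profile. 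Near the ends the gradient vanishes to first order, which is what forces a bounded but nonzero exceptional portion; I will also need consistent orientation, which is shown by comparing the two ways of gluing. Finally, with an uninterrupted chain of $M_d$ blocks on $n-O_d(1)$ vertices, each contributing $d+1$ vertices and $3$ to the diameter, I get $\dm(G)=3n/(d+1)+O_d(1)$.
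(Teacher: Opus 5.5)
\textbf{There are genuine gaps, in all three stages.}

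\textbf{Stages 1--2.} The central claim of Stage~1, that all but $O_d(1)$ layers are thin and that thin layers force cut vertices, is asserted rather than derived, and it is most of the theorem. Stage~2 then assumes $G$ is already a path of bounded blocks, so the argument is circular.

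The per-block optimization also cannot be a finite case analysis. A $2$-connected piece of an even-regular graph may be arbitrarily large, so you need a resistance-per-vertex bound for \emph{arbitrary} two-terminal subgraphs. The class you optimize over is also wrong in three ways:
\begin{itemize}
\item A shared singleton of the chain has $2$ neighbours on one side and $d-2$ on the other. The terminal degrees therefore lie in $\{2,d-2\}$, not $d/2$.
\item The governing quantity is resistance per vertex, not the product, which grows under concatenation.
\item The bound must carry a terminal-degree correction, $R_H\le (m-1)/\beta+\phi(k)-\phi(l)$ with $\beta=2(d-2)$.
\end{itemize}

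The missing tool is a local inequality in the Fiedler order that uses no prior structure. For each short interval $I$ of consecutive vertices, weighted combinations of the cut counts $q_i,K_{ij}$ give $\Delta_I\le T_I\max_{i\in I}c_i$ with $\beta T_I=|I|+\beta(\phi(k)-\phi(l))$. This is strict by a fixed amount unless $I$ is an actual $M_d$ block. Summing along the ordering against a path with shifted positions, plus a path-stability lemma, shows only that all but $O_d(n^{2/3})$ vertices lie in $M_d$ blocks on one path of the block-tree. Applied to a unit-current vector, the same inequality gives the resistance bound above, with equality only for $M_d$ chains.

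\textbf{Stage 3.} This stage has a second gap. Replacing a window by copies of $M_d$ and ``adjusting the ends'' is not a same-order, same-degree local change. A window of $m$ vertices with terminal degrees $k,l$ generally cannot be refilled by a standard chain. The formula $\sum_e\delta w_e(f_u-f_v)^2$ is not valid for a structural change, which is not a small perturbation of $L$; as you note, its errors are of the same order as the gain.

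The paper needs two separate exact comparisons:
\begin{itemize}
\item Intervals of unbounded order are replaced by comparison graphs $J_m$ or end graphs $E_b$ of the same order and terminal degrees with near-maximal resistance. This uses a lower bound on the current, or, at an end, a Schur-complement inertia count.
\item Bounded intervals $H$ are handled by the two swaps $C_kH\mapsto HC_l$ and $HC_l\mapsto C_kH$. Their total change is $-2\mu\delta_Hr_CI^2+O(\mu^2M^2)$, with the unknown position terms cancelled. So one swap improves unless $H$ lies within $O_d(1)$ of an end.
\end{itemize}

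\textbf{The ends.} Absorbing end defects into $B_d$ does not prove that $G$ is path-like. You must still exclude branching of the block-tree inside the bounded ends; the paper does this with degree-preserving edge switches.

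\textbf{Smaller points.} Consistent orientation needs no energy comparison: it is forced by degrees, since $2+2\ne d\ne2(d-2)$ for $d\ge6$. For $d=4$ the local interval inequalities are not available, which is why the paper imports the quartic case from the published characterization.
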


For $d=4$, we use the characterization of
\cite[Theorem~3.2]{AbGhQuartic}. Our proof for $d\ge6$ is given in
\Cref{sec:even-exact}. The theorem determines the middle blocks and
confines the exceptions to bounded end subgraphs; it does not identify
those end subgraphs individually.

The asymptotic formula for minimum algebraic connectivity follows from the structural result by applying the method of \cite[Theorem~1.8 and Table~1]{AbGhDiam} for estimating the algebraic connectivity of path-like graphs. Let $m_d(n)$ denote the minimum algebraic connectivity among all connected $d$-regular graphs on $n$ vertices.

\begin{corollary}\label{thm:evenminimum}
	For every fixed even $d\ge4$, 
	\begin{equation}\label{eq:even-minimum}
m_d(n)=	\frac{2(d-2)\pi^2}{n^2}+O_d(n^{-3}).
\end{equation}
\end{corollary}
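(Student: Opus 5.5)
The plan is to derive \eqref{eq:even-minimum} from the structure in \Cref{thm:evenfull}, then compute the algebraic connectivity of the resulting path-like graph as a weighted path. By \Cref{thm:evenfull}, for $n\ge N_d$ every $\mu$-minimal graph $G$ is a chain of $k$ identical middle blocks $M_d$ plus two end portions with at most $B_d$ vertices in total. Each block $M_d$ has $K_{d-2}$, two further vertices, and a shared singleton, so it contributes $d+1$ new vertices per period. Hence $k=n/(d+1)+O_d(1)$, and $G$ has period length $d+1$ and period "width" in the sense of a weighted path.

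For the upper bound, I will construct for every large $n$ a $d$-regular graph of this form, with a fixed pair of end gadgets chosen according to $n \bmod (d+1)$ so that regularity holds. I then bound $\mu$ by the Rayleigh quotient of the test vector $x(v)=\cos(\pi t(v)/\ell)$. Here $t(v)$ is a position coordinate along the chain, constant on each "layer" of a block, chosen so that the quotient is asymptotically optimal. Equivalently, I will invoke the method of \cite[Theorem~1.8 and Table~1]{AbGhDiam}: it says that a path-like graph with periodic blocks has $\mu = \pi^2/(R\,V\,k^2)\,(1+O(1/k))$. Here $R$ is the effective resistance per period between consecutive cut vertices, and $V=d+1$ is the number of vertices per period. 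For one block, the resistance from a singleton $s$ to the next singleton $s'$ goes $s\to K_{d-2}$ through $d-2$ parallel edges, then $K_{d-2}\to\{a,b\}$ through $2(d-2)$ edges, then $\{a,b\}\to s'$ through $2$ edges; symmetry of the middle layers kills cross currents. This gives $R=\frac1{d-2}+\frac1{2(d-2)}+\frac12=\frac{3+(d-2)}{2(d-2)}=\frac{d+1}{2(d-2)}$. Then
\[
\mu=\frac{\pi^2}{R(d+1)k^2}=\frac{2(d-2)\pi^2}{(d+1)^2k^2}=\frac{2(d-2)\pi^2}{n^2}\bigl(1+O_d(n^{-1})\bigr),
\]
which matches \eqref{eq:even-minimum}. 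The $O(1/k)$ error in the cited theorem, together with the $O_d(1)$ end vertices, perturbs $n^{-2}$ only at order $n^{-3}$.

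For the lower bound, I take any $\mu$-minimal $G$ with $n\ge N_d$ and apply \Cref{thm:evenfull}. The same estimate from \cite{AbGhDiam} gives $\mu(G)\ge \frac{2(d-2)\pi^2}{n^2}-O_d(n^{-3})$. The constant depends only on $B_d$ and $d$, because the bounded ends act as bounded perturbations of the boundary conditions of the discretized Sturm--Liouville problem. For the finitely many $n<N_d$ the statement is vacuous in $O_d$ notation.

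The main obstacle is ensuring the error is truly $O_d(n^{-3})$ uniformly over the arbitrary, unidentified end subgraphs. I would handle this by a two-sided comparison. Collapsing each end portion to a point by adding edges can only increase $\mu$, and deleting all end edges can only decrease it. Both operations keep the graph a periodic weighted path with $k\pm O_d(1)$ periods, so each comparison changes $k$ by $O_d(1)$, hence $\mu$ by relative $O_d(1/n)$.
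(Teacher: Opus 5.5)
Your resistance computation $R=\frac{d+1}{2(d-2)}=D/\beta$ and the resulting constant are correct. Your upper bound (cosine values at the singletons, harmonic interpolation inside each block, constant values on the ends) is essentially the paper's. You do not even need separately constructed end gadgets: the same trial vector can be placed on the minimizer itself, whose structure is given by \Cref{thm:evenfull}.

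The gap is in the lower bound, at exactly the step you flag as the main obstacle. Deleting all edges of an end portion does not leave a periodic weighted path. Each end subgraph meets the chain only at its outer singleton, so removing its edges isolates its other vertices. The resulting graph is disconnected, and the comparison gives only the vacuous bound $\mu(G)\ge0$. Deleting fewer edges, just enough to keep the graph connected, leaves arbitrary tree-like ends, which are no easier than the original ones. Adding edges collapses nothing, since the order is unchanged, and it only bounds $\mu(G)$ from above. The appeal to \cite{AbGhDiam} and to ``bounded perturbations of the boundary conditions'' is a heuristic rather than an argument. So the estimate $\mu(G)\ge 2(d-2)\pi^2/n^2-O_d(n^{-3})$, uniform over the unidentified end subgraphs, is not established.

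The missing idea is to leave the graph alone and work with a mean-zero unit Fiedler vector $\x$ of $G$, as in \Cref{lem:even-chainvalue}. First discard the nonnegative energy of the end-subgraph edges. Then average over the two middle cliques of each block and apply \Cref{lem:comparison-blockminimum} to get $\mu\ge(\beta/D)\sum_j(u_{j+1}-u_j)^2$, where the $u_j$ are the values at the singletons. Finally apply the path inequality \eqref{eq:pathineq} to $(u_j)$.

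The ends are controlled by an a priori estimate, not by a comparison graph. The upper bound gives $\mu=O_d(n^{-2})$, so \Cref{lem:smallentries} gives $\max_v|x_v|=O(n^{-1/2})$ and edge differences of order $O(n^{-3/2})$. Consequently:
the at most $B_d$ end vertices carry only $O_d(n^{-1})$ of the squared norm;
the $D$ vertices assigned to each block lie within $O(n^{-3/2})$ of its singleton value;
$D\sum_ju_j^2=1+O_d(n^{-1})$, and the mean correction is negligible.
This gives the lower bound with a constant depending only on $d$, independently of the internal structure of the ends.
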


We remark that, for every fixed even $d\ge4$, the maximum diameter
among connected $d$-regular graphs of order $n$ is
\begin{equation}\label{eq:maximum-diameter}
 \frac{3n}{d+1}+O_d(1);
\end{equation}
see \cite{Caccetta,Erdos} and
\cite[Theorems~5.1--5.2]{AbGhDiam}. In
\cite[Conjecture~5.4]{AbGhDiam}, it is conjectured that a graph whose
algebraic connectivity is asymptotically minimum has asymptotically
maximum diameter within its respective family. The following result
settles the even-regular case for every fixed even $d\ge6$.

\begin{theorem}\label{cor:diameter}
Fix an even $d\ge6$. If $G_n$ is $d$-regular of order $n$ and
$\mu(G_n)/m_d(n)\to1$, then
\begin{equation}\label{eq:diameterimplication}
 \dm(G_n)=\frac{3n}{d+1}+o(n).
\end{equation}
\end{theorem}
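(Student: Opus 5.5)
Since $\dm(G_n)\le 3n/(d+1)+O_d(1)$ always holds by \eqref{eq:maximum-diameter}, only the lower bound $\dm(G_n)\ge (3/(d+1)-o(1))n$ is needed. I would argue by contradiction and compactness. Suppose that along a subsequence $\dm(G_n)\le (3/(d+1)-\epsilon)n$ while $\mu(G_n)n^2\to 2(d-2)\pi^2$ by \Cref{thm:evenminimum}. Let $f_n$ be a unit Fiedler vector of $G_n$. I would order the vertices by distance layers $V_0,V_1,\dots,V_D$ from a suitable endpoint and set $D=\dm(G_n)$. A standard argument shows that $\mu(G_n)=O(n^{-2})$ forces $D=\Theta(n)$ and forces $f_n$ to vary slowly across layers. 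Rescaling the layer index to $t=i/n\in[0,\ell]$ with $\ell=\lim D/n\le 3/(d+1)-\epsilon$, I would obtain a one-dimensional variational problem. The measure $\rho(t)\,dt$ records the limiting layer densities, with $\int\rho=1$, and the energy term is bounded below by an edge-density functional. Roughly, $\mu n^2\gtrsim \inf_g \int c(t)g'(t)^2dt \big/ \int \rho g^2$, where $c(t)$ is the limiting number of edges between consecutive layers. Since the graph is even-regular there are no bridges, so every edge cut between consecutive layers has size $\ge 2$. This gives $c\ge 2$.

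The key local constraint, which is the even-degree analogue of the counting behind \eqref{eq:maximum-diameter}, is that any three consecutive layers contain at least $d+1$ vertices. More precisely, I would need a weighted local version: over a window of $k$ layers, the number of vertices is at least $(d+1)k/3-O(1)$, and \emph{equality-type density} forces consecutive cuts of size exactly $2$. After averaging, this gives $\rho\ge (d+1)/3$ a.e. on windows. Combining this with $\int\rho=1$ shows that if $\ell<3/(d+1)$ then $\rho$ exceeds $(d+1)/3$ on a set of positive measure. The core step is then to show that the limiting functional $\inf \int c g'^2/\int\rho g^2$, with $c\ge 2$, is strictly larger than $2(d-2)\pi^2$ unless $\rho\equiv(d+1)/3$ and $c$ takes its extremal profile. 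I would try to establish this through a local trade-off inequality: a window carrying excess density must also carry extra cut edges, so that $c/\rho$ (the effective ``conductance per mass'') does not decrease. Since the extremal chain has ratio corresponding to $2(d-2)\pi^2$, which is the Dirichlet/Neumann eigenvalue of the interval of length $3/(d+1)$ with the effective coefficients, shortening the interval strictly raises the eigenvalue by scaling. This gives a contradiction.

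The main obstacle is this trade-off inequality. A graph could pack extra vertices into dense clumps attached by only 2 edges, which would keep $c=2$ while raising $\rho$. This is presumably where $d=4$ fails. My approach would be a local exchange argument. Any block of the $2$-edge-connected layered structure holding more than $d+1$ vertices per three layers either already has larger internal cuts, or can be compared with the extremal $K_{d-2}$-based block. I would then quantify the ``effective resistance per vertex'' of an arbitrary bridgeless $d$-regular piece between two $2$-cuts. The goal is to show it is maximized, up to $1+o(1)$, by the chain of \Cref{fig:d-even}, with a strict deficit proportional to the excess vertices when $d\ge6$. This resistance-versus-mass bound, fed into the one-dimensional Rayleigh quotient via a Poincaré-type comparison, would yield $\liminf \mu(G_n)n^2>2(d-2)\pi^2$ and hence the contradiction.
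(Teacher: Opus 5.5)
\textbf{There is a genuine gap.} It sits where you place the ``main obstacle'': that step is the whole content of the theorem, and the framework around it does not yet give a lower bound on $\mu$.

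\emph{The one-dimensional reduction points the wrong way.} Restricting to vectors that are functions of the distance layer, as in your $\inf_g\int c\,g'^2/\int\rho g^2$ with $c$ the number of edges between consecutive layers, restricts the test class. By \eqref{eq:Rayleigh} it therefore bounds $\mu$ from \emph{above}, not below. To get ``$\mu n^2\gtrsim$'' you would have to control a Fiedler vector that need be neither monotone nor nearly constant on layers. Across a sparse cut, $\sum_e(f_u-f_v)^2$ cannot be compared with the jump of the layer averages.

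\emph{Your constraints are too weak even inside the model.} The profile $c\equiv2$, $\rho\equiv(d+1)/3$ satisfies both constraints and gives coefficient $\frac{2(d+1)}{3}\pi^2<2(d-2)\pi^2$ for $d\ge6$. Also, $c(t)$ has no pointwise limit: in the extremal chain the layer cuts cycle through $2,2(d-2),d-2$, and only their series sum $r_C=D/\beta$ per block matters.

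\emph{What is actually needed.} You need a bound on the true, unshorted resistance per vertex, with a uniform strict gain unless the local structure is an $M_d$ chain. It must hold for an arbitrary graph, which is not a priori a series of two-terminal pieces. You state this as a goal for an unspecified exchange argument.

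\emph{How the paper supplies it.} It orders the vertices by Fiedler value. Then $\x^\top L(G)\x=\sum_{i,j}K_{ij}a_ia_j$ exactly with all $a_i\ge0$. Prefix sums of the eigen-equations give the currents $c_i=\mu f_i$, and the degree condition gives \eqref{eq:even-cut}--\eqref{eq:even-two}. Explicit weighted sums of Fiedler equations then give $\Delta_I\le R_I\max_{i\in I}c_i$ with $R_I\le T_I$ on every interval between small cuts. The inequality is strict by a fixed $\gamma_d$ unless the interval is a standard gap, and a standard gap is forced to be an actual $M_d$ block (\Cref{lem:even-certificate,lem:even-standard-block}). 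For two-terminal pieces this is packaged as \Cref{thm:even-terminal}. This case analysis is where $d\ge6$ is used; nothing indicates the statement fails for $d=4$, it is simply not covered.

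\textbf{The passage from near-equality to diameter is also missing.} The length governing $\mu$ is the shifted resistance length $\beta T_I$, not graph distance. So ``shortening the interval raises the eigenvalue by scaling'' presupposes the trade-off rather than using it.

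The paper runs the logic the other way:
\begin{enumerate}
\item The local bounds give a comparison with a path at shifted positions, carrying an extra term $\kappa_d\int_{\mathcal B}Y'^2$ on the strict intervals (\Cref{lem:even-shifted-path}).
\item The path stability lemma (\Cref{lem:pathstable}) uses that the path increments satisfy $h_i\gtrsim\min\{i,n-i\}\,n^{-5/2}$, so they are small only near the ends. This bounds the strict intervals by $O((\e+n^{-1})^{1/3}n)$ differences.
\item Hence all but that many vertices lie in $M_d$ blocks along one path of the block-tree, each forcing distance three between its singletons. This gives $\dm(G_n)\ge3n/(d+1)-C_d(\e_n+n^{-1})^{1/3}n$.
\item With $n^2m_d(n)\to2(d-2)\pi^2$, so that $\e_n\to0$, and the upper bound $\dm(G_n)<3n/(d+1)$, the statement follows.
\end{enumerate}
Your compactness scheme would need both of these ingredients, and the proposal supplies neither.
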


For $d=4$, \cite[Theorem~3.2]{AbGhQuartic} gives the diameter conclusion
for exact minimizers; it does not establish the corresponding assertion
for graphs whose algebraic connectivity is only asymptotically minimum. Our result concerns asymptotically maximum diameter; it does not assert that an exact minimizer attains the maximum possible diameter for every order.

Our last main result allows the regular degree to grow with $n$. 

\begin{theorem}\label{thm:evenuniform}
For a fixed even integer $r\ge4$, put
\[
 c_r=\min\left\{\frac{2(r-2)}r\pi^2,\,16\right\}.
\]
Then
\begin{equation}\label{eq:even-uniformminimum}
 \lim_{n\to\infty}
 \min_{\substack{|V(G)|=n\\G\text{ is }d\text{-regular}\\
                   d\ge r,\ d\text{ even}}}
 \frac{n^2\mu(G)}d=c_r.
\end{equation}
The minimum is over connected  graphs, and sharpness holds
through all sufficiently large orders.
\end{theorem}

Thus $c_r=2(r-2)\pi^2/r$ for $r=4,6,8,10$, and
$c_r=16$ for every even $r\ge12$.
The proof also gives lower limit $2\pi^2$ for $n^2\mu(G)/d$
when $d\to\infty$ and $d=o(n)$; see \Cref{thm:growing}.
In particular, the maximum relaxation time over even regular degrees
is $(1+o(1))n^2/\pi^2$, and every maximizer at a sufficiently large
order is quartic. Over even regular degrees at least six the maximum
is $(1+o(1))3n^2/(4\pi^2)$; see \Cref{cor:evenrelaxation}.

\subsection{Relation to Part I and earlier work}

L.  Babai (see \cite{Guiduli}) made a conjecture that described the structure of  $\mu$-minimal cubic (i.e. $3$-regular) graphs. Guiduli \cite{Guiduli} (see also \cite{GuiduliThesis}) proved that $\mu$-minimal cubic graphs are path-like, built from specific blocks.
The result of Guiduli was improved  later  by Brand, Guiduli, and Imrich \cite{Imrich}. They completely characterized $\mu$-minimal cubic  graphs and confirmed the Babai conjecture. For every admissible even order, the minimizing cubic graph is unique. (Cubic graphs always have even orders.)
Abdi, Ghorbani and Imrich~\cite{AbGhIm} showed that the algebraic connectivity  of these graphs is $(1+o(1))\frac{2\pi^2}{n^2}$, confirming  the Aldous--Fill conjecture for $d=3$.
Guiduli \cite[Problem~5.2]{GuiduliThesis} asked for a generalization of the aforementioned result of Brand, Guiduli, and Imrich, namely the characterization of $\mu$-minimal $d$-regular graphs. 
In this direction, Abdi and Ghorbani \cite{AbGhQuartic} gave a nearly complete characterization
of $\mu$-minimal quartic  (i.e. $4$-regular) graphs.  Based on that,  they established   the Aldous--Fill conjecture for $d=4$. 
Guiduli and Mohar proposed the following generalization of the cubic
result by considering graphs with minimum degree $d$ rather than
regular graphs.
Part~I~\cite{AbGhCompanion} proves the  characterization in
the two minimum-degree classes $\delta=d$ and $\delta\ge d$, for every
fixed $d\ge3$, and in the regular class for fixed odd $d\ge3$.
Its numerical corollary gives the minimum algebraic connectivity
$$\frac{(d-1)\pi^2}{n^2}+O_d(n^{-3})$$
in each of these classes, with $n$ even in the odd-regular case.
It also proves the corresponding near-minimum diameter implication.
Its uniform minimum-degree theorem gives the sharp constant
$\min\{(r-1)\pi^2/r,8\}$ when $\delta(G)\ge r$, and the
Aldous--Fill bound follows by specialization to regular graphs.

The minimum-degree theorems include even values of $d$, but they do
not determine the minimizers in the smaller even-regular class.
Nor does their constant $8$ replace the even-cut estimate needed for
\Cref{thm:evenuniform}. We prove the even-cut version of its component argument in
\Cref{sec:evenrelaxation}, together with an exact finite-order
bound for the sublinear-degree regime.
All local comparisons, replacement estimates, and growing-degree
arguments are proved here. The quartic characterization is the only
structural theorem imported from another paper. References to Part~I explain the parallel
methods and results, rather than supply unstated proof steps.

\subsection{Outline of the proofs}
Our proofs use Fiedler vectors and changes to a graph that reduce its
algebraic connectivity. These methods were used in
\cite{AbGhMin,AbGhDiam,AbGhIm,AbGhQuartic}, and also in Part~I
\cite{AbGhCompanion}. A Fiedler vector assigns a real number to each
vertex. If these numbers have sum zero and sum of squares one, then
the sum of $(x_u-x_v)^2$ over all edges $uv$ is exactly $\mu(G)$.
We use this formula to compare graphs of the same order and degree.
The quartic case is supplied by \cite{AbGhQuartic}; the structural
argument below is for even $d\ge6$.

First, we construct graphs made of a long chain of $M_d$ blocks and
two end subgraphs whose sizes are bounded in terms of $d$. The method
of comparing chains with paths in \cite{AbGhDiam} gives
\[
 \mu(G_{n,d})\le\frac{2(d-2)\pi^2}{n^2}+O_d(n^{-3}).
\]
Thus a minimizing graph must have algebraic connectivity at most
this value.

Next, we list the vertices in increasing order of their Fiedler-vector
values. We count the edges joining the vertices before each position
to those after it. The degree condition gives inequalities relating
these counts to the changes in the vector values. We apply the
inequalities to short consecutive groups of vertices and then add
them. This compares the changes in the vector values with those along
a path. Equality in consecutive
groups forces those groups to be $M_d$ blocks. The inequalities also
show that a graph close to the minimum is made mostly of such blocks
along one path of its block-tree. For an exact minimizer, at most
$O_d(n^{2/3})$ vertices remain outside the selected blocks.

We now assume that the graph attains the minimum. This allows us to
reduce the number of remaining vertices to a bound depending only on
$d$. If a part between the selected blocks is too
large, replacing it by a graph with the same number of vertices and
the same attachment degrees lowers algebraic connectivity. For a part
at an end, we make the comparison while fixing the vector value at
its attachment vertex. These arguments bound the size of each part
that remains. Swapping a bounded part with a neighboring $M_d$ block
then shows that it must be close to an end of the graph. Finally,
changes of edges that preserve every vertex degree rule out branches
at the ends. This proves the asserted structure.

Once the structure is known, the comparison with a path from
\cite{AbGhDiam} gives the formula for the minimum algebraic
connectivity. The two bounded end subgraphs affect only the error
term. Each further $M_d$ block adds $d+1$ vertices and three to the
distance between the ends of the chain. Counting these blocks gives
the diameter conclusions, both for exact minimizers and for graphs
whose algebraic connectivity is close to the minimum.

Finally, we allow $d$ to grow with $n$. When $d=o(n)$, we extend the
comparison path by repeating its endpoint values. This gives a lower
bound with an error that can still be controlled as $d$ grows. When
$d$ is proportional to $n$, we divide the vertices into a bounded
number of large connected sets on which the Fiedler-vector values
are nearly equal. We compare the average values on these sets. Even
regular degree forces at least two edges across every cut, and this
gives the lower bound $16$ for the limiting value of $n^2\mu(G)/d$.
A regular graph built from two large parts joined by two edges
attains this limit. Together, the two cases give the uniform bound
and the results on relaxation time.

\subsection{Organization}
\Cref{sec:prelim} introduces the notation and the facts about Fiedler
vectors and paths used in the proofs. \Cref{sec:local} proves the
inequalities for short groups of ordered vertices and constructs
graphs for comparison. \Cref{sec:evenstructure} shows that graphs
with algebraic connectivity close to the minimum consist mostly of
$M_d$ blocks. \Cref{sec:even-exact} proves the stated structure of
exact minimizers. \Cref{sec:chainvalue} gives the formula for the
minimum algebraic connectivity, and \Cref{sec:diameter} proves the
diameter results. \Cref{sec:evenrelaxation} allows the degree to vary
with the order and proves the bounds on relaxation time.

\section{Fiedler vectors and paths}\label{sec:prelim}
The \emph{order} of a graph is its number of vertices. We write
$\deg_G(v)$ for the degree of a vertex $v$, $\delta(G)$ for the
minimum degree, and $\dm(G)$ for the diameter. A $d$-regular graph
is \emph{$\mu$-minimal} if its algebraic connectivity equals the
minimum $m_d(n)$ defined in the introduction, where $n$ is its order.

A \emph{cut vertex} is a vertex whose deletion disconnects the graph,
and a \emph{bridge} is an edge whose deletion disconnects it. A
\emph{block} is a maximal connected subgraph without a cut vertex;
in particular, a bridge forms a block on two vertices. The
\emph{block-tree} has one node for each block and one for each cut
vertex, with a block joined to the cut vertices it contains. Following
\cite{AbGhDiam}, a graph with at least two blocks is \emph{path-like}
when this tree is a path. Its two end blocks are the blocks at the
ends of that path. An \emph{end subgraph} may contain several blocks
near one end.

Write $K_t$ for the complete graph on $t$ vertices. For disjoint
graphs $H_1,\ldots,H_t$, the \emph{sequential join}
$H_1+\cdots+H_t$ adds all edges between consecutive graphs and no
edges between nonconsecutive graphs. Put
\[
 L_d=K_{d+1}-e,\qquad M_d=K_1+K_2+K_{d-2}+K_1,
\]
where $e$ is one edge of $K_{d+1}$. In an $L_d$ chain, consecutive
copies are disjoint and joined by bridges at endpoints of their
missing edges; see \Cref{fig:Mohar}. In an $M_d$ chain, consecutive
copies share their outer singleton vertices, with all copies oriented
in the same direction; see \Cref{fig:d-even}. Each $M_d$ block has
$d+2$ vertices. Every additional block adds $d+1$ vertices and three
to the distance between the outer singletons of the chain. Its
internal vertices have degree $d$, and each shared singleton has
degree $2+(d-2)=d$.

For a vertex set $S$, let $e(S)$ be the number of edges with both
endpoints in $S$. In a $d$-regular graph, the number of edges with
exactly one endpoint in $S$ is $d|S|-2e(S)$. Thus, when $d$ is even,
every nonempty proper vertex set has a positive even number of edges
leaving it, since the graph is connected. In particular, an
even-regular graph has no bridge.

Except in the last section, $d$ is fixed. The constant in
$O_d(\cdot)$ may depend on $d$, but not on $n$. Constants with
subscript $d$ may be increased during a proof. All vectors are real,
their norms are Euclidean, and $\1$ denotes the all-one vector.
An ordered vector is nondecreasing; equal entries are allowed.
For even $d$, there is no parity restriction on sufficiently large
orders $n$.

An eigenvector corresponding to $\mu(G)$ is a {\em Fiedler vector}.
For a nonzero real vector $\x=(x_1,\ldots,x_n)^\top$, the quantity
$\x^\top L(G)\x/\|\x\|^2$ is a {\em Rayleigh quotient}.
It is well known that
\begin{align}
 \mu(G)&=\min_{\x\ne\0,\,\x\perp\1}
       \frac{\x^\top L(G)\x}{\|\x\|^2},\label{eq:Rayleigh}\\
 \x^\top L(G)\x&=\sum_{ij\in E(G)}(x_i-x_j)^2.\label{eq:quadratic}
\end{align}
At a vertex $i$ of degree $d_i$, a Fiedler vector satisfies the
{\em eigen-equation}
\begin{equation}\label{eq:eigenequation}
 \mu x_i=d_ix_i-\sum_{j:\,ij\in E(G)}x_j
        =\sum_{j:\,ij\in E(G)}(x_i-x_j).
\end{equation}
For $\bar y=n^{-1}\sum_i y_i$, centering leaves the quadratic form
unchanged and gives
\begin{equation}\label{eq:centering}
 \|\y-\bar y\1\|^2=\sum_i y_i^2-\frac1n\left(\sum_i y_i\right)^2.
\end{equation}
These facts are recalled in \cite[Section~2]{AbGhDiam}.
We also write $\E_H(\z)=\z^\top L(H)\z$ for the quadratic form on a
subgraph $H$. For every nonconstant vector $\y$, centering and
\eqref{eq:Rayleigh} give
\begin{equation}\label{eq:centeredRayleigh}
 \mu(G)\le\frac{\y^\top L(G)\y}{\|\y-\bar y\1\|^2}.
\end{equation}
In particular, if $\x$ is a mean-zero unit Fiedler vector of $G$,
then a vector on a graph $G'$ of the same order proves $\mu(G')<\mu(G)$
whenever
\begin{equation}\label{eq:strictcomparison}
 \y^\top L(G')\y-\mu(G)\|\y\|^2
       +\frac{\mu(G)}n\left(\sum_i y_i\right)^2<0.
\end{equation}
If an unchanged vector has Rayleigh quotient $\mu(G)$ on $G'$ but
no longer satisfies its eigen-equations, equality is again impossible,
and $\mu(G')<\mu(G)$.

For $P_n$, the path graph on $n$ vertices, we know that
(see \cite{fiedler1973algebraic})
\[
 p_n:=\mu(P_n)=2(1-\cos(\pi/n)),
\]
and by \cite[Theorem~5.6.1]{Spielman}, its increasing unit Fiedler vector has
components
\begin{equation}\label{eq:pathvector}
 v_i=-\sqrt{\frac2n}\cos\frac{(2i-1)\pi}{2n}
\end{equation}
for $i=1,\ldots,n$. Also define $h_i=v_{i+1}-v_i$ for
$i=1,\ldots,n-1$.
From \eqref{eq:Rayleigh} and \eqref{eq:centering} it follows that
\begin{equation}\label{eq:pathineq}
 \y^\top L(P_n)\y=\sum_{i=1}^{n-1}(y_{i+1}-y_i)^2
       \ge p_n\|\y-\bar y\1\|^2.
\end{equation}
We use the same near-equality lemma as in \cite{AbGhCompanion}
and include its proof.
\begin{lemma}\label{lem:pathstable}
	Let $n\ge3$ and $\theta\ge0$. Suppose there is a constant $C>0$ such that
	\begin{align*}
 \left|\|\y-\bar y\1\|^2-1\right|&\le C\theta,\\
 0\le\y^\top L(P_n)\y-p_n\|\y-\bar y\1\|^2&\le C\theta/n^2.
\end{align*}
	Then there is a constant $C'>0$, depending only on $C$, such that,
	for sufficiently small $\theta$, after changing the sign of $\y$ if
	necessary,
	\begin{equation}\label{eq:pathclose}
		\sum_{i=1}^{n-1}(y_{i+1}-y_i-h_i)^2
		\le C'\theta/n^2.
	\end{equation}
	No sign change is needed when $\y$ is nondecreasing. If
	$A\subseteq\{1,\ldots,n-1\}$ satisfies
	\begin{equation}\label{eq:smallset}
		\sum_{i\in A}(y_{i+1}-y_i)^2\le C\theta/n^2,
	\end{equation}
	then, after increasing $C'$ if necessary,
	\[
	|A|\le C'\theta^{1/3}n.
	\]
\end{lemma}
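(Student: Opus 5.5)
The plan is to expand $\y$ in the orthonormal eigenbasis of $L(P_n)$ and use the spectral gap between $p_n$ and the third eigenvalue. The eigenvalues of $L(P_n)$ are $\lambda_k=2(1-\cos(k\pi/n))$ for $k=0,\ldots,n-1$. The corresponding unit eigenvectors are $\phi^{(k)}$, with $\phi^{(0)}=n^{-1/2}\1$ and $\phi^{(1)}=\bv=(v_i)$ given by \eqref{eq:pathvector}.

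\emph{Step 1: $\y$ is close to $\pm\bv$ in $\ell^2$.} Write $\z=\y-\bar y\1=\sum_{k\ge1}a_k\phi^{(k)}$. Then
\[
\y^\top L(P_n)\y-p_n\|\z\|^2=\sum_{k\ge2}(\lambda_k-\lambda_1)a_k^2\ge(\lambda_2-\lambda_1)\sum_{k\ge2}a_k^2 .
\]
Since $\lambda_2-\lambda_1\ge c/n^2$ for an absolute $c>0$ and all $n\ge3$, this gives $\sum_{k\ge2}a_k^2\le C\theta/c$. Combined with $|\|\z\|^2-1|\le C\theta$, it also gives $|a_1^2-1|=O(\theta)$. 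For small $\theta$ we have $|a_1|\ge1/2$; changing the sign of $\y$ makes $a_1>0$, and then $|a_1-1|=O(\theta)$.

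\emph{Step 2: pass to differences.} Set $\bu=\z-\bv=(a_1-1)\bv+\sum_{k\ge2}a_k\phi^{(k)}$. Since $y_{i+1}-y_i-h_i=u_{i+1}-u_i$, the left side of \eqref{eq:pathclose} equals $\bu^\top L(P_n)\bu$. By orthogonality of the eigenvectors,
\[
\bu^\top L(P_n)\bu=(a_1-1)^2\lambda_1+\sum_{k\ge2}\lambda_ka_k^2 .
\]
The first term is $O(\theta^2/n^2)$. For the second, use $\lambda_k\le\lambda_1+(\lambda_k-\lambda_1)\cdot\max\{1,\lambda_2/(\lambda_2-\lambda_1)\}$ together with $\lambda_2/(\lambda_2-\lambda_1)=O(1)$. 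This bounds the second term by $O(1)\bigl[\lambda_1\sum_{k\ge2}a_k^2+\sum_{k\ge2}(\lambda_k-\lambda_1)a_k^2\bigr]=O(\theta/n^2)$. Hence \eqref{eq:pathclose} holds with $C'$ depending only on $C$.

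\emph{Step 3: the sign assertion for nondecreasing $\y$.} Both $\z$ and $\bv$ are nondecreasing and mean zero. For $\sigma=\pm1$, Abel summation gives
\[
\langle\z,\bv\rangle=-\sum_i(z_{i+1}-z_i)\,S_i ,
\]
where $S_i=\sum_{j\le i}v_j<0$ for $1\le i<n$. Hence $\langle\z,\bv\rangle=a_1\ge0$, and the sign change is unnecessary. Indeed $a_1>0$ is then automatic, because $|a_1|\ge1/2$ rules out $a_1=0$.

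\emph{Step 4: the counting bound.} This step is the only one needing a nontrivial estimate, and the main point is a lower bound on $h_i$. We have $h_i=2\sqrt{2/n}\,\sin(\pi/(2n))\sin(i\pi/n)\ge c'n^{-3/2}\min\{i,n-i\}/n$. So $h_i^2\ge c''\,\min\{i,n-i\}^2/n^5$, and this is small only near the ends.

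For $i\in A$, the inequality $h_i^2\le2(y_{i+1}-y_i)^2+2(y_{i+1}-y_i-h_i)^2$ lets us combine \eqref{eq:smallset} with \eqref{eq:pathclose}, giving $\sum_{i\in A}h_i^2\le(2C+2C')\theta/n^2$.

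Among index sets of size $|A|$, the sum $\sum_{i\in A}h_i^2$ is minimized by taking the indices nearest the two ends. That minimum is at least
\[
2\sum_{j\le |A|/2}c''j^2/n^5\ge c'''|A|^3/n^5 .
\]
The crude bound $\min\{i,n-i\}\ge$ rank suffices here, up to constants. Therefore $|A|^3\le C_1\theta n^3$, i.e.\ $|A|\le C'\theta^{1/3}n$ after enlarging $C'$. The exponent $1/3$ in the statement comes exactly from this cubic growth of the minimal sum.
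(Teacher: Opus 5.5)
Your proof is correct and follows the paper's route. Like the paper, you use the spectral gap between the first two nonzero eigenvalues of $L(P_n)$: you work directly with eigen-coefficients, whereas the paper first proves a unit-vector estimate using $(1-c)^2\le1-c^2$ and then rescales. Your Abel-summation sign argument is equivalent to the paper's positivity identity, and your cubic counting uses the same lower bound $h_i\ge4\sqrt2\,r_i/n^{5/2}$. One small repair in Step~4: bound the $j$th smallest $r_i$ on $A$ below by $j/2$, as the paper does, instead of using $2\sum_{j\le|A|/2}j^2$, since the latter is zero when $|A|=1$.
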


\begin{proof}
Put $L=L(P_n)$. The next positive eigenvalue after $p_n$ is
$\widetilde p_n=2(1-\cos(2\pi/n))$, so, for $n\ge3$,
\begin{equation}\label{eq:pathgap1}
 \frac{\widetilde p_n}{p_n}
 =4\cos^2\frac{\pi}{2n}\ge3.
\end{equation}
We first prove the following estimate for a mean-zero unit vector $\z$,
after choosing its sign so that $\langle\z,\bv\rangle\ge0$:
\begin{equation}\label{eq:pathunitstable}
 (\z-\bv)^\top L(\z-\bv)
 \le2\bigl(\z^\top L\z-p_n\bigr).
\end{equation}
Write $\z=c\bv+\bu$, where $\bu\perp\1,\bv$ and $c\ge0$.
Then $c^2+\|\bu\|^2=1$. Every eigenvalue contributing to $\bu$
is at least $3p_n$, and $\lambda+p_n\le2(\lambda-p_n)$ for
$\lambda\ge3p_n$. Hence
\[
 \bu^\top L\bu+p_n\|\bu\|^2
 \le2\bigl(\bu^\top L\bu-p_n\|\bu\|^2\bigr).
\]
Since $(1-c)^2\le1-c^2=\|\bu\|^2$, we obtain
\[
 (\z-\bv)^\top L(\z-\bv)
 =p_n(1-c)^2+\bu^\top L\bu
 \le2\bigl(\z^\top L\z-p_n\bigr),
\]
which proves \eqref{eq:pathunitstable}.

Now put $s=\|\y-\bar y\1\|$. We may assume $\theta\le1$ and
$C\theta\le1/2$, so $s>0$. Change the sign of $\y$ if necessary
so that $\langle\y,\bv\rangle\ge0$, and apply
\eqref{eq:pathunitstable} to $\z=(\y-\bar y\1)/s$.
The inequality $(u+v)^2\le2u^2+2v^2$, applied to each path difference,
and $\y-\bar y\1-\bv=s(\z-\bv)+(s-1)\bv$ give
\begin{align*}
 (\y-\bar y\1-\bv)^\top L(\y-\bar y\1-\bv)
 &\le2s^2(\z-\bv)^\top L(\z-\bv)+2p_n(s-1)^2\\
 &\le4\bigl(\y^\top L\y-p_ns^2\bigr)+2p_n(s-1)^2\\
 &\le\frac{(4C+2\pi^2C^2)\theta}{n^2}.
\end{align*}
For the last step we used
$|s-1|=|s^2-1|/(s+1)\le C\theta$ and $p_n\le\pi^2/n^2$.
Adding a constant does not change path differences, so this proves
\eqref{eq:pathclose}. If $\y$ is nondecreasing, then
\[
 \langle\y,\bv\rangle
 =\frac1n\sum_{i<j}(y_j-y_i)(v_j-v_i)>0,
\]
because $\bv$ is strictly increasing and $\y$ is not constant.
Thus no sign change is needed.

It remains to prove the last assertion. From
	\eqref{eq:pathvector},
	\[
	h_i
	=
	2\sqrt{\frac2n}
	\sin\frac{\pi}{2n}
	\sin\frac{\pi i}{n}.
	\]
	Put
	\[
	r_i=\min\{i,n-i\}.
	\]
	Using $\sin t\ge2t/\pi$ on $[0,\pi/2]$, we obtain
	$\sin\frac{\pi}{2n}\ge\frac1n,$ and 	$\sin\frac{\pi i}{n}
	=	\sin\frac{\pi r_i}{n}
	\ge\frac{2r_i}{n}.$
	Consequently
	\begin{equation}\label{eq:hlower}
		h_i\ge
		\frac{4\sqrt2\,r_i}{n^{5/2}}.
	\end{equation}

	Let $|A|=k$. Each positive integer occurs at most twice among the
	numbers $r_i$. Hence, if their values on $A$ are arranged increasingly,
	the $j$th one is at least $j/2$. By \eqref{eq:hlower},
	\begin{equation}\label{eq:cubicmass}
		\sum_{i\in A}h_i^2
		\ge \frac{8}{n^5}\sum_{j=1}^k j^2
		\ge \frac{8}{3}\frac{k^3}{n^5}.
	\end{equation}
	On the other hand, \eqref{eq:pathclose} and
	\eqref{eq:smallset} give
	\[
	\sum_{i\in A}h_i^2
	\le
	2\sum_{i\in A}(y_{i+1}-y_i)^2
	+2\sum_{i=1}^{n-1}(y_{i+1}-y_i-h_i)^2
	\le
	\frac{2(C+C')\theta}{n^2}.
	\]
	Together with \eqref{eq:cubicmass}, this gives
	\[
	k^3\le\frac34(C+C')\theta n^3.
	\]
	Thus
	\[
	|A|
	\le
	\left(\frac34(C+C')\right)^{1/3}
	\theta^{1/3}n.
	\]
	Increasing $C'$ once more proves the assertion.
\end{proof}

\begin{lemma}\label{lem:smallentries}
For a unit Fiedler vector $\x$ on a graph of order $n$,
\begin{align}
 \max_i|x_i|&\le\sqrt{(n-1)\mu},\label{eq:edgeestimate}\\
 |x_u-x_v|&\le\mu\sqrt n/2\quad(uv\in E(G)).\notag
\end{align}
In particular, if $\mu=O_d(n^{-2})$, these are
$O_d(n^{-1/2})$ and $O_d(n^{-3/2})$, respectively.
\end{lemma}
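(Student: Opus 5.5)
The plan is to derive both bounds directly from the eigen-equation \eqref{eq:eigenequation} and the Rayleigh identity \eqref{eq:quadratic}. Let $\x$ be a unit Fiedler vector. Since $\x\perp\1$ has $n$ entries, we can use the orthogonality to bound a single entry by the edge differences along a path. Assume the maximum of $|x_i|$ is attained at vertex $i$. Because $\sum_j x_j=0$, there is a vertex $j$ with $x_j$ of the opposite sign (or zero), so $|x_i|\le|x_i-x_j|$.

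For the first bound, join $i$ to $j$ by a shortest path $i=w_0,w_1,\ldots,w_k=j$ with $k\le n-1$ edges. Cauchy--Schwarz gives
\[
 x_i^2\le (x_{w_0}-x_{w_k})^2\le k\sum_{t=1}^{k}(x_{w_{t-1}}-x_{w_t})^2\le (n-1)\,\x^\top L(G)\x=(n-1)\mu .
\]
Here the path edges are distinct edges of $G$, so their squared differences are part of the sum \eqref{eq:quadratic}, which equals $\mu$ for a unit Fiedler vector. This yields \eqref{eq:edgeestimate}.

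For the edge bound we need a linear-in-$\mu$ estimate, so a quadratic-form argument will not suffice; it has to come from the eigen-equation summed over a set. For an edge $uv$, order the vertices by $\x$-value and consider the cut between them. Summing \eqref{eq:eigenequation} over $S=\{w: x_w\le t\}$ makes internal edges cancel and gives
\[
 \mu\sum_{w\in S}x_w=\sum_{ab\in E,\,a\in S,\,b\notin S}(x_a-x_b).
\]
Every crossing term is $\le0$. Take $t$ just below $\max(x_u,x_v)$, with $x_u\le t<x_v$, so the edge $uv$ crosses. Then
\[
 x_v-x_u\le \mu\Bigl|\sum_{w\in S}x_w\Bigr|\le \mu\sqrt{|S|}\,\Bigl(\sum_{w\in S}x_w^2\Bigr)^{1/2}.
\]
Using $\sum_{w\in S}x_w=-\sum_{w\notin S}x_w$, we may pick whichever of $S$ or its complement has size at most $n/2$. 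This gives the bound $\mu\sqrt{n/2}$.

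Obtaining the stated constant $\sqrt n/2$ is the main obstacle, and it needs a sharper estimate. For a mean-zero unit vector, $|\sum_{w\in S}x_w|\le\sqrt{|S|(n-|S|)/n}\le\sqrt n/2$. This follows by writing $\sum_{w\in S}x_w=\langle \x,\1_S-\frac{|S|}n\1\rangle$ and noting that $\|\1_S-\frac{|S|}n\1\|^2=|S|(n-|S|)/n\le n/4$. This gives exactly $|x_u-x_v|\le\mu\sqrt n/2$. The final "in particular" then follows by substituting $\mu=O_d(n^{-2})$.
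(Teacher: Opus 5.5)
Your proof is correct and follows the paper's argument. The entry bound comes from a path of at most $n-1$ edges plus Cauchy--Schwarz, and the edge bound from summing the eigen-equations over one side of a cut, which gives $x_v-x_u\le\mu|\sum_{w\in S}x_w|\le\mu\sqrt{|S|(n-|S|)/n}\le\mu\sqrt n/2$. The only cosmetic differences are that you use level sets where the paper uses prefixes of the sorted order, and you get the last estimate by projecting onto $\1_S-\frac{|S|}{n}\1$, whereas the paper adds the two one-sided Cauchy--Schwarz bounds $T^2/i+T^2/(n-i)\le1$.
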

\begin{proof}
Join the maximum and minimum components by a path of length at most
$n-1$. Cauchy--Schwarz bounds their squared difference by $(n-1)\mu$;
since $\x\perp\1$, this proves the first assertion.
For the second, order $x_1\le\cdots\le x_n$ and sum
\eqref{eq:eigenequation} over the first $i$ vertices:
\begin{equation}\label{eq:prefixsum}
 \sum_{uv\in E(G),\,u\le i<v}(x_v-x_u)
       =-\mu\sum_{j\le i}x_j.
\end{equation}
If $T=\sum_{j\le i}x_j=-\sum_{j>i}x_j$, then
$T^2/i+T^2/(n-i)\le1$, so $|T|\le\sqrt{i(n-i)/n}\le\sqrt n/2$.
Every nonzero edge difference is one of the nonnegative terms on the
left of some such cut. This proves its stated bound.
\end{proof}

\section{Local estimates and comparison graphs}\label{sec:local}
Throughout this section, $d\ge6$ is even, $D=d+1$, and
$\beta=2(d-2)$. We first explain the coefficient $\beta$, then prove
local comparisons in an arbitrary Fiedler ordering. The same partition
will be used for stability and for replacements with fixed boundary
values. The comparison graphs are constructed at the end of the section.

\subsection{The quadratic form of a standard block}
The following two identities record both a lower bound and the
nonnegative terms lost in taking it. The block calculation in this
subsection also holds for $d=4$; the restriction $d\ge6$ is needed
for the subsequent interval comparisons.
\begin{samepage}
\begin{lemma}\label{lem:comparison-averaging}
For disjoint nonempty vertex sets $A,B$, write $\bar z_A,\bar z_B$
for their average values. If all edges between the two sets are present,
their contribution is
\begin{align}\label{eq:clique-averaging}
 \sum_{u\in A,\,v\in B}(z_u-z_v)^2
 ={}&|A||B|(\bar z_A-\bar z_B)^2\notag\\
 &+|B|\sum_{u\in A}(z_u-\bar z_A)^2
  +|A|\sum_{v\in B}(z_v-\bar z_B)^2.
\end{align}
Consequently, averaging within a clique of vertices with the same
outside neighbors cannot increase the quadratic form. The two
remainder sums vanish precisely when the values are constant on
each of the two sets.
\end{lemma}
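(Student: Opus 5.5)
The identity \eqref{eq:clique-averaging} is a two-set version of the variance decomposition, and I will prove it by a direct expansion about the two averages. For $u\in A$ and $v\in B$, write
\[
 z_u-z_v=(z_u-\bar z_A)+(\bar z_A-\bar z_B)+(\bar z_B-z_v).
\]
Square and sum over all $|A||B|$ pairs. The three squared terms give the three terms on the right of \eqref{eq:clique-averaging}. The constant term $(\bar z_A-\bar z_B)^2$ is counted $|A||B|$ times. The term $(z_u-\bar z_A)^2$ is counted once for each $v\in B$, which gives the factor $|B|$, and symmetrically for $(z_v-\bar z_B)^2$.

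Each of the three cross terms has a factor that sums to zero. Any cross term containing $z_u-\bar z_A$ vanishes after summing over $u\in A$, because $\sum_{u\in A}(z_u-\bar z_A)=0$ and the other factor does not depend on $u$. The remaining cross term, between $\bar z_A-\bar z_B$ and $\bar z_B-z_v$, vanishes after summing over $v\in B$ for the same reason. This proves the identity. It needs no hypothesis beyond $A$ and $B$ being disjoint and nonempty, so that the averages are defined.

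For the consequence, let $Q$ be a clique whose vertices all have the same set $N$ of neighbours outside $Q$. Replace $z$ on $Q$ by its average $\bar z_Q$ and leave every other value unchanged; call the new vector $z'$. The quadratic form $\E$ splits into three kinds of edges.
\begin{itemize}
\item Edges with neither endpoint in $Q$ do not change.
\item Edges inside $Q$ contribute $\sum_{\{u,w\}\subseteq Q}(z_u-z_w)^2\ge0$, and this becomes $0$.
\item Edges between $Q$ and $N$ form a complete bipartite set, since every vertex of $Q$ is adjacent to every vertex of $N$. Apply \eqref{eq:clique-averaging} with $A=Q$ and $B=N$. Averaging on $Q$ keeps $\bar z_A$ fixed, so the first term $|A||B|(\bar z_A-\bar z_B)^2$ and the third remainder term are unchanged. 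Only the nonnegative term $|B|\sum_{u\in A}(z_u-\bar z_A)^2$ is removed.
\end{itemize}
Hence the form does not increase. Averaging also preserves $\sum_i z_i$, which is convenient for the later use with \eqref{eq:strictcomparison}.

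The equality statement follows because both remainder sums are sums of squares with positive coefficients $|B|$ and $|A|$. They vanish exactly when $z_u=\bar z_A$ for all $u\in A$ and $z_v=\bar z_B$ for all $v\in B$, that is, when $z$ is constant on each set.

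No step is a real obstacle. The only point needing care is to say precisely what \emph{averaging within a clique} means: average on $A$ alone, with the outside neighbourhood common to all of $A$. Otherwise the cross-edge block would not be complete bipartite, and \eqref{eq:clique-averaging} would not apply.
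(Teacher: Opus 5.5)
Your proof is correct and follows the paper's route. You expand $z_u-z_v$ about the two averages, observe that the mixed sums vanish, and read off both the averaging consequence and the equality condition from the nonnegative remainder terms, with somewhat more explicit bookkeeping for the averaging step than the paper gives.
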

\end{samepage}
\begin{proof}
Expand $z_u-z_v$ about the two averages. The mixed sums vanish,
giving \eqref{eq:clique-averaging}. Its two remainder sums are
nonnegative and vanish precisely under the stated conditions.
\end{proof}

\begin{lemma}\label{lem:comparison-weighted-squares}
For positive numbers $c_1,\ldots,c_m$ and real numbers
$g_1,\ldots,g_m$, put $R=\sum_jc_j^{-1}$ and $\Delta=\sum_jg_j$.
Then
\begin{equation}\label{eq:weighted-squares}
 \sum_{j=1}^m c_jg_j^2-\frac{\Delta^2}{R}
 =\sum_{j=1}^m c_j\left(g_j-\frac{\Delta}{Rc_j}\right)^2.
\end{equation}
Thus $\sum_jc_jg_j^2\ge\Delta^2/R$, with equality precisely when
$g_j=\Delta/(Rc_j)$ for every $j$.
\end{lemma}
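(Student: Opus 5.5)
The identity follows by expanding the right-hand side of \eqref{eq:weighted-squares} and comparing it term by term with the left-hand side. Put $t=\Delta/R$, so the claimed minimizer is $g_j=t/c_j$. Then
\[
 \sum_j c_j\Bigl(g_j-\frac{t}{c_j}\Bigr)^2
 =\sum_j c_jg_j^2-2t\sum_j g_j+t^2\sum_j c_j^{-1}
 =\sum_j c_jg_j^2-2t\Delta+t^2R .
\]
Substituting $t=\Delta/R$ gives $-2t\Delta+t^2R=-2\Delta^2/R+\Delta^2/R=-\Delta^2/R$, which is exactly \eqref{eq:weighted-squares}. Since all $c_j>0$, we have $R>0$, so the division is legitimate.

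For the inequality and the equality case, note that each term on the right of \eqref{eq:weighted-squares} is $c_j$ times a square, with $c_j>0$. The right-hand side is therefore nonnegative, which gives $\sum_jc_jg_j^2\ge\Delta^2/R$. It vanishes exactly when every square vanishes, that is, when $g_j=\Delta/(Rc_j)$ for all $j$.

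An alternative route is Cauchy--Schwarz, $\Delta^2=\bigl(\sum_j c_j^{-1/2}\cdot c_j^{1/2}g_j\bigr)^2\le R\sum_jc_jg_j^2$. However, the algebraic identity is preferable, because it also records the nonnegative remainder terms. The paper needs these remainders later, for the stability estimates, where the loss in the lower bound has to be quantified. No step here is a real obstacle; the only point needing care is $R>0$, which holds because the $c_j$ are positive.
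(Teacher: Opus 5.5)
Your proof is correct and takes the same approach as the paper. You expand the right-hand side using $R=\sum_jc_j^{-1}$ and $\Delta=\sum_jg_j$, and you read off the inequality and the equality case from the positivity of the weights $c_j$.
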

\begin{proof}
Expanding the right-hand side and using $R=\sum_jc_j^{-1}$ and
$\Delta=\sum_jg_j$ gives \eqref{eq:weighted-squares}. The equality
condition follows because all the weights are positive.
\end{proof}
This is the weighted Cauchy--Schwarz inequality, with its nonnegative
remainder retained.

\begin{lemma}\label{lem:comparison-blockminimum}
For singleton values $x,x'$, give every vertex in the $K_2$ group
value $x+(d-2)(x'-x)/D$, and every vertex in the $K_{d-2}$ group
value $x+(d-1)(x'-x)/D$. The three sets of connecting edges have sizes
$2$, $2(d-2)$ and $d-2$, so the quadratic form of one block is
\begin{equation}\label{eq:comparisonperiod}
 \frac{\beta}{D}(x-x')^2.
\end{equation}
These are the minimizing internal values for fixed $x,x'$.
\end{lemma}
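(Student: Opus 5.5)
The plan is to compute the quadratic form of one $M_d$ block as a function of its internal values, reduce it to a one-dimensional weighted problem, and apply \Cref{lem:comparison-weighted-squares}.

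\emph{Setup.} Write the block as $\{s\}+K_2+K_{d-2}+\{s'\}$, with $z_s=x$ and $z_{s'}=x'$. The edges of the block are of five kinds:
\begin{itemize}
\item the internal edge of the $K_2$ group;
\item the internal edges of the $K_{d-2}$ group;
\item the $2$ edges from $s$ to the $K_2$ group;
\item the $2(d-2)$ edges between the $K_2$ and $K_{d-2}$ groups;
\item the $d-2$ edges from the $K_{d-2}$ group to $s'$.
\end{itemize}
This gives the stated sizes $2$, $2(d-2)$, $d-2$ of the connecting sets.

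\emph{Reduction to constant groups.} Apply \Cref{lem:comparison-averaging} to each pair of consecutive sets in the sequential join, treating $\{s\}$ and $\{s'\}$ as sets of size one. Replacing the values on the $K_2$ group by their average $a$, and those on the $K_{d-2}$ group by their average $b$, does not increase any connecting-edge contribution. It also makes the internal clique contributions zero. Moreover, the remainder sums in \eqref{eq:clique-averaging} vanish only when each group is constant. Hence any minimizer is constant on both groups, and the form there equals
\[
 2(a-x)^2+2(d-2)(b-a)^2+(d-2)(x'-b)^2 .
\]

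\emph{Weighted squares.} Set $g_1=a-x$, $g_2=b-a$, $g_3=x'-b$ and $c=(2,\,2(d-2),\,d-2)$. Then $\Delta=x'-x$, and
\[
 R=\frac12+\frac1{2(d-2)}+\frac1{d-2}=\frac{d+1}{2(d-2)}=\frac D\beta .
\]
By \Cref{lem:comparison-weighted-squares}, the form is at least $\Delta^2/R=\beta(x-x')^2/D$, which is \eqref{eq:comparisonperiod}. Equality holds exactly when $g_j=\beta\Delta/(Dc_j)$ for each $j$, that is,
\[
 g_1=\frac{(d-2)\Delta}{D},\qquad g_2=\frac{\Delta}{D},\qquad g_3=\frac{2\Delta}{D}.
\]
These three increments sum to $\Delta$, as they must, and they give $a=x+(d-2)(x'-x)/D$ and $b=x+(d-1)(x'-x)/D$. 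These are exactly the prescribed values, which identifies them as the unique minimizing internal values for fixed $x,x'$. At these values the form equals \eqref{eq:comparisonperiod}.

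I expect no real obstacle. The only care needed is bookkeeping: the edge counts between consecutive groups must be right, and the averaging step together with the equality case of the weighted inequality must be combined to obtain uniqueness of the minimizer.
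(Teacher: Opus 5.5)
Your proposal is correct and follows the paper's proof: average on each middle clique using \eqref{eq:clique-averaging}, then apply \eqref{eq:weighted-squares} with weights $2,2(d-2),d-2$, for which $R=D/\beta$. You also spell out the equality case, including uniqueness of the minimizing internal values, which the paper leaves implicit.
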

\begin{proof}
By \eqref{eq:clique-averaging}, we may first average on each middle
clique. Then \eqref{eq:weighted-squares}, with weights
$2,2(d-2),d-2$, gives the minimum and the displayed values, since
\[
 R=\frac12+\frac1{2(d-2)}+\frac1{d-2}=\frac D\beta.
\]
\end{proof}
This is the Dirichlet principle in this particular network: the
harmonic extension minimizes the quadratic form with boundary values
fixed; see \cite[Exercise~1.3.11]{DoyleSnell}. It is also the clique
calculation used in \cite[proof of Theorem~3.8]{AbGhDiam}.
\subsection{Cuts and sums of the Fiedler equations}\label{sec:even-cuts}
Write $D=d+1$, $\beta=2(d-2)$, and
\begin{align*}
 \phi(k)&=\frac12\left(\frac1k-\frac1{d-k}\right),\\
 F(k)&=\frac12\left(\frac1k+\frac1{d-k}\right).
\end{align*}
The boundary correction is antisymmetric:
$\phi(d-k)=-\phi(k)$. Also
\begin{equation}\label{eq:even-F}
 F(k)=\frac{d}{2k(d-k)}\le F(2)=\frac{d}{4(d-2)}
 \quad(2\le k\le d-2),
\end{equation}
since $k(d-k)-2(d-2)=(k-2)(d-k-2)\ge0$.
Order a vector so that $x_1\le\cdots\le x_n$, and put
$a_i=x_{i+1}-x_i\ge0$. Let $q_i$ count the edges crossing the cut after
vertex $i$, and put $q_0=q_n=0$. Let $K_{ij}$ count the edges
crossing both cuts $i,j$. Thus, for $i\le j$, it counts the edges
$uv$ with $u\le i$ and $v>j$.

\begin{lemma}\label{lem:even-cut-counts}
For every nondecreasing real vector on a $d$-regular graph,
\begin{equation}\label{eq:even-energy}
 \x^\top L(G)\x=\sum_{i,j=1}^{n-1}K_{ij}a_i a_j.
\end{equation}
The cut counts satisfy the following inequalities, together with
their reversed versions, whenever the displayed cut indices lie
between $0$ and $n$:
\begin{align}
 q_c+q_{c+t}&\ge t(D-t) &&(1\le t\le d),\label{eq:even-cut}\\
 K_{c+s,c+t}&\ge s(D-t)-q_c &&(1\le s\le t\le d),\label{eq:even-overlap}\\
 K_{ij}&\ge\frac{q_i+q_j-d(j-i)}2 &&(i<j).\label{eq:even-two}
\end{align}
In particular, $2K_{i,i+1}=q_i+q_{i+1}-d$.
\end{lemma}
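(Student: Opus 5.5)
The energy identity follows by expanding each edge difference into consecutive gaps, and the three inequalities follow from degree counting on a window of consecutive vertices in the order. No spectral information is needed: only regularity and simplicity of $G$ are used.

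\emph{The energy identity \eqref{eq:even-energy}.} Take an edge $uv$ with $u<v$ in the order. Telescoping gives $x_v-x_u=\sum_{i=u}^{v-1}a_i$, so
\[
 (x_v-x_u)^2=\sum_{u\le i,j\le v-1}a_ia_j .
\]
The edge $uv$ crosses cut $i$ exactly when $u\le i<v$. So the pair $(i,j)$ receives a contribution from $uv$ exactly when $uv$ crosses both cuts $i$ and $j$. Summing over all edges gives \eqref{eq:quadratic} in the form $\sum_{i,j}K_{ij}a_ia_j$, where $K_{ii}=q_i$ and $K_{ij}=K_{ji}$.

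\emph{The cut inequalities.} Each one comes from a window $S=\{c+1,\ldots,c+t\}$ of consecutive vertices in the order. The key observation is that an edge with exactly one endpoint in $S$ crosses exactly one of the cuts $c$ and $c+t$. An edge crossing both cuts has no endpoint in $S$.

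For \eqref{eq:even-cut}, note that since $G$ is simple and $d$-regular, the number of edges leaving $S$ is
\[
 d|S|-2e(S)\ge dt-t(t-1)=t(D-t).
\]
Each such edge is counted in $q_c$ or in $q_{c+t}$. Edges counted in both only help.

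For \eqref{eq:even-overlap}, put $S'=\{c+1,\ldots,c+s\}\subseteq S$. A vertex of $S'$ has at most $t-1$ neighbours inside $S$, so it has at least $d-(t-1)=D-t$ neighbours outside $S$. This gives at least $s(D-t)$ edges from $S'$ to vertices outside $S$. Those edges going to the left of $c$ cross cut $c$, and they are distinct edges, so there are at most $q_c$ of them. The remaining edges, at least $s(D-t)-q_c$ of them, join a vertex $u\le c+s$ to a vertex $v>c+t$. Each of these is counted in $K_{c+s,c+t}$.

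For \eqref{eq:even-two}, take $S=\{i+1,\ldots,j\}$. Every edge crossing cut $i$ or cut $j$ falls into one of two classes. Either it crosses both cuts, and is counted twice in $q_i+q_j$ and once in $K_{ij}$. Or it has exactly one endpoint in $S$. Hence
\[
 q_i+q_j=2K_{ij}+\bigl(d|S|-2e(S)\bigr)\le 2K_{ij}+d(j-i).
\]
When $j=i+1$, the set $S$ is a single vertex, so $e(S)=0$ and equality holds. This gives $2K_{i,i+1}=q_i+q_{i+1}-d$.

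\emph{Reversed versions.} Reversing the order (equivalently, replacing $\x$ by $-\x$ and relabelling) keeps the vector nondecreasing. It maps cut $i$ to cut $n-i$ and preserves all the counts $q$ and $K$. Applying the same arguments to windows read from the right therefore gives the reversed inequalities.

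Nothing here is a genuine obstacle. The only care needed is to count edges crossing both cuts correctly in each inequality, and to check that the displayed index ranges keep every window inside $\{1,\ldots,n\}$.
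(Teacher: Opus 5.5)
Your proposal is correct and follows the paper's proof essentially step for step. The paper also telescopes the edge differences for \eqref{eq:even-energy}. It counts degrees on the window of $t$ consecutive vertices for \eqref{eq:even-cut}, and on its first $s$ vertices, discounting at most $q_c$ edges to the left, for \eqref{eq:even-overlap}. It obtains \eqref{eq:even-two} from the identity $q_i+q_j=d|S|-2e(S)+2K_{ij}$, which gives equality at adjacent cuts.
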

\begin{proof}
For an edge $uv$ with $u<v$, we have
$x_v-x_u=\sum_{i=u}^{v-1}a_i$. Expanding its square and summing
proves \eqref{eq:even-energy}.
For \eqref{eq:even-cut}, the $t$ vertices between cuts $c$ and
$c+t$ have total degree $dt$. At most $t(t-1)$ of this total comes
from internal edges, so at least $t(D-t)$ edges leave this set.
They are counted in $q_c+q_{c+t}$, proving the inequality.
For \eqref{eq:even-overlap}, consider the first $s$ vertices after
cut $c$. Of their total degree $ds$, at most $s(s-1)$ comes from
internal edges, at most $s(t-s)$ from edges to the next $t-s$
vertices, and at most $q_c$ from edges to the left. At least
$s(D-t)-q_c$ edges therefore go beyond cut $c+t$; all cross both
cuts $c+s,c+t$. For the last inequality, if
$T=\{i+1,\ldots,j\}$, then
$q_i+q_j=d|T|-2e(T)+2K_{ij}$. This gives \eqref{eq:even-two}.
When $j=i+1$, the set $T$ is a singleton and $e(T)=0$, which proves
the stated identity for adjacent cuts.
\end{proof}

A cut is called small when $q_i<d$. Every cut is even, so its small
size belongs to $\{2,4,\ldots,d-2\}$. By \eqref{eq:even-cut}, small
cuts occur singly or in adjacent pairs: two of them cannot be at
distance $2,\ldots,d-1$. At the last cut of a group, record its type
as $(h,k)$ for an adjacent pair of sizes $h,k$, and as $(*,k)$ for
a single cut. Necessarily $h+k\ge d$. An additional position in a
run of high cuts has type $\circ$, and its $\phi$-value is defined
to be zero.

If consecutive recorded positions are $a$ and $a+r$, their interval
is $I=\{a,\ldots,a+r-1\}$, with endpoint values $x_a,x_{a+r}$.
The cut at the right endpoint is not itself in $I$; at a paired right
end the preceding small cut is in $I$. Write $k$
for the small size at the left, when there is one. At the right there
is either a single cut of size $l$, a pair of sizes $g,l$, or a free
end. All other cuts of the interval are high. The desired denominator
is
\begin{equation}\label{eq:even-T}
 T=\frac r\beta+\phi(k)-\phi(l),
\end{equation}
with the free-end terms omitted. When both ends are small, $r\ge d$.
If the right end is a pair, $r=d$ is impossible, since
\eqref{eq:even-cut} at distance $d-1$ would give
$k+g\ge2(d-1)$.

For an interval $I$ write
\begin{align*}
 \Delta_I&=\sum_{i\in I}a_i,\\
 Q_I&=\sum_{i,j\in I}K_{ij}a_i a_j.
\end{align*}
Here and below an interval consists of difference indices; its length
is their number.

For any ordered vector, put
\[
 c_i=\sum_jK_{ij}a_j
    =\sum_{uv\in E(G),\,u\le i<v}(x_v-x_u).
\]
Thus $c_i$ is the sum of the component differences across cut $i$,
and $c_i\ge0$. Now let $\x$ be a mean-zero unit Fiedler vector, and,
as in the companion paper, put
\begin{align*}
 f_i&=-\sum_{v=1}^i x_v\quad(1\le i<n),\\
 f_0&=f_n=0.
\end{align*}
\begin{lemma}\label{lem:even-prefix-identities}
For the mean-zero unit Fiedler vector above, $f_i>0$ for $1\le i<n$,
and
\begin{align}
 c_i&=\mu(G)f_i,\notag\\
 c_{i+1}-c_i&=-\mu(G)x_{i+1}.\label{eq:even-b}
\end{align}
\end{lemma}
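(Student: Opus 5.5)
The plan is to derive both identities directly from the eigen-equation \eqref{eq:eigenequation}, summed over the first $i$ vertices of the ordering, in the same way as in the proof of \Cref{lem:smallentries}. Then I would show positivity of $f_i$ separately from the sign pattern of the ordered entries.

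\emph{The identity $c_i=\mu(G)f_i$.} Summing \eqref{eq:eigenequation} over $v=1,\ldots,i$, every edge with both endpoints among the first $i$ vertices contributes $(x_u-x_v)+(x_v-x_u)=0$. An edge $uv$ with $u\le i<v$ contributes $x_u-x_v$. Hence
\[
 \mu(G)\sum_{v\le i}x_v=-\sum_{uv\in E(G),\,u\le i<v}(x_v-x_u)=-c_i ,
\]
which is exactly \eqref{eq:prefixsum}. This gives $c_i=\mu(G)f_i$ for $1\le i<n$. At $i=0$ and $i=n$ no edge crosses, so $c_0=c_n=0=f_0=f_n$, and the identity holds at the ends too; the latter uses $\sum_v x_v=0$. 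Subtracting the identities at $i+1$ and $i$ gives
\[
 c_{i+1}-c_i=\mu(G)(f_{i+1}-f_i)=-\mu(G)x_{i+1},
\]
which is \eqref{eq:even-b}.

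\emph{Positivity of $f_i$.} Since $\x$ is mean-zero, unit, and nondecreasing, we have $x_1<0<x_n$. The partial sums $\sum_{v\le i}x_v$ first decrease while the entries are negative and then increase back to $0$ at $i=n$. So for $1\le i<n$ they are $\le0$. They could vanish only if $x_{i+1}=\cdots=x_n=0$ (from the increasing stretch), or if $x_1=\cdots=x_i=0$ on the initial stretch. Both are impossible, because they would contradict $x_n>0$ or $x_1<0$.

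To make this precise, let $p$ be the last index with $x_p<0$.
\begin{itemize}
\item For $i\le p$, every term in the sum is negative, so $f_i>0$ strictly.
\item For $i>p$, write $f_i=\sum_{v>i}x_v$. Every term is $\ge0$ and the last term is $x_n>0$, so $f_i>0$.
\end{itemize}
This gives strict positivity without using $c_i\ge0$ or $\mu>0$.

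There is no real obstacle here. The only care needed is to handle ties in the ordering and the boundary conventions $f_0=f_n=0$ consistently, and the split at the last negative entry does that.
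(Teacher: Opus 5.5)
Your proof is correct. You obtain $c_i=\mu(G)f_i$ and \eqref{eq:even-b} exactly as the paper does: sum the eigen-equations over a prefix, then subtract consecutive cuts.

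For the positivity of $f_i$ you take a different route. The paper uses $\sum_v x_v=0$ to write $n f_i=\sum_{v\le i<w}(x_w-x_v)$. This makes $f_i$ a sum of nonnegative ordered differences across cut $i$, like $c_i$ but over all pairs rather than over edges. Strictness then follows because the summand $x_n-x_1$ is positive for a nonconstant vector. That one-line identity needs no case split, does not locate the sign change, and needs no bookkeeping for ties.

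Your split at the last negative index $p$ is equally rigorous. For $i\le p$ the prefix consists of negative entries. For $i>p$ the suffix consists of nonnegative entries and includes $x_n>0$. Your argument uses only the sign pattern rather than an identity, and as a by-product gives the explicit bound $f_i\ge\min\{-x_1,x_n\}$.
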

\begin{proof}
Indeed,
\[
 f_i=\frac1n\sum_{v\le i<w}(x_w-x_v)>0:
\]
all summands are nonnegative, and equality would force the nonconstant
vector $\x$ to be constant. Summing its eigen-equations gives the first identity in
\eqref{eq:even-b}. Subtracting consecutive identities and using
$f_{i+1}-f_i=-x_{i+1}$ gives the second.
\end{proof}

\begin{lemma}\label{lem:even-local-methods}
For an interval $I$ and a nondecreasing real vector, either of the
following implications bounds $\Delta_I$.
If $R>0$ and $Q_I\ge\Delta_I^2/R$, then
\begin{equation}\label{eq:even-method1}
 \Delta_I\le R\max_{i\in I}c_i.
\end{equation}
If weights satisfy
\begin{align*}
 w_j&\ge0\quad(j\in I),\\
 \sum_{j\in I}K_{ij}w_j&\ge1\quad(i\in I),
\end{align*}
then
\begin{equation}\label{eq:even-method2}
 \Delta_I\le\left(\sum_{j\in I}w_j\right)\max_{i\in I}c_i.
\end{equation}
\end{lemma}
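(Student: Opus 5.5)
Both implications come from pairing the quadratic form $Q_I$ with the vector $(a_i)_{i\in I}$ and using $c_i=\sum_jK_{ij}a_j$ together with $a_i\ge0$ and $K_{ij}\ge0$. Put $c^*=\max_{i\in I}c_i$.

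For \eqref{eq:even-method1}, first bound $Q_I$ from above. Restricting the sum defining $c_i$ to $j\in I$ only drops nonnegative terms, so
\[
 Q_I=\sum_{i\in I}a_i\sum_{j\in I}K_{ij}a_j\le\sum_{i\in I}a_ic_i\le c^*\Delta_I .
\]
Combining this with the hypothesis $Q_I\ge\Delta_I^2/R$ gives $\Delta_I^2\le Rc^*\Delta_I$. If $\Delta_I=0$ the claim is trivial, since $c^*\ge0$. Otherwise, dividing by $\Delta_I>0$ gives $\Delta_I\le Rc^*$.

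For \eqref{eq:even-method2}, use the weights as a dual certificate. Multiply the assumed inequality $\sum_{j\in I}K_{ij}w_j\ge1$ by $a_i\ge0$ and sum over $i\in I$:
\[
 \Delta_I\le\sum_{i\in I}a_i\sum_{j\in I}K_{ij}w_j=\sum_{j\in I}w_j\sum_{i\in I}K_{ij}a_i .
\]
Since $K$ is symmetric ($K_{ij}=K_{ji}$ by definition), the inner sum equals $\sum_{i\in I}K_{ji}a_i$. This is at most $c_j\le c^*$, again because we have dropped nonnegative terms outside $I$. Since $w_j\ge0$, we obtain $\Delta_I\le\bigl(\sum_{j\in I}w_j\bigr)c^*$.

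There is no real obstacle here. The only points to check are the symmetry of $K_{ij}$ and that every term discarded when restricting to $I$ is nonnegative, which holds because the vector is nondecreasing.
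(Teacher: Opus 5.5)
Your proposal is correct and follows the paper's proof: bound $Q_I\le\sum_{i\in I}a_ic_i\le\Delta_I\max_{i\in I}c_i$ and cancel $\Delta_I$ (handling $\Delta_I=0$ separately) for \eqref{eq:even-method1}. For \eqref{eq:even-method2}, multiply each row inequality by $a_i\ge0$, sum, and use the symmetry of $K$ together with $\sum_{i\in I}K_{ij}a_i\le c_j$.
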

\begin{proof}
For \eqref{eq:even-method1}, use
$Q_I\le\sum_{i\in I}a_i c_i$ and cancel $\Delta_I$; the zero case
is immediate. For \eqref{eq:even-method2}, multiply each displayed row
inequality by $a_i$, sum, and use symmetry of $K$ and
$\sum_{i\in I}K_{ij}a_i\le c_j$.
\end{proof}

We prove that one of these methods supplies $R\le T$ on every
interval used in the final partition. Apart from the stated equality
types, it supplies $R<T$. All interval lengths are bounded in terms
of $d$; thus the strict improvements can subsequently be replaced by
one positive constant depending on $d$.

\subsection{Initial weights for a longer interval}
We record once the alternating weights used in the longer-interval
proofs. The tail of each construction only has to cover its own rows
and supply the missing half in the last row below.

\begin{lemma}\label{lem:even-initial-weights}
Suppose $q_0=k<d$ and cuts $1,\ldots,t$ are high, where $t\ge1$.
Put
\[
 w_0=\frac1k,\qquad
 w_i=\frac1d\quad(1\le i<t,\ i\equiv t-1\pmod2),
\]
and put the other weights zero. Rows $0,\ldots,t-1$ are covered,
meaning that their weighted cut sums are at least one. Row $t$
receives at least $1/2$. The total weight is
\begin{equation}\label{eq:even-initial-four}
 \frac1k+\frac{\lfloor t/2\rfloor}{d}
 \le\frac1k+\frac{t}{2d}.
\end{equation}
When $t$ is even, one may decrease $w_0$ by $1/(2d)$ and keep all
these conclusions. With this decrease, and with the original weights
when $t$ is odd, the total is exactly
\begin{equation}\label{eq:even-prefix-trimmed}
 \frac1k+\frac{t-1}{2d}.
\end{equation}
\end{lemma}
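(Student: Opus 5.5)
The plan is to verify the covering inequalities row by row, using only the diagonal entries $K_{ii}=q_i$ and the adjacent-cut identity $2K_{i,i\pm1}=q_i+q_{i\pm1}-d$ from \Cref{lem:even-cut-counts}. All other terms $K_{ij}w_j$ are nonnegative, so they can be discarded. Throughout, I use $q_0=k$ and $q_i\ge d$ for $1\le i\le t$. This gives the adjacent lower bounds
\[
 K_{i,i\pm1}\ge\tfrac d2\quad\text{for two high cuts},\qquad
 K_{0,1}\ge\tfrac k2 .
\]
Call an index $i\in\{1,\dots,t-1\}$ \emph{weighted} if $i\equiv t-1\pmod 2$; index $0$ also carries weight.

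First I check the rows $0,\ldots,t-1$. Row $0$ receives $K_{00}w_0=k\cdot\frac1k=1$. A weighted row $i\ge1$ receives $K_{ii}w_i=q_i/d\ge1$. An unweighted row has $1\le i\le t-1$ and $i\equiv t\pmod 2$, hence $i\le t-2$. So $i+1\le t-1$ is weighted, and it contributes $K_{i,i+1}\cdot\frac1d\ge\frac12$. The other neighbour $i-1$ is either weighted with $i-1\ge1$, contributing at least $\frac d2\cdot\frac1d=\frac12$, or it is $0$, contributing $K_{10}w_0\ge\frac k2\cdot\frac1k=\frac12$. Either way the row is covered.

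For row $t$, the index $t-1$ has the right parity. If $t\ge2$, it contributes $K_{t,t-1}/d\ge\frac12$. If $t=1$, it is index $0$ and contributes $K_{10}/k\ge\frac12$. So row $t$ receives at least $\frac12$.

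Next I count the weight. The weighted indices in $\{1,\dots,t-1\}$ are the odd ones when $t$ is even, which number $t/2$. They are the even ones from $2$ up to $t-1$ when $t$ is odd, which number $(t-1)/2$. In both cases there are $\lfloor t/2\rfloor$ of them. This gives the total weight in \eqref{eq:even-initial-four}, and the bound by $\frac1k+\frac t{2d}$ is immediate.

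For the trimmed version with $t$ even, replace $w_0$ by $\frac1k-\frac1{2d}$. This is still nonnegative because $k<d$. Only rows that used $w_0$ need rechecking, and among rows $0,\dots,t$ the term $w_0$ was used only in row $0$ and in row $1$. Row $1$ is weighted when $t$ is even, so it was covered by $K_{11}w_1$ alone. Row $0$ now receives
\[
 K_{00}w_0+K_{01}w_1\ge k\Bigl(\frac1k-\frac1{2d}\Bigr)+\frac k2\cdot\frac1d=1,
\]
again using that index $1$ is weighted. Row $t$ used index $t-1\ge1$, which is unaffected. The total becomes $\frac1k+\frac t{2d}-\frac1{2d}$. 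In the odd case the original total is $\frac1k+\frac{t-1}{2d}$, which gives \eqref{eq:even-prefix-trimmed}.

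The only delicate point is bookkeeping rather than any estimate. One must make sure that the unweighted rows never sit at $t-1$, and that the trimming of $w_0$ is compensated exactly by the weighted neighbour $1$, which happens only when $t$ is even.
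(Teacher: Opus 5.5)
Your proof is correct and follows the same route as the paper's: selected rows are covered by their diagonals, and each unselected row gets two halves from the adjacent-cut identity, with $w_0$ supplying the left half at row $1$ when $t$ is odd. When $t$ is even, the trimmed $w_0$ is compensated in row $0$ by $K_{01}w_1\ge k/(2d)$; your bookkeeping is slightly more explicit than the paper's, for instance in noting that an unselected row can never be $t-1$.
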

\begin{proof}
The adjacent-cut identity gives $K_{01}\ge k/2$ and
$K_{i,i+1}\ge d/2$ between high cuts. Row $0$ is covered by its
own weight. Each selected high row is covered by its diagonal,
while an unselected row between $1$ and $t-1$ has a selected
neighbor on each side, contributing at least $1/2$ each. At the
left boundary the weight $w_0$ gives the same half contribution.
Row $t$ receives its half from $w_{t-1}$, or from $w_0$ when $t=1$.
If $t$ is even, row $1$ is selected. It covers itself, and its
contribution $K_{01}/d\ge k/(2d)$ permits the stated decrease in
$w_0$. No other row uses the decreased weight in its check.
Counting the selected indices proves both sums.
\end{proof}

\subsection{Short intervals ending at a separating vertex}
A vertex $a$ separates its earlier and later vertices precisely when
$q_{a-1}+q_a=d$: the equality
$2K_{a-1,a}=q_{a-1}+q_a-d$ says that no edge bypasses it.
\begin{lemma}\label{lem:even-separator-counts}
If a separating vertex $a$ has $p$ neighbors to its right, then
\begin{equation}\label{eq:even-J}
 K_{a+s,a+t}\ge
 J_p(s,t):=s(D-t)-\min(p,s)+\max(0,p-t),
 \quad0\le s\le t\le d.
\end{equation}
The numbers $J_p(s,t)$ are exactly the cut counts of
$K_1+K_p+K_{d-p}+K_1$ in its natural order.
\end{lemma}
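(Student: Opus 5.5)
The inequality comes from counting degrees, in the same way as \eqref{eq:even-overlap}, but with the left-boundary term $q_c$ replaced by what separation allows. I would first pin down what ``separating'' gives in the ordering. Since $q_{a-1}+q_a=d$ and $2K_{a-1,a}=q_{a-1}+q_a-d=0$, no edge joins a vertex before $a$ to a vertex after $a$. Hence every edge crossing cut $a$ is incident with $a$, so $q_a=p$. Likewise, the only edges from the block $\{a+1,\ldots,a+s\}$ to the left go to $a$.

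Now fix $0\le s\le t\le d$ and count the edges $uv$ with $u\le a+s$ and $v>a+t$, which is exactly $K_{a+s,a+t}$. Vertices $u<a$ contribute nothing, because they have no neighbours after $a$. I would split the remaining contributions into two disjoint families.

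\begin{itemize}
\item \emph{Edges from $a$.} Vertex $a$ has $p$ right neighbours, and at most $t$ of them lie in $\{a+1,\ldots,a+t\}$. So at least $\max(0,p-t)$ of its edges go beyond $a+t$.
\item \emph{Edges from $S=\{a+1,\ldots,a+s\}$.} The total degree of $S$ is $ds$. At most $s(s-1)$ of it comes from edges inside $S$, and at most $s(t-s)$ from edges to the next $t-s$ vertices. Edges to the left all go to $a$; there are at most $\min(p,s)$ of them, since $a$ has only $p$ right neighbours and $|S|=s$. So at least $s(D-t)-\min(p,s)$ edges go from $S$ beyond $a+t$.
\end{itemize}

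These two families are disjoint, since one consists of edges at $a$ and the other of edges at vertices of $S$. Adding them gives \eqref{eq:even-J}. The case $s=0$ is included, since there $S$ is empty and $\min(p,0)=0$.

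For the exactness claim, I would order the $d+2$ vertices of $K_1+K_p+K_{d-p}+K_1$ naturally: first the singleton, then the $K_p$ group, then the $K_{d-p}$ group, then the last singleton. Then I would compute each cut count $K_{s,t}$ directly, treating the three ranges $s\le t\le p$, $s\le p<t$ and $p<s\le t$ separately. For $s\le t\le p$, the singleton sends $p-t$ edges beyond cut $t$. Each of the first $s$ vertices of $K_p$ sends $(p-t)+(d-p)=d-t$ edges beyond it. The total is $s(d-t)+p-t$, which equals $J_p(s,t)$.

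The other two ranges are handled the same way, using that the $K_{d-p}$ group is joined to all of $K_p$ and to the final singleton. In the range $p<s\le t$, every vertex of $K_p$ lies before both cuts. For $t=d$ the cut separates only the final singleton. The only real care needed is in this bookkeeping at the range boundaries; the inequality itself is routine once the no-bypass consequence of separation is in place. It also helps to notice that equality holds in every degree count above exactly when the local structure is this block, which is how the lemma will be used.
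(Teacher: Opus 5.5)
Your proof is correct and follows the paper's argument. The vertices $a+1,\ldots,a+s$ can have earlier neighbours only at $a$, giving at most $\min(p,s)$ such edges; $a$ itself sends at least $\max(0,p-t)$ edges past cut $a+t$; and these two counts are disjoint. Your three-range computation of the cut counts of $K_1+K_p+K_{d-p}+K_1$ spells out the paper's one-line remark that the sequential join attains equality in these counts.
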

\begin{proof}
The $s$ vertices after $a$ have no earlier neighbor except
possibly $a$. At most $\min(p,s)$ of their edges meet it. In addition,
at least $\max(0,p-t)$ edges from $a$ go beyond position $a+t$.
These two disjoint counts give \eqref{eq:even-J}.
Counting in the sequential join gives equality in those counts,
which proves the last assertion.
\end{proof}

\subsubsection{The shortest interval and its equality cases}
We use the two endpoint types
$$ P=(2,d-2),\quad Q=(d-2,2).$$
\begin{lemma}\label{lem:even-shortest-separator}
Suppose that $r=D$ and the right type is $(g,p)$, where $p+g=d$.
The local comparison has a denominator $R\le T$. Equality for
nonzero nonnegative differences is possible only if the right type
is $P$ or $Q$. If the left type also belongs to $\{P,Q\}$, the
two types must agree. Every other case is strict.
\end{lemma}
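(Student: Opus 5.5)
The plan is to apply the first method of \Cref{lem:even-local-methods} with an explicit $R$ obtained from the separator counts. The right end of $I$ is a pair $(g,p)$ with $g+p=d$, so its vertex $a+D$ satisfies $q_{a+D-1}+q_{a+D}=d$. By the remark before \Cref{lem:even-separator-counts}, it is a separating vertex with $g$ neighbours to its left and $p$ to its right. Apply the reversed form of \Cref{lem:even-separator-counts}, reading leftwards from $a+D$. It gives $K_{ij}\ge J_g(\cdot,\cdot)$ for all pairs of cuts in $I$. Since every $a_i\ge0$, this yields
\[
Q_I\ \ge\ \sum_{i,j\in I}J_{ij}a_ia_j .
\]
The right-hand side is the quadratic form of $K_1+K_g+K_{d-g}+K_1$ with values placed in the natural order. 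This join has exactly $d+2=D+1$ vertices, matching the $D+1$ vertices $a,\dots,a+D$ of $I$.

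Next I minimise this form with the two endpoint values fixed, exactly as in \Cref{lem:comparison-blockminimum}. First average inside each clique using \eqref{eq:clique-averaging}, then apply \Cref{lem:comparison-weighted-squares} with the three connecting weights $g$, $g(d-g)$ and $d-g$. This gives $Q_I\ge\Delta_I^2/R_g$ with
\[
R_g=\frac1g+\frac1{g(d-g)}+\frac1{d-g}=\frac{D}{g(d-g)} .
\]
The cut count of the join at the left cut $a$ is $d-g$, so the left small size satisfies $k\ge d-g$. This is where the left type enters. Any excess $k-(d-g)$, together with the off-diagonal excess of $K$ over $J$, can be retained as an additional nonnegative term in $Q_I$. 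I expect this to sharpen $R_g$ by a correction matching $\phi(k)$.

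The remaining step is to compare this $R$ with
\[
T=\frac{D}{\beta}+\phi(k)-\phi(p)=\frac{D}{\beta}+\phi(k)+\phi(g),
\]
using the antisymmetry $\phi(d-g)=-\phi(g)$. \eqref{eq:even-F} gives $g(d-g)\ge\beta$ for $2\le g\le d-2$, so $R_g\le D/\beta$, with equality exactly when $g\in\{2,d-2\}$, that is, when the right type is $P$ or $Q$. What remains is an elementary inequality in $g$ and $k$, comparing the corrected $R$ with $T$ over the admissible range. For the equality statement, I then use the remainder terms of \eqref{eq:clique-averaging} and \eqref{eq:weighted-squares}. Equality with a nonzero nonnegative difference vector forces $K=J$ on the support and the harmonic profile of \Cref{lem:comparison-blockminimum}. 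It also forces equality in the $g$-bound, hence $g\in\{2,d-2\}$. If the left end also has type $P$ or $Q$, the left cut sizes are then pinned by $k\ge d-g$ and by the equality in the $\phi$-correction. I expect this to force the two types to coincide. Every other configuration leaves some strict remainder, so $R<T$.

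The main obstacle is the bookkeeping of the left boundary term $\phi(k)$. The bare bound $R_g\le D/\beta$ is not by itself comparable with $T$ when $\phi(k)+\phi(g)<0$. The excess of $q_a=k$ and of the neighbouring $K_{a,\cdot}$ over the $J_g$ values therefore has to be credited carefully. One option is to shift weight in the second method of \Cref{lem:even-local-methods}. I would try that first, seeding the weight $w_a=1/k$ as in \Cref{lem:even-initial-weights}. I would then check that the accounting gives exactly $\phi(k)$, with equality only in the matched $P$/$P$ or $Q$/$Q$ configurations.
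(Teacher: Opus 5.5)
\textbf{There is a genuine gap: the inequality $R\le T$, which is the whole content of the lemma, is never established, and the correction you propose is provably insufficient.}

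Your set-up agrees with the paper's. The right endpoint is a separator. The reversed form of \eqref{eq:even-J} bounds $K$ below by the cut counts of $K_1+K_p+K_g+K_1$. The harmonic minimum gives $D/(gp)$, and $k\ge p$.

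What the lemma actually requires is a comparison of the corrected denominator with $T=D/\beta+\phi(k)-\phi(p)$, together with its equality cases. You leave exactly that step as something you ``expect''. Moreover, the correction you name, the diagonal excess $(k-p)a_0^2$, is not enough. Carry it out exactly: prescribe the first internal value $u$. Since the internal matrix is $DI-\1\1^\top$, the comparison minimum is $\frac{pg}{D}\Delta^2+\frac D2(u-\frac gD\Delta)^2$. Adding $(k-p)u^2$ and minimizing in $u$ yields
\[
 R(k)=\frac{D(D+2\delta)}{g\{pD+(d+p)\delta\}},\qquad \delta=k-p .
\]
For $p\ge4$ this does suffice. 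One bounds $R(k)$ by $(D+2(k-p))/(gk)$ and checks the endpoints $k=p$ and $k=d-2$. There $T$ minus this bound carries a factor $g-2$, and that factor is also what confines equality to type $P$. But for right type $Q$ (so $p=2$) and $k=d-2$,
\[
 R(d-2)-T=\frac{(d-4)^2}{2(d-2)(d^2-6)}>0;
\]
for instance $77/120>5/8$ when $d=6$. So your route fails precisely in the configuration (left $P$, right $Q$) that the clause ``the two types must agree'' addresses.

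Your equality argument has a related flaw. Equality means $R=T$, not $gp=\beta$. The difference $T-D/\beta=\phi(k)+\phi(g)$ can have either sign.

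\textbf{What is missing: two specific off-diagonal bounds.}

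First, the cut after the first internal vertex is high. So the adjacent-cut identity gives $K_{01}\ge k/2$, whereas the corresponding value in $K_1+K_2+K_{d-2}+K_1$ is $1$. This retains $(k-2)(a_0^2+a_0a_1)$ beyond the $J_2$ form. Write the $K_2$ values as $A\mp v$ and the $K_{d-2}$ values as $B+w_j$ with $\sum_jw_j=0$. The retained form is then at least
\[
kA^2+2g(B-A)^2+g(\Delta-B)^2+(2d+4-k)v^2+D\textstyle\sum_jw_j^2 .
\]
This gives $R=1/k+3/(2(d-2))$ and
\[
 T-R=\frac{d\{k(d-k)-2(d-2)\}}{4(d-2)k(d-k)}\ge0 .
\]

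Second, this bound is still an equality at $k=d-2$, with profile $a_0=2\Delta/5$, $a_2=\Delta/5$, $a_d=2\Delta/5$. So strictness for $P\to Q$ needs one more cross term, since the comparison value of $K_{02}$ is $0$:
\begin{itemize}
\item for $d\ge8$, \eqref{eq:even-two} gives $K_{02}\ge(d-6)/2>0$;
\item for $d=6$ with left type $(2,4)$, \eqref{eq:even-J} at the left separator gives $K_{02}\ge J_4(0,2)=2$.
\end{itemize}

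Without these two additional terms, the proposal proves neither $R\le T$ in the $Q$ case nor the agreement of the two endpoint types.
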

\begin{proof}
The reversed form of \eqref{eq:even-J} compares our interval with
$K_1+K_p+K_g+K_1$. The left size $k$ satisfies $k\ge p$ by
\eqref{eq:even-cut}. Put $\delta=k-p$.
For endpoint values $0,\Delta$ and prescribed first internal value
$u$, the exact minimum on this comparison graph is
\[
 \frac{pg}{D}\Delta^2+\frac D2\left(u-\frac{g}{D}\Delta\right)^2.
\]
To check this, its internal matrix is $DI-\1\1^\top$, whose inverse
is $(I+\1\1^\top)/D$. Equivalently, complete squares after subtracting
the minimizing internal values $g\Delta/D$ and $(g+1)\Delta/D$.
Since the actual first diagonal is $k$, we can add $\delta u^2$.
Thus
\begin{align}
 Q_I&\ge\frac{\Delta^2}{R(k)},\label{eq:even-Rsep}\\
 R(k)&=\frac{D(D+2\delta)}{g\{pD+(d+p)\delta\}}.\notag
\end{align}

For $p\ge4$, a slightly larger denominator avoids any polynomial
expansion. Write
\[
 H=pD+(d+p)(k-p)=Dk+(p-1)(k-p).
\]
Since $k\ge p$, we have $H\ge Dk$, and hence
\begin{equation}\label{eq:even-separator-simple}
 R(k)\le B(k):=\frac{D+2(k-p)}{gk}.
\end{equation}
It suffices to show $B(k)\le T$. To do so, observe that
\[
 B(k)-\phi(k)=\frac2g+\frac{g-2p+2}{2gk}
                         +\frac1{2(d-k)}.
\]
If $g-2p+2\ge0$, this is a convex function of $k$, being a
nonnegative sum of reciprocal affine functions and a constant.
If $g-2p+2<0$, both nonconstant terms are increasing in $k$.
In either case its maximum on $[p,d-2]$ is at an endpoint.
The two endpoint comparisons are
\begin{align}
 (T-B)\big|_{k=p}
 &=\frac{D(p-2)(g-2)}{\beta pg},\notag\\
 (T-B)\big|_{k=d-2}
 &=\frac{(g-2)\{g(p-2)+p(p-4)\}}
          {4gp(d-2)}.\label{eq:even-sepgap}
\end{align}
Every factor has the required sign for $p\ge4$ and $g\ge2$.
Both comparisons are strict if $g>2$. If $g=2$, the interval
$[p,d-2]$ consists only of $k=p=d-2$. This proves exactly the
same equality restriction as the sharper denominator, with no
higher-degree calculation.

When $p=2$, keep $K_{01}\ge k/2$, which follows because cut $1$ is
high. Besides the $J_2$ form, this retains
$(k-2)(a_0^2+a_0a_1)$. Write the two first clique values as $A-v,A+v$
and the other clique values as $B+w_j$, with $\sum w_j=0$. The
retained sum is at least
\[
 kA^2+2g(B-A)^2+g(\Delta-B)^2
              +(2d+4-k)v^2+D\sum_jw_j^2.
\]
Consequently we may take $R=1/k+3/(2(d-2))$, and
\begin{equation}\label{eq:even-J2gap}
 T-R=\frac{d\{k(d-k)-2(d-2)\}}{4(d-2)k(d-k)}\ge0.
\end{equation}
Only $k=2,d-2$ can give equality. For $k=d-2$, equality would force
$a_0=2\Delta/5$, $a_2=\Delta/5$, $a_d=2\Delta/5$, and all other
differences zero. But \eqref{eq:even-two} also gives
$K_{02}\ge(d-6)/2$, whereas $J_2(0,2)=0$. This gives strictness
for $d\ge8$. When $d=6$ and the left type is $(2,4)$,
\eqref{eq:even-J} at the left gives $K_{02}\ge2$, again strict.

We have proved the required comparison on every such shortest
interval.
Every possible equality case ends in $P$ or $Q$. If its left type
is also $P$ or $Q$, the two types are equal. The degree-six cases
just considered satisfy the same restriction. In every other case
the comparison is strict on nonzero nonnegative differences; by
compactness it has a fixed positive improvement for each fixed $d$.
\end{proof}

\subsubsection{All the longer intervals ending at a separator}
\begin{lemma}\label{lem:even-longer-separator}
Let $r=D+t$, where $t\ge1$, with right type $(g,p)$ and $p+g=d$.
Then \eqref{eq:even-method2} gives a denominator $R<T$.
\end{lemma}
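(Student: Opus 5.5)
We use the weighted covering method \eqref{eq:even-method2}. Index the interval as $I=\{0,1,\dots,r-1\}$ with $r=D+t$. Cut $0$ has left small size $k$ (or is a free end), cuts $1,\dots,r-2$ are high except possibly at the paired right end, and the right separator vertex sits at position $r-1-D+1=t$ from the left. The target is $T=r/\beta+\phi(k)-\phi(p)$, where $l=p$ is the size of the last cut of the pair. We need nonnegative weights $w_j$ such that every row $i\in I$ has $\sum_jK_{ij}w_j\ge1$ and $\sum_jw_j<T$.

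We split the interval into a prefix of $t$ high cuts following the small cut $0$, and a final block of length $D$ ending at the separator. The final block is the separator-bounded piece already analysed in \Cref{lem:even-shortest-separator}.

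On the prefix we place the weights of \Cref{lem:even-initial-weights}, including the trimmed version when $t$ is even. This covers rows $0,\dots,t-1$ and gives row $t$ at least one half, at total cost at most $1/k+(t-1)/(2d)$ by \eqref{eq:even-prefix-trimmed}.

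On the tail rows $t,\dots,r-1$ we use the lower bounds $K_{a+s,a+t'}\ge J_p(s,t')$ from \Cref{lem:even-separator-counts}. These come from the reversed form at the right separator, so they are cut counts of $K_1+K_p+K_g+K_1$. We choose weights supported on the diagonal positions that mimic the dual of the harmonic minimiser of \Cref{lem:comparison-blockminimum}. A natural choice is:
\begin{itemize}
\item weight $1/p$ on the cut of size $p$ adjacent to the right end (and $1/g$ on the other small cut of the pair, when that is cheaper);
\item weights of order $1/\beta$ on the high cuts crossing the $K_{d-2}$-like group, spread alternately as in the prefix.
\end{itemize}
Row $t$ needs only the missing half, which lets us shave one weight of $1/(2d)$. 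Summing gives a total of the form
\[
\frac1k+\frac{t-1}{2d}+\Bigl(\frac{D}{\beta}-\phi(p)-\frac1{2d}+\text{(lower-order)}\Bigr).
\]

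The comparison with $T$ then reduces to the following inequality: each extra high cut costs $1/(2d)$ in the weights but contributes $1/\beta=1/(2(d-2))$ to $T$. Since $1/(2d)<1/(2(d-2))$, the $t\ge1$ extra cuts produce a strict gain of at least $t\bigl(1/\beta-1/(2d)\bigr)>0$. This gain has to absorb the discrepancy between the left boundary cost $1/k$ and the term $r/\beta+\phi(k)$ in $T$. Using $1/k-\phi(k)=F(k)\le d/(4(d-2))$ from \eqref{eq:even-F}, the left correction is bounded uniformly. We then verify the resulting inequality as a rational function of $d,k,p$, splitting into $p=2$ and $p\ge4$ as in the shortest case, and checking the extreme values $k\in\{2,d-2\}$ by convexity in $k$.

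The main obstacle is the tail construction. We must show that the $J_p$ entries really cover the rows near the separator with total weight no larger than the shortest-interval denominator, and that the junction row $t$ is genuinely covered by the prefix half plus the tail half. For small $t$ (say $t=1,2$) the prefix and tail overlap in their supports, so the tail weights must be recomputed by hand.

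For $p=2$ we expect the bound $K_{01}\ge k/2$ and \eqref{eq:even-two} to be needed to recover the loss. The inequality $(k-2)(d-k-2)\ge0$ supplies the slack there. Because $t\ge1$ always yields the positive margin above, no equality case survives, which is why the lemma asserts strict inequality $R<T$.
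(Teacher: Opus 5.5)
Your frame matches the paper's proof. You apply \eqref{eq:even-method2}, cover rows $0,\ldots,t-1$ and half of row $t$ with the trimmed weights of \Cref{lem:even-initial-weights} (total $1/k+(t-1)/(2d)$), and cover the last $D$ rows using the reversed $J_p$ bounds. But the tail weights, which are the whole content of the lemma, are never specified, and what you do propose fails.

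\begin{itemize}
\item Cut $r$, of size $p$, is the right endpoint and does not belong to $I$, so it cannot carry the weight $1/p$. The only small cut available is $r-1$, of size $g$.
\item Alternating weights comparable to $1/\beta$ on about $g/2$ cuts of the $K_g$ group cost about $g/(4(d-2))$, roughly $1/4$ when $g=d-2$. The worst-case budget left after $1/k$, $(t-1)/(2d)$ and $1/g$ is only about $1/(d-2)$ at $t=1$.
\item Most importantly, your accounting cannot close. A total of the form you display gives
\[
 T-R=\frac t\beta-\frac{t-2}{2d}-F(k)-\cdots=\frac{t+d-2}{d(d-2)}-F(k)-\cdots .
\]
Since $F(k)\ge 2/d$ for every $k$, this is nonpositive for all $t\le d-2$. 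For $k=2$, $t=1$ it equals $-(d-2)/(4d)$ even before your lower-order terms. The per-cut gain $t/(d(d-2))$ cannot absorb $F(k)$, so bounding $F(k)$ uniformly is not enough, and no ``positive margin from $t\ge1$'' exists in your scheme.
\end{itemize}

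The missing idea is that the block's share $D/\beta$ of $T$ must pay for both boundary mismatches $F(k)$ and $F(p)$. This works only because the tail costs just $1/g$ plus less than $1/(d-2)$. The paper uses three tail weights:
\[
 w_{t+1}=A=\frac1{d+p-2},\qquad w_{t+p}=B=\frac{1/g-A}{p}\ge0,\qquad w_{t+d}=\frac1g .
\]
Coverage follows from the $J_p$ entries:
\begin{itemize}
\item Row $t+s$ with $1\le s\le p$ receives $A(d+p-2s)+Bsg$. This is linear in $s$, with values $1+Bg$ and $1$ at the two ends.
\item Row $t+s$ with $p\le s\le d$ receives exactly $(d-s)(A+pB)+(s-p)/g=1$.
\item Row $t$ gets its second half from $K_{t,t+1}\ge(d+p-2)/2$, since $q_{t+1}\ge d+p-2$ by \eqref{eq:even-cut}.
\end{itemize}
For the total:
\begin{itemize}
\item By \eqref{eq:even-F}, $T-1/k-1/g=(D+t)/\beta-F(k)-F(p)\ge(t+1)/(2(d-2))$.
\item The other two tail weights satisfy $1/(pg)+(p-1)/(p(d+p-2))<1/(d-2)$; after clearing denominators the difference has numerator $p^2g-2(d-2)(p-1)\ge 2p$.
\item Finally $(t-1)/(2d)+1/(d-2)\le(t+1)/(2(d-2))$, which gives $R<T$.
\end{itemize}
The supports $[0,t-1]$ and $\{t+1,t+p,t+d\}$ are disjoint for every $t\ge1$. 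So, contrary to your worry, small $t$ needs no recomputation, and no split into $p=2$ and $p\ge4$ is needed.
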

\begin{proof}
We give the weights in \eqref{eq:even-method2} explicitly.
Use relative indices $0,\ldots,t+d$. Take the initial weights
of \Cref{lem:even-initial-weights} with total
\eqref{eq:even-prefix-trimmed}. Finally put
\begin{align}
 w_{t+1}&=A:=\frac1{d+p-2},\label{eq:even-pairweights}\\
 w_{t+p}&=B:=\frac{1/g-A}{p},\notag\\
 w_{t+d}&=\frac1g.\notag
\end{align}
All weights are nonnegative. The initial rows and the first half
of row $t$ are covered by \Cref{lem:even-initial-weights}.
The second half of row $t$ comes from $A$, since
$q_{t+1}\ge d+p-2$ by \eqref{eq:even-cut} and hence
$K_{t,t+1}\ge(d+p-2)/2$.

It remains to check the tail. For $1\le s\le p$, the contribution
of the three tail weights is
\[
 A(d+p-2s)+Bsg.
\]
This is linear in $s$, with values $1+Bg\ge1$ at $s=1$ and $1$
at $s=p$. For $p\le s\le d$, it is
$(d-s)(A+pB)+(s-p)/g=1$. These are entries of $J_p$ in
\eqref{eq:even-J}. Thus every row is covered.

The sum of the weights is
\[
 R=\frac1k+\frac1g+\frac{t-1}{2d}
       +\frac1{pg}+\frac{p-1}{p(d+p-2)}.
\]
The last two terms have sum less than $1/(d-2)$: after multiplication
by the positive denominator, the difference has numerator
\[
 p^2g-2(d-2)(p-1)
 =g(p^2-2p+2)-2(p-1)(p-2)\ge2p>0.
\]
On the other hand, \eqref{eq:even-F} gives
$T-1/k-1/g\ge(t+1)/(2(d-2))$. Since
$(t-1)/(2d)+1/(d-2)\le(t+1)/(2(d-2))$, we have $R<T$.
There is no restriction on the endpoint sizes other than those already
stated.
\end{proof}

\subsection{Intervals ending at a single cut of size two}
\begin{lemma}\label{lem:even-single-two}
Suppose that an interval has a small left end, a single right cut
of size two, and length $r\ge d$. Then
\eqref{eq:even-method2} gives a denominator $R<T$.
\end{lemma}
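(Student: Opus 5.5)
The plan is to apply \eqref{eq:even-method2} with explicit weights, following the pattern of \Cref{lem:even-longer-separator}. Use relative indices $0,\ldots,r-1$, with left small cut $q_0=k$ and right cut $q_r=2$. Then
\[
 T=\frac{r}{\beta}+\phi(k)-\phi(2)
  =\frac{r+1}{2(d-2)}+\phi(k)-\frac14 .
\]
The weights have two parts. The left part is the initial weights of \Cref{lem:even-initial-weights} on the high run $1,\ldots,t$, where $t=r-d$ (or a suitable shift). It covers rows $0,\ldots,t-1$, gives half of row $t$, and has total $1/k+(t-1)/(2d)$ after the trimming in \eqref{eq:even-prefix-trimmed}. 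The right part is a short tail that covers the last $d$ rows.

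For the tail I would mimic the comparison graph $K_1+K_{d-2}+K_2+K_1$, read from the right. A right cut of size $2$ followed (to the left) by high cuts behaves like the separator case with $g=2$, $p=d-2$. The difference is that \eqref{eq:even-J} must be replaced by the reversed overlap bound \eqref{eq:even-overlap} with $q_r=2$, namely
\[
 K_{r-t,r-s}\ge s(D-t)-2 \qquad (1\le s\le t\le d),
\]
together with the reversed form of \eqref{eq:even-cut}, $q_{r-t}\ge t(D-t)-2$. Concretely, I would try weights of the type \eqref{eq:even-pairweights} with $(g,p)=(2,d-2)$:
\[
 w_{r-1}=\tfrac12,\qquad w_{r-(d-2)}=B,\qquad w_{r-d+1}=A,
\]
with $A,B$ adjusted by $O(1/d^2)$ as needed. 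Coverage of row $r-s$ then reduces to a linear inequality in $s$ on the two ranges $1\le s\le d-2$ and $d-2\le s\le d$, checked at the endpoints as in \Cref{lem:even-longer-separator}. The deficit of $2$ in the overlap bound, compared with $J_p$, has to be absorbed. It is offset by the fact that $J_p$ subtracts $\min(p,s)$ while here only the constant $2$ is subtracted. So for $s\ge2$ the overlap bound is at least as strong, and only the rows with $s=1$ and the junction row $t$ need a direct check, using the adjacent-cut identity $2K_{i,i+1}=q_i+q_{i+1}-d$.

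The total is then
\[
 R=\frac1k+\frac{t-1}{2d}+\frac12+(\text{tail correction}).
\]
I would compare this with $T$ by using \eqref{eq:even-F} to write $T-1/k-1/2\ge\ldots$, and by bounding the tail correction by a quantity strictly below $1/(d-2)$, exactly as in the numerator computation there. For the boundary case $r=d$ ($t=0$) there is no high run on the left. Here I would take $w_0=1/k$ directly and rely on $k\ge d-2$, which follows from \eqref{eq:even-cut} at distance $d$. With that restriction the left and right weights merge, and the strictness becomes an explicit rational inequality in $d$ and $k\in\{d-2\}$ or $k\ge d-2$.

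I expect the main obstacle to be the junction between the two weight families. The last high rows covered only by halves must meet the tail weights, and the estimate $R<T$ must hold uniformly in the parity of $t$ and for small $t$ (notably $t=0,1$). In the separator case this junction was handled by $q_{t+1}\ge d+p-2$; here I would first try the reversed cut bound $q_{r-d+1}\ge 2d-3$, and if that margin is too thin, shift the weight $A$ one position left and recompute.
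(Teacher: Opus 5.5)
Your frame is the paper's: the initial weights of \Cref{lem:even-initial-weights} followed by a short tail in \eqref{eq:even-method2}. However, the tail you propose cannot give $R<T$, and the junction is not where it breaks.

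The weight $w_{r-1}=\tfrac12$ is copied from \Cref{lem:even-longer-separator}. There, $1/g$ sits on the difference across the small cut of size $g$, that cut lies inside the interval, and $T$ pays for the weight through $-\phi(p)=\phi(g)$. Here the cut of size two is the right endpoint $r$, which is not in the interval. Cut $r-1$ is high, and $T$ contains $-\phi(2)=-\tfrac14+\tfrac1{2(d-2)}$ rather than $+\phi(2)$, so the budget is roughly $\tfrac12$ smaller.

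Concretely, for $r=d$ (which forces $k=d-2$) one has $T=2/(d-2)$, while your weights give $R\ge\tfrac1{d-2}+\tfrac12>T$ for every $d\ge6$. For $d=6$, $k=2$, $r=7$ one has $T=\tfrac78$, but $R\ge\tfrac12+\tfrac12+A>1$. In general, with $F(k)=d/(4(d-2))$, your target quantity is
\[
 T-\frac1k-\frac12-\frac{t-1}{2d}=\frac{2t+5d-2-d^2}{2d(d-2)},
\]
which is negative whenever $2t<d^2-5d+2$. So no $O(1/d^2)$ adjustment of $A,B$ can help. At the junction itself, the reversed cut bound gives $q_{t+1}\ge 2d-4$, not $2d-3$. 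This already yields $K_{t,t+1}\ge d-2$, so $A=1/(2(d-2))$ supplies the missing half of row $t$.

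The missing idea is that the entire tail must cost only $O(1/d)$. Once the $\tfrac12$ is removed, row $r-1$ must be covered cheaply. The paper gets the needed entries from a reversed $J_2$ comparison: at most one of the two edges crossing cut $r$ ends at the last vertex of the interval, since moving that vertex across the cut gives $q_{r-1}=d+2-2c\ge d$. Hence row $r-1$ has diagonal at least $d$ and entries $K_{r-j,r-1}\ge d-j$ for $j\ge2$, one better than \eqref{eq:even-overlap}. Dropping the exterior difference across cut $r$ yields the matrix $U$ of \eqref{eq:even-U}.

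The tail weights then go at $t+1$, $t+d-2$ and $t+d-1$, not at $r-(d-2)=t+2$. These are the first cut of the $K_{d-2}$ group, the interface between the $K_{d-2}$ and $K_2$ groups, and the inner cut of the $K_2$ group, with the values \eqref{eq:even-singleweights}. The identity $2B+C=2A$ makes every tail row $1\le s\le d-2$ equal to $(d-1)/(d-2)$, and the last row equals $A+(d-2)B+dC=1$. The tail costs only $(3d+1)/(2(d-2)(d+2))$, which gives $T-R\ge\frac{t-1}{d(d-2)}+\frac5{2(d-2)(d+2)}$.

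For $r=d$, the weights $1/(d-2)$, $2/((d-2)(d+2))$ and $1/(d+2)$ at $0,d-2,d-1$ give exactly one in every row of $U$. Their total is $2D/((d-2)(d+2))<2/(d-2)=T$.
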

\begin{proof}
In reverse order, a cut of size two followed by a high cut also gives
the comparison $J_2$. Indeed, at most one of its two crossing edges
ends at the first vertex: moving that vertex past the cut gives
$q_1=d+2-2c\ge d$, hence $c\le1$. Therefore row zero has entries
at least $2,1,0,\ldots,0$; row one has diagonal at least $d$ and
entry at column $j\ge2$ at least $d-j$, by counting the first
vertex's remaining neighbors. The other rows follow from
\eqref{eq:even-overlap}.

Set the first exterior difference to zero and reverse again. On the
last $d$ differences the resulting entries are
\begin{align}
 U_{ij}&=(d-j)(i+1)-2\quad(0\le i\le j\le d-2),\label{eq:even-U}\\
 U_{i,d-1}&=i\quad(0\le i\le d-2),\notag\\
 U_{d-1,d-1}&=d.\notag
\end{align}
For $r=d$, \eqref{eq:even-cut} forces $k=d-2$. Weights at indices
$0,d-2,d-1$ equal to
$1/(d-2)$, $2/((d-2)(d+2))$, and $1/(d+2)$
give $\sum_jU_{ij}w_j=1$ in every row. Their sum
$2D/((d-2)(d+2))$ is smaller than $T=2/(d-2)$.

For $r=d+t$, $t\ge1$, use the initial weights of
\Cref{lem:even-initial-weights} with total \eqref{eq:even-prefix-trimmed}. Put the three tail weights at indices
$t+1,t+d-2,t+d-1$ equal to
\begin{align}
 A&=\frac1{2(d-2)},\label{eq:even-singleweights}\\
 B&=\frac5{2(d-2)(d+2)},\notag\\
 C&=\frac{d-3}{(d-2)(d+2)}.\notag
\end{align}
For tail rows $1\le s\le d-2$, \eqref{eq:even-U} gives contribution
$2(d-1)A+(2B+C-2A)s=(d-1)/(d-2)>1$; the last row gives
$A+(d-2)B+dC=1$. Row $t$ is high, and
$K_{t,t+1}\ge d-2$ by \eqref{eq:even-two}, so $A$ supplies the
missing half there. The initial rows are covered by \Cref{lem:even-initial-weights}.
The total is
\[
 R=\frac1k+\frac{t-1}{2d}
                 +\frac{3d+1}{2(d-2)(d+2)}.
\]
Using \eqref{eq:even-F} once more,
\[
 T-R\ge\frac{t-1}{d(d-2)}+\frac5{2(d-2)(d+2)}>0.
\]
This proves all the single-size-two comparisons by explicit sums of
the Fiedler equations.
\end{proof}

\subsection{The remaining short intervals}\label{sec:even-remaining}
The remaining comparisons can be proved by four weighted equations,
with alternating weights added when the interval is longer. The sums of the relevant overlap bounds are constant, which is why
these few weights cover every row.

\begin{lemma}\label{lem:even-uniform-short}
Let $d\ge6$ be even and $d\le r\le d+5$. If the right end is paired
with $g+l\ge d+2$, or is a single cut of size $l\ge4$, then
\eqref{eq:even-method2} gives a denominator $R<T$.
\end{lemma}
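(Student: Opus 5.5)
The plan follows the two previous proofs, \Cref{lem:even-longer-separator} and \Cref{lem:even-single-two}. I use relative indices $0,\ldots,r-1$, with the right end at cut $r$. I apply \eqref{eq:even-method2} with an explicit nonnegative weight vector $w$, and it has two parts.

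\textbf{Prefix.} When $r>d$, put $t=r-d\in\{1,\ldots,5\}$. On the prefix I take the initial weights of \Cref{lem:even-initial-weights}, with total \eqref{eq:even-prefix-trimmed}. These cover rows $0,\ldots,t-1$ and give half of row $t$. When $r=d$, \eqref{eq:even-cut} forces $k+l\ge 2(d-1)$ (respectively $k+g\ge\ldots$). So only large $k$ occur, and the prefix consists only of $w_0$.

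\textbf{Tail.} On the last $d$ rows I place four weights $A,B,C,E$. They sit at index $t+1$, at two positions governed by the sizes at the right end, and at the last index $t+d-1$. In the paired case these two positions are $t+d-g$ and $t+d-1$, shifted by one when the preceding small cut lies in $I$.

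\textbf{Lower bound on the tail entries.} I read the entries of $K$ on the tail from the reversed forms of \eqref{eq:even-overlap} and \eqref{eq:even-two}, applied at the right small cut(s).
\begin{itemize}
\item For a single cut of size $l$: $K_{c-s,c-t}\ge s(D-t)-l$, reversed.
\item For a pair $(g,l)$: the same bound, supplemented by $K_{i,i+1}\ge (q_i+q_{i+1}-d)/2$ at the pair. This gives $K_{r-1,r-1}\ge g$, together with the cross terms.
\end{itemize}
As the remark before the lemma says, each tail row sum of these overlap bounds is affine in the row index $s$ on each of at most two ranges. It therefore suffices to check that the row sums are $\ge1$ at the endpoints $s=1$, at $s$ equal to the breakpoint, and at the last row. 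The half of row $t$ not covered by the prefix comes from $A$, using $K_{t,t+1}\ge (q_t+q_{t+1}-d)/2\ge (d+2-2)/2$.

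\textbf{Choice of weights.} I solve the resulting $3$ or $4$ linear equalities for $A,B,C,E$ as rational functions of $d,l$ (and $g$). This is the same way the values \eqref{eq:even-pairweights} and \eqref{eq:even-singleweights} were found. I then check that all four weights are nonnegative for $l\ge4$ and for $g+l\ge d+2$.

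\textbf{Comparing $R$ with $T$.} The total is
\[
 R=\frac1k+\frac{t-1}{2d}+A+B+C+E,
\]
or $w_0+A+B+C+E$ with the tail alone when $r=d$. The target is $T=r/\beta+\phi(k)-\phi(l)$. Using \eqref{eq:even-F}, which gives $\phi(k)\ge 1/k-F(2)$ in the form already used, I reduce $T-R>0$ to an inequality depending only on $d$, $l$ (and $g$). The dependence on $t$ is favourable: each extra prefix unit costs $1/(2d)$ but gains $1/\beta>1/(2d)$. So the case $t=0$ (or $t=1$) is the binding one.

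\textbf{Main obstacle.} The main obstacle is the case $r=d$ and small $t$ with $l$ near $d-2$, or with a pair $g+l=d+2$. There the margins are of order $1/d^2$, and the prefix weight $1/k$ competes with $\phi(l)$. I would handle it by first using the constraint on $k$ forced by \eqref{eq:even-cut} at distance $r$ or $r-1$, namely $k\ge 2(d-1)-l$ or similar. With this constraint, $1/k-\phi(k)=F(k)$ is bounded by $F(2)$ evaluated at the admissible range. After that, I would clear denominators and factor the numerator, as in \eqref{eq:even-sepgap}, exhibiting factors such as $(l-2)$ and $(g+l-d)$ that are positive under the hypotheses.

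The finitely many residual cases $t\le5$ are each a single rational inequality in $d$, and I would verify each for even $d\ge6$. If a residual case fails for $d=6$, I would check it directly by hand.
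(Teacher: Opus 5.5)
Your overall framework is the paper's: method \eqref{eq:even-method2}, the initial weights of \Cref{lem:even-initial-weights} on the prefix, a few tail weights, and $F(k)\le F(2)$ to compare with $T$. However, the proposal stops exactly where the content of the lemma begins. You never exhibit the tail weights, never show they are nonnegative and cover every row, and never prove $R<T$. Saying ``solve the linear equalities and verify each case'' gives no reason why a solution should exist with total below $T$ uniformly in $d,k,l,g$.

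Several of the reductions you do state are also wrong.
\begin{itemize}
\item At $r=d$, \eqref{eq:even-cut} at distance $d$ has coefficient $d(D-d)=d$. It therefore gives only $k+l\ge d$, not $2(d-1)$. So $k=2$, $l=d-2$ is admissible, and ``only large $k$ occur'' is false.
\item Restricting $k$ would not help anyway. The function $F(k)=d/(2k(d-k))$ attains its maximum at both $k=2$ and $k=d-2$.
\item In the paired case $r\ge D$, so $r=d$ cannot occur there. Your parameter $t=r-d$ therefore mislabels the shortest paired interval.
\end{itemize}

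The missing idea is to place two equal weights where the reversed overlap bounds have opposite slopes, so that their sum is constant. This is what the remark before the lemma refers to.

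For a single cut, take $w_{t+1}=w_{t+d-2}=1/H$ with $H=2(d-l+1)$. The bounds $K_{t+1,t+i}\ge2(d-i)-l$ and $K_{t+i,t+d-2}\ge2(i+1)-l$ sum to $H$ for $1\le i\le d-2$. A third weight $w_{t+d-1}=1/(2d)$ completes the last row, because $q_{t+d-2}\ge2d-2-l\ge H$. That last inequality is exactly where $l\ge4$ is used.

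For a pair you must use the overlap bounds at both cuts $r-1$ and $r$. These are $K_{t+1,t+i}\ge2(d-i)-g$ and $K_{t+i,t+d-1}\ge2i-l$, which sum to $H=2d-g-l$. The weights are $1/H,1/H,(d-g)/(gH)$ at $t+1,t+d-1,t+d$, and their total is bounded by \eqref{eq:even-pair-tail-simple}.

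Your remark that the dependence on $t$ is favourable says nothing about where strictness comes from in the binding case. Take the shortest paired interval with $w_0=1/k$, $k\in\{2,d-2\}$ and $g=l=d-2$; then these weights give $R=T$ exactly. Strictness there requires lowering $w_0$ to $1/k-1/(2d)$. Row $0$ stays covered because the first tail weight then sits at index $1$ and contributes $K_{01}/H\ge k/(2H)\ge k/(2d)$. Your plan contains no mechanism that produces this margin.
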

\begin{proof}
Use relative cut indices $0,\ldots,r$, with $q_0=k$ and $q_r=l$.
The interval consists of indices $0,\ldots,r-1$. At a paired right end,
$q_{r-1}=g$; all other cuts strictly between the two ends are high.
As before, $D=d+1$ and $\beta=2(d-2)$.

\paragraph{The initial weights.}
Write $r=d+t$ in the single case and $r=D+t$ in the paired case.
For $t\ge1$, use the initial weights in
\Cref{lem:even-initial-weights} without the optional decrease.
Their total satisfies \eqref{eq:even-initial-four}, and the tail
only has to supply the other half of row $t$. For $t=0$, initially
use only $w_0=1/k$; the paired case will decrease this weight at
the end of its proof.

\paragraph{A single right cut.}
Write $r=d+t$, put $b=d-l\ge2$, and let $H=2(b+1)$.
In addition to the initial weights, put
\begin{equation}\label{eq:even-single-four-simple}
 w_{t+1}=w_{t+d-2}=\frac1H,\qquad
 w_{t+d-1}=\frac1{2d}.
\end{equation}
For $1\le i\le d-2$, the reversed overlap inequality, applied at
cut $r$, gives
\[
 K_{t+1,t+i}\ge2(d-i)-l,\qquad
 K_{t+i,t+d-2}\ge2(i+1)-l.
\]
Their sum is $H$, so these rows are covered by the two weights $1/H$.
Individual lower bounds may be negative, but their sum remains a
valid lower bound. Also, the cut inequality gives
\[
 q_{t+1},q_{t+d-2}\ge2d-2-l\ge H,
\]
where the last inequality is exactly $l\ge4$.
Consequently the last row receives at least $1/2$ from the weight
at $t+d-2$ and at least $1/2$ from its own diagonal. When $t\ge1$,
$K_{t,t+1}\ge q_{t+1}/2\ge H/2$, so the first tail weight supplies
the missing half in row $t$. Thus all rows are covered.

The sum of the weights satisfies
\[
 R\le\frac1k+\frac{t}{2d}+\frac1{b+1}+\frac1{2d}.
\]
Since $1/k-\phi(k)=F(k)\le d/[4(d-2)]$ and $l\ge4$, we obtain
\begin{align*}
 T-R
 &\ge \frac14+\frac1{2b}-\frac1{2l}-\frac1{b+1}
       +\frac{t+1}{d(d-2)}\\
 &\ge \frac18+\frac1{2b}-\frac1{b+1}
       +\frac{t+1}{d(d-2)}\\
 &=\frac{(b-1)(b-2)+2}{8b(b+1)}
       +\frac{t+1}{d(d-2)}>0.
\end{align*}
This proves every single-cut comparison under consideration.

\paragraph{A paired right end.}
Here $r=D+t$, and $g+l\ge d+2$ implies $g,l\ge4$.
Put $b=d-l\ge2$ and $H=2d-g-l=d+b-g$. Thus
\[
 b+2\le g\le d-2,\qquad 4\le H\le d-2.
\]
In addition to the initial weights, put
\begin{equation}\label{eq:even-pair-four-simple}
 w_{t+1}=w_{t+d-1}=\frac1H,\qquad
 w_{t+d}=\frac{d-g}{gH}.
\end{equation}
For $1\le i\le d-1$, apply the reversed overlap inequality at cuts
$r-1$ and $r$, respectively, to obtain
\[
 K_{t+1,t+i}\ge2(d-i)-g,\qquad
 K_{t+i,t+d-1}\ge2i-l.
\]
Their sum is $H$, so these rows are covered. The last row satisfies
\[
 K_{t+d-1,t+d}\ge\frac{d+g-l-2}{2}.
\]
Its contribution from the last two weights is therefore at least
\[
 \frac{d+g-l-2}{2H}+\frac{d-g}{H}
 =1+\frac{g+l-d-2}{2H}\ge1.
\]
When $t\ge1$, both cuts $t,t+1$ are high, so
$K_{t,t+1}/H\ge d/(2H)\ge1/2$ supplies the missing half in row $t$.

We need one elementary bound for the sum of the tail weights:
\begin{equation}\label{eq:even-pair-tail-simple}
 V:=\frac2H+\frac{d-g}{gH}=\frac{d+g}{gH}
 \le\frac14+\frac1{d-2}-\phi(l).
\end{equation}
Indeed,
\[
 gH-(d-2)(b+2)=(g-b-2)(d-2-g)\ge0,
\]
and $d+g\le2(d-1)$. Hence
$V\le U:=2(d-1)/[(d-2)(b+2)]$.
Since $d=b+l$ and $b\ge2,l\ge4$, we also have
\[
 l(b+2)-4(d-2)=(b-2)(l-4)\ge0.
\]
It follows that
\begin{align*}
 \frac14+\frac1{d-2}-\phi(l)-U
 &=\frac{(b-2)^2}{4b(b+2)}
   +\frac{b}{(d-2)(b+2)}-\frac1{2l}\\
 &\ge\frac{(b-2)^2}{4b(b+2)}
      -\frac{(b-2)^2}{2l(b+2)^2}\\
 &\ge\frac{(b-2)^2(b+4)}{8b(b+2)^2}\ge0.
\end{align*}
This proves \eqref{eq:even-pair-tail-simple} using only products
of nonnegative factors.

If $t\ge1$, \eqref{eq:even-initial-four} and
\eqref{eq:even-pair-tail-simple} give
\[
 R\le\frac1k+\frac{t}{2d}+V,
 \qquad
 T-R\ge\frac{t}{d(d-2)}>0.
\]
If $t=0$, decrease the initial weight to
\[
 w_0=\frac1k-\frac1{2d}>0.
\]
The first tail weight is now at index $1$. Row $0$ remains covered,
since
\[
 k w_0+K_{01}w_1
 \ge1-\frac{k}{2d}+\frac{k}{2H}\ge1.
\]
All other rows were covered without this initial weight, so their
inequalities are unchanged. The total is now
$R=1/k+V-1/(2d)$, and consequently
\[
 T-R\ge\frac1{2d}>0.
\]
This completes every paired comparison and the proof.
\end{proof}

\subsection{Long intervals and free ends}\label{sec:even-long}
In this subsection, we bound intervals with a long run of high cuts
and intervals that meet an end of the ordering.

Put
$m=d/2+3$, $c=d(d-2)/(d+2)$, and
$\sigma=(d-4)/(2d(d-2))>0$.
\begin{lemma}\label{lem:even-half}
If a cut of size $k<d$ is followed by $m-1$ high cuts, then on these
$m$ differences
\begin{align}
 Q_I&\ge\frac{\Delta_I^2}{1/k+1/c},\label{eq:even-half}\\
 \frac1k+\frac1c&\le\frac m\beta+\phi(k)-\sigma.\notag
\end{align}
For a nonzero vector, equality in both comparisons requires
$k=d-2$, and the first and last differences are positive.
\end{lemma}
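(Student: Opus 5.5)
The second inequality in \eqref{eq:even-half} is pure arithmetic, so I would dispose of it first. Since $\phi(k)-1/k=-F(k)$, it is equivalent to
\[
 \frac m\beta-F(k)-\frac1c-\sigma\ge0 .
\]
With $m=d/2+3$ we have $m/\beta=(d+6)/(4(d-2))$. By \eqref{eq:even-F}, $F(k)\le F(2)=d/(4(d-2))$, so $m/\beta-F(k)\ge 3/(2(d-2))$. Subtracting $1/c=(d+2)/(d(d-2))$ and $\sigma=(d-4)/(2d(d-2))$ leaves exactly
\[
 \frac{3d-2(d+2)-(d-4)}{2d(d-2)}=0 .
\]
Hence the second comparison always holds, and it is an equality precisely when $F(k)=F(2)$, that is, $k\in\{2,d-2\}$ (using $k(d-k)-2(d-2)=(k-2)(d-k-2)$).

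For the first inequality I would split $\Delta_I=a_0+\Delta'$ with $\Delta'=\sum_{i=1}^{m-1}a_i$. Since all $K_{ij}\ge0$ and $a\ge0$, dropping the cross terms $2\sum_{j\ge1}K_{0j}a_0a_j$ gives
\[
 Q_I\ge k a_0^2+Q',\qquad Q'=\sum_{i,j=1}^{m-1}K_{ij}a_ia_j .
\]
The core claim is then $Q'\ge c\,\Delta'^2$ on the run of $m-1=d/2+2$ high cuts. Granting it, \Cref{lem:comparison-weighted-squares} with weights $k,c$ gives $ka_0^2+c\Delta'^2\ge\Delta_I^2/(1/k+1/c)$. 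Its equality case forces $a_0=\Delta_I/(k(1/k+1/c))>0$, so the first difference is positive.

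To prove $Q'\ge c\Delta'^2$, I would build an entrywise lower bound matrix $N\le K$ on indices $1,\ldots,m-1$ (legitimate because $a\ge0$) and show that $N$ is positive semidefinite with $\1^\top N^{+}\1\le 1/c=(d+2)/(d(d-2))$. The entries of $N$ would come from three sources.
\begin{itemize}
\item The diagonal: $K_{ii}=q_i\ge d$, since the cuts are high.
\item Adjacent entries: $K_{i,i+1}\ge d/2$, from \eqref{eq:even-two}.
\item The overlap bounds \eqref{eq:even-overlap} taken at the small cut $0$: $K_{s,t}\ge s(D-t)-k\ge s(D-t)-(d-2)$ for $1\le s\le t\le d$.
\end{itemize}
The overlap bounds make the entries near the left end large, in the same spirit as the $J_p$ comparison of \Cref{lem:even-separator-counts}. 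The tridiagonal bound alone only yields $\1^\top N^{-1}\1=(d/2+3)/(2d)$, which is too weak. So the overlap entries, which are largest for the first few indices, are essential.

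Concretely, I would exhibit an explicit dual vector $w\ge0$ with $Nw\ge\1$ and $\sum w_j=1/c$, together with a sum-of-squares identity in the style of \Cref{lem:comparison-weighted-squares}. The identity should express $a^\top Na-c\Delta'^2$ as a nonnegative combination of squares plus nonnegative cross terms; this gives the quadratic bound rather than only the linear bound of \Cref{lem:even-local-methods}. The natural guess is that $N$ is the cut matrix of a sequential-join comparison network, and that $1/c$ is its effective resistance. I would compute this resistance by the Dirichlet principle as in \Cref{lem:comparison-blockminimum}, checking that it comes out as $(d+2)/(d(d-2))$.

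For the equality clause, I would read off from this decomposition that equality forces the minimizing harmonic profile. That profile puts positive mass on the last difference $a_{m-1}$, as the end weight of the extremal $w$ does. It also forces all dropped cross terms $K_{0j}a_0a_j$ to vanish and, through the second comparison, $k\in\{2,d-2\}$. To exclude $k=2$, I would note that a small cut of size $2$ followed by a high cut gives $K_{01}\ge1$ (as in the reversed $J_2$ argument of \Cref{lem:even-single-two}). Then the discarded term $2K_{01}a_0a_1$ is strictly positive whenever $a_0,a_1>0$, or else the equality profile is violated, so only $k=d-2$ survives.

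The main obstacle is finding the right certificate for $Q'\ge c\Delta'^2$: choosing which overlap entries to use so that the comparison network has resistance exactly $1/c$, and writing the corresponding nonnegative decomposition. I would first try guessing $w$ by solving $Nw=\1$ numerically for $d=6,8,10$ and then fitting closed forms.
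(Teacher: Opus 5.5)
Your check of the second inequality is correct and is the paper's own one-line argument, namely $m/\beta-F(2)-1/c=\sigma$. The first inequality, however, cannot be reached along your route. Once every cross term with $a_0$ is discarded, the claim $Q'\ge c\Delta'^2$ is false, so the certificate $N$, $w$ you plan to fit does not exist. Take $k=d-2$. The overlap bounds at cut $0$ then give only $K_{1t}\ge 3-t$, so row $1$ of your $N$ is $(d,d/2,0,\ldots,0)$. Now consider the profile $a_1=0$, $a_2=\frac{d-2}{d+2}Z$, $a_{m-1}=\frac{4}{d+2}Z$, with all other $a_i=0$ and $Z=a_2+\cdots+a_{m-1}$. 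Since $N_{22}=d$, $N_{2,m-1}=0$ and $N_{m-1,m-1}=d(d-2)/4$, this profile gives $a^\top Na=cZ^2=c\Delta'^2$. Raising $a_1$ to $\epsilon$ changes $a^\top Na-c\Delta'^2$ by $\epsilon(da_2-2cZ)+(d-c)\epsilon^2=-cZ\epsilon+(d-c)\epsilon^2$, which is negative for small $\epsilon$.

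This is not an artifact of weak bounds. Take $d=6$, $k=4$, and let the five vertices after cut $0$ span a $K_5$. Let the first two each have two neighbors before the cut and the last three each have two neighbors after the fifth, with no other crossing edges. Then $(q_0,\ldots,q_5)=(4,6,6,8,8,6)$, $K_{01}=2$, $K_{12}=3$, $K_{0j}=0$ for $j\ge2$, and $K_{15}=K_{25}=0$. The vector $(a_1,\ldots,a_5)=(\frac14,\frac38,0,0,\frac38)$ gives $Q'=\frac{21}{8}<3=c\Delta'^2$. With $a_0=\frac34$, even $ka_0^2+Q'=\frac{39}{8}$ falls below $\Delta_I^2/(1/k+1/c)=\frac{21}{4}$. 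The lemma holds there only because of the discarded term $2K_{01}a_0a_1=4a_0a_1$.

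The missing idea is to keep $2K_{01}a_0a_1\ge ka_0a_1$, which follows from $2K_{01}=q_0+q_1-d$ with cut $1$ high, and to let the first high difference be shared by the two sides. The paper retains $ka_0^2+da_1^2+ka_0a_1+da_1a_2+C(k)$, where $C(k)$ uses the overlap bounds only on indices $2,\ldots,m-1$. It sets $X=a_0+a_1/2$ and $W=Z+a_1/2$. It then uses the complete-graph estimate $C(d-2)\ge cZ^2+\frac{d+2}{2}\bigl(a_2-\frac{d-2}{d+2}Z\bigr)^2$ together with $C(k)=C(d-2)+(d-2-k)Z^2$. Completing squares leaves $Q_I-kX^2-cW^2\ge\frac{d+2}{2}\bigl(a_2-\frac{d-2}{d+2}Z+\frac{d}{d+2}a_1\bigr)^2+\frac{5d+2}{2(d+2)}a_1^2+(d-2-k)(Z^2+a_1^2/4)$. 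Weighted Cauchy--Schwarz on $X+W=\Delta_I$ then gives the bound; the weight-$c$ part has to absorb only half of $a_1$.

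The same identity settles the equality clause at once. Equality forces $a_1=0$, then $k=d-2$ because $W>0$ for a nonzero vector, and it yields the explicit values of $a_0,a_2,a_{m-1}$. So your separate argument against $k=2$ via $K_{01}\ge1$ is not needed, and as written it does not determine the equality profile.
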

\begin{proof}
Use relative indices $0,\ldots,m-1$, and put
$Z=a_2+\cdots+a_{m-1}$. Keep the terms
\[
 ka_0^2+da_1^2+ka_0a_1+da_1a_2+C(k),
\]
where
\[
 C(k)=\sum_{i,j=2}^{m-1}
 \{\min(i,j)(D-\max(i,j))-k\}a_i a_j.
\]
They are valid lower bounds by \eqref{eq:even-overlap} and
\eqref{eq:even-two}. Negative coefficients, if present in a lower bound,
do not cause a problem, since the differences are nonnegative.
The complete-graph identity
$\sum_{i<j}(v_i-v_j)^2=D\sum_i(v_i-\bar v)^2$, applied with the
first two values zero and the last $(d-2)/2$ values $Z$, gives
\[
 C(d-2)\ge cZ^2+\frac{d+2}{2}
       \left(a_2-\frac{d-2}{d+2}Z\right)^2.
\]
For completeness, the free values have minimizing value
$(d-2)Z/(d+2)$. Subtract that value from them. The remaining sum is
$D\sum v_j^2-(\sum v_j)^2\ge(d+2)\sum v_j^2/2$;
keep its first square.
Put $X=a_0+a_1/2$, $W=Z+a_1/2$. Completing squares now yields
\begin{align*}
 Q_I-kX^2-cW^2\ge{}&
 \frac{d+2}{2}\left(a_2-\frac{d-2}{d+2}Z+
                         \frac{d}{d+2}a_1\right)^2
 +\frac{5d+2}{2(d+2)}a_1^2\\
 &+(d-2-k)(Z^2+a_1^2/4).
\end{align*}
Since $X+W=\Delta_I$, Cauchy--Schwarz proves the first assertion.
The second follows from \eqref{eq:even-F}, since
$m/\beta-F(2)-1/c=\sigma$.
If equality holds, then $a_1=0$, $k=d-2$, and equality in the
complete-graph calculation gives
\begin{align*}
 a_0&=\frac{d}{2D}\Delta_I,\\
 a_2&=\frac{d-2}{2D}\Delta_I,\\
 a_{m-1}&=\frac2D\Delta_I,
\end{align*}
with all other differences zero. This proves the required endpoint
assertion as well.
\end{proof}

\begin{lemma}\label{lem:even-high-runs}
For $u\ge1$ consecutive high cuts,
\begin{equation}\label{eq:even-high}
 Q_I\ge d\left(\sum a_i^2+\sum a_i a_{i+1}\right)
       \ge\frac{2d}{u+1}\Delta_I^2.
\end{equation}
Moreover,
\begin{equation}\label{eq:even-highslack}
 \frac{u+1}{2d}-\frac u\beta
 =\frac{d-2-2u}{2d(d-2)}\le\sigma,
\end{equation}
strictly if $u>1$.
\end{lemma}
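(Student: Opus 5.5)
The plan is to prove the first inequality in \eqref{eq:even-high} directly from the cut-count bounds of \Cref{lem:even-cut-counts}, the second by a Cauchy--Schwarz argument on sums of adjacent differences, and \eqref{eq:even-highslack} by direct algebra.

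\emph{First inequality.} Let $I$ consist of $u$ consecutive indices whose cuts are all high, so $q_i\ge d$ for $i\in I$. The diagonal entries satisfy $K_{ii}=q_i\ge d$. For adjacent indices $i,i+1\in I$, the identity $2K_{i,i+1}=q_i+q_{i+1}-d$ gives $K_{i,i+1}\ge d/2$. Every other entry $K_{ij}$ is a count and so is nonnegative. Since all $a_i\ge0$, we may drop the nonadjacent off-diagonal terms of $Q_I$. Each adjacent pair appears twice in the symmetric double sum, so
\[
 Q_I\ge d\sum_{i\in I}a_i^2+2\cdot\frac d2\sum_{i,i+1\in I}a_ia_{i+1}.
\]
This is the first inequality.

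\emph{Second inequality.} Index the interval as $1,\ldots,u$ and put $a_0=a_{u+1}=0$. Set $s_j=a_j+a_{j+1}$ for $0\le j\le u$. Expanding the squares, each $a_i^2$ occurs exactly twice and each adjacent product $a_ia_{i+1}$ twice with coefficient $1$, so
\[
 \sum a_i^2+\sum a_ia_{i+1}=\frac12\sum_{j=0}^{u}s_j^2 .
\]
The $u+1$ numbers $s_j$ have sum $2\Delta_I$. Cauchy--Schwarz therefore gives $\sum_j s_j^2\ge 4\Delta_I^2/(u+1)$. Multiplying by $d/2$ yields the bound $2d\Delta_I^2/(u+1)$.

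\emph{Slack identity.} With $\beta=2(d-2)$, a common denominator gives
\[
 \frac{u+1}{2d}-\frac{u}{2(d-2)}=\frac{(u+1)(d-2)-ud}{2d(d-2)}=\frac{d-2-2u}{2d(d-2)}.
\]
Comparing with $\sigma=(d-4)/(2d(d-2))$, the inequality reduces to $-2u\le-2$, that is $u\ge1$, with strict inequality exactly when $u>1$.

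None of these steps is a real obstacle. The only point needing care is the telescoping identity for the tridiagonal form in terms of the $s_j$, in particular the boundary terms $s_0=a_1$ and $s_u=a_u$, and this is checked by direct expansion.
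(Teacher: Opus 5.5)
Your proposal is correct and follows the paper's proof essentially verbatim. You bound $Q_I$ by its diagonal and adjacent terms, using $K_{ii}=q_i\ge d$ and $2K_{i,i+1}=q_i+q_{i+1}-d\ge d$. You then rewrite twice the tridiagonal sum as $a_1^2+a_u^2+\sum(a_i+a_{i+1})^2$, whose $u+1$ terms sum to $2\Delta_I$, and apply Cauchy--Schwarz; the slack identity follows by subtraction.
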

\begin{proof}
The diagonal and adjacent terms give the first bound in
\eqref{eq:even-high}. Twice the sum in parentheses is
$a_1^2+a_u^2+\sum(a_i+a_{i+1})^2$; the $u+1$ quantities being
squared sum to $2\Delta_I$. Cauchy--Schwarz proves the second bound.
Subtraction gives \eqref{eq:even-highslack}, whose last inequality
is strict exactly when $u>1$.
\end{proof}

\begin{lemma}\label{lem:even-single-end}
A part of $m$ consecutive high cuts ending immediately before a
single small cut of size $l$ has denominator at most
$m/\beta-\phi(l)-\sigma$.
\end{lemma}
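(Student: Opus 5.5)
This is the mirror image of \Cref{lem:even-half}, so the plan is to reverse the ordering. Replacing $\x$ by $-\x$ and listing the vertices in the opposite order gives another nondecreasing vector on the same graph. Under this reversal, cut $i$ becomes cut $n-i$; the numbers $q_i$ and $K_{ij}$ are carried along, and the inequalities \eqref{eq:even-cut}--\eqref{eq:even-two} hold in reversed form. A part of $m$ consecutive high cuts ending immediately before a single small cut of size $l$ becomes a small cut of size $l$ followed by $m-1$ high cuts, on $m$ differences. This is exactly the hypothesis of \Cref{lem:even-half} with $k=l$. We must check that the $m$ difference indices line up: the small cut itself is not in the interval at the right end, while at the left in \Cref{lem:even-half} the small cut's difference $a_0$ is included. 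Under reversal, the differences adjacent to the cuts shift by one, and this matches the convention for intervals, where the right endpoint cut is excluded.

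Applying \Cref{lem:even-half} in the reversed order gives
\[
 Q_I\ge\frac{\Delta_I^2}{1/l+1/c},\qquad
 \frac1l+\frac1c\le\frac m\beta+\phi(l)-\sigma .
\]
Here $Q_I$ and $\Delta_I$ are invariant under reversal, since each is a sum over the same edges and differences.

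Next we convert the small end from the left to the right. In the denominator \eqref{eq:even-T}, a small cut of size $l$ at the right end contributes $-\phi(l)$, whereas at the left end it contributes $+\phi(l)$. Reversal swaps the two ends. The bound we need is stated with the right-end convention, $m/\beta-\phi(l)-\sigma$. I would therefore recheck the sign bookkeeping explicitly rather than rely on the symmetry $\phi(d-k)=-\phi(k)$. The point is that, in the original orientation, the lemma measures the contribution of the right small cut through $-\phi(l)$. After reversal, the same cut sits at the left of the reversed ordering, and its contribution is governed by the reversed cut-size description.

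The main obstacle is exactly this sign and convention check. If the direct reversal gives $+\phi(l)$ rather than the required $-\phi(l)$, I would redo the completing-squares argument of \Cref{lem:even-half} in the forward orientation. In that case I would keep the terms $la_{m-1}^2+da_{m-2}^2+la_{m-1}a_{m-2}+da_{m-2}a_{m-3}$, together with the reversed overlap block $C$ built from \eqref{eq:even-overlap} applied at the right cut. I would then bound $1/l+1/c$ against $m/\beta-\phi(l)-\sigma$ using \eqref{eq:even-F}, in the form $1/l=F(l)+\phi(l)$, and verify the inequality with the same constant $\sigma$.
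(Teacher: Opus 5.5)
\textbf{The reversal does not land in \Cref{lem:even-half}.} Under reversal, a cut and its difference carry the same index: both go from $i$ to $n-i$, so nothing shifts by one. The $m$ high cuts ending just before the single small cut of size $l$ therefore become $m$ high cuts starting just after it, and the small cut stays outside the part. \Cref{lem:even-half} is about a different set of differences. Its interval contains the small cut, and that cut's difference carries only the diagonal $k$.

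This is not a sign-bookkeeping issue. Even if you granted that lemma with $k=l$, you would need $1/l+1/c\le m/\beta-\phi(l)-\sigma$. Since $m/\beta-\sigma-1/c=F(2)$, this is equivalent to $1/l+\phi(l)\le F(2)$. That fails for $l=2$ and every $d\ge6$; for $d=6$ it reads $5/6\le 7/12$. Your fallback keeps $la_{m-1}^2$, which again places the small cut inside the part, and aims at the same failing bound $1/l+1/c$. For $l=2$ the target equals $2(d+1)/(d(d-2))=O(1/d)$. You must therefore use that \emph{every} difference in the part has diagonal at least $d$. Your reduction is correct at a \emph{paired} right end $(g,l)$: there the cut $g$ lies inside the part, \Cref{lem:even-half} applies with $k=g$, and $\phi(g)\le-\phi(l)$. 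This is how \Cref{lem:even-long-free} handles that case, but it does not cover a single cut.

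The missing argument goes as follows. After reversing, let $c$ be the small cut and $c+1,\ldots,c+m$ the part. Keep $q_{c+1}a_{c+1}^2\ge d\,a_{c+1}^2$, and drop the nonnegative cross terms with $a_{c+1}$. For $2\le s\le t\le m$, where $m\le d$, apply \eqref{eq:even-overlap} at cut $c$ to get $K_{c+s,c+t}\ge s(D-t)-l$. The numbers $s(D-t)$ are the cut counts of $K_D$ in its natural order. Hence the remaining terms are at least $\mathcal E_{K_D}(\bv)-lZ^2$, where $\bv$ has consecutive differences $0,a_{c+2},\ldots,a_{c+m},0,\ldots,0$ and $Z=\sum_{s\ge2}a_{c+s}$. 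In this comparison two values are $0$, $(d-4)/2$ values equal $Z$, and $d/2+1$ values are free. Minimizing over the free values gives $(2d-6-8/d)Z^2$, so
\[
 Q_I\ge d\,a_{c+1}^2+\Bigl(2d-6-\frac8d-l\Bigr)Z^2 .
\]
Weighted Cauchy--Schwarz then gives the denominator $1/d+1/(2d-6-8/d-l)$, which is at most $1/d+1/(d-l)$ for $d\ge8$. Since $\phi(l)+1/(d-l)=F(l)\le F(2)$ by \eqref{eq:even-F}, the saving from $m/\beta-\phi(l)$ is at least $m/\beta-1/d-F(2)=\sigma+4/(d(d-2))$. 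For $d=6$ the simplification fails. Instead use \eqref{eq:even-high} with $u=m=6$, which gives the denominator $7/12$. This equals the target exactly when $l=2$ and is below the target $5/6$ when $l=4$.
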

\begin{proof}
To see this for $d\ge8$, reverse the
ordering, keep the first high diagonal term $d a_1^2$, and minimize
the complete-graph comparison on the remaining differences. A
sufficient denominator is
\[
 \frac1d+\frac1{2d-6-8/d-l}\le\frac1d+\frac1{d-l}.
\]
Indeed, after the first high difference, write $Z$ for the sum of the
remaining differences. The complete-graph comparison has two fixed
zero values and $(d-4)/2$ fixed values $Z$. Minimizing over its other
values gives $(2d-6-8/d-l)Z^2$. Adding the retained first diagonal
term and applying weighted Cauchy--Schwarz gives the denominator above.
Its saving from $m/\beta-\phi(l)$ is at least
$\sigma+4/[d(d-2)]$, by \eqref{eq:even-F}. For $d=6$ use
\eqref{eq:even-high}, with denominator $7/12$; this is at most the
required denominator for either $l=2$ or $4$.
\end{proof}

\begin{lemma}\label{lem:even-long-free}
The local methods give a denominator $R<T$ for an interval with
two small ends and $r\ge2m=d+6$, for an interval with exactly one
free end and $r\ge d$, and for an interval with two free ends and
$r\ge d/2$. Here $T$ is defined by \eqref{eq:even-T}, with the
free-end terms omitted.
\end{lemma}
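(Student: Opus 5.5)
The idea is to cut each interval into consecutive sub-intervals, bound each piece with one of the lemmas just proved, and then recombine the pieces. The recombination uses \Cref{lem:comparison-weighted-squares} (in its reverse direction): if $Q_{I_j}\ge\Delta_{I_j}^2/R_j$ on disjoint consecutive pieces $I_j$, then, since the $K_{ij}a_ia_j$ terms between different pieces are nonnegative, $Q_I\ge\sum_j Q_{I_j}\ge\Delta_I^2/\sum_jR_j$. We may therefore take $R=\sum_jR_j$ in \eqref{eq:even-method1} and only need $\sum_jR_j<T$. In every case below, the saving will come from the slack $\sigma$ in \Cref{lem:even-half}, \Cref{lem:even-high-runs} and \Cref{lem:even-single-end}, and the piece bounds will add to at most $r/\beta$ plus the endpoint corrections.

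\emph{Two small ends, $r\ge2m$.} Let the left size be $k$. The interval begins with $m$ differences governed by \Cref{lem:even-half}; since all cuts after the left end up to the right end are high, its hypothesis holds. That piece has denominator at most $m/\beta+\phi(k)-\sigma$.

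For a single right cut of size $l$, the last $m$ differences form a piece covered by \Cref{lem:even-single-end}, with denominator at most $m/\beta-\phi(l)-\sigma$. For a paired right end $(g,l)$, I would first try reversing the ordering. Read from the right, the paired end is a small cut $l$, the cut $g$ is small too, and then high cuts follow. This does not fit \Cref{lem:even-half} directly. Instead, I would treat the last difference (between cuts $g$ and $l$) as its own one-difference piece: its diagonal $K\ge l$ gives denominator $1/l$. The preceding $m$ differences then come from the reversed \Cref{lem:even-half} with size $g$. Summing and using $1/l+\phi(g)-\phi(l)$ stays within $1/\beta$ up to the $\sigma$ slack checks via $g+l\ge d$ and \eqref{eq:even-F}. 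This paired-end bookkeeping is the step I expect to be most delicate.

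The middle run of $u=r-2m$ (or $r-2m-1$) high cuts, if nonempty, is bounded by \Cref{lem:even-high-runs} with denominator $(u+1)/(2d)\le u/\beta+\sigma$. The total is at most $T-2\sigma+\sigma<T$. When $u=0$ the middle piece is absent and the saving is $2\sigma$.

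\emph{One free end, $r\ge d$.} Suppose the left end is free; the other case is symmetric by reversal. All cuts to the left of the right end are high, and $T=r/\beta-\phi(l)$. If $r\ge m+1$, use \Cref{lem:even-single-end} (or its paired variant as above) on the last $m$ differences and \Cref{lem:even-high-runs} on the remaining $u=r-m$ differences. The total is at most $u/\beta+\sigma+m/\beta-\phi(l)-\sigma$. This is non-strict, so an extra strictness argument is needed: note $q_0=0$ at the free end, so the first cut is bounded below by $d$ through \eqref{eq:even-cut} with large $t$. Alternatively, apply \Cref{lem:even-high-runs} with $u\ge2$ to get strict slack, which holds when $r\ge m+2$. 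For the finitely many remaining lengths $d\le r\le m+1$, apply \Cref{lem:even-high-runs} to the whole run of high cuts and handle the last difference via its diagonal. The resulting inequality $(u+1)/(2d)+1/l<r/\beta-\phi(l)$ reduces to checking \eqref{eq:even-highslack} together with $F(l)\le F(2)$.

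\emph{Two free ends, $r\ge d/2$.} Every cut is high and $T=r/\beta$. \Cref{lem:even-high-runs} with $u=r$ gives $(r+1)/(2d)$. By \eqref{eq:even-highslack} this is less than $r/\beta$ exactly when $(r+1)(d-2)<rd$, i.e. $2r>d-2$, which holds for $r\ge d/2$.
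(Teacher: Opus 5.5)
Your overall scheme is the paper's: end pieces from \Cref{lem:even-half} and \Cref{lem:even-single-end}, a middle high run from \Cref{lem:even-high-runs}, cross terms $K_{ij}a_ia_j\ge0$ dropped, and then weighted Cauchy--Schwarz. Your treatment of two free ends, and of two small ends with a single right cut, is correct. Two steps, however, fail.

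\emph{Paired right end.} The indexing is wrong. With recorded positions $a$ and $a+r$, the interval is $I=\{a,\dots,a+r-1\}$. The cut $a+r$ of size $l$ is not in $I$, while the cut $a+r-1$ of size $g$ is, with diagonal $K_{a+r-1,a+r-1}=g$. So no difference in $I$ has diagonal at least $l$, and your extra one-difference piece does not exist.

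Your bookkeeping could not close in any case. An extra piece of denominator $1/l$ would require $1/l+\phi(g)+\phi(l)$ to be below about $1/\beta+\sigma=1/d$. This fails when $g+l=d$, since then the left side is $1/l>1/d$.

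What you missed is that the last $m$ differences of $I$, read backwards, are a cut of size $g$ followed by $m-1$ high cuts. That is exactly the reversed hypothesis of \Cref{lem:even-half}, which gives denominator $m/\beta+\phi(g)-\sigma$. Since $\phi$ is decreasing and $g\ge d-l$, we have $\phi(g)\le\phi(d-l)=-\phi(l)$. The same correction is needed in your one-free-end case with a paired right end.

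\emph{Strictness with one free end.} When the high part has length one ($u=r-m=1$, i.e.\ $d=6,r=7$ or $d=8,r=8$), your pieces sum exactly to $T$, and you give no valid strictness argument. The remark about $q_0=0$ yields nothing. Your fallback $(u+1)/(2d)+1/l<r/\beta-\phi(l)$ is false: for $d=6$, $r=6$, $l=2$ the left side is at least $1/2+1/2$, while the right side is $5/8$. Even without the spurious $1/l$, bounding the whole run by \eqref{eq:even-high} fails at $d=8$, $r=8$, $l=2$, where $9/16>1/2$.

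The paper closes these cases as follows.
\begin{itemize}
\item At a single right cut with $d\ge8$, the proof of \Cref{lem:even-single-end} actually saves $\sigma+4/[d(d-2)]$, whereas a length-one high run loses exactly $\sigma$.
\item For $d=6$, $r=7$ with a single right cut, the whole interval is seven high cuts, and \eqref{eq:even-high} gives $2/3<3/4\le7/8-\phi(l)$.
\item Whenever the small end is handled by \Cref{lem:even-half} (a small left end, or a paired right end read backwards), use its equality clause. Equality would force the difference of that piece adjacent to the join to be positive. Equality in Cauchy--Schwarz forces the single high difference to be positive. The discarded cross term $2K_{i,i+1}a_ia_{i+1}\ge d\,a_ia_{i+1}$ is then strictly positive, which gives strictness.
\end{itemize}

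Finally, the free-right case is not literally the reversal of the free-left one. The left small cut lies inside $I$, like the $g$-cut, so that case is handled directly by \Cref{lem:even-half} together with this equality argument.
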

\begin{proof}
For an interval with two small ends and $r\ge2m=d+6$, take $m$
differences at each end and the high cuts between them. The boundary parts save $2\sigma$, whereas the middle loses
at most $\sigma$ by \eqref{eq:even-highslack}. At a paired right end,
use $\phi(g)\le-\phi(l)$, since $g+l\ge d$.

If exactly one end is free and $r\ge d$, combine its high part with
the part of length $m$ at the small end. Only a high part of length
one could remove the entire saving. Equality then forces positive
differences on both sides of the join, by \Cref{lem:even-half};
the omitted term $2K_{i,i+1}a_i a_{i+1}\ge d a_i a_{i+1}$ is
strictly positive. At a single small right end there is already an
extra saving for $d\ge8$. The remaining possibility $d=6,r=7$
consists of seven high cuts: its denominator $2/3$ is smaller than
$7/8-\phi(l)\ge3/4$. The case with no high part saves $\sigma$.
Finally, when both ends are free and $r\ge d/2$, \eqref{eq:even-high}
gives coefficient at least $2d^2/(d+2)>\beta$. This proves all the
long-interval and free-end comparisons, with strictness.
\end{proof}

\subsection{The common partition and its sharp intervals}
All the local comparisons are now available. The only intervals not
proved strict have length $D$ and end in $P$ or $Q$. If they also
start in one of these types, both types agree. We call such a
$P\longrightarrow P$ or $Q\longrightarrow Q$ interval a
\emph{standard gap}.

\begin{lemma}\label{lem:even-standard-block}
Every standard gap is an actual block isomorphic to $M_d$, with
its two separating vertices as singleton endpoints.
\end{lemma}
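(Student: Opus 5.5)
The plan is to prove this combinatorially, using only the cut sizes that define a standard gap. The equality analysis of \Cref{lem:even-shortest-separator} is not needed except to know which types occur. Let the gap have recorded positions $a$ and $a+D$, and let both ends have the common type $(g,p)\in\{P,Q\}$, so that $g+p=d$. At the left end the pair consists of cuts $a-1$ and $a$, with $q_{a-1}=g$ and $q_a=p$. At the right end it consists of cuts $a+D-1$ and $a+D$, with $q_{a+D-1}=g$ and $q_{a+D}=p$. The vertex $a$ lies between cuts $a-1$ and $a$, and the vertex $a+D$ lies between cuts $a+D-1$ and $a+D$. Since $q_{a-1}+q_a=d=q_{a+D-1}+q_{a+D}$, the criterion stated before \Cref{lem:even-separator-counts} shows that both $a$ and $a+D$ are separating vertices. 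In particular, no edge joins a vertex left of $a$ to a vertex right of $a$, and no edge joins a vertex left of $a+D$ to a vertex right of $a+D$.

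Put $S=\{a+1,\dots,a+D-1\}$, a set of $D-1=d$ vertices. By the two separations, every edge leaving $S$ goes to $a$ or to $a+D$. Vertex $a$ has exactly $q_a=p$ neighbors to its right, and all of them lie in $S\cup\{a+D\}$. Vertex $a+D$ has exactly $q_{a+D-1}=g$ neighbors to its left, and all of them lie in $S\cup\{a\}$. First I rule out the edge $a(a+D)$. If it were present, at most $(p-1)+(g-1)=d-2$ edges would leave $S$. But $d|S|-2e(S)\ge d^2-d(d-1)=d$, a contradiction. Hence exactly $d$ edges leave $S$, so $2e(S)=d(d-1)$ and $S$ induces $K_d$. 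Every vertex of $S$ then has $d-1$ internal neighbors, hence exactly one external neighbor, since the $d$ outgoing edges are distributed among $d$ vertices with at most one each. Thus the $p$ neighbors of $a$ in $S$ and the $g$ neighbors of $a+D$ in $S$ partition $S$.

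The induced subgraph on $\{a\}\cup S\cup\{a+D\}$ is therefore $K_1+K_p+K_g+K_1$, and the two middle cliques together form $K_d$. Since $\{p,g\}=\{2,d-2\}$, this graph is $M_d$, with $a$ and $a+D$ as its singleton endpoints. Its orientation is determined by whether the type is $P$ or $Q$, so it is the same at both ends, as required for the chain.

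It remains to see that this subgraph is a full block of $G$. It is $2$-connected, because $K_d$ is $2$-connected and each singleton has at least two neighbors in it, as $p,g\ge2$. It is maximal because $a$ and $a+D$ are cut vertices separating it from the rest of $G$. The second separation ensures that no other vertex attaches to $S$.

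The main point requiring care is the index bookkeeping: identifying which vertex lies between which cuts, and checking that the right neighbors of $a$ cannot escape past $a+D$. That edge count is the step I would verify most carefully. Everything else reduces to the degree count on $S$.
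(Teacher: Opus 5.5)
Your argument is correct and is essentially the paper's proof. You count the edges leaving the $d$ interior vertices against $e(S)\le\binom d2$, which rules out the edge between the two separators and forces $S$ to be a clique whose vertices each have exactly one neighbor among the separators. The separation property then identifies $K_1+K_p+K_g+K_1$ as an actual block; your explicit check that $a$ and $a+D$ are genuine cut vertices (each has at least two neighbors on its outer side) is a detail the paper leaves implicit.
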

\begin{proof}
Let its endpoints be $a,b=a+D$, each with the same number
$k\in\{2,d-2\}$ of neighbors to its right. The set
$T=\{a+1,\ldots,b-1\}$ has $d$ vertices, and every edge leaving
$T$ meets $a$ or $b$. Let $e_0$ be one if $a,b$ are adjacent and
zero otherwise. Counting the degrees in $T$ gives
\[
 d^2=2e(T)+k+(d-k)-2e_0.
\]
Since $e(T)\le\binom d2$, necessarily $e_0=0$ and $T$ is a
clique. Every vertex of $T$ then has exactly one neighbor among
$a,b$, with $k$ adjacent to $a$ and $d-k$ to $b$.
The induced graph is therefore
$K_1+K_k+K_{d-k}+K_1$, which is $M_d$ up to reversal.
It has no cut vertex because $k,d-k\ge2$. No internal vertex has
an outside neighbor, and neither endpoint can be bypassed in the
ordering. It is consequently a maximal subgraph without a cut
vertex, hence an actual block.
\end{proof}

The following lemma packages the local bounds. It is stated for
arbitrary nondecreasing vectors: only the later global comparison
uses the Fiedler equations.

\begin{lemma}\label{lem:even-certificate}
Consider consecutive difference indices in an increasing ordering
of a $d$-regular graph. Each of the two boundary positions is either
an end of the ordering or a separating vertex of type $P$ or $Q$;
in particular, the entire ordering is allowed.
For a one-ended subgraph it is enough that all vertices away from its
separating boundary have degree $d$.
There is a partition into intervals of differences of lengths at most
$a_d$, with positive numbers $R_I$, such that
\begin{align}
 \Delta_I&\le R_I\max_{i\in I} c_i,\label{eq:even-certificate}\\
 R_I&\le T_I=\frac{|I|}{\beta}+\psi_I^- -\psi_I^+.\notag
\end{align}
Here $c_i=\sum_jK_{ij}a_j$, $\psi=\phi(k)$ at a recorded small cut
of size $k$, and $\psi=0$ at a free partition point. The values of
$\psi$ agree on the two sides of a common partition point.
The numbers $R_I$ are bounded above by a constant depending only on $d$.
There is a constant $\gamma_d>0$ such that each interval is either
strict, with $R_I\le T_I-\gamma_d$, or has length $D$ and ends in
$P$ or $Q$. In the latter case, if it starts in $P$ or $Q$, its two
types agree and it is a standard gap.

If an initial segment of length $L$ contains no standard gap, then
at least $c_dL-C_d$ of the partition intervals contained in this
segment are strict. The same assertion holds for the whole interval
between two separating vertices when it contains no standard gap.
\end{lemma}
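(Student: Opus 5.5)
The plan is to build the partition greedily from the recorded positions and then cite the interval lemmas case by case. First, record every small cut as in \Cref{sec:even-cuts}, with singles and adjacent pairs typed $(*,k)$ or $(h,k)$. By \eqref{eq:even-cut}, consecutive small groups are at distance at least $d$, or exactly $D$ when the right end is a pair. The boundary positions are either free ends or separating vertices of type $P$ or $Q$; these are themselves recorded pairs. For a one-ended subgraph, only the degree counts at vertices away from the separating boundary enter \eqref{eq:even-cut}--\eqref{eq:even-two}, so the same inequalities hold there. The natural intervals are the stretches between consecutive recorded positions. Any stretch of length at least $2m=d+6$ between small ends, and any free-end stretch, is cut into pieces of length at most $2m$ plus high runs. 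The high runs are chopped into runs of length about $d/2$, so every length stays at most some $a_d$. For each piece, the partition point is a recorded small cut with $\psi=\phi(k)$, or a free point with $\psi=0$. The $\psi$-values therefore telescope and agree across common points, which gives the form of $T_I$.

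Next, verify $R_I\le T_I$ by cases on the length $r$ and the right type. If $r=D$ and the right end is a pair with $g+p=d$, use \Cref{lem:even-shortest-separator}. If $r\ge D+1$ with such a right end, use \Cref{lem:even-longer-separator}. A single right cut of size two is handled by \Cref{lem:even-single-two}. For $d\le r\le d+5$, the remaining paired ends with $g+l\ge d+2$ and single cuts of size $l\ge4$ fall under \Cref{lem:even-uniform-short}. Long intervals and free ends are covered by \Cref{lem:even-long-free}, together with \Cref{lem:even-half}, \Cref{lem:even-high-runs} and \Cref{lem:even-single-end} for the subdivided pieces. Each lemma yields a weight certificate or a bound $Q_I\ge\Delta_I^2/R$. \Cref{lem:even-local-methods} then converts either one into \eqref{eq:even-certificate}. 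The $R_I$ are bounded because there are finitely many weight patterns for bounded lengths.

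For the uniform gap $\gamma_d$, note that a fixed $d$ allows only finitely many interval configurations, namely lengths at most $a_d$ and endpoint sizes in $\{2,\dots,d-2\}$. Every case with strict inequality has a positive difference $T-R$. Here I take the explicit certificate's $T-R$, or, in the shortest case, the compactness argument already made in \Cref{lem:even-shortest-separator}. The minimum over these finitely many cases is $\gamma_d$. The only non-strict cases have length $D$ and end in $P$ or $Q$. If such an interval also starts in $P$ or $Q$, the two types agree, and \Cref{lem:even-standard-block} identifies it as an $M_d$ block.

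The main obstacle is the final counting assertion. Take an initial segment of length $L$ with no standard gap. Every non-strict interval then has length $D$, ends in $P$ or $Q$, and starts at a type that is not in $\{P,Q\}$ or whose type differs. The next interval to its right starts in $P$ or $Q$. If that interval were also non-strict, it would have to end in the same type and hence be a standard gap, which is excluded. So the next interval is strict, or else it is the last one in the segment. Thus among every two consecutive intervals at least one is strict, apart from a bounded boundary effect. Since all lengths are at most $a_d$, the segment contains at least $L/a_d-2$ intervals. Hence at least $L/(2a_d)-C_d$ of them are strict, and I take $c_d=1/(2a_d)$. The same argument applies between two separating vertices. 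The delicate point is checking that the type at the right end of a non-strict interval is exactly the left type of the following one, and handling a pair type straddling the partition point. The convention that the type is recorded at the last cut of a group ensures this.
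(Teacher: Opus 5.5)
Your overall route is the paper's. You partition at the recorded small groups, subdivide long stretches to bound the lengths, certify each piece by a local lemma, and get $\gamma_d$ from finiteness. You then count strict intervals by noting that a nonstrict interval can only be followed by a strict one, which is essentially the paper's count. The gap is in the partition itself. That construction is the real content of this lemma, and you leave it underdetermined in a way that can fail.

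A piece made only of high cuts can be certified only if it is long enough. For $u$ high differences, \Cref{lem:even-high-runs} gives $R=(u+1)/(2d)$, which exceeds $T=u/\beta$ when $2u<d-2$. For a single high difference with $q_i=d$, the required inequality $\Delta_I\le T_I\max_{i\in I}c_i$ is false: take every other difference zero, so $c_i=da_i$, while $T_I=1/(2(d-2))<1/d$. Short high runs are harmless only when absorbed into a piece that contains a small end, as in the proof of \Cref{lem:even-long-free}. Your recipe (cut stretches of length at least $2m$ into pieces of length at most $2m$ plus high runs, chopped into runs of length about $d/2$) does not say what happens when the leftover run is shorter than $d/2$. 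A stretch of length $2m+1$ with $m$ differences split off at each end is an example. Likewise, \Cref{lem:even-long-free} covers one-free-end pieces only when $r\ge d$. You never check that your free-end pieces, including those at a genuine end of the ordering, have this length.

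The paper closes this with explicit thresholds. Put $h=d/2$. A stretch between small groups is kept whole if shorter than $2d+h$. Otherwise exactly $d$ differences are split off at each small end. The middle high run, now of length at least $h$, is cut into pieces of lengths $h$ to $2h-1$ (write its length as $qh+t$ with $0\le t<h$ and add $t$ to one part). Before the first and after the last small group, the threshold is $d+h$, with $d$ differences kept at the small end. The bound $q_i\ge i(D-i)\ge d$ for $i\le d$ shows that these end pieces have length at least $d$. Every piece is then literally a case of \Cref{lem:even-shortest-separator,lem:even-longer-separator,lem:even-single-two,lem:even-uniform-short,lem:even-long-free}, with $a_d=5d/2$.

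Two smaller points. First, a paired right end forces separation at least $D$, not exactly $D$; that is why \Cref{lem:even-longer-separator} is needed. Second, compactness is required in every case whose strictness was proved only for the quadratic form, not just the shortest one. An example is the join with a high part of length one in \Cref{lem:even-long-free}, where the numerical difference $T-R$ is zero.
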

\begin{proof}
Put $h=d/2$. Between two successive last cuts of small groups,
keep the whole interval if its length is less than $2d+h$.
Otherwise keep the first and last $d$ differences as separate
intervals and partition the middle high run into lengths from
$h$ to $2h-1$. This is possible: write its length as $qh+t$,
where $0\le t<h$, and add $t$ to one of the $q$ parts.

Before the first small group and after the last, keep the whole
interval if its length is less than $d+h$. Otherwise keep $d$
differences at the small end and partition the remaining high run
in the same way. The first and last $d$ cuts of a full ordering
are high, since $q_i\ge i(D-i)\ge d$ when one side has
$i\le d$ vertices. The same argument applies at a genuine end of
a subgraph whose nonboundary vertices have degree $d$.
If there is no small cut, partition the whole high run; its length
is at least $h$ in all applications here.

These rules cover every difference index exactly once. Each interval
has length at most $a_d=5d/2$. Intervals with two small ends have
length at least $d$, and at least $D$ if the right end is paired.
Those with one small end have length at least $d$, and high-only
intervals have length at least $h$. The preceding local lemmas
therefore cover every possible interval.

The inequalities \eqref{eq:even-method1} and
\eqref{eq:even-method2} prove the first assertion for every increasing
vector. Neither method uses the eigen-equation. All the estimates
and strictness checks needed for $R_I\le T_I$ were proved in
\Cref{lem:even-shortest-separator,lem:even-longer-separator,lem:even-single-two,lem:even-uniform-short,lem:even-long-free}. Only finitely
many sizes, endpoint types and lengths occur for fixed $d$. Compactness on
$\{a_i\ge0:\sum a_i=1\}$ in the strict quadratic-form cases gives
one positive $\gamma_d$ for all strict intervals.

Every interval not proved strict ends in $P$ or $Q$, and two
successive intervals of this kind make the second a standard gap.
The first such interval is also standard if it starts at the given
separating vertex. Therefore, in a segment without standard gaps,
at most one more interval is nonstrict than strict. The intervals
contained in an initial segment cover all but at most $a_d$
differences of that segment. Their number is at least
$L/a_d-1$, which proves the last assertion after changing constants.
\end{proof}

\subsection{Comparison graphs and an upper bound}\label{sec:upper}
We now give explicit end completions for the comparison
chains of \cite[Theorem~1.8 and Table~1]{AbGhDiam}. This gives a
regular graph at every sufficiently large order and the $O_d(n^{-3})$
error term needed for the exact-minimizer arguments.

For this construction, allow any even $d\ge4$, and put $D=d+1$ and $\beta=2(d-2)$.

For $D\le N\le2D-1$ and $p\in\{2,d-2\}$, start with the graph on
$\{0,\ldots,N-1\}$ in which each vertex is joined to its $d/2$ nearest
neighbors on either side in the cyclic order. Equivalently, the allowed
differences modulo $N$ are $\pm1,\ldots,\pm d/2$.
Delete the disjoint edges $01,23,\ldots,(p-2)(p-1)$ and join all
$p$ of their endpoints to a new vertex $r$. Call this graph $B_{N,p}$,
and call $r$ its root.

\begin{lemma}\label{lem:comparison-completions}
For $D\le N\le2D-1$ and $p\in\{2,d-2\}$, the graph $B_{N,p}$
is simple and connected. Its root has degree $p$, and every other
vertex has degree $d$. Deleting any one remaining edge of difference
one leaves it connected.
\end{lemma}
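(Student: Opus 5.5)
We check the three assertions of \Cref{lem:comparison-completions} directly from the circulant description of $B_{N,p}$.

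\emph{Simplicity and degrees.} Let $C$ be the circulant graph on $\mathbb Z_N$ with connection set $\{\pm1,\ldots,\pm d/2\}$.

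Since $N\ge D=d+1$, the $d$ residues $\pm1,\ldots,\pm d/2$ are nonzero and pairwise distinct modulo $N$. Indeed, $j\equiv -j'$ with $1\le j,j'\le d/2$ would need $N\le d$. Hence $C$ is a simple $d$-regular graph.

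The deleted edges $01,23,\ldots,(p-2)(p-1)$ are genuine edges of difference one. They are pairwise disjoint because $p-1<N$.

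Deleting them lowers the degree of each of the $p$ vertices $0,\ldots,p-1$ by exactly one. Joining each of these vertices to the new vertex $r$ restores its degree to $d$ and gives $r$ degree $p$. All other vertices are untouched, and no multiple edge arises because $r$ is new.

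\emph{Connectivity after deleting one more difference-one edge.} We prove this stronger statement first; connectivity of $B_{N,p}$ itself then follows immediately.

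Since $d\ge4$, the difference-two edges $\{i,i+2\}$ are present in $C$ and none of them is deleted.

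\begin{itemize}
\item If $N$ is odd, the difference-two edges already form a single Hamiltonian cycle $0,2,4,\ldots$ on $\mathbb Z_N$.
\item If $N$ is even, they form two cycles, one on the even residues and one on the odd residues.
\end{itemize}

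In the even case we must join the two cycles. The difference-one edges between even and odd residues number $N$. At most $p/2+1$ of them are removed: the $p/2$ deleted edges together with the one extra deleted edge. Since $N\ge D>d/2+1\ge p/2+1$, at least one difference-one edge survives and joins the two cycles.

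Alternatively, for $d\ge6$ the difference-three edges also join opposite parities, and none of them is touched.

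In both cases the vertex set $\mathbb Z_N$ stays connected. The root $r$ is adjacent to vertex $0$, so the whole graph is connected. Since no edge of difference two or more was used, this argument also shows that $B_{N,p}$ itself is connected.

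\emph{Bookkeeping.} The only point needing care is the count of surviving cross-parity edges for even $N$ when $d=4$, where there are no difference-three edges.

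For $d=4$ we have $p=2$, so only one edge is deleted in the construction and at most one more afterwards. At least $N-2\ge 3$ difference-one edges remain, which suffices.
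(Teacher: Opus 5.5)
Your proof is correct and follows the paper's argument: the undeleted difference-two edges give one spanning cycle for odd $N$ and two for even $N$, and in the even case at least $N-p/2-1>0$ difference-one edges survive to join the two cycles. Your explicit check that $\pm1,\ldots,\pm d/2$ are distinct nonzero residues modulo $N$ spells out what the paper only asserts, and your separate $d=4$ bookkeeping is harmless but already covered by the general count.
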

\begin{proof}
Each endpoint of a deleted edge loses one neighbor and gains the
root as a neighbor, so every nonroot degree remains $d$. The root
has exactly the $p$ specified neighbors. The construction is simple.
To verify connectedness, observe that the edges of difference two form one
spanning cycle for odd $N$ and two cycles for even $N$. None of those
edges is deleted. In the even case, some edges of difference one still
join the two cycles. This remains true after deleting any one further
edge of difference one, since at least $N-p/2-1>0$ such edges remain.
The root retains its incident edges.
\end{proof}

\begin{lemma}\label{lem:comparison-orders}
For every $n\ge5D$, a chain of $k\ge2$ consistently oriented $M_d$
blocks can be completed at its ends to a connected simple $d$-regular
graph $G_{n,d}$ of order $n$, where $n=Dk+O_d(1)$.
\end{lemma}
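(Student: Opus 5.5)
The plan is to attach a copy of $B_{N,p}$ at each end of the chain, identifying its root with the free outer singleton there, and to let the free parameter $N\in[D,2D-1]$ absorb the residue of $n$ modulo $D$.

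First I determine the degrees still missing at the two ends. A chain of $k$ consistently oriented $M_d$ blocks $K_1+K_2+K_{d-2}+K_1$ has $k(d+2)-(k-1)=Dk+1$ vertices. Every vertex has degree $d$ except the two outer singletons. The left singleton has degree $2$, coming from its $K_2$ neighbors, and the right one has degree $d-2$. So the left end needs $d-2$ more neighbors and the right end needs $2$.

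Next I complete the ends. At the left singleton, identify it with the root of a copy of $B_{N_1,d-2}$. By \Cref{lem:comparison-completions} that root has degree $d-2$ inside the completion, so the singleton gets total degree $2+(d-2)=d$. At the right singleton, identify it with the root of $B_{N_2,2}$, giving degree $(d-2)+2=d$. Every nonroot vertex of the completions already has degree $d$, and no new edges join a completion to the chain except at its root. The resulting graph is therefore simple and $d$-regular. It is connected because each piece is connected (again by \Cref{lem:comparison-completions}) and the pieces meet at cut vertices. Its order is
\[
 n=Dk+1+N_1+N_2,\qquad D\le N_1,N_2\le 2D-1 .
\]

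It remains to reach every $n\ge5D$. The sum $N_1+N_2$ takes every integer value in $[2D,4D-2]$, an interval of $2D-1\ge D$ consecutive integers. Hence for each $n$, I choose $N_1+N_2$ in this range with $N_1+N_2\equiv n-1\pmod D$, and then put $k=(n-1-N_1-N_2)/D$.

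The main obstacle is confirming that $k\ge2$. Taking the smallest admissible representative, $N_1+N_2\le 2D+D-1=3D-1$, which gives $k\ge(n-3D)/D\ge2$ when $n\ge5D$. The bound $N_1+N_2\le4D-2$ is what ensures that such a representative exists. Finally, $n=Dk+O_d(1)$ holds because $1+N_1+N_2\le4D$.
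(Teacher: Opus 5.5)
Your proof is correct and follows essentially the same construction as the paper: you attach $B_{N,d-2}$ at the left singleton and $B_{N',2}$ at the right one, and let the order $Dk+1+N+N'$ cover every residue modulo $D$. The only difference is that the paper fixes the right completion as $B_{D,2}$ and varies only $N$, so your second free parameter $N'$ is harmless but unnecessary.
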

\begin{proof}
Take a chain of $k$ consistently oriented $M_d$ blocks. Identify its
left singleton, of internal degree two, with the root of $B_{N,d-2}$,
and its right singleton with the root of $B_{D,2}$. The resulting graph
$G_{n,d}$ is simple, connected and $d$-regular, and has
\[
 n=kD+N+D+1.
\]
As $N$ runs through $D,\ldots,2D-1$, the right side covers every residue
modulo $D$. Thus the construction exists for every $n\ge5D$, with
$k\ge2$ and $n=Dk+O_d(1)$.
\end{proof}

We record the error term needed to apply stability to exact
minimizers. The calculation uses only the chain and bounded end
subgraphs; it does not assume a structure theorem for a minimizer.
\begin{lemma}\label{lem:comparison-upperbound}
There is a constant $A_d>0$ such that every comparison graph $G_{n,d}$
from \Cref{lem:comparison-orders} satisfies
\begin{equation}\label{eq:comparisonerror}
 \mu(G_{n,d})\le\frac{\beta\pi^2}{n^2}+\frac{A_d}{n^3}.
\end{equation}
\end{lemma}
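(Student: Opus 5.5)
We need an upper bound on $\mu(G_{n,d})$. The plan is to exhibit a test vector and apply \eqref{eq:centeredRayleigh}. The vector should be modeled on the path Fiedler vector of a suitably weighted path, so that the quadratic form is $\beta\pi^2/n^2$ to leading order with an $O_d(n^{-3})$ error.

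\textbf{Construction of the test vector.} Index the singletons of the chain by $s_0,\ldots,s_k$. Put $N'=n$ and use the continuous profile
\[
 f(\xi)=-\cos(\pi\xi/n),\qquad 0\le\xi\le n,
\]
assigning to each vertex a position $\xi$ along the chain. The singleton $s_j$ gets position $\xi_j=\ell+jD$, where $\ell$ is the number of vertices in the left completion $B_{N,d-2}$ (excluding the root). We give $s_j$ the value $f(\xi_j)$. Inside each $M_d$ block we use the harmonic extension of \Cref{lem:comparison-blockminimum}, so each block contributes exactly $\frac{\beta}{D}(f(\xi_j)-f(\xi_{j+1}))^2$. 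On each bounded end subgraph, every vertex gets the constant value of its root. This makes the end graphs contribute zero energy; the end values are $f(\xi_0)$ and $f(\xi_k)$, with $\xi_0=O_d(1)$ and $n-\xi_k=O_d(1)$.

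\textbf{Numerator.} The mean value theorem gives
\[
 (f(\xi_{j+1})-f(\xi_j))^2=D^2f'(\eta_j)^2 .
\]
Replacing the resulting sum by the integral $\int_0^n f'(\xi)^2\,d\xi=\pi^2/(2n)$ costs $O_d(n^{-2})$, because $f'$ is $O(n^{-1})$ and $f''$ is $O(n^{-2})$. Hence the numerator is
\[
 \frac{\beta}{D}\cdot D\int_0^n f'^2+O_d(n^{-2})=\frac{\beta\pi^2}{2n}+O_d(n^{-2}).
\]
For the error, the Riemann-sum error per block is $D^3\cdot O(n^{-3})$, summed over $n/D$ blocks.

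\textbf{Denominator.} The block-internal values are convex combinations of neighboring singleton values. So $\sum_v y_v^2$ equals $\sum_j (D\text{ vertices})\,f(\xi_j)^2$ up to the per-vertex error $|f(\xi)-f(\xi_j)|\cdot|f|=O(D/n)$, which sums to $O_d(1)$. Thus
\[
 \sum_v y_v^2=\int_0^n f^2+O_d(1)=\frac n2+O_d(1),
\]
the end subgraphs contributing $O_d(1)$ each. Likewise, $\sum_v y_v=\int_0^n f+O_d(1)=O_d(1)$ by symmetry of the cosine, so $\frac1n(\sum y_v)^2=O_d(n^{-1})$. Hence $\|\y-\bar y\1\|^2=\frac n2+O_d(1)$, and the quotient is
\[
 \frac{\beta\pi^2}{n^2}\bigl(1+O_d(n^{-1})\bigr).
\]

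\textbf{Main obstacle.} The delicate step is the Riemann-sum comparison. It has to be done carefully enough that the errors are genuinely $O_d(n^{-2})$ in the numerator and $O_d(1)$ in the denominator. In particular, the ends must be handled with care: the choice of $\xi_0$ and the constant extension near the endpoints must not lose a first-order term. Near the ends $f'$ is small, namely $O(\xi/n^2)$, so the truncations of length $O_d(1)$ cost only $O_d(n^{-3})$ in energy, and the argument goes through.
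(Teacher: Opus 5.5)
Your proposal is correct and is essentially the paper's argument. Both use a cosine profile at the singletons, the harmonic block extension of \Cref{lem:comparison-blockminimum}, constant values on the two bounded ends, and \eqref{eq:centeredRayleigh} with $\sum_v y_v=O_d(1)$ and $\sum_v y_v^2=n/2+O_d(1)$. The only difference is cosmetic: the paper samples $-\cos(\pi j/k)$ at $s_j$, so the block energies sum exactly to $\frac{2\beta k}{D}\sin^2\frac{\pi}{2k}$ and no Riemann-sum estimate is needed for the numerator, after which $Dk=n+O_d(1)$ converts $k$ to $n$ (and, as you leave implicit, the finitely many small orders are absorbed by enlarging $A_d$).
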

\begin{proof}
Choose successive endpoint values
$x_j=-\cos(\pi j/k)$ for $0\le j\le k$, extend them within each group
using \eqref{eq:comparisonperiod}, and extend the values constantly over
the end pieces. Denote the resulting vector by $\y$. Its quadratic form
is
\begin{align*}
 \y^\top L(G)\y
 &=\frac{\beta}{D}\sum_{j=0}^{k-1}(x_{j+1}-x_j)^2\\
 &=\frac{2\beta k}{D}\sin^2\frac\pi{2k}
 =\frac{\beta\pi^2}{2Dk}+O_d(k^{-3}).
\end{align*}
Assign $D$ vertices to each repeated group, counting a common endpoint
only in the next group. Replacing all their values by the first endpoint
value changes the sum of their components and the sum of their squares
by at most $C_d|x_{j+1}-x_j|$. Since $|x_j|\le1$ and
$\sum_j|x_{j+1}-x_j|=2$, the total error is $O_d(1)$.
The elementary cosine sums therefore give
\begin{align*}
 \sum_i y_i&=O_d(1),\\
 \sum_i y_i^2&=\frac n2+O_d(1),\\
 \|\y-\bar y\1\|^2&=\frac n2+O_d(1).
\end{align*}
The $O_d(1)$ end pieces do not change any of these estimates.
Equations \eqref{eq:Rayleigh} and \eqref{eq:centering} now give
\eqref{eq:comparisonerror} after increasing $A_d$ if necessary.
\end{proof}
Thus an exact minimizer satisfies the upper hypothesis needed for
\Cref{thm:evenmain}, with $\e=O_d(n^{-1})$. This is the only
numerical input required for the exact structural theorem.

\section{Structure of graphs with small algebraic connectivity}
\label{sec:evenstructure}\label{sec:evensharp}\label{sec:even-global}
The comparison upper bound is enough to begin the structural proof.
We first treat all graphs close to that bound, without exact minimality.

\begin{theorem}\label{thm:evenmain}
Fix an even $d\ge6$ and put $\beta=2(d-2)$. There are constants
$C_d,n_d$ such that, if $G$ is $d$-regular of order $n\ge n_d$ and
\[
 n^2\mu(G)\le\beta\pi^2+\e,\qquad 0\le\e\le1,
\]
then actual blocks isomorphic to $M_d$, on one path of the
block-tree, cover at least
\begin{equation}\label{eq:even-cover}
 n-C_d(\e+n^{-1})^{1/3}n
\end{equation}
vertices. Every exact minimizer therefore has
$n-O_d(n^{2/3})$ vertices in these blocks and satisfies
$\dm(G)\ge3n/(d+1)-O_d(n^{2/3})$.
\end{theorem}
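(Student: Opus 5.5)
Take a mean-zero unit Fiedler vector $\x$ of $G$, order the vertices so that $x_1\le\cdots\le x_n$, and apply \Cref{lem:even-certificate} to the whole ordering. This gives a partition into intervals $I$ with $\Delta_I\le R_I\max_{i\in I}c_i$. By \Cref{lem:even-prefix-identities}, $c_i=\mu f_i$. The plan is to convert this into a comparison with a path. Since each $|I|\le a_d$ and $|f_{i+1}-f_i|=|x_{i+1}|=O_d(n^{-1/2})$ by \Cref{lem:smallentries}, we have $\max_{I}f_i\le \bar f_I+O_d(n^{-1/2})$, where $\bar f_I$ is an average over $I$.

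Define a path vector $\y$ of length $N=\sum_I|I|/\beta\cdot\beta=\,n$ (more precisely, a path on roughly $n\beta^{-1}\sum T_I$ steps). Spread the increment $\Delta_I$ uniformly over $T_I\beta$ consecutive path steps, using $R_I\le T_I$ and the telescoping of $\psi$ at common partition points, so that $\sum_I T_I=(n-1)/\beta+O(1)$. Then the summed Fiedler identities play the role of the path eigen-equation.

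Concretely, I would compare $\mu=\sum_i a_ic_i=\mu\sum a_if_i$ with the inequalities $\Delta_I\le R_I\mu \max f$. Multiplying by $\Delta_I/R_I$ and summing gives
\[
\sum_I \frac{\Delta_I^2}{R_I}\le \mu\sum_I \Delta_I\max_{I}f ,
\]
while the left side dominates the energy of $\y$ on a path of length $\beta n/\beta\cdot$ (scale $T_I$) with weight $\beta$. Summation by parts, using $f_i-f_{i-1}=-x_i$, turns the right side into $\mu\|\x\|^2+$ error, so
\[
\beta\,\y^\top L(P_{n'})\y\le\mu(1+O_d(n^{-1}))
\]
with $n'\approx n$. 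Combined with \eqref{eq:pathineq} and $p_{n'}\approx\pi^2/n^2$, this already reproduces the lower bound $\mu\ge\beta\pi^2/n^2-O(n^{-3})$. Under the hypothesis $n^2\mu\le\beta\pi^2+\e$, the hypotheses of \Cref{lem:pathstable} then hold with $\theta=\e+n^{-1}$ (after checking that $\|\y-\bar y\1\|^2$ is within $O(\theta)$ of $1$, again via the averaging errors).

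The losses also have to be fed back. Each strict interval costs $\gamma_d$ in its denominator, i.e.\ it effectively contracts $\gamma_d\beta$ path steps to zero difference. Equivalently, we obtain a set $A$ of path indices with $\sum_{A}(y_{i+1}-y_i)^2$ of order $\theta/n^2$ per unit, which makes $|A|$ proportional to the number of strict intervals. The last clause of \Cref{lem:pathstable} gives $|A|\le C'\theta^{1/3}n$, so at most $O_d(\theta^{1/3}n)$ intervals are strict. By the final paragraph of \Cref{lem:even-certificate}, any segment of length $L$ free of standard gaps contains $c_dL-C_d$ strict intervals. Hence all but $O_d(\theta^{1/3}n)$ differences lie in runs of consecutive standard gaps. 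By \Cref{lem:even-standard-block}, these are actual $M_d$ blocks sharing separating vertices, and since the separating vertices are ordered they lie on one path of the block-tree. This gives \eqref{eq:even-cover}.

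For exact minimizers, \Cref{lem:comparison-upperbound} gives $\e=O_d(n^{-1})$, so $\theta^{1/3}n=O_d(n^{2/3})$. Each block contributes three to the distance between its singletons, and the number of blocks is $(n-O(n^{2/3}))/D$, which yields the diameter bound.

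The main obstacle is the middle step: making the intervalwise inequality $\Delta_I\le R_I\max c_i$ into a genuine Rayleigh-quotient comparison on a path, with quantitative loss. The strict savings must be turned into many small path increments, and the error from replacing $\max_I f$ by an average must be controlled at order $n^{-3}$ and not merely $o(n^{-2})$. I would first attempt the weighted Cauchy--Schwarz form $\sum_I\Delta_I^2/T_I$ with the slack $T_I-R_I\ge\gamma_d$ kept explicitly as a separate nonnegative term.
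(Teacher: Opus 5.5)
Your plan is the paper's proof. It uses the piecewise-linear comparison function with shifted interval lengths $\beta T_I$ (the paper's positions $a-\beta\phi(k)$; since $\beta T_I$ need not be an integer, interpolate continuously and sample at integers). It then uses the bound $\sum_I\Delta_I^2/R_I\le\mu+O_d(n^{-3})$, \Cref{lem:pathstable} with $\theta=\e+n^{-1}$ including its cubic counting clause, and \Cref{lem:even-standard-block}. The averaging error you list as an obstacle is already settled by your own summation by parts: it is at most $\mu\cdot O_d(n^{-1/2})\sum_ia_i=O_d(n^{-3})$.

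The step you leave open must be done as in your last sentence, not by ``contracting $\gamma_d\beta$ steps to zero difference''. Since $\beta\gamma_d$ can be smaller than one, such a flat piece need not contain a single whole path step. Instead, retain $\Delta_I^2/R_I\ge\Delta_I^2/T_I+\gamma'_d\Delta_I^2$ on strict intervals, which yields $\kappa_d\int_{\mathcal B}Y'^2\le C_d\theta n^{-2}$. Then let $A$ be the set of integers $j$ with $[j,j+1]$ inside one shifted strict interval, so that $\sum_{j\in A}b_j^2\le\int_{\mathcal B}Y'^2$ by Cauchy--Schwarz on unit intervals. To make $|A|$ comparable with the total strict length, you need every shifted length to be at least $3$, as checked in \Cref{lem:even-shifted-path}. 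An interval of length $\ell\ge3$ contains at least $\ell-2\ge\ell/3$ whole unit steps.

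Applying the bound $c_dL-C_d$ separately to each run free of standard gaps loses $C_d$ per run, so you must also bound the number of runs. This is immediate: a run that follows a standard gap begins with a strict interval, because a nonstrict interval starting in $P$ or $Q$ is itself standard. The paper instead counts directly: every nonstrict, nonstandard interval is either the first interval or immediately preceded by a strict one.
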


\subsection{Comparison with a path at shifted positions}
The boundary correction in \eqref{eq:even-T} has a direct geometric
meaning: moving the two endpoints turns $\beta T_I$ into the
length of the interpolation interval. This avoids estimating the
boundary terms one at a time.

\begin{lemma}\label{lem:even-shifted-path}
Fix $K>0$. Let $\x$ be an increasing mean-zero unit Fiedler vector
of a $d$-regular graph with $\mu(G)\le K/n^2$. Use the partition
from \Cref{lem:even-certificate} on the whole ordering.
There is an increasing piecewise-linear function $Y$ on $[1,n]$,
with $y_j=Y(j)$, such that
\begin{align}
 \|\x-\y\|&=O_{d,K}(n^{-1}),\notag\\
 \|\y-\bar y\1\|^2&=1+O_{d,K}(n^{-1}),\notag\\
 \mu(G)&\ge\beta\int_1^nY'(t)^2\,dt
   +\kappa_d\int_{\mathcal B}Y'(t)^2\,dt-C_{d,K}n^{-3}.
 \label{eq:even-shifted-statement}
\end{align}
Here $\kappa_d>0$, and $\mathcal B$ is the union of the shifted
exceptional intervals. Each shifted interval has length at least
three and at most a constant depending on $d$; its old and new
lengths are comparable by constants depending only on $d$.
\end{lemma}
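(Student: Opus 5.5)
The idea is to read the boundary corrections $\psi$ in \eqref{eq:even-certificate} as shifts of the partition points. Let the partition of \Cref{lem:even-certificate} for the whole ordering have consecutive partition points $p_0=1<p_1<\cdots<p_N=n$ in vertex positions. At each $p_j$ put the shifted position $s_j=p_j+\beta\psi(p_j)$. This is well defined because $\psi$ agrees on the two sides of a common partition point. Moreover $s_0=1$ and $s_N=n$, since $\psi=0$ at the free ends. By construction, the shifted length of $I=[p_j,p_{j+1})$ is $s_{j+1}-s_j=|I|+\beta\psi_I^--\beta\psi_I^+=\beta T_I$.

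Let $Y$ be the piecewise-linear function with $Y(s_j)=x_{p_j}$, linear between consecutive $s_j$. It is nondecreasing because $\x$ is ordered; a tiny strictly increasing perturbation of size $O(n^{-3})$ can be added if strict monotonicity is wanted. We need $s_{j+1}-s_j\ge 3$. Every interval has length at least $d/2$ and $|\psi|\le\phi(2)<1/4$, so $\beta T_I\ge |I|-\beta/2$. This gives the bound directly for long intervals. For short ones I would read it off the case list: each short interval has $|I|\ge d$ and $\beta T_I\ge d-\beta/2\cdot O(1)\ge 3$ for $d\ge6$, and where this is tight I would fall back on the explicit values of $T$ in \Cref{lem:even-shortest-separator,lem:even-single-two}. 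The upper bound and the comparability of old and new lengths follow because all lengths are at most $a_d$ and all $\psi$ are bounded.

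The main step is the energy inequality. Summing by parts with $f_i-f_{i-1}=-x_i$, and using $\sum x_i=0$ and $\|\x\|=1$, gives $\sum_i a_if_i=\sum_i x_i^2=1$. Hence, by \Cref{lem:even-prefix-identities},
\[
 \mu=\mu\sum_i a_if_i=\sum_I\sum_{i\in I}a_i\,\mu f_i .
\]
Let $F_I=\max_{i\in I}f_i$. Since $|f_{i+1}-f_i|=|x_{i+1}|=O(n^{-1/2})$ by \Cref{lem:smallentries} and $|I|\le a_d$, every $f_i$ with $i\in I$ is within $O_d(n^{-1/2})$ of $F_I$. The error in replacing $f_i$ by $F_I$ is therefore at most
\[
 \mu\cdot O_d(n^{-1/2})\sum_I\Delta_I=O_{d,K}(n^{-2}\cdot n^{-1/2}\cdot n^{-1/2})=O_{d,K}(n^{-3}),
\]
using $\sum_I\Delta_I=x_n-x_1=O(n^{-1/2})$. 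Now \eqref{eq:even-certificate} with $c_i=\mu f_i$ gives $\mu F_I\ge\Delta_I/R_I$, so
\[
 \mu\ge\sum_I\frac{\Delta_I^2}{R_I}-Cn^{-3}.
\]
For a nonstrict interval, $R_I\le T_I$. For a strict one, $R_I\le T_I-\gamma_d$ with $T_I$ bounded, so $1/R_I\ge(1+\kappa_d/\beta)/T_I$. Since $Y'=\Delta_I/(\beta T_I)$ on the shifted interval, $\Delta_I^2/T_I=\beta\int Y'^2$ over it. Summing over intervals yields \eqref{eq:even-shifted-statement}, with $\mathcal B$ the union of the shifted strict intervals. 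I expect the delicate points to be keeping the $O(n^{-3})$ error uniform and checking the length-$\ge3$ claim in the shortest paired cases.

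Finally, set $y_j=Y(j)$. A position $j\in I$ lies within distance $O_d(1)$ of the shifted interval of $I$ or of a neighbouring one, so $|x_j-y_j|\le\Delta_{I'}+\Delta_I+\Delta_{I''}$ over at most three adjacent intervals. Each $I$ contains $O_d(1)$ positions, so $\|\x-\y\|^2\le C_d\sum_I\Delta_I^2$. The bound $\Delta_I\le R_I\mu F_I$ and \eqref{eq:even-certificate} also give $\sum_I\Delta_I^2\le C_d\sum_I\Delta_I^2/R_I\le C_d(\mu+Cn^{-3})=O_{d,K}(n^{-2})$, hence $\|\x-\y\|=O_{d,K}(n^{-1})$. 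Since $\x$ is a mean-zero unit vector, \eqref{eq:centering} and the triangle inequality then give $\|\y-\bar y\1\|^2=1+O_{d,K}(n^{-1})$.
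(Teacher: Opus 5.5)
Your argument is essentially the paper's proof. You shift each partition point by the boundary correction so that the shifted length of $I$ is $\beta T_I$, interpolate linearly, and combine \eqref{eq:even-certificate} with $c_i=\mu f_i$ and $\sum_I\sum_{i\in I}a_ic_i=\mu$. Your error control is a harmless variant of the paper's pointwise bound $a_i=O(n^{-3/2})$ with per-interval $O(n^{-4})$ errors: you use the global sum $\sum_I\Delta_I=x_n-x_1=O(n^{-1/2})$, and the energy bound $\sum_I\Delta_I^2=O(n^{-2})$ for $\|\x-\y\|$.

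Two slips need fixing.

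First, the sign of the shift is wrong. With $s_j=p_j+\beta\psi(p_j)$ you get $s_{j+1}-s_j=|I|+\beta(\psi_I^+-\psi_I^-)$, which is not $\beta T_I$. You need $s_j=p_j-\beta\psi(p_j)$, as in the paper's $a-\beta\phi(k)$.

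Second, the length bound should not go through $\phi(2)<1/4$, which only yields $\ell_I\ge|I|-(d-2)\ge2$. Use instead $\beta|\psi|\le\beta\phi(2)=(d-4)/2$. This gives:
\begin{itemize}
\item $\ell_I\ge|I|-(d-4)\ge4$ when both ends are small, since then $|I|\ge d$;
\item $\ell_I\ge d/2+2$ when exactly one end is small;
\item $\ell_I=|I|\ge d/2\ge3$ for high-only pieces, whose endpoints are free with $\psi=0$.
\end{itemize}
This also shows that the shifted points stay in order. No appeal to the explicit values of $T$ in the shortest-interval lemmas is needed.
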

\begin{proof}
By \Cref{lem:smallentries}, $\max_i|x_i|=O_{d,K}(n^{-1/2})$.
The prefix sum in \eqref{eq:prefixsum} has magnitude at most
$\sqrt n/2$. Since $c_i=\mu f_i$, $q_i\ge2$, and $c_i\ge q_i a_i$,
we have
\begin{align}
 0\le a_i&\le\frac{\mu(G)\sqrt n}{4}=O_{d,K}(n^{-3/2}),
 \label{eq:even-errors}\\
 |c_j-c_i|&=O_{d,K}(n^{-5/2})\quad\text{if }|i-j|=O_d(1).\notag
\end{align}
In the rest of this proof, $K$ is fixed and absorbed into the constants. For fixed $d$, only finitely many positive values of $T_I$ occur.
By decreasing $\gamma_d$ and increasing the chosen $R_I$ when
necessary, we may therefore assume
$c_d\le R_I\le T_I\le C_d$ for some $c_d>0$, while retaining
$R_I\le T_I-\gamma_d$ on every strict interval.

\emph{The contribution of one interval.}
For an interval $I$ beginning at $a$, let
$E_I=\sum_{i\in I}a_i c_i$. By \eqref{eq:even-errors},
\begin{equation}\label{eq:even-localerror}
 E_I=c_a\Delta_I+O_d(n^{-4}).
\end{equation}
Either local method gives
$\Delta_I\le R\max_{i\in I}c_i=R c_a+O_d(n^{-5/2})$.
Multiplying by $\Delta_I=O_d(n^{-3/2})$ and using
\eqref{eq:even-localerror}, we obtain
\begin{equation}\label{eq:even-localfinal}
 E_I\ge\frac{\Delta_I^2}{T}-O_d(n^{-4}).
\end{equation}
On a strict interval the bound improves by
$\gamma'_d\Delta_I^2$, with $\gamma'_d>0$ independent of $I$.
Indeed, \Cref{lem:even-certificate} gives $R_I\le T_I-\gamma_d$,
and both denominators are positive and bounded above. Thus
$1/R_I-1/T_I\ge\gamma'_d$. We call these intervals exceptional.
Every other interval has length $D$ and the endpoint-type restriction
in that lemma.

\emph{The corrected positions.}
At each original endpoint $a$ of the partition, prescribe the value
$x_a$ at the new position $a-\beta\phi(k)$ if the endpoint is a
small cut of size $k$. At a free endpoint retain position $a$.
The new length corresponding to an old interval is
\[
 \ell_I=r+\beta\phi(k)-\beta\phi(l)=\beta T.
\]
Since $|\beta\phi(k)|\le(d-4)/2$, the new positions remain in
order. Intervals with two small ends have $\ell_I\ge4$; those with
one small end also have $\ell_I\ge4$; high-only intervals have
$\ell_I=r\ge d/2\ge3$. All new lengths are bounded in terms of $d$.
The first and last positions are $1,n$.

Interpolate linearly between these prescribed values, obtaining an
increasing continuous function $Y$ on $[1,n]$. Put $y_j=Y(j)$ and
$b_i=y_{i+1}-y_i$, as in the companion paper.
Each $Y(j)$ is between original components at indices a bounded
distance from $j$. It follows from \eqref{eq:even-errors} that
\begin{align}
 \|\x-\y\|&=O_d(n^{-1}),\label{eq:even-norm}\\
 \|\y-\bar y\1\|^2&=1+O_d(n^{-1}).\notag
\end{align}
Let $\mathcal B$ be the union of the moved exceptional intervals.
Since $\int_I Y'^2=\Delta_I^2/\ell_I$, summing
\eqref{eq:even-localfinal} gives
\begin{equation}\label{eq:even-globalenergy}
 \mu(G)\ge\beta\int_1^nY'(t)^2\,dt
             +\kappa_d\int_{\mathcal B}Y'(t)^2\,dt-C_dn^{-3},
\end{equation}
where $\kappa_d>0$. Here $\sum_I E_I=\mu(G)$ by
\eqref{eq:even-energy} and \eqref{eq:even-b}; cross terms between
intervals have not been counted independently or discarded with
an uncontrolled sign. There are $O(n)$ intervals, which accounts
for the summed error $O_d(n^{-3})$.
On each unit interval Cauchy--Schwarz gives
\[
 (y_{j+1}-y_j)^2\le\int_j^{j+1}Y'(t)^2\,dt.
\]
Consequently,
\begin{equation}\label{eq:even-discrete-energy}
 \mu(G)\ge\beta\,\y^\top L(P_n)\y-C_dn^{-3}.
\end{equation}
Together with the norm estimate, this completes the comparison.
\end{proof}

\subsection{Near equality and the actual blocks}
\begin{proof}[Proof of \Cref{thm:evenmain}]
Put $\theta=\e+n^{-1}$ and apply \Cref{lem:even-shifted-path} with
$K=\beta\pi^2+1$. The path inequality and the norm estimate give
\begin{equation}\label{eq:even-excess}
 \beta\{\y^\top L(P_n)\y-p_n\|\y-\bar y\1\|^2\}
 +\kappa_d\int_{\mathcal B}Y'^2
 \le C_d\theta n^{-2}.
\end{equation}
Both terms are nonnegative. Thus the norm and energy hypotheses of
\Cref{lem:pathstable} hold. In particular, writing
$b_i=y_{i+1}-y_i$ and $h_i=v_{i+1}-v_i$, we have
\begin{equation}\label{eq:even-pathclose}
 \sum_i(b_i-h_i)^2\le C_d\theta/n^2.
\end{equation}
If $\theta$ is bounded below by a fixed positive constant, the
conclusion is trivial after increasing $C_d$. We may therefore
use the small-$\theta$ assertion of the path lemma.

Let $A$ consist of the integer indices $j$ for which $[j,j+1]$
is contained in one moved exceptional interval. Cauchy--Schwarz on
these disjoint unit intervals and \eqref{eq:even-excess} give
\[
 \sum_{j\in A}b_j^2\le\int_{\mathcal B}Y'^2
                       \le C_d\theta/n^2.
\]
The last assertion of \Cref{lem:pathstable} now gives
$|A|\le C_d\theta^{1/3}n$ directly. A moved interval of length
$\ell\ge3$ contains at least $\ell-2\ge\ell/3$ complete unit
intervals. The interiors of the unit intervals counted for different
partition intervals are disjoint. Hence
\[
 |\mathcal B|\le3|A|=O_d(\theta^{1/3}n).
\]
Old and moved lengths are comparable, so the original exceptional
intervals also have total length $O_d(\theta^{1/3}n)$.

Every remaining interval has length $D$ and right type $P$ or $Q$.
If its left type is neither, the preceding interval is exceptional,
unless it is the first interval. Discard these further intervals.
At most one follows each exceptional interval; their bounded lengths
and the first interval add only $O_d(\theta^{1/3}n)$ differences.
Every interval now left is standard. By
\Cref{lem:even-standard-block}, it is an actual $M_d$ block, and
the separating endpoints place all selected blocks on one path of
the block-tree.

For completeness, passing from differences to vertices loses no
additional order of error. Each standard interval contains $D$
differences and its $D+1$ consecutive vertices. The union of these
vertex sets has at least as many vertices as there are selected
differences. Since all but $O_d(\theta^{1/3}n)$ differences were
selected, the covered vertex count is \eqref{eq:even-cover}.

Finally, \Cref{lem:comparison-upperbound} permits
$\e=A_d/n$ for an exact minimizer, once $n$ is sufficiently large.
The exceptional-vertex bound is then $O_d(n^{2/3})$.
Each selected block must be traversed between its singleton endpoints,
a distance of three. There are at least
$n/D-O_d(n^{2/3})$ selected blocks, proving the stated diameter
bound. No numerical lower bound for the minimum has been used in
this deduction.
\end{proof}

\section{The structure of \texorpdfstring{$\mu$}{mu}-minimal graphs}\label{sec:even-exact}
In this section, we prove \Cref{thm:evenfull} for $d\ge6$. We first
bound the exceptional intervals, then place them at the ends, and
finally exclude branching. Throughout, $d$ is fixed and even,
$D=d+1$, and $\beta=2(d-2)$. We retain the function
$\phi$ and the types $P,Q$ from \Cref{sec:even-cuts}.
For $k\in\{2,d-2\}$, write
\[
 C_k=K_1+K_k+K_{d-k}+K_1.
\]
This is an oriented copy of $M_d$. Its left and right singleton
vertices have degrees $k$ and $d-k$, respectively. When subgraphs
are joined at a singleton, these two degrees must add to $d$.
We will keep track of this condition in every replacement.

Recall that a standard gap is a length-$D$ interval joining
separating vertices of matching type $P$ or $Q$. By
\Cref{lem:even-standard-block}, it is an actual oriented block $C_k$.
The arbitrary-vector interval comparison, including a fixed loss on
strict intervals, was proved once in \Cref{lem:even-certificate}.
We now use it with prescribed endpoint values and degrees.

\subsection{Effective resistance with prescribed boundary degrees}
Call a connected simple graph $H$ with distinct specified vertices
$s,t$ \emph{admissible of type $(k,l)$} if
\begin{align}
 \deg_H(s)&=k,\label{eq:even-terminaldegrees}\\
 \deg_H(t)&=d-l,\notag\\
 \deg_H(v)&=d\quad(v\ne s,t),\notag
\end{align}
where $k,l\in\{2,d-2\}$.
Write $R_H$ for the reciprocal of the minimum of
$\mathcal E_H(\z)=\sum_{uv\in E(H)}(z_u-z_v)^2$ under $z_s=0,z_t=1$.
The minimizing vector is unique and satisfies the zero eigen-equation
at every other vertex. Its values lie in $[0,1]$, by taking a maximum
or minimum in those equations. Multiplying it by $R_H$ gives a
vector whose endpoint difference and quadratic form both equal $R_H$.
For every cut between consecutive values, the sum of the component
differences over the edges crossing that cut is one. This follows by
summing the zero eigen-equations and the equation at the first terminal. We refer to this as the unit-current vector, using only these
stated identities.

\begin{theorem}\label{thm:even-terminal}
If $H$ is admissible of type $(k,l)$ and has $m$ vertices, then
\begin{equation}\label{eq:even-terminal}
 R_H\le\frac{m-1}{\beta}+\phi(k)-\phi(l).
\end{equation}
Equality holds precisely when $k=l$ and $H$ is a chain of copies of
$C_k$, joined at their singleton vertices, with $s,t$ the outer
singletons.
\end{theorem}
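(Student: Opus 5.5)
Take the unit-current vector $\z$ of $H$, scaled so that $z_s=0$ and $z_t=R_H$, and order the vertices of $H$ by nondecreasing $\z$, placing $s$ first and $t$ last. Ties are broken so that $s$ and $t$ remain at the ends. Then run the local machinery of \Cref{sec:local} on this ordering. Write $a_i$ for the consecutive differences, $q_i$ for the cut counts and $c_i=\sum_jK_{ij}a_j$ for the current across cut $i$.

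The key identity replaces \eqref{eq:even-b}. Summing the zero eigen-equations over an initial segment, together with the equation at $s$, shows $c_i=1$ for every cut $i$. This is exactly the unit-current property recorded before the theorem. So the sum over all differences is $\sum_ia_i=R_H$. On the other hand, \eqref{eq:even-certificate} gives $\Delta_I\le R_I\max_{i\in I}c_i=R_I\le T_I$ on every interval of the partition from \Cref{lem:even-certificate}. That lemma applies here because it is stated for arbitrary nondecreasing vectors and one-ended or two-ended subgraphs whose nonboundary vertices have degree $d$.

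Next I check the boundary. At $s$ the cut $q_1$ equals $\deg_H(s)=k$, and that cut is small. The vertex $s$ itself is a separating position whose $k$ right neighbours play the role of a type-$(d-k,k)$ endpoint. Since $k\in\{2,d-2\}$, this is $P$ or $Q$ up to orientation, and its $\psi$-value is $\phi(k)$. Similarly, at $t$ the last cut has size $d-l$, giving $\psi$-value $\phi(l)$, so $-\phi(l)$ appears at the right. I expect this matching of conventions to be the fiddliest bookkeeping: one must check that the certificate's boundary treatment at a "separating vertex of type $P$ or $Q$" corresponds exactly to the terminal degrees \eqref{eq:even-terminaldegrees}, with the correct sign of $\phi$.

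Summing over the partition telescopes the $\psi$-terms, because the values agree at common partition points. There are $m-1$ differences in total, so
\[
R_H=\sum_I\Delta_I\le\sum_IT_I=\frac{m-1}{\beta}+\phi(k)-\phi(l),
\]
which is \eqref{eq:even-terminal}.

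For the equality case, equality forces $\Delta_I=R_I=T_I$ on every interval. Since every strict interval satisfies $R_I\le T_I-\gamma_d$, no interval is strict. Here I use that $\sum_I\Delta_I=R_H>0$, and that the strict-improvement statement in the local lemmas was for nonzero nonnegative differences; an interval with $\Delta_I=0$ and $T_I>0$ already loses. Hence every interval has length $D$ and ends in $P$ or $Q$. The first interval starts at the separating vertex $s$ of type $P$ or $Q$, so by \Cref{lem:even-certificate} all intervals are standard gaps with a single common type. Then \Cref{lem:even-standard-block} identifies each one as an actual oriented block $C_k$, and consecutive blocks share separating singletons. The common type forces $k=l$, and $H$ is the stated chain.

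Conversely, for such a chain \Cref{lem:comparison-blockminimum} shows that each block has resistance $D/\beta$. Resistances add in series, giving $R_H=(m-1)/\beta$, which equals the right-hand side when $k=l$.

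The main obstacle is justifying the equality analysis for possible zero differences and ties in the ordering. Ties could make an interval have $\Delta_I=0$, so I would first argue that the harmonic $\z$ with unit current has no edge-free cut. Each cut carries current $1$, so $q_i\ge1$ and $\max_{i\in I}c_i=1$ is attained. This lets the compactness bound apply to every interval.
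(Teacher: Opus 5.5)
Your outline is the paper's proof: the unit-current vector ordered with $s$ first and $t$ last, $c_i=1$ on every cut, \Cref{lem:even-certificate}, telescoping, the same equality induction, and the series converse. But the step you deferred as ``bookkeeping'' is a real gap.

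\Cref{lem:even-certificate} concerns a segment of an increasing ordering of a $d$-regular graph. Its two boundaries are separating vertices of type $P$ or $Q$, that is, vertices lying between two adjacent small cuts whose sizes sum to $d$. The weaker hypothesis ``nonboundary vertices of degree $d$'' is granted only for one-ended subgraphs, and even there, in \Cref{lem:even-endloss}, the paper completes the root.

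In $H$ itself there is no cut before $s$ and none after $t$, and $\deg_H(t)=d-l$. So the boundary inequalities are not available as stated. For example, the reversed overlap bound at the right boundary cut counts the last vertex with full degree $d$ and $l$ edges beyond it.

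Applied literally to $H$, the partition rules go wrong at both ends.
\begin{itemize}
\item At $s$ they see only a single small cut of size $k$. So $s$ is not a $P$/$Q$ boundary, and your equality step ``the first interval starts at a separating vertex of type $P$ or $Q$'' has no basis.
\item At $t$ they record the single cut of size $d-l$ before $t$, whose $\psi$-value is $\phi(d-l)=-\phi(l)$. This leaves a one-difference run after it that no local lemma covers. For $l=2$ its required denominator $1/\beta-\phi(2)=1/(d-2)-1/4$ is not even positive.
\end{itemize}
Your sentence ``the last cut has size $d-l$, giving $\psi$-value $\phi(l)$'' assigns the right number. But by the paper's convention that value belongs to a cut of size $l$ after $t$, which does not exist in $H$.

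The missing device is the paper's regular completion.
\begin{itemize}
\item At $s$, attach a disjoint $K_{d+1}$ with a matching of size $(d-k)/2$ deleted, and join $s$ to the $d-k$ endpoints of that matching.
\item At $t$, do the same with a matching of size $l/2$.
\end{itemize}
The resulting graph is simple, connected and $d$-regular. Place the left completion before $s$ with a constant value below $0$, and the right completion after $t$ with a constant value above $R_H$.

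Then the two cuts at $s$ have sizes $d-k,k$ and those at $t$ have sizes $d-l,l$. So both terminals are genuine separating vertices of type $P$ or $Q$, with recorded sizes $k$ and $l$. No completion edge crosses a cut inside $H$, so $c_i=1$ still holds there. Now \Cref{lem:even-certificate} applies verbatim, giving $\psi=\phi(k)$ at $s$ and $\psi=\phi(l)$ at $t$.

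With this inserted, the rest of your argument goes through as you wrote it, including the equality analysis (all intervals standard with a common type, hence $k=l$ and a chain of $C_k$) and the converse.
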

\begin{proof}
The internal degree condition implies $m\ge d+2$. Indeed, an internal
vertex needs $d$ neighbors, and if $m=d+1$, every internal vertex
would be adjacent to both terminals, forcing terminal degrees at
least $d-1$.

For the cut counts one may complete $H$ to a regular graph as follows.
At $s$ attach a disjoint $K_D$ with a matching of size $(d-k)/2$
deleted, joining $s$ to all endpoints of that matching. At $t$ do
the same with a matching of size $l/2$. The resulting graph is simple,
connected and $d$-regular. Put the new left vertices before $s$ and
the new right vertices after $t$. At $s$ the two adjacent cuts have
sizes $d-k,k$; at $t$ they have sizes $d-l,l$.

Order the unit-current vector on $H$ increasingly, putting $s$ first
and $t$ last among any ties. Extend it by a constant below zero on
the left completion and by a constant above $R_H$ on the right.
Across every cut within $H$, $c_i=1$: sum the zero eigen-equations
on the internal vertices on one side. The completion edges do not
cross these cuts. \Cref{lem:even-certificate} gives
\[
 R_H=\sum_I\Delta_I\le\sum_I R_I
 \le\sum_I T_I=\frac{m-1}{\beta}+\phi(k)-\phi(l).
\]
The boundary terms cancel at every interior partition point.

If equality holds, there is no strict interval. The first interval
starts in $P$ or $Q$, so it is standard. Induction gives the same
conclusion for every interval, with unchanged orientation. These
intervals cover all differences from $s$ to $t$. The degree count
for a standard gap identifies the whole graph as the stated chain.
Conversely, one copy of $C_k$ has coefficient
\begin{equation}\label{eq:even-cellresistance}
 r_C=\frac1k+\frac1{k(d-k)}+\frac1{d-k}
     =\frac D\beta.
\end{equation}
These coefficients add at shared singleton vertices. A chain of
$q$ copies has $m-1=qD$, proving equality.
\end{proof}

The boundary terms in \eqref{eq:even-terminal} must be retained.
In particular, it is not enough to replace the odd-degree coefficient
by $\beta$ in a two-terminal inequality. They will also enter the
interchange calculation below.

\subsection{An end subgraph and its replacement}
We compare end subgraphs through their contribution at the root.
The replacement test below uses the number of negative eigenvalues,
so it does not require a separate mean correction for a trial vector.

We use the following matrix identity for fixed boundary values.
\begin{lemma}\label{lem:boundary-square}
Let
\begin{align*}
 M&=\begin{pmatrix}C&-J\\-J^\top&B\end{pmatrix},\\
 S&=C-JB^{-1}J^\top,
\end{align*}
where $M$ is real symmetric and $B$ is positive definite.

Then
\begin{equation}\label{eq:boundary-square}
 \begin{pmatrix}\bu\\\bv\end{pmatrix}^{\!\top}
 M\begin{pmatrix}\bu\\\bv\end{pmatrix}
 =\bu^\top S\bu+
 (\bv-B^{-1}J^\top\bu)^\top B(\bv-B^{-1}J^\top\bu).
\end{equation}
For fixed $\bu$, the minimum is therefore $\bu^\top S\bu$, attained
at $\bv=B^{-1}J^\top\bu$. Moreover, $M$ and $S$ have the same number
of negative eigenvalues, counted with multiplicity.
\end{lemma}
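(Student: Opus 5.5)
The plan is to prove the identity \eqref{eq:boundary-square} by completing the square, and then to obtain the inertia statement from a congruence. Both parts use the same block elimination. Put $\bw=B^{-1}J^\top\bu$.

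\emph{The identity.} Expand the quadratic form:
\[
 \bu^\top C\bu-2\bu^\top J\bv+\bv^\top B\bv.
\]
On the other hand,
\[
 (\bv-\bw)^\top B(\bv-\bw)=\bv^\top B\bv-2\bv^\top J^\top\bu+\bu^\top JB^{-1}J^\top\bu.
\]
This uses the symmetry of $B$, hence of $B^{-1}$. Adding $\bu^\top S\bu=\bu^\top C\bu-\bu^\top JB^{-1}J^\top\bu$ to the second expression reproduces the first, which proves \eqref{eq:boundary-square}.

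\emph{The minimum.} Since $B$ is positive definite, the second term is nonnegative. It vanishes exactly when $\bv=\bw$. So for fixed $\bu$ the minimum over $\bv$ is $\bu^\top S\bu$, attained uniquely at $\bv=B^{-1}J^\top\bu$.

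\emph{The inertia.} Rewrite the identity in matrix form as a congruence. Let
\[
 P=\begin{pmatrix}I&0\\-B^{-1}J^\top&I\end{pmatrix},
\]
which is invertible. Then
\[
 M=P^\top\begin{pmatrix}S&0\\0&B\end{pmatrix}P,
\]
which is exactly \eqref{eq:boundary-square} read as a polarized identity. Equivalently, one can multiply out the product directly.

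By Sylvester's law of inertia, $M$ and $\mathrm{diag}(S,B)$ have the same number of negative eigenvalues. Since $B$ has none, that number equals the number of negative eigenvalues of $S$.

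There is no genuine obstacle here. The only point needing care is that $S$ is symmetric, which it is because $C$ is symmetric (as a diagonal block of $M$) and $JB^{-1}J^\top$ is symmetric. This is what makes Sylvester's law applicable.
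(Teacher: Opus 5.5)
Your proposal is correct and follows essentially the same route as the paper. It uses direct expansion for \eqref{eq:boundary-square} and positive definiteness of $B$ for the minimum. For the inertia claim, your congruence $M=P^\top\mathrm{diag}(S,B)P$ is exactly the paper's invertible change $(\bu,\bv)\mapsto(\bu,\bv-B^{-1}J^\top\bu)$, followed by Sylvester's law of inertia and the fact that $B$ has no negative eigenvalues.
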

\begin{proof}
Direct expansion gives \eqref{eq:boundary-square}. The minimum follows
because $B$ is positive definite. For the last assertion, the invertible change
$(\bu,\bv)\mapsto(\bu,\bv-B^{-1}J^\top\bu)$ makes the quadratic
form block diagonal with blocks $S,B$. An invertible change
preserves the largest dimension of a subspace on which a quadratic
form is negative definite. By diagonalization, this dimension is
its number of negative eigenvalues, and $B$ has none. This is the
case of Sylvester's law of inertia needed here; see
\cite[Theorem~24.2.2]{Spielman}.
\end{proof}
The matrix $S$ is the \emph{Schur complement}; see
\cite[Section~12.7]{Spielman}.

The following lemma gives the corresponding small-parameter formula.
\begin{lemma}\label{lem:boundary-expansion}
Suppose a quadratic
form with fixed boundary values has minimum $q(0)$ at the vector
$\mathbf f_0$ of free values, and its excess at $\mathbf f_0+\mathbf w$
is $\mathbf w^\top B\mathbf w$. Let $\omega$ be the fixed boundary
contribution to the squared norm. If $B-tI$ is positive definite,
the minimum after subtracting $t$ times the squared norm is
\begin{equation}\label{eq:boundary-expansion}
 q(t)=q(0)-t(\|\mathbf f_0\|^2+\omega)
             -t^2\mathbf f_0^\top(B-tI)^{-1}\mathbf f_0.
\end{equation}
The minimizing change is $\mathbf w=t(B-tI)^{-1}\mathbf f_0$.
\end{lemma}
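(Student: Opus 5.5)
The plan is to reduce to the unshifted case by completing the square, exactly as in \Cref{lem:boundary-square}. Write the quadratic form with the boundary values fixed as a function of the free vector $\mathbf f=\mathbf f_0+\mathbf w$. By hypothesis the form equals $q(0)+\mathbf w^\top B\mathbf w$. There is no linear term in $\mathbf w$, because $\mathbf f_0$ is the minimizer; if needed I will verify this from the expansion about $\mathbf f_0$ together with the stationarity of the minimum.

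The squared norm of the whole vector is the fixed boundary part $\omega$ plus
\[
 \|\mathbf f_0+\mathbf w\|^2=\|\mathbf f_0\|^2+2\mathbf f_0^\top\mathbf w+\mathbf w^\top\mathbf w .
\]
The shifted functional is therefore
\[
 q(0)-t(\|\mathbf f_0\|^2+\omega)-2t\,\mathbf f_0^\top\mathbf w+\mathbf w^\top(B-tI)\mathbf w .
\]

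Since $B-tI$ is positive definite, I will complete the square in $\mathbf w$. Put $\mathbf w_*=t(B-tI)^{-1}\mathbf f_0$. Then
\[
 \mathbf w^\top(B-tI)\mathbf w-2t\,\mathbf f_0^\top\mathbf w
 =(\mathbf w-\mathbf w_*)^\top(B-tI)(\mathbf w-\mathbf w_*)-t^2\mathbf f_0^\top(B-tI)^{-1}\mathbf f_0 .
\]
The first term on the right is nonnegative and vanishes only at $\mathbf w=\mathbf w_*$. This gives the minimum formula \eqref{eq:boundary-expansion} and identifies $\mathbf w_*$ as the unique minimizing change.

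The argument is purely algebraic, with no real obstacle. The only point needing care is the absence of a linear term in the excess, which is exactly what the hypothesis "excess $\mathbf w^\top B\mathbf w$" encodes. I would also remark that symmetry of $B$ is used when completing the square; $B$ may be taken symmetric without loss of generality, since only its quadratic form enters.
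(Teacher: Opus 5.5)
Your proof is correct and is essentially the paper's. The paper isolates the same $\mathbf w$-dependent terms $\mathbf w^\top(B-tI)\mathbf w-2t\,\mathbf f_0^\top\mathbf w$ and reads off their minimum and minimizer from the Schur-complement identity \eqref{eq:boundary-square}, which is exactly the completion of the square you carry out by hand. One small caution on your closing remark: $B$ should be understood as the symmetric matrix of the excess, as it is in the paper's applications, rather than symmetrized ``without loss of generality'', because the conclusion involves $(B-tI)^{-1}$, which the quadratic form alone does not determine.
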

\begin{proof}
The terms depending on $\mathbf w$ are
$\mathbf w^\top(B-tI)\mathbf w-2t\mathbf f_0^\top\mathbf w$.
Their minimum is the last term in \eqref{eq:boundary-expansion},
by \eqref{eq:boundary-square}, and the equality condition there gives
the stated minimizing change.
\end{proof}

An admissible end subgraph is a connected simple graph with a root $r$ of degree
$p\in\{2,d-2\}$ and $b$ other vertices, all of degree $d$.
The root is shared with the rest of the graph. Put
\begin{align*}
 A_H&=L(H)[V(H)\setminus\{r\}],\\
 T_H(t)&=\1^\top(A_H-tI)^{-1}\1.
\end{align*}
Here $A_H$ is a principal submatrix, so its diagonal entries are $d$.
If $\mathbf a$ indicates the neighbors of $r$, then $A_H\1=\mathbf a$
and $\1^\top A_H\1=p$.

\begin{lemma}\label{lem:even-response}
The smallest eigenvalue of $A_H$ is at least $b^{-2}$.
For $0\le tb^2\le1/2$, the vector
$\bu=(A_H-tI)^{-1}\1$ is positive and satisfies
\begin{align}
 \max_vu_v&\le2b^2,\label{eq:even-responsebound}\\
 T_H(t)&\le2b^3.\notag
\end{align}
Also $T_H(0)\le T_H(t)\le T_H(0)/(1-tb^2)$, $T_H(0)\le b^3$, and
\begin{align}
 \|(A_H-tI)^{-1}\|&\le2b^2,\label{eq:resolventbounds}\\
 \|(A_H-tI)^{-1}\1\|^2&\le4b^5,\notag\\
 0\le T_H(t)-T_H(0)&\le2tb^5.\notag
\end{align}
The matrix norm is its Euclidean operator norm.
For every $0\le t<\lambda_{\min}(A_H)$, if the component at the
root is fixed at $a$, the vector minimizing
$\mathcal E_H- t\sum_{v\ne r}x_v^2$ is
\begin{equation}\label{eq:even-rootvector}
 \x_{H-r}=a\{\1+t(A_H-tI)^{-1}\1\},
\end{equation}
and the minimum is $-a^2\{bt+t^2T_H(t)\}$.
\end{lemma}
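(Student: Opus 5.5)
We establish the spectral lower bound $\lambda_{\min}(A_H)\ge b^{-2}$ first; everything else then follows from resolvent expansions together with the identities $A_H\1=\mathbf a$ and $\1^\top A_H\1=p$.

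\emph{The lower bound.} Write $A_H=L(H-r)+\mathrm{diag}(\mathbf a)$, where $\mathbf a$ is the neighbor indicator of $r$. Then for $\z$ on $V(H)\setminus\{r\}$,
\[
 \z^\top A_H\z=\mathcal E_H(\z')
\]
with $\z'$ the extension of $\z$ by $z'_r=0$. Take a unit $\z$ and a vertex $v$ with $|z_v|\ge b^{-1/2}$. Since $H$ is connected, there is a path of length at most $b$ from $r$ to $v$. Cauchy--Schwarz along this path gives
\[
 \z^\top A_H\z\ge\frac{z_v^2}{b}\ge b^{-2}.
\]

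\emph{Positivity and the bounds for $\bu$.} For $tb^2\le1/2$ we have $\|(A_H-tI)^{-1}\|\le(b^{-2}-t)^{-1}\le2b^2$. The matrix $A_H$ is an irreducible M-matrix: it is $H-r$ with Dirichlet condition at $r$, and $H-r$ is connected or each component meets $\mathbf a$. Hence $(A_H-tI)^{-1}$ is entrywise nonnegative for $t<\lambda_{\min}$, by the Neumann series of $(sI-N)^{-1}$ with $N$ the nonnegative off-diagonal part. Irreducibility componentwise makes it positive, so $\bu>0$.

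For the sup bound, note $\|\bu\|\le2b^2\|\1\|=2b^{5/2}$, which is exactly the second display in \eqref{eq:resolventbounds}. For the entrywise bound \eqref{eq:even-responsebound} I would instead use a maximum-principle argument. Let $w$ be the exit time to $r$ of the lazy/continuous walk. Then $A_H^{-1}\1$ is bounded by the commute-type quantity (resistance to $r$ times $\1^\top\cdots$). More simply, $u_v=\sum_w G(v,w)$ with $G=(A_H-tI)^{-1}$, and $G(v,w)\le G(w,w)^{1/2}G(v,v)^{1/2}\le 2b^2\cdot$\dots. I expect this to be the one fiddly point. The first attempt: at $t=0$, $G(v,w)\le G(w,w)=$ effective resistance from $w$ to $r\le b$. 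So $u_v\le b^2$, and for $t>0$ the Neumann comparison gives $G_t\le G_0/(1-tb^2)$ entrywise. This yields $\max u\le2b^2$ and $T_H(t)\le b\cdot2b^2$.

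\emph{Monotonicity and the comparison of $T_H$.} The function $T_H(t)=\sum_j\langle\1,\psi_j\rangle^2/(\lambda_j-t)$ is increasing in $t$, and $\lambda_j/(\lambda_j-t)\le1/(1-tb^2)$, which gives the two-sided bound. At $t=0$, $T_H(0)=\sum u_v\le b\cdot b^2$. For the difference,
\[
 T_H(t)-T_H(0)=t\,\1^\top A_H^{-1}(A_H-tI)^{-1}\1\le t\|A_H^{-1}\1\|\,\|\bu\|\le t\cdot b^{5/2}\cdot2b^{5/2}.
\]

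\emph{The root vector.} Fix $x_r=a$ and write $\x_{H-r}=a\1+\mathbf w$. Then
\[
 \mathcal E_H(\x)=\mathbf w^\top A_H\mathbf w,
\]
since constants have zero energy and the root has $\mathbf w$-value zero. The objective becomes
\[
 \mathbf w^\top(A_H-tI)\mathbf w-2ta\1^\top\mathbf w-ta^2b.
\]
\Cref{lem:boundary-expansion}, or direct completion of squares using $t<\lambda_{\min}$, gives the minimizer $\mathbf w=ta(A_H-tI)^{-1}\1$. This is \eqref{eq:even-rootvector}, and the minimum is $-a^2\{bt+t^2T_H(t)\}$.
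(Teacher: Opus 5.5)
Your proposal is correct, but it reaches the entrywise bound, positivity, and the root vector by routes different from the paper's.

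\textbf{Entrywise bound.} You argue electrically. At $t=0$ the Dirichlet Green's function satisfies $G_0(v,w)\le G_0(w,w)=R_{\rm eff}(w,r)\le b$, by the maximum principle and a path of length at most $b$. Hence $u_v(0)\le b^2$, and you then perturb in $t$. The paper stays with the equations $(L(H)\bu)_v=1+tu_v$. Summing them over the upper side of each cut bounds every edge difference by $b(1+tU)$, and a path from $r$ to a maximizing vertex gives $U\le b^2(1+tU)$. This covers every $t$ with $tb^2\le1/2$ at once, with no perturbation step and no effective-resistance facts.

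\textbf{Positivity.} You use the M-matrix expansion $(sI-N)^{-1}=\sum_k N^k/s^{k+1}$ with $s=d-t$, together with connectivity of each component of $H-r$. The paper instead observes that replacing $\z$ by $|\z|$ does not increase $\frac12\z^\top Q\z-\1^\top\z$.

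\textbf{Root vector.} You shift by $a\1$, so $T_H(t)$ appears immediately. The paper minimizes $\z^\top(A_H-tI)\z-2a\mathbf a^\top\z+pa^2$ and then needs the identity $\mathbf a^\top(A_H-tI)^{-1}\mathbf a=p+bt+t^2T_H(t)$. Your shift is the more direct of the two.

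Your eigenvalue bound and the spectral estimates for $T_H$ are essentially the paper's.

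\textbf{Two points to tighten.} First, the sentence ``the Neumann comparison gives $G_t\le G_0/(1-tb^2)$ entrywise'' asserts a matrix inequality that $G_t=\sum_{k\ge0}t^kG_0^{k+1}$ does not give directly. It would need $G_0^2\le b^2G_0$ entrywise, a Green's-function domination inequality you have not proved. You only need the vector version, which is immediate. Since $G_0\ge0$ entrywise and $G_0\1\le b^2\1$, induction gives $G_0^{k+1}\1\le b^{2k}G_0\1$. Hence $\bu\le\sum_k(tb^2)^k\,G_0\1\le2b^2\1$, and the series converges because $\|tG_0\|\le tb^2\le1/2$. State it this way, and delete the exploratory remarks about exit times and the Cauchy--Schwarz bound on $G(v,w)$.

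Second, $A_H$ is not irreducible when $H-r$ is disconnected. In that case $(A_H-tI)^{-1}$ is block-diagonal with positive blocks rather than positive. This still gives $\bu>\0$, which is what your ``componentwise'' remark intends.
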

\begin{proof}
Extend a vector by zero at $r$. Choose a path of at most $b$ edges
from each vertex to $r$; Cauchy--Schwarz and summation give
$\|\z\|^2\le b^2\mathcal E_H(\z)$. This proves the eigenvalue bound.
For $\lambda\ge b^{-2}$ and $0\le tb^2\le1/2$,
\[
 \frac1\lambda\le\frac1{\lambda-t}
 \le\frac1{1-tb^2}\frac1\lambda\le2b^2.
\]
Diagonalization gives the asserted bounds for $T_H(t)$ and the
first two estimates in \eqref{eq:resolventbounds}, using
$\|\1\|^2=b$. The last follows from
\[
 (A_H-tI)^{-1}-A_H^{-1}
 =tA_H^{-1}(A_H-tI)^{-1}.
\]

For any positive definite matrix $Q$ with nonpositive off-diagonal
entries, the minimizer of
$\frac12\z^\top Q\z-\1^\top\z$ is nonnegative: replacing $\z$ by
$|\z|$ does not increase the quadratic term and improves the linear
term if a component is negative. A zero component of its minimizer
would give $(Q\z)_v\le0$, whereas $Q\z=\1$. Thus
$Q^{-1}\1>\0$. Apply this to $Q=A_H-tI$.

Set $u_r=0$ and $U=\max u_v$. At each nonroot vertex,
$(L(H)\bu)_v=1+tu_v>0$. Summing over the upper side of a cut in the
increasing order bounds every edge difference by $b(1+tU)$.
A path from $r$ to a maximum therefore gives
$U\le b^2(1+tU)$, and hence $U\le2b^2$. Summing proves the other
bound in \eqref{eq:even-responsebound}.

With the root fixed at $a$, the quadratic expression is
$\z^\top(A_H-tI)\z-2a\mathbf a^\top\z+pa^2$.
By \eqref{eq:boundary-square}, its minimizer is
$a(A_H-tI)^{-1}\mathbf a$. Since $A_H\1=\mathbf a$,
this is \eqref{eq:even-rootvector}. Also
\[
 \mathbf a^\top(A_H-tI)^{-1}\mathbf a
 =p+bt+t^2T_H(t),
\]
by writing $A_H=(A_H-tI)+tI$ on both sides of the inverse.
Substitution gives the asserted minimum.
\end{proof}

\begin{lemma}\label{lem:even-replaceend}
Let $G$ have an admissible end subgraph $H$ with $b$ nonroot vertices,
and let $\x$ be a mean-zero unit Fiedler vector with $x_r\ne0$.
Put $\mu=\mu(G)$, and replace $H$ by an admissible end subgraph
$H'$ with the same $b$ and root degree. Suppose
\[
 \mu<\min\{\lambda_{\min}(A_H),\lambda_{\min}(A_{H'})\}.
\]
If $T_{H'}(\mu)>T_H(\mu)$, then the resulting connected regular
graph $G'$ has smaller algebraic connectivity.
\end{lemma}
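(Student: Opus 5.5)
The plan is to compare the inertias of $L(G)-\mu I$ and $L(G')-\mu I$ through a Schur complement that eliminates the nonroot vertices of the end subgraph, as \Cref{lem:boundary-square} suggests. Since $0$ is always an eigenvalue of a Laplacian and $\mu>0$, the matrix $L(G')-\mu I$ always has the negative eigenvalue $-\mu$. Hence $\mu(G')<\mu$ holds exactly when $L(G')-\mu I$ has at least two negative eigenvalues. So it suffices to produce a two-dimensional subspace on which the quadratic form of $L(G')-\mu I$ is negative definite. Note that $G'$ is connected and $d$-regular, because $H'$ is admissible with the same root degree $p$.

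\textbf{Step 1: Schur complement for $G$.} Let $W$ be the vertex set of $G$ outside $H-r$, so $r\in W$, and let $G_0$ be the graph induced on $W$. Write $L(G)-\mu I$ in block form with free block $B=A_H-\mu I$, which is positive definite by hypothesis. By \Cref{lem:boundary-square}, together with the root computation in \Cref{lem:even-response} (with $t=\mu$ and root value $a$), the Schur complement is
\[
 S=L(G_0)-\mu I_W-\bigl(b\mu+\mu^2T_H(\mu)\bigr)\,\mathbf e_r\mathbf e_r^\top .
\]
Here the $p$ edges at the root and the root's own norm term are kept in the $W$ part, so only the correction coming from the minimization over $H-r$ appears.

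\textbf{Step 2: the same for $G'$.} The same computation applied to $G'$, with $B'=A_{H'}-\mu I$, gives
\[
 S'=S-\mu^2\bigl(T_{H'}(\mu)-T_H(\mu)\bigr)\,\mathbf e_r\mathbf e_r^\top=S-c\,\mathbf e_r\mathbf e_r^\top,\qquad c>0 .
\]
By the inertia statement of \Cref{lem:boundary-square}, it is enough to show that $S'$ has at least two negative eigenvalues.

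\textbf{Step 3: the structure of $S$.} The Fiedler vector $\x$ satisfies the eigen-equations on $H-r$. Hence its restriction to $H-r$ is exactly the minimizer $B^{-1}J^\top\x_W$, and from \eqref{eq:boundary-square} we get $S\x_W=\0$. In particular, $\x_W\ne\0$ because $x_r\ne0$. Moreover, $L(G)-\mu I$ has exactly one negative eigenvalue, namely $-\mu$, so by inertia $S$ has exactly one negative eigenvalue. Let $\z$ be a corresponding eigenvector, with $\z^\top S\z<0$; it is orthogonal to $\x_W$.

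\textbf{Step 4: a negative definite plane for $S'$.} For $\bv=\alpha\z+\gamma\x_W$, the cross terms vanish because $S\x_W=\0$. Therefore
\[
 \bv^\top S'\bv=\alpha^2\,\z^\top S\z-c\,v_r^2\le\alpha^2\,\z^\top S\z .
\]
If $\bv^\top S'\bv=0$, then $\alpha=0$, and so $\bv^\top S'\bv=-c\gamma^2x_r^2$, which forces $\gamma=0$. Thus $S'$ is negative definite on this plane and has at least two negative eigenvalues, which proves $\mu(G')<\mu$.

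The main care point is Step 1: the bookkeeping of which terms (the root edges, the root norm, and the $-\mu$ on $W$) belong to the $W$ part and which to the correction. Once this is done correctly, the correction $-a^2\{b\mu+\mu^2T(\mu)\}$ from \Cref{lem:even-response} depends on $H$ only through $T_H(\mu)$. Everything else is identical for $H$ and $H'$, because they have the same $b$ and the same root degree $p$. The other delicate point is Step 3, identifying $\x_W$ as a null vector of $S$, which uses the eigen-equations exactly on $H-r$.
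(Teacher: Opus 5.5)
Your proof is correct and follows the paper's argument essentially step for step: the same Schur complement over $H-r$, the same rank-one shift $S'=S-\mu^2\bigl(T_{H'}(\mu)-T_H(\mu)\bigr)\mathbf e_r\mathbf e_r^\top$, the same null vector $\x_W$ of $S$, and the same negative definite plane spanned by $\x_W$ and a negative eigenvector of $S$. One small precision: $S\x_W=\0$ needs the full equation $(L(G)-\mu I)\x=\0$, including its rows on $W$; the eigen-equations on $H-r$ alone only give $\x_{H-r}=B^{-1}J^\top\x_W$.
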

\begin{proof}
Order the exterior vertices, including the root $r$, first. Put
$B_H=A_H-\mu I$ and let $\mathbf a$ indicate the neighbors of
$r$ in $H-r$. Then
\[
 L(G)-\mu I=
 \begin{pmatrix}C&-e_r\mathbf a^\top\\-\mathbf a e_r^\top&B_H\end{pmatrix}.
\]
The exterior block $C$ is unchanged by replacement: the exterior
edges and the degree of the root are unchanged. Both $B_H$ and
$B_{H'}$ are positive definite by hypothesis.
Writing $p=\deg_H(r)$ and
$\rho_H=\mathbf a^\top B_H^{-1}\mathbf a$,
\Cref{lem:even-response} gives
\begin{align*}
 \rho_H&=p+b\mu+\mu^2T_H(\mu),\\
 \eta:=\rho_{H'}-\rho_H&=\mu^2\bigl(T_{H'}(\mu)-T_H(\mu)\bigr)>0.
\end{align*}
By \eqref{eq:boundary-square}, the reduced exterior matrices are
$S=C-\rho_He_re_r^\top$ and $S'=S-\eta e_re_r^\top$.
Since $L(G)-\mu I$ has exactly one negative eigenvalue, so does
$S$. The exterior restriction $\z$ of $\x$ satisfies
$S\z=\0$ and $z_r\ne0$.

Choose a unit eigenvector $\bu$ of $S$ with eigenvalue
$-\sigma<0$; then $\bu\perp\z$. On their two-dimensional span,
\begin{equation}\label{eq:even-endchange}
 (\xi\bu+\zeta\z)^\top S'(\xi\bu+\zeta\z)
 =-\sigma\xi^2-\eta(\xi u_r+\zeta z_r)^2<0
 \quad\text{if }(\xi,\zeta)\ne(0,0).
\end{equation}
The first term is negative when $\xi\ne0$; otherwise the
second is negative because $z_r\ne0$. Thus $S'$ has at least
two negative eigenvalues. By \eqref{eq:boundary-square}, so does
$L(G')-\mu I$, proving $\mu(G')<\mu(G)$.
\end{proof}

\begin{lemma}\label{lem:even-endloss}
Fix $0<\xi\le1/2$. Order the vector from
\Cref{lem:even-response}, together with $u_r=0$, increasingly.
Suppose its first $\lfloor\xi b\rfloor$ differences contain no
standard gap. If $0\le tb^2\le1/2$, then
\begin{equation}\label{eq:even-endloss}
 T_H(t)\le\frac{b^3}{3\beta}-c_{d,\xi}b^3+C_{d,\xi}b^2+C_dt b^5,
\end{equation}
where $c_{d,\xi}>0$.
\end{lemma}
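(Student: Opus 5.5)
The plan is to read $T_H(t)=\1^\top\bu=\sum_v u_v$ as a sum over cuts of the increasing order of $\bu$, and then bound that sum with the interval partition of \Cref{lem:even-certificate}. Index the vertices $0,1,\ldots,b$ increasingly with the root at position $0$; this is consistent with the ordering, since $\bu>\0$ by \Cref{lem:even-response} and $u_r=0$. Put $a_i=u_{i+1}-u_i\ge0$. By Abel summation,
\[
 T_H(t)=\sum_{i=0}^{b-1}(b-i)\,a_i .
\]
Summing the equations $(L(H)\bu)_v=1+tu_v$ over the vertices above cut $i$ gives
\[
 c_i=\sum_{v>i}(1+tu_v)\le(b-i)(1+2tb^2),
\]
using \eqref{eq:even-responsebound}. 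To apply \Cref{lem:even-certificate} I will complete $H$ at the root exactly as in the proof of \Cref{thm:even-terminal}. That is, attach a copy of $K_D$ with $(d-p)/2$ matching edges deleted, place it before $r$, and extend the vector by a constant on it. The root then becomes a separating vertex with adjacent cuts of sizes $d-p$ and $p$, so its type is $P$ or $Q$. All nonroot vertices of $H$ have degree $d$, so the one-ended version of the certificate applies with the far end of the ordering free.

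The main estimate goes interval by interval. For an interval $I$ starting at $s_I$, the certificate gives
\[
 \Delta_I\le R_I\max_{i\in I}c_i\le R_I(b-s_I)(1+2tb^2).
\]
Since $|I|\le a_d$, the weights $b-i$ differ from $b-s_I$ by $O_d(1)$ on $I$. Hence
\[
 \sum_{i\in I}(b-i)a_i\le R_I(b-s_I)^2+O_d(b)+C_d\,t\,b^2(b-s_I)^2 .
\]
Summing over the $O(b)$ intervals, the errors are $O_d(b^2)+C_dtb^5$. For the main term I use $R_I\le T_I=|I|/\beta+\psi_I^--\psi_I^+$. The length parts give a Riemann sum for $\beta^{-1}\int_0^b(b-s)^2\,ds=b^3/(3\beta)$, with an error of $O_d(b^2)$. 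The boundary terms $\psi$ telescope: at a common partition point the two adjacent weights $(b-s)^2$ differ by $O_d(b)$, and $|\psi|=O_d(1)$. Over $O(b)$ points this costs only $O_d(b^2)$, and the terms at the root and at the free end are also $O_d(b^2)$.

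The saving comes from the hypothesis that the first $\lfloor\xi b\rfloor$ differences contain no standard gap. By the last assertion of \Cref{lem:even-certificate}, at least $c_d\xi b-C_d$ intervals inside that initial segment are strict, so $R_I\le T_I-\gamma_d$ on each of them. Each such interval has $s_I\le\xi b\le b/2$, so it saves at least $\gamma_d(b-s_I)^2\ge\gamma_d b^2/4$. In total this yields the negative term $-c_{d,\xi}b^3$ with $c_{d,\xi}=\gamma_dc_d\xi/4$, after absorbing $C_db^2$ into $C_{d,\xi}b^2$. Combining the three parts gives \eqref{eq:even-endloss}.

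I expect the main obstacle to be the bookkeeping around the completion and the $t$-term. One point is to check that the certificate's partition, which was applied to a full regular ordering, restricts correctly so that the root is a recorded small cut whose $\psi$-value is $\phi(p)$. Another is to check that the completion vertices add no differences; this holds because the vector is constant on them. Finally, the factor $(1+2tb^2)$ multiplies only $\max c_i$, which is why it produces the separate additive $C_dtb^5$ rather than interfering with the savings. If the restriction to the initial segment is awkward, I would instead sum the certificate over the whole ordering from $r$, and count strict intervals only among those with $s_I\le\xi b$.
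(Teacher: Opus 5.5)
Your proposal is correct and follows the paper's proof essentially step by step. The shared steps are the Abel summation $T_H(t)=\sum_i(b-i)a_i$, the completion at the root so that \Cref{lem:even-certificate} applies with a separating boundary of type $P$ or $Q$, the bound $\Delta_I\le R_I(b-s_I)(\cdots)$, the Riemann-sum and telescoping estimates for $\sum_I T_I(b-s_I)^2$, and the strict-interval saving in the prefix. The only cosmetic difference is in how you control the $t$-dependence: you use $c_i\le(b-i)(1+2tb^2)$ from $\max_v u_v\le 2b^2$, whereas the paper uses $c_i\le w_i+tT_H(t)$ together with $T_H(t)\le 2b^3$, and both give the $C_dtb^5$ term.
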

\begin{proof}
Index the root by $0$ and the other vertices by $1,\ldots,b$.
Put $a_i=u_{i+1}-u_i$, $w_i=b-i$. Summing the equations on the
upper side of cut $i$ gives
\begin{align*}
 c_i&=\sum_{v>i}(1+tu_v),\\
 T_H(t)&=\sum_{i=0}^{b-1}w_i a_i.
\end{align*}
The $c_i$ decrease, and $c_i\le w_i+tT_H(t)$.
Complete the root on its other side as in the terminal proof, so
its two cuts have sizes $d-p,p$. Apply
\Cref{lem:even-certificate}. If $a$ is the first index of $I$,
\[
 \Delta_I\le R_I\{w_a+tT_H(t)\}.
\]
Since $w_i\le w_a$ on $I$, it follows that
\begin{equation}\label{eq:even-weightedsum}
 T_H(t)\le\sum_I w_a^2 R_I
              +tT_H(t)\sum_I w_aR_I.
\end{equation}
The last sum is $O_d(b^2)$, since $R_I=O_d(1)$ and there are at most
$b$ intervals. The second term is therefore $O_d(tb^5)$, by
\eqref{eq:even-responsebound}.

For the first term use \eqref{eq:even-certificate}. Bounded interval
lengths give
\[
 \frac1\beta\sum_I |I|w_a^2
 =\frac1\beta\sum_{i=0}^{b-1}w_i^2+O_d(b^2)
 =\frac{b^3}{3\beta}+O_d(b^2).
\]
The sum $\sum_Iw_a^2(\psi_I^- -\psi_I^+)$ is also $O_d(b^2)$:
collect the terms at each common endpoint and use the boundedness
of $\psi$ and the total variation, at most $b^2$, of the decreasing
sequence $w_a^2$. The two outer terms have the same bound.
Finally, \Cref{lem:even-certificate} gives at least
$c_d\lfloor\xi b\rfloor-C_d$ strict intervals in the stated prefix.
At their first indices $w_a\ge b/2$. Their total saving is therefore
at least $c_{d,\xi}b^3-C_{d,\xi}b^2$. Substitute these estimates in
\eqref{eq:even-weightedsum} to obtain \eqref{eq:even-endloss}.
\end{proof}

\subsection{Regular comparison subgraphs at the same order}
\begin{lemma}\label{lem:even-comparisons}
For each $k,l\in\{2,d-2\}$ and every sufficiently large $m$,
there is an admissible two-terminal graph $J_m$ of type $(k,l)$ with
\begin{equation}\label{eq:even-comparisonresistance}
 R_{J_m}\ge\frac{m-1}{\beta}-C_d.
\end{equation}
For each $p\in\{2,d-2\}$ and every sufficiently large $b$, there is
an admissible end subgraph $E_b$ with root degree $p$ and $b$
nonroot vertices such that
\begin{equation}\label{eq:even-comparisonend}
 T_{E_b}(0)\ge\frac{b^3}{3\beta}-C_db^2.
\end{equation}
\end{lemma}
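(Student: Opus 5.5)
The plan is to build both objects from a long chain of oriented copies $C_k$ together with bounded "adapter" gadgets. The gadgets absorb the residue of $m-1$ (resp.\ $b$) modulo $D$ and fix the terminal or root degrees. Admissibility only requires exact degrees and simplicity, so the gadgets can be taken from the graphs $B_{N,p}$ of \Cref{lem:comparison-completions} (cyclic powers with $N$ from $D$ to $2D-1$), possibly modified by one extra pendant chain.

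For $J_m$ of type $(k,l)$, I take $s$ to be the left singleton of a chain of $q$ copies of $C_k$. If $k\ne l$, I switch orientation once in the middle. A $C_k$ followed by a $C_{d-k}$ shares a singleton of degree $(d-k)+(d-(d-k))=d$, so this is legitimate whenever the shared degrees add to $d$; I will check that the needed combination $k$ then $d-k$ produces right terminal degree $d-l$. To fix the order, I insert at one interior shared singleton (degree split $2+(d-2)$) a bounded subgraph in series: a copy of some $B_{N,\cdot}$-type two-rooted gadget, for instance $B_{N,2}$ with a second root produced by deleting another pair of difference-one edges. This inserted piece is connected in series, so resistance is additive along the series chain.

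The resistance lower bound follows from Rayleigh monotonicity. Shorting (identifying) all vertices of each bounded gadget can only decrease $R$, and after shorting the graph is a series chain of $q$ copies of $C_k$ or $C_{d-k}$, each of resistance $D/\beta$ by \eqref{eq:even-cellresistance}. Since $qD=m-1-O_d(1)$, this gives \eqref{eq:even-comparisonresistance}. To avoid quoting Rayleigh monotonicity, I can instead argue directly: the voltage vector that is linear across each cell with the harmonic internal values of \Cref{lem:comparison-blockminimum}, and constant on gadgets, is a test vector for the minimum energy $1/R$.

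For $E_b$, I take the root $r$ of degree $p$ as the left singleton of a chain of $q$ copies of $C_p$ (with orientation switches if needed), and close the far end with some $B_{N,p'}$ whose root is identified with the last singleton, as in \Cref{lem:comparison-orders}. Here $N$ is chosen in $[D,2D-1]$ to make the count exactly $b$, and $p'$ is chosen so degrees add to $d$.

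For the bound on $T_{E_b}(0)=\1^\top A_H^{-1}\1$, I use the variational form $T(0)=\max_\z\,(2\1^\top\z-\z^\top A_H\z)$ over $\z$ with $z_r=0$. As test vector I take $z$ equal to the unit-flow potential, namely the solution of $L\z=\1$ off the root. Concretely, $z$ at the shared singleton after $j$ cells is $(D/\beta)\sum_{i<j}(\text{current})$, where the current through cell $i$ is the number of vertices beyond it, about $b-iD$. Inside cells I use the harmonic interpolation, and I make $z$ constant on the end gadget. Then $2\1^\top\z-\z^\top A\z\approx\sum_{v}z_v$ up to $O_d(b^2)$ errors, and $\sum_v z_v\approx\int_0^b (x\,(b-x/2)/\beta)\,dx=b^3/(3\beta)$. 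This is the same computation as in \Cref{lem:even-endloss}, run in reverse with no strict intervals.

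The main obstacle is the exact bookkeeping of degrees and orders in the adapters: ensuring every residue is hit, the degrees at each junction add to $d$, and the graph stays simple. I would first try to reuse $B_{N,p}$ verbatim at one end and a single series insertion elsewhere. The analytic bounds are routine once each gadget has $O_d(1)$ size.
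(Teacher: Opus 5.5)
There is a genuine gap in your construction of $J_m$ for the mixed types $(k,l)$ with $k\ne l$. You claim that a $C_k$ followed by a $C_{d-k}$ shares a singleton of degree $(d-k)+(d-(d-k))=d$. But the left singleton of $C_{d-k}=K_1+K_{d-k}+K_k+K_1$ has degree $d-k$, not $k$. The shared vertex would therefore have degree $2(d-k)\in\{4,2d-4\}$, and this is never $d$ when $d\ge6$. The paper uses exactly this observation in the proof of \Cref{thm:even-boundedends} to force consistent orientation.

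Consequently, a chain of cells starting at $s$ with $\deg s=k$ consists only of copies of $C_k$. It ends at a singleton of degree $d-k$, so it realizes only type $(k,k)$. Your order-fixing insertion does not change this: its two roots must have degrees $k$ and $d-k$ to fit the split at an interior singleton, so the type is again $(k,k)$. When $l=d-k$ you need $\deg t=d-l=k$. This degree change must come from a bounded gadget, not from reorienting cells. The same false switch appears in your $E_b$ paragraph ("with orientation switches if needed"), but there it is harmless because none is needed.

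The paper obtains the degree change and fixes the residue in one step, using a two-rooted gadget:
\begin{itemize}
\item Take disjoint $B_{N,k}$ and $B_{D,d-l}$. Delete one remaining difference-one edge in each, and add two cross edges pairing their endpoints.
\item By \Cref{lem:comparison-completions}, the result is simple and connected with all degrees unchanged. Its roots have degrees $k$ and $d-l$.
\item Its order $N+D+2$ runs through every residue modulo $D$ as $N$ runs through $D,\ldots,2D-1$.
\item Identify the last right singleton of a $C_k$ chain (degree $d-k$) with the root of $B_{N,k}$, and let $t$ be the root of $B_{D,d-l}$.
\end{itemize}
Your shorting argument, or your test vector constant on the gadget, then gives $R_{J_m}\ge qD/\beta$ with $qD=m-1-O_d(1)$, exactly as you intended.

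If you prefer a single cyclic gadget with two roots, note two points. A root of degree $d-2$ requires deleting $(d-2)/2$ disjoint difference-one edges, not ``another pair''. And when both roots have degree $d-2$, the range of $N$ must be shifted upward so that the deleted edges can be disjoint.

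The rest of your proposal is sound. For $E_b$ your construction is the paper's, with $p'=p$. Your lower bound $\1^\top A^{-1}\1\ge2\1^\top\z-\z^\top A\z$, using an approximate potential, is a valid alternative to the paper's exact computation. The paper solves $A\bu=\1$ exactly: clique symmetry gives currents $I$, $I-p$, $I-d$ across the three interfaces of a cell with $I$ vertices beyond it. Each cell then contributes $(D/\beta)I^2+O_d(I+1)$, and one sums over $I=b,b-D,\ldots$. Your route avoids solving the system but requires estimating both $\sum_v z_v$ and the energy to within $O_d(b^2)$. Both estimates do hold for your vector. Calling $\z$ the exact solution of $L\z=\1$ is a slip: with that choice the functional equals $T_{E_b}(0)$ and there is nothing to estimate.
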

\begin{proof}
Use the graphs $B_{N,p}$ from \Cref{lem:comparison-completions}. Recall that
the root has degree $p$, all other degrees are $d$, and deleting any
one remaining edge of difference one leaves the graph connected.

Take disjoint $B_{N,k}$ and $B_{D,d-l}$. In each choose a remaining
difference-one edge. Delete these two edges and replace them by
two edges between the two subgraphs, pairing their endpoints.
The graph is simple and connected, and every degree is unchanged.
Its two roots have the required degrees. Its order is $N+D+2$;
as $N$ runs through $D,\ldots,2D-1$, these orders cover every residue
modulo $D$.
Prepend copies of $C_k$, identifying each right singleton with the
next left singleton, and finally with the first root. At each
identification the degrees are $d-k$ and $k$, so the shared vertex
has degree $d$. Each copy adds $D$ vertices and coefficient $D/\beta$.
The remaining graph has bounded order, proving
\eqref{eq:even-comparisonresistance} for every sufficiently large $m$.

For the end subgraph write $b=qD+N$ with $D\le N\le2D-1$ and
$q\ge0$. Join $q$ copies of $C_p$ to $B_{N,p}$ in the same way.
There are exactly $b$ vertices other than the first root, and their
degrees are $d$. Set the root value to zero and solve $A\bu=\1$.
Symmetry, or subtracting equations, makes $\bu$ constant on each
of the two middle cliques of a copy of $C_p$.
If $I$ vertices remain beyond its first root, the currents on its
three interfaces are $I$, $I-p$, and $I-d$. Thus its contribution
to $\bu^\top A\bu=T_{E_b}(0)$ is
\[
 \frac{I^2}{p}+\frac{(I-p)^2}{p(d-p)}
                +\frac{(I-d)^2}{d-p}
 =\frac D\beta I^2+O_d(I+1).
\]
Here $I=b,b-D,\ldots,b-(q-1)D$. The sum of the leading terms is
$b^3/(3\beta)+O_d(b^2)$, the sum of the errors is $O_d(b^2)$,
and the last bounded subgraph contributes a nonnegative quantity.
This proves \eqref{eq:even-comparisonend}.
\end{proof}

All the replacements just constructed have exactly the same number
of vertices as the subgraphs they replace. Their specified root
degrees are also identical. Thus attaching them to the unchanged
outside subgraphs preserves simplicity, connectedness and every degree.
There is no parity restriction on the order in this construction.

\subsection{Every exceptional interval has bounded order}
In this subsection, we use the comparison subgraphs to rule out
exceptional intervals whose order tends to infinity.

Fix an increasing unit Fiedler vector of an exact minimizer $G$.
Select all its standard gaps. Their difference intervals have disjoint
interiors, and consecutive ones may share a singleton vertex.
The proof of \Cref{thm:evenmain}, together with
\eqref{eq:comparisonerror}, shows that all but $O_d(n^{2/3})$
difference indices belong to these gaps.
The remaining nonempty intervals between standard gaps, and the two
intervals at the ends, will be called \emph{exceptional intervals}.
We include both endpoint vertices in an interior interval, and its
attachment root in an end interval. They contain no standard gap.
Their total number of vertices, with these boundary vertices counted,
is $O_d(n^{2/3})$. Indeed, each nonempty interval accounts for at
least one omitted difference, and including its endpoints adds at
most twice the number of intervals.

An interior exceptional interval induces a connected admissible
subgraph $H$ of some type $(k,l)$, with $k,l\in\{2,d-2\}$.
Only its two separating endpoints meet the rest of $G$.
An end interval is an admissible end subgraph. Connectedness follows
from that of $G$ and the fact that no endpoint can be bypassed.
The two values at the ends of an interior interval satisfy $a<b$.
Otherwise all its values would be equal, and the current across its
first cut would be zero. But for every proper prefix of an increasing
mean-zero nonconstant vector, $\sum_{v\le i}x_v<0$; hence
\eqref{eq:even-b} makes this current positive.

\begin{lemma}\label{lem:even-sign-count}
For the increasing unit Fiedler vector of an exact minimizer, at least
$c_dn$ components have each strict sign. In particular, every initial
or final interval of $o(n)$ vertices has one strict sign, including
its attachment root.
\end{lemma}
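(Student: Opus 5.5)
We compare the Fiedler vector with the path vector. Apply \Cref{thm:evenmain} together with \eqref{eq:comparisonerror}, so that $\theta=\e+n^{-1}=O_d(n^{-1})$. The proof of \Cref{lem:even-shifted-path} then gives an interpolated vector $\y$ with $\|\x-\y\|=O_d(n^{-1})$. Combining \eqref{eq:even-excess} with \Cref{lem:pathstable} gives $\sum_i(b_i-h_i)^2\le C_d n^{-3}$, where $b_i=y_{i+1}-y_i$ and no sign change is needed because $\y$ is nondecreasing. Since both $\y$ and $\bv$ have mean $O_d(n^{-1})$ or zero, summing the differences turns this into a uniform estimate on the components themselves.

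To make this precise, write $y_j-\bar y=\sum_i(\text{partial sums of } b_i)$ minus the mean, and similarly for $\bv$. By Cauchy--Schwarz,
\[
 \max_j\Bigl|(y_j-y_1)-(v_j-v_1)\Bigr|\le\sqrt n\Bigl(\sum_i(b_i-h_i)^2\Bigr)^{1/2}=O_d(n^{-1}).
\]
Subtracting means changes this by at most the same order. Since $\bar v=0$ and $\bar y=O_d(n^{-1})$ (because $\x$ has mean zero and $\|\x-\y\|=O_d(n^{-1})$ implies $|\bar y|\le\|\x-\y\|/\sqrt n$), we obtain $\max_j|y_j-v_j|=O_d(n^{-1})$. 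Next compare $\x$ with $\y$ pointwise. Each $y_j$ lies between components $x_i$ with $|i-j|=O_d(1)$, and consecutive differences satisfy \eqref{eq:even-errors}, so $|x_j-y_j|=O_d(n^{-3/2})$. Hence
\[
 x_j=-\sqrt{2/n}\,\cos\frac{(2j-1)\pi}{2n}+O_d(n^{-1}).
\]

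For every $j\le n/3$, the cosine term is at least $\cos(\pi/3)-o(1)\ge 1/3$, so $x_j\le -\tfrac13\sqrt{2/n}+O_d(n^{-1})<0$ once $n$ is large. Symmetrically, $x_j>0$ for $j\ge 2n/3$. This gives at least $c_dn$ components of each strict sign, with $c_d=1/4$ say. An initial interval of $o(n)$ vertices, including its attachment root, lies among the first $n/3$ positions for large $n$, so all of its components are strictly negative. The final case is symmetric.

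The main point to check is the transfer from the difference estimate to a sup-norm estimate. This requires the $n^{-3}$ bound, that is, $\theta=O(n^{-1})$; the weaker $\theta^{1/3}$-type bounds would be insufficient. It also requires correct handling of the mean correction. Both are routine once the exact-minimizer hypothesis supplies $\e=A_d/n$. If the pointwise comparison between $\x$ and $\y$ is delicate at the ends, where shifted positions are fixed at $1$ and $n$, an alternative is the $\ell^2$ bound $\|\x-\bv\|=O_d(n^{-1/2})$ together with the monotonicity of $\x$. A sign failure at some index $j\le n/3$ would force $x_i\ge0$ for all $i\ge j$, giving $\|\x-\bv\|^2\gtrsim\sum_{i\ge j,\,i\le n/2}v_i^2$. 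This lower bound is large compared with $n^{-1}$ unless $j$ is close to $n/2$, which yields the same conclusion.
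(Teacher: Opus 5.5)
Your proof is correct, but it takes a much heavier route than the paper. To get the pointwise profile $x_j=v_j+O_d(n^{-1})$, you use three pieces of machinery:
\begin{itemize}
\item the near-minimal upper bound \eqref{eq:comparisonerror},
\item the shifted interpolation vector of \Cref{lem:even-shifted-path},
\item the near-equality \Cref{lem:pathstable}.
\end{itemize}
You then read the sign pattern off the path eigenvector.

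The paper needs only $\mu(G)=O_d(n^{-2})$. By \Cref{lem:smallentries}, $\max_v|x_v|\le C_d/\sqrt n$, so $1=\sum_v x_v^2\le (C_d/\sqrt n)\sum_v|x_v|$. Since $\x\perp\1$, each strict sign carries half of the $\ell^1$ mass, which is at least $\sqrt n/(2C_d)$. Dividing by the maximal entry gives at least $n/(2C_d^2)$ components of each strict sign. Monotonicity of $\x$ then puts every initial (final) segment of $o(n)$ vertices, root included, inside the negative (positive) block.

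Comparing the two:
\begin{itemize}
\item The paper's argument takes a few lines. It uses no partition of the Fiedler ordering and no near-minimality, so it holds for every $d$-regular graph with $\mu=O(n^{-2})$.
\item Your argument gives much more: roughly the first and last thirds of the ordering have fixed signs, with a constant $c=1/4$ independent of $d$. That extra information is not used later in the paper.
\end{itemize}

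One correction to your closing remark: even on your route the $n^{-3}$ precision is not needed. The Cauchy--Schwarz error $\sqrt n\,(C'\theta/n^2)^{1/2}=\sqrt{C'\theta/n}$ is already smaller than $|v_j|\ge\tfrac12\sqrt{2/n}$ for $j\le n/3$ once $\theta$ is a sufficiently small constant.
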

\begin{proof}
Since $\mu(G)=O_d(n^{-2})$,
\Cref{lem:smallentries} gives $\max|x_v|\le C_d/\sqrt n$.
Therefore $\sum|x_v|\ge\sqrt n/C_d$. Each strict sign has total
absolute sum at least $\sqrt n/(2C_d)$, and so occurs at least
$c_dn$ times. In particular, any initial or final interval of
$o(n)$ vertices has one strict sign, including its attachment root.
\end{proof}

\begin{lemma}\label{lem:even-interiorloss}
Let $H$ be an interior exceptional interval with $m$ vertices,
endpoint values $a<b$, $\Delta=b-a$, and $M=\max\{|a|,|b|\}$.
For sufficiently large $m$,
\begin{equation}\label{eq:even-interiorloss}
 \mathcal E_H(\x)\ge\frac{\beta+\eta_d}{m}\Delta^2-C_d\mu M\Delta,
\end{equation}
where $\eta_d>0$.
If at least $t$ vertices lie on each outside side of $H$, then also
\begin{equation}\label{eq:even-currentlower}
 \mathcal E_H(\x)\ge\mu(t-m)M\Delta.
\end{equation}
\end{lemma}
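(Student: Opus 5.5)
For \eqref{eq:even-interiorloss} I would rerun the interval certificate of \Cref{lem:even-certificate} on the restriction of $\x$ to $H$, as in the proof of \Cref{thm:even-terminal}. The difference is that the currents $c_i$ are no longer constant. Order $H$ increasingly and complete both terminals by the $K_D$-minus-matching gadgets, so that the cut sizes at the two separating endpoints are $d-k,k$ and $d-l,l$. The certificate then gives a partition into bounded intervals with $\Delta_I\le R_I\max_{i\in I}c_i$ and $R_I\le T_I$. Since $H$ contains no standard gap, its last assertion shows that at least $c_d m-C_d$ of these intervals are strict, with $R_I\le T_I-\gamma_d$.

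Next I would control the variation of $c_i$ across $H$. Here $c_i=\mu f_i$, and by \eqref{eq:even-b} we have $c_{i+1}-c_i=-\mu x_{i+1}$. Every value in $H$ lies in $[a,b]$, so $|x_v|\le M$, and consecutive currents therefore differ by at most $\mu M$. Let $c_*$ be the current at the first cut of $H$. Then $|c_i-c_*|\le\mu M m$ on $H$.

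Now run the argument of \eqref{eq:even-localfinal} with the exact currents rather than an $O(n^{-4})$ error. We have $\mathcal E_H(\x)=\sum_{i\in H}a_ic_i$, because the edges leaving $H$ meet only the endpoints and do not cross interior cuts. From this,
\[
 \mathcal E_H(\x)\ge c_*\Delta-\mu Mm\Delta,\qquad \Delta\le\sum_IR_I\,(c_*+\mu Mm).
\]
The boundary terms telescope, so
\[
 \sum_IR_I\le\sum_IT_I-\gamma_d(c_dm-C_d)=\frac{m-1}{\beta}+\phi(k)-\phi(l)-c'_dm
\]
for large $m$, and this is at most $m/(\beta+\eta_d)$ for a suitable $\eta_d>0$. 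This yields
\[
 c_*\ge\frac{\beta+\eta_d}{m}\Delta-\mu Mm.
\]
Multiplying by $\Delta$ leaves an error term of order $\mu Mm\Delta$, not $\mu M\Delta$. That is the obstacle. I would first try to refine the bound by pairing each interval with its own current, $\Delta_I\le R_I(c_*+\mu M\,\mathrm{pos}_I)$, and using Cauchy--Schwarz with weights $R_I$, so that the linear drift cancels at first order. The error then becomes proportional to the deviation of $\Delta_I/R_I$ from its mean, and I would absorb part of it into the $\eta_d$ slack. If this still leaves an error of order $m$, I would check whether the later applications only need $m=O(n^{2/3})$, in which case the factor $m$ can be carried along. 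The constant $C_d$ in the statement suggests that the intended argument keeps only an $O_d(1)$ number of intervals near each end uncompensated. So I would aim to show that the drift contributes $\mu M\sum_I R_I\,(\text{distance to a reference cut})$ with alternating signs, which cancels up to bounded boundary terms.

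For \eqref{eq:even-currentlower}, use \eqref{eq:prefixsum}. The current across any cut inside $H$ is $c_i=\mu f_i=\mu\bigl|\sum_{v\le i}x_v\bigr|$, and $\sum_{v\le i}x_v=-\sum_{v>i}x_v$ because $\x$ is mean-zero. Suppose $a\ge0$, so that $b=M$; the case $b\le0$ is symmetric, using prefixes instead. All $t$ outside vertices above $H$ have values at least $b=M$, and the vertices of $H$ above the cut contribute at least $-m M$ in the worst case. Hence $c_i\ge\mu(tM-mM)$. When $a<0<b$, note that $f_i$ is the minimum of a suffix sum and a negated prefix sum, and bound each by $(t-m)M$. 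Finally, $\mathcal E_H(\x)=\sum_{i\in H}a_ic_i\ge\min_ic_i\cdot\Delta\ge\mu(t-m)M\Delta$.
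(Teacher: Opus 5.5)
Your argument for \eqref{eq:even-currentlower} is the paper's, except for one slip. $f_i$ is not the minimum of two quantities; it \emph{equals} both the suffix sum and the negated prefix sum. So you only need to estimate the one on the side where $|x|=M$ is attained. In the case $a<0<b$, the other side only gives $t\min\{-a,b\}-mM$, so you cannot bound both by $(t-m)M$.

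The genuine gap is in \eqref{eq:even-interiorloss}. You solve the certificate inequalities for a single reference current $c_*$, which forces you to compare currents across all of $H$. As you correctly observe, this proves the bound only with an error of order $\mu Mm\Delta$. None of the repairs you list is an actual argument: first-order cancellation of the drift, alternating signs, and carrying $m$ along because $m=O(n^{2/3})$ are all left unjustified. As written, the stated inequality is not proved.

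The missing idea is to use each current only inside its own bounded interval and then eliminate it. The certificate intervals have length at most $5d/2$, and consecutive currents differ by $\mu|x_{i+1}|\le\mu M$. Hence $\min_{i\in I}c_i\ge\max_{i\in I}c_i-C_d\mu M$. Multiplying $\Delta_I\le R_I\max_{i\in I}c_i$ by $\Delta_I$ then gives
\[
 E_I=\sum_{i\in I}a_ic_i\ge\Delta_I\min_{i\in I}c_i\ge\frac{\Delta_I^2}{R_I}-C_d\mu M\Delta_I ,
\]
a lower bound in which no current value survives. Summing over the intervals, the errors add up to $C_d\mu M\sum_I\Delta_I=C_d\mu M\Delta$. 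Cauchy--Schwarz, $\sum_I\Delta_I^2/R_I\ge\Delta^2/\sum_IR_I$, together with your telescoped bound $\sum_IR_I\le m/(\beta+\eta_d)$, then gives \eqref{eq:even-interiorloss}. This is exactly the paper's proof.

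Incidentally, your weaker form would still serve in \Cref{thm:even-boundedintervals}. When $t\ge Km$, \eqref{eq:even-currentlower} bounds $2\mu Mm\Delta$ by $2m\,\mathcal E_H(\x)/(t-m)\le2\mathcal E_H(\x)/(K-1)$, and $K$ may be chosen large in terms of $\eta_d$. But that is a different statement from the lemma, and what makes it work is \eqref{eq:even-currentlower}, not $m=O(n^{2/3})$.
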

\begin{proof}
Apply \Cref{lem:even-certificate} to the inherited ordering of
$H$. On each interval $I$, \eqref{eq:even-b} shows that the variation
of $c_i$ is at most $C_d\mu M$. Hence, writing
$E_I=\sum_{i\in I}a_i c_i$, we have
\[
 E_I\ge\Delta_I\max_{i\in I}c_i-C_d\mu M\Delta_I
     \ge\frac{\Delta_I^2}{R_I}-C_d\mu M\Delta_I.
\]
Every internal cut sees only edges of $H$, so
$\sum_I E_I=\mathcal E_H(\x)$. Since $H$ contains no standard gap,
at least $c_dm-C_d$ intervals are strict. Telescoping gives
\[
 \sum_I R_I\le\frac{m-1}{\beta}+\phi(k)-\phi(l)-c'_dm+C_d.
\]
Decrease $c'_d$ if necessary so that $c'_d<1/(2\beta)$. For large
$m$ the right side is at most $m/(\beta+\eta_d)$ for some
$\eta_d>0$. Cauchy--Schwarz applied to $\sum\Delta_I^2/R_I$
proves \eqref{eq:even-interiorloss}.

If $M=-a$, then $a<0$. Every outside vertex to the left has component
at most $-M$, while the contribution of the vertices of $H$ to a
prefix is at most $mM$. Thus \eqref{eq:even-b} gives
$c_i\ge\mu(t-m)M$ at every cut inside $H$.
If $M=b>0$, use the corresponding suffix to get the same bound.
Multiply by $a_i\ge0$ and sum. This proves
\eqref{eq:even-currentlower}.
\end{proof}

\begin{theorem}\label{thm:even-boundedintervals}
For fixed even $d\ge6$, the orders of all exceptional intervals of
sufficiently large exact minimizers are bounded in terms of $d$.
\end{theorem}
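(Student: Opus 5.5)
We argue by contradiction. Suppose an exact minimizer $G$ of large order has an exceptional interval with $m$ vertices, where $m$ is larger than a constant to be chosen depending on $d$. We already know $m=O_d(n^{2/3})$. We then build a graph $G'$ of the same order and degree with $\mu(G')<\mu(G)$. There are two cases: interior intervals and end intervals. In both, the replacement is the comparison graph from \Cref{lem:even-comparisons} of the same order and boundary type, so $G'$ stays connected and $d$-regular.

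\emph{Interior intervals.} Let $H$ be admissible of type $(k,l)$ with endpoint values $a<b$, $\Delta=b-a$, and $M=\max\{|a|,|b|\}$. Replace $H$ by $J_m$ of the same type. As trial vector on $J_m$ we take $a$ plus $\Delta/R_{J_m}$ times the unit-current vector, and keep $\x$ outside. By \eqref{eq:even-comparisonresistance} the new energy on $J_m$ is at most $\Delta^2/R_{J_m}\le \beta\Delta^2/m+O_d(\Delta^2/m^2)$. By \eqref{eq:even-interiorloss} the old energy is at least $(\beta+\eta_d)\Delta^2/m-C_d\mu M\Delta$. The energy therefore drops by at least $\eta_d\Delta^2/(2m)-C_d\mu M\Delta$.

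The change in squared norm and in mean only concerns $m$ vertices whose values lie in $[a,b]$. Its effect in \eqref{eq:strictcomparison} is at most $C_d\mu m(M\Delta+\Delta^2)$ plus a mean correction of order $\mu m^2M^2/n$. To control $M$, we compare with \eqref{eq:even-currentlower}. If $H$ has at least $t\ge 2m$ outside vertices on each side, then $\mathcal E_H\ge \mu m M\Delta$. Combined with the upper bound $\mathcal E_H\le C\Delta^2/m$, this gives $\mu m^2 M\lesssim \Delta$. Since $\mu\asymp n^{-2}$ and $m=O(n^{2/3})$, every error term becomes $o(\Delta^2/m)$ once $m$ is large. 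Hence \eqref{eq:strictcomparison} holds, and this contradicts minimality.

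If $H$ is within distance $2m$ of an end, the needed lower bound on the outside side fails. In that case the whole initial segment up to $H$ has $O(m)=o(n)$ vertices. By \Cref{lem:even-sign-count} it then has one strict sign, and we fold it into the end-interval case below, treating the segment as part of an end subgraph.

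\emph{End intervals.} For an end subgraph $H$ with $b$ nonroot vertices and root degree $p$, we use \Cref{lem:even-replaceend} with $H'=E_b$. First, $b=O(n^{2/3})$ gives $\mu b^2=o(1)$. Together with $\lambda_{\min}\ge b^{-2}$ from \Cref{lem:even-response}, this verifies the eigenvalue hypothesis. Next, $x_r\neq 0$: otherwise the restriction of $\x$ to $H-r$ would be an eigenvector of $A_H$ with eigenvalue $\mu<\lambda_{\min}$, which forces it to vanish. That would contradict strict sign by \Cref{lem:even-sign-count}, unless everything in $H$ is zero.

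Because $H$ contains no standard gap, \Cref{lem:even-endloss} applies with $\xi=1/2$. It gives $T_H(\mu)\le b^3/(3\beta)-c_db^3+C_db^2+C_d\mu b^5$. From \eqref{eq:even-comparisonend} and monotonicity, $T_{E_b}(\mu)\ge T_{E_b}(0)\ge b^3/(3\beta)-C_db^2$. Since $\mu b^5=o(b^3)$, for large $b$ we get $T_{E_b}(\mu)>T_H(\mu)$, and \Cref{lem:even-replaceend} contradicts minimality.

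\emph{Main obstacle.} The delicate step is the error bookkeeping in the interior case: showing that the norm and mean corrections, of order $\mu m M\Delta$, are dominated by the gain $\eta_d\Delta^2/m$. This requires the lower bound $\Delta\gtrsim\mu m^2M$ extracted from \eqref{eq:even-currentlower}, together with $m\ll n$. I would first attempt this via the current bound. If $M$ is not controlled that way, I would instead merge intervals near a sign change of $\x$ with a neighboring end piece. Applying the hypothesis of \Cref{lem:even-endloss} to an end interval after such merging also needs care, since the merged piece may contain standard gaps. The plan is to restrict to the first $\lfloor b/2\rfloor$ differences adjacent to the root, which by construction lie in the exceptional interval.
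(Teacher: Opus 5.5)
Your architecture matches the paper's. Interior intervals far from both ends are replaced by $J_m$ carrying the harmonic vector. Every other exceptional interval is absorbed into an end subgraph rooted at its far endpoint and removed via \Cref{lem:even-replaceend} with $E_b$, using \Cref{lem:even-endloss} and \Cref{lem:even-comparisons}. The end case is essentially right, but the interior bookkeeping has a genuine gap. You use an upper bound $\mathcal E_H(\x)\le C\Delta^2/m$ that nothing in the paper provides. In fact $\mathcal E_H(\x)$ is at least the harmonic energy $\Delta^2/R_H$, and \Cref{thm:even-terminal} bounds $R_H$ only from above. So a priori $H$ could have bounded resistance and $\mathcal E_H(\x)\gg\Delta^2/m$; the argument must exclude this, not assume it. Moreover, even granting that bound, the threshold $t\ge2m$ only gives $\mu mM\Delta\le C'\Delta^2/m$ with a fixed $C'$. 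The dominant norm loss $2\mu mM\Delta$ is then of the same order as your gain $\eta_d\Delta^2/(2m)$, not $o(\Delta^2/m)$. The facts $\mu\asymp n^{-2}$ and $m=O(n^{2/3})$ only dispose of the terms $\mu m\Delta^2$ and $\mu m^2\Delta^2/n$. (The mean term should carry $\Delta^2$, not $M^2$, since the sum changes by at most $m\Delta$.)

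The missing idea is to measure the saving against $\mathcal E_H(\x)$ itself. By \eqref{eq:even-currentlower}, the error $C_d\mu M\Delta$ in \eqref{eq:even-interiorloss} is at most $C_d\mathcal E_H(\x)/(t-m)$. So for $t\ge Km$ with $K>1$ and large $m$ one gets $\mathcal E_H(\x)\ge(\beta+\eta_d/2)\Delta^2/m$, while $J_m$ costs at most $(\beta+\eta_d/4)\Delta^2/m$. Hence the replacement saves at least $\chi_d\mathcal E_H(\x)\ge\chi_d\mu(t-m)M\Delta$ for a fixed $\chi_d>0$. This exceeds $\mu$ times the total centered-norm loss $4mM\Delta$ once $K>1+4/\chi_d$, a constant depending on $d$; no fixed threshold such as $2m$ works.

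The near-end case must then cover all $t<Km$, so the end subgraph has $m-1\le b\le(K+1)m$ nonroot vertices. You must therefore apply \Cref{lem:even-endloss} with $\xi$ of order $1/(K+1)$ (the paper takes $\xi=1/[4(K+1)]$), not $\xi=1/2$. With $\xi=1/2$ the first $\lfloor b/2\rfloor$ differences from the root can run past $H$ into the standard gaps beyond it, so they do not lie in the exceptional interval by construction.

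Finally, record why the no-standard-gap property of the Fiedler order transfers to the order of $(A_H-\mu I)^{-1}\1$ used in \Cref{lem:even-endloss}. The eigen-equations give $\x_{H-r}=x_r\{\1+\mu(A_H-\mu I)^{-1}\1\}$ with $x_r\ne0$, so the two orders coincide, reversed if $x_r<0$.
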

\begin{proof}
Suppose instead that an exceptional interval has order $m\to\infty$
along a sequence of minimizers. We already know $m=o(n)$ and
$\mu=O_d(n^{-2})$.

First let $H$ be interior and at least $t$ vertices lie on each
outside side. Suppose $t\ge Km$, where $K>1$ will be fixed below.
By \eqref{eq:even-currentlower}, the error term in
\eqref{eq:even-interiorloss} is at most
$C_d\mathcal E_H(\x)/(t-m)$. For large $m$ it follows that
\[
 \mathcal E_H(\x)\ge\frac{\beta+\eta_d/2}{m}\Delta^2.
\]
Replace $H$ by $J_m$ of the same type from
\Cref{lem:even-comparisons}, and use its minimizing vector
with the same endpoint values. For large $m$ its quadratic form
is at most $(\beta+\eta_d/4)\Delta^2/m$. It therefore saves at
least $\chi_d\mathcal E_H(\x)$ for some fixed $\chi_d>0$.

All old and new values on the changed vertices lie in $[a,b]$.
The loss of squared norm is at most $2mM\Delta$, and the change
of the sum is at most $m\Delta$. Centering loses at most a further
$m^2\Delta^2/n\le2mM\Delta$. Thus the total loss of centered
squared norm is at most $4mM\Delta$.
Choose $K>1+4/\chi_d$. By \eqref{eq:even-currentlower}, the saving
in the quadratic form exceeds $\mu$ times that possible loss.
This contradicts exact minimality.

We are left with $t<Km$, or with an exceptional interval meeting
an end. In the first case include the whole nearer end up to the
far endpoint of $H$ and take that endpoint as root. In the second
case use the end interval itself. After possibly reversing the
ordering, this is an admissible end subgraph with $b$ nonroot vertices,
where
$m-1\le b\le(K+1)m$ and $b=o(n)$.
The first $m-1$ differences from its root belong to the exceptional
interval and contain no standard gap. Its root value $a_0$ is
nonzero, by the sign count. The eigen-equations give
\[
 \x_{H-r}=a_0\{\1+\mu(A_H-\mu I)^{-1}\1\}.
\]
Thus ordering the vector $(A_H-\mu I)^{-1}\1$ from the root is
exactly the inherited ordering, reversed when $a_0<0$. Keep ties
in that order. For the fixed $\xi=1/[4(K+1)]$, the first
$\lfloor\xi b\rfloor$ differences are still in the exceptional
interval when $m$ is large.

Since $\mu b^2=o(1)$, Lemmas~\ref{lem:even-endloss} and
\ref{lem:even-comparisons} imply
\[
 T_H(\mu)<T_{E_b}(0)\le T_{E_b}(\mu)
\]
for large $m$. The last inequality is the monotonicity in
\Cref{lem:even-response}. Since $\mu b^2=o(1)$,
\Cref{lem:even-replaceend} gives a strict improvement at the
same order and with all degrees unchanged, a contradiction.
No sequence of unbounded exceptional intervals exists, proving
the uniform bound.
\end{proof}

\subsection{Interchanging a bounded interval and a standard block}
In this subsection, we compare moving a bounded exceptional interval
one block to the left or right. The preceding theorem lets us use estimates uniform over a
finite family of admissible subgraphs. Shared vertices require a
small change from the bridge calculation: give each terminal half
weight in the squared norm of a subgraph. At a common terminal the
two halves then add to its original weight one. No graph or degree
is changed by this convention.
Explicitly, for terminals $s,t$, put
\[
 \|\z\|_*^2=\sum_{v\ne s,t}z_v^2+\frac{z_s^2+z_t^2}{2}.
\]

For an admissible $H$ of type $(k,l)$ and order $m$, put
$\nu_H=m-1$, $r_H=R_H$, $r_C=D/\beta$, and
\begin{equation}\label{eq:even-defect}
 \delta_H=\nu_Hr_C-D\{r_H+\phi(l)-\phi(k)\}.
\end{equation}
\Cref{thm:even-terminal} gives $\delta_H\ge0$, with equality
only for a standard chain. Let $z$ be the unit-current vector on
$H$, with endpoint values $0,r_H$. Put
\begin{align}
 A_H^*&=\sum_{v\ne s,t}z_v+\frac{r_H}{2},\label{eq:even-moment}\\
 S_H&=A_H^*-\frac{\nu_Hr_H}{2},\notag\\
 W_H&=S_H-\frac{\beta r_H}{2}\{\phi(k)+\phi(l)\}.\notag
\end{align}
The star distinguishes this scalar from the matrix used for an end
subgraph.

\begin{lemma}\label{lem:even-interchange}
Join $H$ to $C_l$, or $C_k$ to $H$, by identifying the adjoining
terminals. These are admissible subgraphs of the same order and type
$(k,l)$. For fixed outside values $a,b$, let $q_{HC_l}(t;a,b)$ be the
minimum of the quadratic form minus $t$ times the squared norm,
with half weight at the two outside terminals. Define $q_{C_kH}$
in the same way. For sufficiently small $t\ge0$, put
$r=r_H+r_C$, $c=(a+b)/2$, and $J=(b-a)/r$.
Then
\begin{equation}\label{eq:even-interchange}
 q_{HC_l}-q_{C_kH}
 =2t\delta_HcJ+2tr_CW_HJ^2+O_{H,d}(t^2(a^2+b^2)).
\end{equation}
For a fixed bound on $m$, the remainder and its validity interval
are uniform over all admissible graphs and endpoint choices.
\end{lemma}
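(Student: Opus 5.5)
The plan is to expand both minima to first order in $t$ with \Cref{lem:boundary-expansion}, and then compare the squared norms of the two harmonic (unit-current) extensions explicitly.

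\textbf{Setup and the zeroth-order term.} Both $HC_l$ and $C_kH$ are admissible of type $(k,l)$: at the identified terminal the degrees are $d-l$ and $l$, or $d-k$ and $k$, so the shared vertex has degree $d$. Both have order $m+D$. For fixed outside values $a,b$, \Cref{lem:boundary-expansion} gives
\[
 q(t)=q(0)-t\,N+O(t^2(a^2+b^2)),
\]
where $N$ is the half-weighted squared norm $\|\cdot\|_*^2$ of the $t=0$ minimizer. Effective resistances add in series, so both graphs have resistance $r=r_H+r_C$, and hence $q_{HC_l}(0)=q_{C_kH}(0)=(b-a)^2/r$. The $t=0$ minimizer is the harmonic extension. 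It carries current $J=(b-a)/r$ through each piece, so on each piece it equals the starting value plus $J$ times that piece's unit-current vector.

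\textbf{First-order term.} For a piece $X$ with unit-current vector $z$ and starting value $u$, the half-weighted norm is
\[
 \nu_Xu^2+2uJA_X^*+J^2\|z\|_*^2,\qquad\text{where }\nu_X=|V(X)|-1 .
\]
The half weights at the terminals make these norms add over the two pieces. In $HC_l$, $H$ starts at $a$ and $C_l$ starts at $a+Jr_H$. In $C_kH$, $C_k$ starts at $a$ and $H$ starts at $a+Jr_C$. The terms $J^2\|z\|_*^2$ occur in both configurations and cancel.

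For the block, compute $A^*_{C_k}$ directly from the unit-current values of $C_k$. These are $1/k$ on the $K_k$ group and $1/k+1/(k(d-k))$ on the $K_{d-k}$ group, which gives
\[
 A^*_{C_k}=\frac Dk+\frac{r_C}2,
\]
and $A^*_{C_l}=D/l+r_C/2$ in the same way. Substituting $a=c-Jr/2$ and $b=c+Jr/2$ turns
\[
 q_{HC_l}-q_{C_kH}=-t\,(N_{HC_l}-N_{C_kH})
\]
into a linear combination of $cJ$ and $J^2$; there is no $c^2$ term, since $\nu$-weights are the same in both graphs. The $cJ$ coefficient should collect to $2t\{\nu_Hr_C-D(r_H+\phi(l)-\phi(k))\}=2t\delta_H$. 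Here $D/k-D/l$ supplies the $\phi$ terms, via $1/k=F(k)+\phi(k)$ and $F(k)=F(d-k)$, and I will use $F(k)=D/(2\beta)$ for $k\in\{2,d-2\}$. The $J^2$ coefficient should collect to $2tr_CW_H$. The first-moment term $A^*_H$ enters through $S_H$, and the asymmetry $\phi(k)+\phi(l)$ of the two block orientations produces the correction defining $W_H$.

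\textbf{Remainder and the main difficulty.} For uniformity, note that the free-vertex Dirichlet matrices of all admissible graphs of order at most a fixed bound form a finite family, each positive definite. So there is a uniform $t_0>0$ with $B-tI\ge\frac12\lambda_{\min}(B)I$ for $t\le t_0$. Also $\|\mathbf f_0\|=O(|a|+|b|)$, because harmonic values lie in $[a,b]$. The $t^2$ term in \eqref{eq:boundary-expansion} is therefore $O(t^2(a^2+b^2))$ uniformly. I expect the main obstacle to be the algebraic bookkeeping in the first-order term: verifying that the $J^2$ coefficient reduces exactly to $2r_CW_H$ with the stated normalization of $S_H$ and $W_H$, and that no stray $c^2$ term remains. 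I will check this first on $H=C_k$ with $k=l$, where $\delta_H=0$ and the difference must vanish.
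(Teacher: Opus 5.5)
Your route is the paper's: expand both minima with \Cref{lem:boundary-expansion}, use series resistance to get equal zeroth-order terms $(b-a)^2/r$, write the harmonic vector on each piece as its starting value plus $J$ times that piece's unit-current vector, and compare half-weighted norms. Your remainder and uniformity argument, the admissibility check, and the $cJ$ coefficient $2t\delta_H$ are all correct. The gap is in the $J^2$ coefficient, which is where the content of the lemma lies. As planned, that step would fail, for two reasons.

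\emph{The cancellation claim.} The assertion that ``the terms $J^2\|z\|_*^2$ occur in both configurations and cancel'' is false for the block pieces when $k\ne l$. The block in $HC_l$ is $C_l$, and the block in $C_kH$ is $C_k$. For $l=d-k$ these are the two orientations of one graph, and their unit-current vectors are related by $z\mapsto r_C-z$. Write $B_C(p)$ for the half-weighted second moment of the unit-current vector of $C_p$. A block has half-weighted mass $D$ and first moment $A_C(k)=Dr_C/2+D\phi(k)$, so
\[
 B_C(l)-B_C(k)=Dr_C^2-2r_CA_C(k)=Dr_C\{\phi(l)-\phi(k)\},
\]
which is nonzero when $k\ne l$. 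After the substitution $a=c-Jr/2$, this term is exactly what cancels the $r_C$-part of the $\phi$-terms of $r\delta_H$. Without it you would arrive at $2tr_CW_HJ^2+tDr_C\{\phi(l)-\phi(k)\}J^2$, not the stated coefficient. Your proposed test $H=C_k$, $k=l$ cannot reveal this, since the discrepancy vanishes when $k=l$.

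\emph{The constant.} The value you intend to use is wrong: for $k\in\{2,d-2\}$ one has $F(k)=d/(4(d-2))=d/(2\beta)$, not $D/(2\beta)$. The correct value is what gives $A^*_{C_p}=D/p+r_C/2=Dr_C/2+D\phi(p)$. With $D/(2\beta)$, rewriting $A^*_{C_l}$ in terms of $\phi(l)$ acquires a spurious $r_C/2$, and the $J^2$ coefficient an extra $r_Hr_C$.

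With both corrections, the $J^2$ coefficient of $N_{HC_l}-N_{C_kH}$ is
\[
 Dr_H^2+2r_HA_C(l)+B_C(l)-B_C(k)-\nu_Hr_C^2-2r_CA_H^*=-r\delta_H-2r_CW_H .
\]
Here $\beta r_C=D$ is used to turn $\beta r_Hr_C\{\phi(k)+\phi(l)\}$ into $Dr_H\{\phi(k)+\phi(l)\}$. Together with the $aJ$ coefficient $-2\delta_H$ and $a=c-Jr/2$, this gives the stated formula.
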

\begin{proof}
All unfixed vertices in either order have weight one. Their matrix
$B$ with the two outside values fixed is positive definite.
Let $f_0$ be the full minimizing vector at $t=0$. In
\eqref{eq:boundary-expansion}, take the free vector to be
$(f_0)_{\rm int}$ and $\omega=(a^2+b^2)/2$.
The first-order term is therefore $-t\|f_0\|_*^2$, including
the two fixed half-weights. For $0\le t\le\lambda_{\min}(B)/2$,
the inverse has norm at most $2/\lambda_{\min}(B)$ and
$\|(f_0)_{\rm int}\|^2=O_{H,d}(a^2+b^2)$.
Thus the last term is $O_{H,d}(t^2(a^2+b^2))$, uniformly over a
finite family.

Here is the calculation of the first-order term, including the
orientation change. For the unit-current vector of $C_p$, the
internal values are $1/p$ and $(d-p+1)/[p(d-p)]$. Its half-weighted
mass is $D$, its first coordinate sum is
\begin{equation}\label{eq:even-cellmoment}
 A_C(p)=\frac Dp+\frac{r_C}{2}
       =\frac{Dr_C}{2}+D\phi(p),
\end{equation}
and its second coordinate sum, denoted $B_C(p)$, satisfies
\begin{equation}\label{eq:even-cellsecond}
 B_C(l)-B_C(k)=Dr_C\{\phi(l)-\phi(k)\}.
\end{equation}
Indeed, there are only the two orientations; reversal sends every
coordinate $z$ to $r_C-z$, proving \eqref{eq:even-cellsecond}
from \eqref{eq:even-cellmoment}.

At unit current the first coordinate sums in $HC_l$ and $C_kH$
are respectively $A_H^*+A_C(l)+Dr_H$ and
$A_C(k)+A_H^*+\nu_Hr_C$. Their difference is $-\delta_H$.
For the second coordinate sums the difference is
\[
 Dr_H^2+2r_HA_C(l)+B_C(l)
       -\nu_Hr_C^2-2r_CA_H^*-B_C(k)
 =-r\delta_H-2r_CW_H,
\]
by \eqref{eq:even-cellmoment}--\eqref{eq:even-cellsecond}.
For boundary values $a,b$, the harmonic vector is $a\1+Jz$.
The difference of its squared norms is therefore
$-2\delta_HcJ-2r_CW_HJ^2$. The zero-parameter quadratic forms in
both orders are $(b-a)^2/r$. Substitution proves
\eqref{eq:even-interchange}.
\end{proof}

Notice why one cannot simply interchange the same oriented block
with $H$ when $k\ne l$. The correct alternatives are $C_kH$ and
$HC_l$. Their common vertex has degree $d$ in both cases, and the
two outside terminal degrees are unchanged. Formula
\eqref{eq:even-defect} is exactly the correction required by this
change of orientation.

\begin{lemma}\label{lem:even-movetoend}
Fix $B$. In a sufficiently large exact minimizer, any interior
exceptional interval of order at most $B$, immediately preceded and
followed by standard blocks, has an outside side with at most
$T_{d,B}$ vertices.
\end{lemma}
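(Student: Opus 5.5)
We argue by contradiction, comparing the minimizer with the graph obtained by sliding $H$ through the standard chain toward one end. Let $H$ have type $(k,l)$ and order at most $B$. Suppose it is preceded by a standard block and followed by one, with more than $T$ vertices on each outside side, where $T$ is to be chosen. The degree condition at a shared singleton identifies these neighbours: the block on the left is $C_k$ and the block on the right is $C_l$. Both $C_kH$ and $HC_l$ are admissible of type $(k,l)$, and by \Cref{lem:even-interchange} they are the two correct interchanges. By \Cref{thm:even-terminal}, $\delta_H>0$ unless $H$ is a standard chain, which an exceptional interval is not. Since only finitely many admissible $H$ of order at most $B$ occur, there is a uniform $\delta_{d,B}>0$ with $\delta_H\ge\delta_{d,B}$. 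Likewise $|W_H|$ and the remainder in \eqref{eq:even-interchange} are uniformly bounded.

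\emph{Step 1: choosing the direction.} Let $c$ be the midpoint value of the terminals of $H$. If $c>0$ we move $H$ to the right, otherwise to the left. By \eqref{eq:even-interchange}, with $J>0$ and $\delta_H>0$, the order with $H$ on the larger-value side has the smaller constrained minimum, to first order in $t=\mu$. Moving toward the larger $|x|$ only increases $|c|$, so the sign of $c$ never reverses along the way. This is why the direction can be fixed once.

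\emph{Step 2: one or many interchanges.} Choose $j$ blocks on the chosen side. Replace $HC_l\cdots C_l$ (or its mirror image) by $C_k\cdots C_kH$ with the same outside terminal values. On the new graph, use the vector that equals $\x$ outside the changed segment and, on each piece, is the minimizer of $\mathcal E-\mu\|\cdot\|_*^2$ with the current terminal values. The successive single interchanges telescope through \eqref{eq:even-interchange}, with half weights at shared terminals. Since $\x$ satisfies the eigen-equations inside the old segment, its restriction is the old constrained minimizer. Both old and new graphs have the same exterior, so the argument of \Cref{lem:even-replaceend} (negative inertia of $L-\mu I$ via \Cref{lem:boundary-square}) applies unchanged. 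Thus it suffices to show that the total change
\[
 \sum_{i=1}^{j}\Bigl(-2\mu\delta_H|c_i|J_i+2\mu r_CW_HJ_i^2+O_{d,B}\bigl(\mu^2(a_i^2+b_i^2)\bigr)\Bigr)
\]
is negative, where $c_i,J_i$ are the data at the $i$th step. These data come from $\x$ on the standard chain, where $\x$ is exactly harmonic-plus-$\mu$ correction.

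\emph{Step 3: the estimate, which is the main obstacle.} We need quantitative information about $\x$ along the chain. From \Cref{lem:smallentries} and \eqref{eq:even-b}, $|x|\le C_dn^{-1/2}$ and $J_i\le C_d\mu n^{1/2}$. Hence the quadratic terms sum to $O_{d,B}(\mu\,\mu^2 n\,j)$ and the remainders to $O_{d,B}(\mu^2n^{-1}j)$. For the main term we use the prefix identity $c_i$-current $=\mu f$ from \Cref{lem:even-prefix-identities}. Across a standard block the current is $J_i\approx\mu|f|$, where $|f|$ is the mass beyond that block. Moreover, $|c_i|$ is at least the value at $H$ when moving outward. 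The plan is to take $j$ comparable to the distance $t$ to the chosen end, within the chain portion. There the current accumulates: $\sum_i|c_i|J_i\gtrsim\mu\sum_i|f_i||x_i|$. For an outside side of $t$ vertices this is at least of order $\mu\,t^2n^{-1}$, using $|x|\gtrsim n^{-1/2}$ on the outer portion. This uses \Cref{lem:even-sign-count}, together with the fact that the tail mass grows linearly in the number of vertices beyond.

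Against the errors $O_{d,B}(\mu^2n^{-1})$ per step, this yields a contradiction once $t>T_{d,B}$. The delicate point is making the lower bound on $\sum|c_i|J_i$ uniform when $H$ sits near the zero crossing, where $c\approx0$. There I would first try taking $j$ all the way to the end of the selected chain on the favourable side. Then $\sum_i|c_i|J_i$ telescopes to roughly $(x_{\rm end}^2-c^2)/(2r_C)$, which is of order $n^{-1}$ since $|x_{\rm end}|\gtrsim n^{-1/2}$ while $|c|$ is small. This swamps the cumulative error $O(j\mu^2n^{-1})=O(n^{-4})$. If that telescoping is awkward because of the bounded end subgraph, I would stop $j$ just before the end interval, which by \Cref{thm:even-boundedintervals} has bounded order.
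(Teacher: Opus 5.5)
\paragraph{Gap in Step~1.} In \eqref{eq:even-interchange}, the term $2\mu r_CW_HJ^2$ is of the same order in $\mu$ as $2\mu\delta_HcJ$, and $W_H$ has no definite sign. So your claim that ``to first order'' the order with $H$ on the larger-value side wins holds only when $|c|\gg J$.

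When $|c|\lesssim J$ (for instance when $H$ sits at the sign change of $\x$), the swap you choose has first-order change $-2\mu\delta_H|c|J\mp2\mu r_CW_HJ^2$. Its sign is then decided by $W_H$, not by $c$. Your Step~3 hides this in two ways. First, it bounds the $W_H$ terms by the global estimates $M\le C_dn^{-1/2}$ and $J\le C_d\mu n^{1/2}$. Second, it compares them with a main term $\mu\sum_i|c_i|J_i$ that you never bound below in this regime. The pointwise bound $|x|\gtrsim n^{-1/2}$ on the outer portion that you invoke is also not available from \Cref{lem:even-sign-count}.

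Your remedy is to slide $H$ through $j\sim t$ blocks, or up to the end. This needs an uninterrupted run of standard blocks on one side, but the hypothesis gives only one block on each side. At this stage up to $O_d(n^{2/3})$ bounded exceptional intervals may be separated from $H$ by a single block, and \Cref{lem:even-interchange} cannot move $H$ past another exceptional interval. Even with a long run, telescoping about $n$ swaps whose boundary data are intermediate, non-Fiedler values would need an error analysis you do not supply.

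\paragraph{The missing idea.} Use both swaps on the triple $C_kHC_l$ and add them. Apply $C_kH\mapsto HC_l$ with data $(c_L,J_L)$ and $HC_l\mapsto C_kH$ with data $(c_R,J_R)$. The harmonic approximation gives
\[
 J_L,\,J_R=I+O(\mu M),\qquad c_R-c_L=r_CI+O(\mu M),
\]
where $I$ is the current through $H$. Hence both the $c_H$ and the $W_H$ terms cancel, and the sum of the two changes is $-2\mu\delta_Hr_CI^2+O(\mu^2M^2)$.

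If $|c_H|\le KI$, then $M=O_K(I)$, so one of the two swaps improves. This handles the zero-crossing regime with a single block on each side.

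If $c_H>KI$, one swap toward larger values suffices. A large $K$ absorbs the $W_H$ term, and the remainder is controlled by a local bound instead of your global ones. The suffix sum gives $I+O(\mu M)\ge\mu tc_H/2$, and $M=O(c_H)$. Hence $\mu M/I\le C/t$, so $O(\mu^2M^2)$ is negligible against $\mu\delta_Hc_HI$ once $t$ is large. This is exactly where the size $t$ of the outside side enters.

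\paragraph{Two smaller points.} First, $\delta_H>0$ needs more than ``an exceptional interval is not a standard chain''. Exceptionality refers to the Fiedler ordering, so you must show that if $H$ were a chain of copies of $C_k$, its Fiedler ordering would agree with the natural one up to ties inside cliques, and would therefore contain standard gaps. The paper checks this via monotone interface currents when the endpoint signs agree, and via \eqref{eq:even-harmonicerror} when they differ.

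Second, the inertia argument of \Cref{lem:even-replaceend} does not carry over unchanged. For a two-terminal replacement the Schur complement changes by a $2\times2$ matrix, and by \eqref{eq:even-interchange} its leading part is indefinite because $\delta_H\ne0$. You need the centered comparison \eqref{eq:strictcomparison} together with a bound on the centering term, which here is $C\mu(I+\mu M)^2/n$.
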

\begin{proof}
First, such an interval $H$ cannot be a standard chain in a different
ordering. Values on each middle clique of a standard block coincide,
by subtracting their eigen-equations. If both endpoint values have
the same weak sign, then all values on $H$ have that sign. The
currents along its natural interfaces form a monotone sequence,
by the eigen-equations. Its first and last currents are positive,
as follows from \eqref{eq:even-b} and the ordered separating endpoints.
Thus every natural interface has a positive difference.
If the endpoint values have opposite strict signs, write
$\Delta=b-a$. Their largest absolute value is at most $\Delta$.
With the endpoints fixed, the harmonic vector $f_0$ and the Fiedler
restriction $f$ satisfy
\begin{equation}\label{eq:even-harmonicerror}
 B(f-f_0)=\mu f_{\rm int}.
\end{equation}
For bounded order, $B^{-1}$ is bounded by the same path argument
as in \Cref{lem:even-response}. Therefore $f-f_0=O_{d,B}(\mu\Delta)$.
Every distinct consecutive group in the harmonic chain differs by
at least a fixed positive multiple of $\Delta$. These differences
remain positive for small $\mu$. In either case, its natural order
agrees with the Fiedler order apart from ties within cliques, and
it contains standard gaps, a contradiction.

It follows from \Cref{thm:even-terminal} that $\delta_H>0$.
There are only finitely many possible $H$ of bounded order, so
$\delta_H$ has a uniform positive lower bound. All constants below
are uniform over this family.

The three consecutive subgraphs are $C_k,H,C_l$. Let $a_H,b_H$
be the endpoint values of $H$, and put
$c_H=(a_H+b_H)/2$ and $I=(b_H-a_H)/r_H>0$.
Let $M$ bound the absolute values on all three subgraphs.
Equation~\eqref{eq:even-harmonicerror} gives an $O(\mu M)$ error
from the harmonic vector on each bounded subgraph, including an
$O(\mu M)$ error in the endpoint currents. At a common terminal,
the incoming and outgoing currents differ by $\mu$ times its
component. Therefore the neighboring block currents are
$I+O(\mu M)$. For the union on the left and the union on the right,
the corresponding parameters in \Cref{lem:even-interchange}
satisfy
\begin{align*}
 c_L&=c_H-r_CI/2+O(\mu M),\\
 J_L&=I+O(\mu M),\\
 c_R&=c_H+r_CI/2+O(\mu M),\\
 J_R&=I+O(\mu M).
\end{align*}
Also $M\le C(|c_H|+I)$ for small $\mu$: first obtain this inequality
with an additional $C\mu M$ on the right and absorb it.

Keep the outside values fixed and consider the two replacements
\begin{align*}
 C_kH&\longmapsto HC_l,\\
 HC_l&\longmapsto C_kH.
\end{align*}
Each preserves the order and all degrees. In either pair the old
restriction minimizes its quadratic form minus $\mu$ times the
squared norm, since its Dirichlet matrix is positive definite for
small $\mu$. The sum of the two changes, before centering, is
\begin{equation}\label{eq:even-twoswaps}
 -2\mu\delta_Hr_CI^2+O(\mu^2M^2).
\end{equation}
This follows by applying \eqref{eq:even-interchange} in the two
directions. The $c_H$ and $W_H$ terms cancel.
Only a bounded number of actual vertex values change, each by
$O(I+\mu M)$. Consequently the change in their sum is of that
same order. Since the original vector has mean zero, centering
adds at most
\begin{equation}\label{eq:even-swapcenter}
 C\mu(I+\mu M)^2/n
\end{equation}
to either comparison. Shared vertices are counted once: the two
half weights at each internal shared vertex add to one, and the
outer endpoints have not changed.

Choose a large constant $K$. If $|c_H|\le KI$, then $M=O_K(I)$.
The sum \eqref{eq:even-twoswaps}, with both centering terms, is
negative for large $n$. At least one replacement improves the
Rayleigh quotient.

Suppose $c_H>KI$; the negative case follows by reversal and a sign
change. Let $t$ be the smaller number of vertices outside the three
subgraphs. For large $K$, the last singleton has value at least
$c_H/2$, and every vertex after it has at least that value.
The outgoing current there is consequently at least $\mu t c_H/2$,
by a suffix sum. It is $I+O(\mu M)$ by the eigen-equation at that
singleton and the preceding harmonic estimates. Since $M=O(c_H)$,
\[
 \frac{\mu M}{I}\le\frac Ct
\]
when $t$ exceeds a fixed constant.
For the right replacement, \eqref{eq:even-interchange} has leading
term $-2\mu\delta_Hc_RJ_R$. Choose $K$ large enough to absorb the
term involving $W_HJ_R^2$, and then choose $t$ large enough to
absorb $O(\mu^2M^2)$. The change is at most $-c\mu c_HI<0$.
The centering term \eqref{eq:even-swapcenter} is smaller for large
$n$, again giving a contradiction.
Thus $t$ is bounded. Adding the bounded orders of the neighboring
blocks gives the assertion about an outside side of $H$.
\end{proof}

\begin{theorem}\label{thm:even-boundedends}
Every sufficiently large exact minimizer consists of an uninterrupted
chain of copies of $M_d$ and two connected end subgraphs with
$O_d(1)$ vertices in total. Consecutive copies share their singleton
endpoints with consistent orientation, and each end subgraph meets
the chain only at its outside singleton.
\end{theorem}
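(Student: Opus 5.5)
The plan is to combine the three structural facts already proved for exact minimizers: Theorem~\ref{thm:evenmain} (all but $O_d(n^{2/3})$ differences lie in standard gaps), Theorem~\ref{thm:even-boundedintervals} (every exceptional interval has order bounded in terms of $d$), and Lemma~\ref{lem:even-movetoend} (a bounded interior exceptional interval flanked by standard blocks lies within $T_{d,B}$ vertices of an end). Fix the increasing unit Fiedler vector, select all standard gaps, and list the exceptional intervals $H_1,\dots,H_s$ in the order of the Fiedler ordering, including the possibly empty initial and final ones. By Theorem~\ref{thm:even-boundedintervals} each $H_j$ has at most $B=B_d$ vertices. By Lemma~\ref{lem:even-standard-block} each standard gap is an actual block isomorphic to $M_d$ with separating singleton endpoints, so all the selected blocks and the exceptional intervals lie consecutively along one path of the block-tree, and consecutive standard gaps share a singleton. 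Orientation consistency between consecutive gaps holds because two successive standard gaps share a separating vertex, which has a single type $P$ or $Q$; this fixes the orientation of both.

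Next, bound the number and location of the interior exceptional intervals. An interior $H_j$ is preceded and followed by standard gaps, hence by standard blocks, so Lemma~\ref{lem:even-movetoend} applies: one of its two outside sides contains at most $T_{d,B}$ vertices. Consequently every interior exceptional interval lies within distance $T_{d,B}+B$ of one of the two ends of the ordering. Since distinct exceptional intervals are disjoint and each contains at least one vertex not in a standard gap, only $O_d(1)$ of them fit in the two end regions of size $T_{d,B}+B$. The end intervals are bounded by Theorem~\ref{thm:even-boundedintervals}. Then take the two end subgraphs to be everything from each end of the ordering up to and including the last exceptional interval near that end, together with the standard blocks trapped between such intervals. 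Each end subgraph has $O_d(1)$ vertices, and what remains between them is an uninterrupted chain of consecutive standard gaps, that is, copies of $M_d$.

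Finally, verify the attachment and connectivity claims. Each end subgraph meets the chain only at the outermost singleton of the chain. This holds because that singleton is a separating vertex, so $q_{a-1}+q_a=d$ and no edge bypasses it. Connectedness of each end subgraph follows from connectedness of $G$ and the separating property, as in the discussion preceding Lemma~\ref{lem:even-sign-count}.

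The main obstacle is the dichotomy in Lemma~\ref{lem:even-movetoend}. That lemma only guarantees that some side is short, so I must rule out intervals being near alternating ends in a way that leaves unbounded material. The counting above handles this: ``near an end'' means within a fixed distance of one of just two positions, so the total is bounded. A subtler point is an interior interval not immediately flanked by standard blocks on both sides, for example two exceptional intervals separated by nothing. By definition consecutive exceptional intervals are separated by at least one standard gap, so the hypothesis of the lemma holds. I would check this carefully, and also check that the case of an empty chain is excluded because $n-O_d(1)$ vertices must lie in blocks.
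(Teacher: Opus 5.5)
Your proposal is correct and follows essentially the same route as the paper. Both arguments bound every exceptional interval by \Cref{thm:even-boundedintervals}, use \Cref{lem:even-movetoend} to place each interior one within a bounded number of vertices of an end, absorb the two bounded end segments (extended to the adjacent separating singletons) into the end subgraphs, and observe that only standard gaps remain between them. The one cosmetic difference is the orientation step: you obtain consistent orientation from the shared endpoint type $P$ or $Q$, whereas the paper uses the degree count at the shared singleton ($2+2\ne d$ and $2(d-2)\ne d$ for $d\ge6$), and either argument suffices.
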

\begin{proof}
\Cref{thm:even-boundedintervals} bounds every exceptional
interval, including those at the ends. Every interior exceptional
interval has a standard block immediately on both sides, so
\Cref{lem:even-movetoend} puts it within a bounded number of
vertices of an end. Thus every exceptional vertex belongs to an
initial or final bounded segment of the increasing order. Extend
these two segments, if necessary, to adjacent separating singletons;
this adds at most $O_d(1)$ vertices. Between them all gaps are
standard. Sharing a singleton forces their orientations to agree,
since $2+2$ and $2(d-2)$ are different from $d$ for $d\ge6$.
Increasing the order threshold ensures that at least two middle
blocks remain.
\end{proof}

\subsection{The end subgraphs do not branch}
In this subsection, we exclude branching inside the bounded end
subgraphs. Together with the middle chain, this proves that the whole
block-tree is a path. All vertices in either bounded
end subgraph have one strict Fiedler sign, by the sign count above.
Change the sign when needed to make the end under consideration
positive.

\begin{lemma}\label{lem:even-outward-component}
Let $U$ be a component of $G-v$ lying in the bounded positive end
under consideration, and suppose that $x_v>0$. For sufficiently
large $n$,
\begin{equation}\label{eq:even-outward}
 \x_U-x_v\1=\mu x_v(L_U-\mu I)^{-1}\1>\0,
\end{equation}
where $L_U$ is the principal Laplacian on $U$.
\end{lemma}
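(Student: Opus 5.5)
We would derive \eqref{eq:even-outward} from the eigen-equations restricted to $U$, exactly as in the end-subgraph calculation of \Cref{lem:even-response}. Since $U$ is a component of $G-v$, every edge leaving $U$ goes to $v$. Write $\mathbf a$ for the indicator of the neighbors of $v$ in $U$. Then the eigen-equations at the vertices of $U$ read
\[
 (L_U-\mu I)\x_U=x_v\mathbf a .
\]
Every vertex of $U$ has degree $d$ in $G$, so $L_U\1=\mathbf a$. Subtracting $x_v(L_U-\mu I)\1=x_v\mathbf a-\mu x_v\1$ gives
\[
 (L_U-\mu I)(\x_U-x_v\1)=\mu x_v\1 .
\]
Hence, once $L_U-\mu I$ is invertible, the displayed identity holds.

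Next we would show that $L_U-\mu I$ is positive definite and that its inverse maps $\1$ to a positive vector. The end subgraph has $O_d(1)$ vertices by \Cref{thm:even-boundedends}, so $|U|=b=O_d(1)$. The path argument of \Cref{lem:even-response} applies verbatim with $v$ in the role of the root: extend by zero at $v$, join each vertex of $U$ to $v$ by a path of at most $b$ edges inside $U\cup\{v\}$, and apply Cauchy--Schwarz. This gives $\lambda_{\min}(L_U)\ge b^{-2}$, a positive constant depending only on $d$. On the other hand $\mu=O_d(n^{-2})$ by \Cref{lem:comparison-upperbound}, so for sufficiently large $n$ we have $\mu b^2\le1/2$, and $L_U-\mu I$ is positive definite. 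It is a positive definite matrix with nonpositive off-diagonal entries. The variational argument from \Cref{lem:even-response} (replace $\z$ by $|\z|$, then use $Q\z=\1$ to exclude zero components) gives $(L_U-\mu I)^{-1}\1>\0$. Multiplying by $\mu x_v>0$ yields strict positivity of $\x_U-x_v\1$.

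The only point needing care is the connectivity used in the path argument. A vertex of $U$ must reach $v$ through a path inside $U\cup\{v\}$. This holds because $U$ is a connected component of $G-v$ and $G$ is connected, so some vertex of $U$ is adjacent to $v$. Uniformity comes from the bounded order of the end subgraphs. The sign hypothesis $x_v>0$ enters only in the final multiplication; $\mu>0$ because $G$ is connected. I expect no real obstacle beyond confirming that the bound on $|U|$ is uniform in $n$, which \Cref{thm:even-boundedends} supplies.
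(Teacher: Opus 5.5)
Your proposal is correct and follows the paper's own argument. The path bound $\lambda_{\min}(L_U)\ge|U|^{-2}$ together with $\mu=O_d(n^{-2})$ makes $L_U-\mu I$ positive definite for large $n$, the positivity argument of \Cref{lem:even-response} gives $(L_U-\mu I)^{-1}\1>\0$, and the eigen-equations on $U$ (via $L_U\1=\mathbf a$) give the identity; you simply write out the eigen-equation manipulation that the paper leaves implicit.
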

\begin{proof}
Paths to $v$ give $\lambda_{\min}(L_U)\ge |U|^{-2}$.
For large $n$, the positivity argument in \Cref{lem:even-response}
applies to $L_U-\mu I$. The eigen-equations then give
\eqref{eq:even-outward}.
\end{proof}

\begin{lemma}\label{lem:even-cutbranch}
Deleting a vertex of a sufficiently large exact minimizer leaves
at most two components.
\end{lemma}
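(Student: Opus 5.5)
We plan to argue by contradiction, combining the structure theorem for exact minimizers (\Cref{thm:even-boundedends}) with a degree-preserving edge switch that lowers the algebraic connectivity. Let $G$ be a sufficiently large exact minimizer and $v$ a vertex with $G-v$ having at least three components. The middle chain is long, so all but one component of $G-v$ have $O_d(1)$ vertices. Each such bounded component lies in one of the two bounded end subgraphs, because the middle chain consists of $M_d$ blocks joined at separating singletons, and deleting any vertex there leaves exactly two components. Hence $v$ lies in, or is the attachment singleton of, a bounded end subgraph, and at least two components $U_1,U_2$ of $G-v$ are bounded and lie away from the chain. By the sign count (\Cref{lem:even-sign-count}) and a sign change, every vertex of this end, including $v$, has positive value. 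So $x_v>0$, and \Cref{lem:even-outward-component} gives $\x_{U_i}>x_v\1$ componentwise.

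\textbf{The switch.} Since $d$ is even, each $U_i$ sends an even number $e_i\ge2$ of edges to $v$. Choose an edge $vu_1$ with $u_1\in U_1$ and an edge $vu_2$ with $u_2\in U_2$, and some further edge $wz$ whose endpoints are suitably placed. The natural first attempt is the 2-switch that deletes $vu_1$ and $wz$ and inserts $u_1w$ and $vz$; this preserves all degrees. We need $G'$ to stay simple and connected, and the Rayleigh quotient of the unchanged vector $\x$ to not increase. Because the vector is unchanged, the norm and mean are unchanged, so by \eqref{eq:strictcomparison} and the remark after it, it suffices to show
\[
 (x_{u_1}-x_w)^2+(x_v-x_z)^2\le(x_{u_1}-x_v)^2+(x_w-x_z)^2,
\]
together with a failure of some eigen-equation. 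Expanding, this is $(x_{u_1}-x_z)(x_v-x_w)\le 0$ up to relabeling, i.e.\ the four values must be interleaved in a suitable order. The cleanest choice is to take the switch entirely between the two outward components. Delete $vu_1$ and an edge $y_2z_2$ inside $U_2$, and add $u_1y_2$ and $vz_2$, where the choices are made so that $x_v<x_{z_2}$ and $x_{u_1}$ lies on the correct side. Using $x_{U_i}>x_v$, the sign condition can be arranged by taking $u_1$ to be a neighbour of $v$ with minimal value in $U_1$ and $y_2$ in $U_2$ with value at least $x_{u_1}$, or the symmetric choice with the roles of $U_1$ and $U_2$ swapped.

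\textbf{Connectivity and simplicity.} Since $e_1\ge2$, the component $U_1$ stays attached to $v$ after deleting $vu_1$, and it is now also joined to $U_2$. We have to choose $y_2z_2$ so that $U_2$ minus this edge stays connected to $v$; this is possible because $U_2$ has at least two edges to $v$ and $v$ gains the edge $vz_2$. Simplicity requires $u_1y_2$ and $vz_2$ to be absent from $G$; the first holds automatically because the two vertices lie in different components, and the second can be arranged by choosing $z_2$ among non-neighbours of $v$, or else by a short case analysis using the degree count in the bounded component.

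\textbf{Expected obstacle.} The main difficulty is showing that such an interleaved choice always exists and that the inequality is strict, or that equality breaks an eigen-equation, so that we get $\mu(G')<\mu(G)$ and a contradiction. If the direct 2-switch fails, the fallback is a Schur-complement comparison in the style of \Cref{lem:even-replaceend}. There one merges $U_1$ and $U_2$ into a single component $U$ of the same order attached to $v$ with the same degree $e_1+e_2$, and shows that its response $\1^\top(L_U-\mu I)^{-1}\1$ is strictly larger. This should hold because hanging more mass along a longer path increases the resistance-type quantity, and \Cref{lem:even-outward-component} supplies the required positivity.
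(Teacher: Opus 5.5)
Your reduction is fine: only a vertex $v$ of a bounded positive end can have two outward components $U_1,U_2$, each joined to $v$ by an even number of edges and carrying values above $x_v>0$. The gap is the decisive step, which you defer to the ``expected obstacle'': you never show that a simple, connected, degree-preserving switch of the right sign exists, and the switch you propose does not work in general.

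For the switch that deletes $vu_1,y_2z_2$ and adds $u_1y_2,vz_2$, the change of the quadratic form of the unchanged vector is $2(x_v-x_{y_2})(x_{u_1}-x_{z_2})$. Since $x_{y_2}>x_v$, you need $x_{z_2}\le x_{u_1}$ for the endpoint $z_2$ that becomes adjacent to $v$, and simplicity forces $z_2\notin N(v)$. The value of $y_2$ is irrelevant, and taking $u_1$ of minimal value works against you.

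This fails for both labelings in a configuration your argument cannot exclude. Suppose $U_1,U_2$ are isomorphic with the same attachment to $v$; then $\x_U-x_v\1=\mu x_v(L_U-\mu I)^{-1}\1$ gives them identical values. Suppose also that in each component the neighbours of $v$ carry smaller values than all non-neighbours (recall that the minimum of an outward component is attained only at neighbours of $v$). Then every admissible $z_2$ has $x_{z_2}>x_{u_1}$, and the change is positive.

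The fallback is only a heuristic. No merged end subgraph with $|U_1|+|U_2|$ nonroot vertices and root degree $e_1+e_2$ is constructed, and the inequality of responses it needs is exactly the unproved content.

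The paper does not use an unchanged vector. It labels the two components $A,B$ so that $F_A/p\le F_B/q$ (value sums divided by the numbers of edges to $v$) and takes the smallest-valued neighbour $u\in A$ of $v$. It then moves the $q$ edges from $v$ to $B$ over to $u$ and raises every value on $B$ by $\eta=x_u-x_v$, so that these edges and all edges of $B$ keep their differences. Degrees are restored by moving to $v$ the $q$ edges $uw$ with $w$ of least value among the $h\ge q+1\ge2$ neighbours of $u$ in $A$ not adjacent to $v$. Summed eigen-equations give $q\eta\le\mu F_B$ and $\sum_{w\in W}(x_w-x_u)\le q(\eta-\mu x_u)/h$. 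These make the change of the form minus $\mu$ times the centred norm negative.

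Your switch idea can also be rescued with a different case split.
\begin{itemize}
\item If the value ranges of $U_1$ and $U_2$ overlap in an interval of positive length, choose edges $ab$ in $U_1$ and $cd$ in $U_2$ with $x_a<t<x_b$ and $x_c<t<x_d$ for a common $t$. Replace them by $ac,bd$; the change is $2(x_a-x_d)(x_b-x_c)<0$.
\item If instead $\max_{U_1}x\le\min_{U_2}x$, take an edge $ay$ of $U_1$ with $a\notin N(v)$ and $x_y\ne x_a$; it exists because $\mu x_a>0$. Replace $vu_2$ and $ay$ by $va,u_2y$. The change is $2(x_v-x_y)(x_{u_2}-x_a)\le0$, and equality breaks the eigen-equation at $u_2$.
\end{itemize}
In both cases the added edges are new. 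Since every cut of $G$ is even, deleting the two old edges cannot disconnect $G$.
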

\begin{proof}
Only a vertex $v$ in a bounded end could violate the assertion.
Suppose that two components $A,B$ of $G-v$ lie away from the middle
chain. Their values exceed $x_v>0$ by \eqref{eq:even-outward}.
Put $F_A=\sum_Ax$, $F_B=\sum_Bx$, $p=|N(v)\cap A|$, and
$q=|N(v)\cap B|$. Relabel so that $F_A/p\le F_B/q$.
Choose $u\in N(v)\cap A$ with smallest value and put
$\eta=x_u-x_v>0$. Summing the equations over $A$ gives
\begin{align*}
 p\eta&\le\mu F_A,\\
 q\eta&\le\mu F_B.
\end{align*}
Let $Z=\{w\in N(u)\cap A:vw\notin E(G)\}$ and $h=|Z|$.
There are at most $p-1$ common neighbors of $u,v$ in $A$, so
$h\ge d-p\ge q+1\ge2$. The second inequality uses an edge from
$v$ to a third component. Every common neighbor in $A$ has value
at least $x_u$, by the choice of $u$. Its eigen-equation therefore
gives
\[
 \sum_{w\in Z}(x_w-x_u)\le\eta-\mu x_u.
\]
Take the $q$ smallest values in $Z$, forming $W$. Their sum of
differences is at most $q(\eta-\mu x_u)/h$.

Move all edges from $v$ to $B$ to $u$, and move the edges $uw$
for $w\in W$ to $v$. Every degree is unchanged, and all added edges
are new. The edge $uv$ remains. Each component detached by deleting
edges $uw$ contains an endpoint $w$ and is reattached by $vw$;
the moved component $B$ remains attached at $u$. Thus the graph
is connected. Add $\eta$ to every component on $B$, keeping all
other values. The moved edges to $B$ keep their differences.
The change of the quadratic form minus $\mu$ times centered
squared norm is at most
\[
 -q\eta^2(1-2/h)-2q\mu x_u\eta/h
       -\mu|B|(1-|B|/n)\eta^2<0.
\]
Here the change in variance is
$2\eta F_B+|B|(1-|B|/n)\eta^2$, and the quadratic-form change is
$q\eta^2+2\eta\sum_{w\in W}(x_w-x_u)$.
The Rayleigh principle gives a contradiction.
\end{proof}

\begin{lemma}\label{lem:even-levelswitch}
Let $A$ be a component of $G-a$ in a bounded positive end, lying
away from the middle chain. Choose $w\in A$ with largest component.
Then no edge $uv$ outside $A\cup\{a\}$ with $x_u<x_v$ satisfies
$x_u\le x_w\le x_v$.
\end{lemma}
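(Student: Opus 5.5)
We argue by contradiction, using a degree-preserving edge switch in the spirit of \Cref{lem:even-cutbranch}. Suppose $uv$ is an edge outside $A\cup\{a\}$ with $x_u<x_v$ and $x_u\le x_w\le x_v$. By \Cref{lem:even-outward-component} every vertex of $A$ has value strictly larger than $x_a>0$, so $x_w>x_a$. Since $w$ has degree $d$ and all its neighbors lie in $A\cup\{a\}$, we pick a neighbor $z$ of $w$. The natural choice is the neighbor of smallest value, or $a$ itself if $wa\in E(G)$. The eigen-equation at the maximum $w$ gives $\sum_{y\sim w}(x_w-x_y)=\mu x_w>0$, so $x_z<x_w$, and $x_z$ is the smallest value adjacent to $w$.

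The switch deletes $uv$ and $wz$ and inserts $uw$ and $vz$; alternatively it inserts $uz$ and $vw$, and we choose whichever keeps simplicity and connectivity. Every degree is unchanged. New edges are genuinely new because $u,v\notin A\cup\{a\}$, whereas $w\in A$ and $z\in A\cup\{a\}$. The one exception is $z=a$, where $ua$ or $va$ might already be an edge; in that case we swap the roles of $u$ and $v$, or take a different neighbor $z$ of $w$ inside $A$. We then check connectivity. Deleting $uv$ cannot disconnect $G$, since an even-regular graph has no bridge. After both deletions, any component cut off contains one of the four endpoints, and the two inserted edges reconnect the pieces. This is the routine case analysis of \Cref{lem:even-cutbranch}.

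The energy comparison keeps the same vector $\x$, so the squared norm and the sum are unchanged and no centering correction is needed. For the switch inserting $uw$ and $vz$, the change of the quadratic form is
\[
 (x_u-x_w)^2+(x_v-x_z)^2-(x_u-x_v)^2-(x_w-x_z)^2=-2(x_w-x_u)(x_v-x_z)\cdot(-1)\ldots
\]
More cleanly, the change equals $2(x_u-x_z)(x_v-x_w)$ for the first pairing, and similarly $2(x_u-x_w)(x_v-x_z)$ for the other. The ordering $x_u\le x_w\le x_v$ together with $x_z<x_w$ makes one of these products nonpositive. We choose the pairing $uw,vz$ when $x_z\le x_u$, and otherwise the pairing with the nonpositive product. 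The Rayleigh quotient on $G'$ is then at most $\mu(G)$. If it is strictly smaller, this contradicts minimality. If there is equality, $\x$ is still a minimizer of the Rayleigh quotient and hence a Fiedler vector of $G'$. But the eigen-equation at $w$ changes by $x_z-x_u\ne0$ or by $x_z-x_v\ne0$, and by the final remark after \eqref{eq:strictcomparison} this forces $\mu(G')<\mu(G)$.

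I expect the main obstacle to be the degenerate cases. The first is equality among the values, $x_u=x_w$ or $x_w=x_v$, where the product vanishes and we must rely on the eigen-equation argument. There we need to check that the equation at $w$ or $z$ genuinely changes; this uses $x_z<x_w$ and the strict positivity from \eqref{eq:even-outward}. The second is the case $z=a$ when edges to $a$ already exist, which I would handle by choosing $z$ inside $A$ whenever $a$ is adjacent to $u$ or $v$.
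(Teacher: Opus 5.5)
Your strategy is the paper's: switch $wz,uv$ for two edges between $\{w,z\}$ and $\{u,v\}$, compute the change of the quadratic form as a product, and use a changed eigen-equation to make equality strict. As written, however, the proposal never fixes one switch that is at once energy-nonincreasing, simple and connected, and two of its rules fail.

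\textbf{The pairing rule is reversed.} When $x_z\le x_u$, the switch inserting $uw,vz$ changes the form by $2(x_u-x_z)(x_v-x_w)\ge0$. No case split is needed. The switch inserting $uz,vw$ always changes it by $2(x_u-x_w)(x_v-x_z)\le0$, because $x_u\le x_w$ and $x_z\le x_w\le x_v$. This is the paper's switch $wz,uv\mapsto wv,zu$.

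\textbf{The fix for $z=a$ does not work as stated.} Exchanging $u$ and $v$ puts you back on the pairing of uncertain sign, and $va$ may also be an edge. Take $z\in A$ from the start, as the paper does. Such a neighbor exists, since $w$ has degree $d$ and at most one neighbor, namely $a$, outside $A$. Then only $x_z\le x_w$ is available, so the equality cases must be checked separately:
\begin{itemize}
\item If $x_w=x_u$, the equation at $w$ changes by $x_z-x_v<0$.
\item If $x_z=x_v$, then $x_z=x_w=x_v>x_u$. Here it is the equation at $z$ that changes, by $x_w-x_u>0$.
\end{itemize}

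\textbf{The connectivity step is a genuine gap.} Your claim that ``the two inserted edges reconnect the pieces'' is false in general. Suppose $\{wz,uv\}$ were a $2$-edge cut with $w,v$ on one side and $z,u$ on the other. Then the energy-favorable switch $uz,vw$ leaves the graph disconnected. You also cannot ``choose whichever keeps connectivity,'' because the energy sign already dictates the pairing.

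The missing fact is that $G-wz-uv$ is itself connected. One way to see it: $wz$ lies in $G[A\cup\{a\}]$ and $uv$ lies in $G-A$. These are connected subgraphs meeting only in $a$. A bridge of either would be a bridge of $G$, since $A$ has no neighbors outside $A\cup\{a\}$, and an even-regular graph has no bridge. The paper argues differently. By the outward-component lemma, a component of $G-w$ inside $A\setminus\{w\}$ would have all values above $x_w$, so $G-w$ is connected. Hence $G[A\cup\{a\}]-w$ is connected, and $w$ keeps a second neighbor after $wz$ is deleted. Either way, every pairing gives a connected graph.

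The appeal to the ``routine case analysis'' of the branching lemma does not supply this, since that lemma moves a whole component rather than switching two edges. With these three repairs, your argument becomes the paper's proof.
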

\begin{proof}
First $G-w$ is connected. Otherwise a component $U$ not containing
$a$ lies in $A\setminus\{w\}$; \eqref{eq:even-outward} would make
all its values exceed $x_w$, a contradiction. It follows also that
$G[A\cup\{a\}]-w$ is connected: a path leaving this subgraph can
leave and return only at $a$, and that excursion can be removed.
Choose a neighbor $z\in A$ of $w$. Such a neighbor exists, and
$w$ has another neighbor besides $z$, because only $a$ can be its
neighbor outside $A$ and $d\ge3$. Therefore deleting $wz$ leaves
$G[A\cup\{a\}]$ connected.

Suppose the stated edge $uv$ exists. Replace $wz,uv$ by $wv,zu$.
Both added edges are new and every degree is preserved. If deleting
$uv$ disconnects $G-A$, the two new edges join both resulting
components through the connected subgraph containing $w,z,a$.
Thus the whole graph remains connected.
The quadratic-form change on the unchanged vector is
\[
 2(x_w-x_u)(x_z-x_v)\le0,
\]
since $x_z\le x_w$. Strict inequality contradicts minimality.
If $x_w=x_u$, then $x_z-x_v\ne0$, so the eigen-equation at $w$
changes. The other equality possibility is $x_z=x_v$, which
forces $x_z=x_w=x_v>x_u$; the eigen-equation at $z$ then changes.
In either case the old vector cannot attain equality in the new
Rayleigh principle, again giving strict improvement.
\end{proof}

\begin{lemma}\label{lem:even-blockbranch}
No block in a bounded end has two distinct outward cut vertices.
\end{lemma}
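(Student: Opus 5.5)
We argue by contradiction, using the two preceding lemmas in the same way as in the odd-degree case of Part~I. Suppose a block $X$ in the bounded positive end (after a sign change) has two distinct outward cut vertices $a_1,a_2$. Here "outward" means that the components of $G-a_i$ not containing the middle chain are nonempty. By \Cref{lem:even-cutbranch}, each $a_i$ leaves exactly two components: one outward component $A_i$, and one containing $X$ and the chain. The sets $A_1,A_2$ are disjoint and both lie away from the middle chain. The vertices of $X$ that are not outward lie on the chain side. We may also let $a_0$ be the cut vertex of $X$ toward the chain, if there is one. By the sign count, all values here are positive. By \eqref{eq:even-outward} applied to $U=A_i$ and $v=a_i$, every value in $A_i$ exceeds $x_{a_i}$.

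The plan is to use \Cref{lem:even-levelswitch} for both branches and derive incompatible orderings. Let $w_i$ be a vertex of largest value in $A_i$, and relabel so that $x_{w_1}\le x_{w_2}$. Then we build a path from $a_2$ into $A_2$ up to $w_2$. Values along this path start at $x_{a_2}$ and end at $x_{w_2}$. I would choose the path so that it avoids $A_1\cup\{a_1\}$; it can stay in $A_2\cup\{a_2\}$ except for its start. We then need $x_{a_2}\le x_{w_1}$. If that holds, some edge $uv$ of this path crosses the level $x_{w_1}$, meaning $x_u\le x_{w_1}\le x_v$ with $x_u<x_v$. This edge lies outside $A_1\cup\{a_1\}$, which contradicts \Cref{lem:even-levelswitch} for $A=A_1$. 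Equal values along the path are harmless, because we only need a strict pair crossing the level. If all path values equal $x_{w_1}$, then $x_{w_2}=x_{a_2}$, which is impossible by \eqref{eq:even-outward}.

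So the key step is showing $x_{a_2}\le x_{w_1}$, given that $x_{a_1}<x_{w_1}$. I would handle this with a path $P$ inside the block $X$ from $a_1$ to $a_2$. It avoids $A_1$ and $A_2$, since $X$ is $2$-connected and meets each $A_i$ only at $a_i$. Suppose instead that $x_{a_2}>x_{w_1}$. Then $P$ goes from $x_{a_1}<x_{w_1}$ to $x_{a_2}>x_{w_1}$. Apart from its initial vertex $a_1$, it contains an edge crossing the level $x_{w_1}$ strictly. That edge may be incident to $a_1$. \Cref{lem:even-levelswitch} forbids only edges outside $A_1\cup\{a_1\}$, so an edge at $a_1$ needs extra care. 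To avoid it, I would instead take the path inside $X$ from $a_1$ to $a_2$ through a neighbor $y\neq$ of $a_1$ in $X$. If $x_y\le x_{w_1}$, the remaining subpath avoids $a_1$ and crosses the level, giving a contradiction. If every neighbor of $a_1$ in $X$ has value above $x_{w_1}$, then I would compare with the eigen-equation at $a_1$, or apply the level switch symmetrically with $A_2$. In either case we reach either $x_{a_2}\le x_{w_1}$ or a forbidden crossing edge. Edges through $a_1$ are handled by considering $X-a_1$, which is connected by $2$-connectivity.

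The main obstacle is this boundary bookkeeping at $a_1$. The exclusion in \Cref{lem:even-levelswitch} leaves edges at $a_1$ unconstrained. I expect the cleanest resolution to use the $2$-connectivity of $X$: connect a neighbor of $a_1$ to $a_2$ inside $X-a_1$. If that is not enough, a direct degree-preserving switch in the style of the proof of \Cref{lem:even-levelswitch} should work. One would swap $w_1z$ with an edge $a_1y$ and check connectivity using the fact that $A_1\cup\{a_1\}$ stays attached through $w_1$. If both orderings of the maxima lead to a forbidden crossing edge, no block in a bounded end has two outward cut vertices.
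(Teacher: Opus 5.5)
This is essentially the paper's proof: the level-crossing step along a path from $a_2$ to $w_2$ inside $A_2\cup\{a_2\}$ is identical, and the case you leave hedged is closed exactly as in the paper, by the eigen-equation at $a_1$. Under the assumption $x_{a_2}>x_{w_1}$, any vertex of the connected graph $X-a_1$ (a block of an even-regular graph is never a bridge) with value at most $x_{w_1}$ would give a forbidden crossing edge on a path to $a_2$ inside $X-a_1$. Hence every neighbor of $a_1$, which lies in $X$ or in $A_1$ by \Cref{lem:even-cutbranch}, has value greater than $x_{a_1}$ (using \eqref{eq:even-outward} for $A_1$), contradicting $\sum_{v\sim a_1}(x_{a_1}-x_v)=\mu x_{a_1}>0$; the symmetric level switch with $A_2$ and the extra edge switch you mention are not needed.
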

\begin{proof}
Root the block-tree at its attachment to the middle chain.
Suppose a block $B$ has distinct outward cut vertices $a,b$, with
outward components $A,C$. By \Cref{lem:even-cutbranch}, these
are their only outward components. Equation~\eqref{eq:even-outward}
gives $\x_A>x_a\1$ and $\x_C>x_b\1$.
Write $M_A=\max_Ax$, $M_C=\max_Cx$, and relabel so that $M_A\le M_C$.
If $x_b\le M_A$, a path from $b$ to a maximum vertex of $C$ has a
nonconstant edge with endpoint values enclosing $M_A$. This includes
$x_b=M_A$, since $M_C>x_b$. It contradicts
\Cref{lem:even-levelswitch} for a maximum vertex in $A$.
Hence $x_b>M_A$.

Since $B-a$ is connected, a vertex there with value at most $M_A$
would yield the same contradiction by a path to $b$.
Thus every vertex of $B-a$ has value greater than $M_A>x_a$.
All other neighbors of $a$ lie in $A$, by
\Cref{lem:even-cutbranch}, and have value greater than $x_a$.
This contradicts
$\sum_{v\sim a}(x_a-x_v)=\mu x_a>0$.
\end{proof}

\begin{proof}[Proof of \Cref{thm:evenfull}]
For $d\ge6$, \Cref{thm:even-boundedends} supplies the chain
and bounded end subgraphs. \Cref{lem:even-cutbranch} excludes
branching at a cut-vertex node. \Cref{lem:even-blockbranch}
excludes branching at a block-node in an end. The middle blocks
already have just their two prescribed attachments. Hence the entire
block-tree is a path. This proves the stated structural assertion.

If the middle contains $q$ blocks, then $n=qD+O_d(1)$. Each adds
three to the distance between the outside singletons, so
$\dm(G)\ge3q=3n/D-O_d(1)$. The general upper bound
$\dm(G)<3n/D$ from \cite{Caccetta,Erdos} gives the equality.
Together with \cite[Theorems~5.1--5.2]{AbGhDiam}, this also shows that
the diameter differs from the maximum by at most $O_d(1)$. For $d=4$, the published structural theorem
\cite[Theorem~3.2]{AbGhQuartic} gives the same conclusions, with a finite
list of end blocks. No extension to near-minimizing quartic graphs
is used in this deduction.
\end{proof}

\section{The minimum algebraic connectivity}\label{sec:chainvalue}
We now deduce the numerical formula from the structure of exact
minimizers, in the same order as in Part~I. The calculation is the
chain-to-path comparison of \cite[Section~3.2]{AbGhDiam}, with the
bounded-end errors retained. It applies to $d=4$ as well.

\begin{lemma}\label{lem:even-chainvalue}
Fix an even $d\ge4$ and an integer $B\ge0$. Suppose $G$ consists
of $q$ consistently oriented $M_d$ blocks and two connected end
subgraphs. Each end subgraph meets the chain only at the corresponding
outer singleton, there are no other edges between the pieces, and
at most $B$ vertices lie outside the chain. If $n=|V(G)|$, then
\begin{equation}\label{eq:even-chainvalue}
 \mu(G)=\frac{2(d-2)\pi^2}{n^2}+O_{d,B}(n^{-3})
 \qquad(q\to\infty).
\end{equation}
The error is uniform over the choices of the two end subgraphs.
\end{lemma}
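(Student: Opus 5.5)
The upper bound comes from a trial vector built from the chain, and the lower bound from the interval certificate of \Cref{lem:even-certificate}, compared with a path via \Cref{lem:pathstable}.

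\emph{Upper bound.} Label the chain singletons $s_0,\dots,s_q$ and put $x_{s_j}=-\cos(\pi j/q)$. Extend inside each block by the harmonic values of \Cref{lem:comparison-blockminimum}, and extend constantly over each end subgraph, using the value at its attachment singleton. The end subgraphs then contribute nothing to the quadratic form, which is exactly $(2\beta q/D)\sin^2(\pi/2q)=\beta\pi^2/(2Dq)+O_d(q^{-3})$. The end pieces have at most $B$ vertices with values bounded by one, so they change $\sum y_i$ and $\sum y_i^2$ by $O_B(1)$. The proof of \Cref{lem:comparison-upperbound} then gives $\|\y-\bar y\1\|^2=n/2+O_{d,B}(1)$. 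Since $n=qD+O_{d,B}(1)$, we get $\mu(G)\le\beta\pi^2/n^2+O_{d,B}(n^{-3})$, uniformly in the end subgraphs.

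\emph{Lower bound.} Since $\mu=O(n^{-2})$, take an increasing mean-zero unit Fiedler vector and use the eigen-equations to control its shape.
\begin{enumerate}
\item The values on each middle clique coincide, because subtracting eigen-equations gives this. The block currents are monotone, and every value on an end subgraph has one strict sign, as in \Cref{lem:even-sign-count}.
\item Apply the argument of \Cref{lem:even-shifted-path} directly to this ordering. On the standard blocks the certificate $R_I=T_I$ holds. On the bounded end pieces I use only that $\mathcal E\ge0$, together with a crude bound of $O_{d,B}(1)$ on their total length in the shifted coordinates.
\item Interpolating at the shifted positions gives $\mu\ge\beta\int Y'^2-Cn^{-3}$, where $Y$ lives on an interval of length $n+O_{d,B}(1)$.
\item By \Cref{lem:smallentries}, the values on the end pieces differ from their attachment values by $O(n^{-3/2})$. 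Hence $\|\y\|$ and $\bar y$ change by only $O(n^{-1})$.
\item The path inequality \eqref{eq:pathineq} on $n'=n+O(1)$ points then gives $\mu\ge\beta p_{n'}(1-O(n^{-1}))$. This is only $\beta\pi^2/n^2-O(n^{-3})$, because $p_{n'}=\pi^2/n^2+O(n^{-3})$.
\end{enumerate}
The point to check here is that the multiplicative norm error really is $O(n^{-1})$ and not $O(n^{-1/2})$. For this I use that the end pieces carry $O(1)$ vertices with values $O(n^{-1/2})$, so they contribute $O(n^{-1})$ to the squared norm.

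\emph{Alternative for the lower bound, and the main obstacle.} The main obstacle is making the lower bound hold uniformly over arbitrary end subgraphs. An unrestricted exceptional piece could in principle carry strict intervals or large resistance, and the certificate lemma does not apply cleanly across it. My fallback is an explicit reduction. Use the two terminal-to-infinity Dirichlet problems: by \Cref{lem:even-response}, each end subgraph acts on its root as a boundary term $-a^2\{bt+t^2T_H(t)\}$ with $b\le B$ and $T_H(t)\le2B^3$. With this, $\mu$ is the smallest root of a secular equation for a weighted path of $q$ identical three-clique cells plus two bounded boundary perturbations. Solving the transfer-matrix recursion of the periodic chain exactly, and perturbing the boundary condition by $O_{d,B}(1)$, shifts the effective length by $O(1)$. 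This gives $\mu=\beta\pi^2/(D^2q^2)(1+O(q^{-1}))$ uniformly, which is \eqref{eq:even-chainvalue} after substituting $n=qD+O(1)$.
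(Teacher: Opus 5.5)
Your upper bound is the paper's argument and is fine. The lower bound, as you set it up, has a gap.

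You run it through \Cref{lem:even-shifted-path}, that is, through the interval certificate of \Cref{lem:even-certificate}. Those interval comparisons are proved only for even $d\ge6$ (the restriction is stated at the start of \Cref{sec:local}). The lemma, however, covers $d=4$, and it is exactly what the paper uses for the quartic case of \Cref{thm:evenminimum}. The shifted-path lemma is also formulated for $d$-regular graphs, while here the end subgraphs are arbitrary.

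In addition, the claim that ``on the standard blocks the certificate $R_I=T_I$ holds'' presupposes something about the increasing Fiedler ordering. It must place the end pieces at the two ends and run through the chain block by block, so that each $M_d$ block is a standard gap of that ordering. Your step~1 asserts this (``block currents are monotone'') without proof. It needs something like Fiedler's cut-vertex argument: at a singleton $s$ with $x_s\ne0$, one component of $G-s$ has Dirichlet eigenvalue above $\mu$, so all its values lie strictly beyond $x_s$ on the side of its sign.

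For $d\ge6$, once that ordering fact is supplied, your route does work. But the only local input it ever uses is the lower bound on the energy of a single standard block.

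The ``main obstacle'' you name is not real. You already handle the end pieces by discarding their nonnegative energy and bounding their $O(n^{-1})$ contribution to the squared norm. So the secular-equation fallback is unnecessary, and its key claim, a uniform $O(1)$ shift of effective length, is left unverified anyway.

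The paper's argument removes both problems. It puts $u_j=x_{s_j}$ and assumes no ordering or monotonicity at all.
\begin{itemize}
\item Averaging on the two middle cliques and applying \Cref{lem:comparison-blockminimum} gives, block by block and exactly, $\mu\ge(\beta/D)\sum_{j=0}^{q-1}(u_{j+1}-u_j)^2$. This holds for $d=4$ too and carries no per-interval $O(n^{-4})$ error. The end subgraphs are dropped because their terms are nonnegative.
\item For the squared norm, it assigns to block $j$ its $D$ vertices other than the right singleton, all within distance two of $s_{j-1}$. Then \Cref{lem:smallentries} gives $D\sum_ju_j=O_{d,B}(n^{-1/2})$ and $D\sum_ju_j^2=1+O_{d,B}(n^{-1})$, hence $\|\bu-\bar u\1\|^2=1/D+O_{d,B}(n^{-1})$.
\item The path inequality on $q+1$ points, with $D(q+1)=n+O_{d,B}(1)$, then finishes, uniformly in the end subgraphs.
\end{itemize}
Replacing your step~2 by this block comparison repairs the proof for the full range $d\ge4$. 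It also makes your steps~3--5 and the fallback superfluous.
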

\begin{proof}
Put $D=d+1$, $\beta=2(d-2)$, and list the singleton vertices as
$s_0,\ldots,s_q$. If $b$ vertices lie outside the chain, then
\[
 n=qD+1+b,\qquad 0\le b\le B.
\]
The cosine trial vector from \Cref{lem:comparison-upperbound} applies
here too: give $s_j$ value $-\cos(\pi j/q)$, minimize inside each
block, and extend constantly over each end subgraph. Internal edges
of an end subgraph contribute zero. The energy and centered squared
norm are respectively
\[
 \frac{\beta\pi^2}{2Dq}+O_d(q^{-3}),
 \qquad \frac n2+O_{d,B}(1).
\]
The mean is $O_{d,B}(n^{-1})$, by the same bounded-variation sum
used there. Hence
\begin{equation}\label{eq:even-chain-upper}
 \mu(G)\le\frac{\beta\pi^2}{n^2}+O_{d,B}(n^{-3}).
\end{equation}
In particular, $\mu=O_{d,B}(n^{-2})$.

Choose a mean-zero unit Fiedler vector $\x$ of $G$ and put
$u_j=x_{s_j}$. No monotonicity in the chain order is needed.
By \Cref{lem:smallentries},
\begin{equation}\label{eq:even-chain-errors}
 \max_v|x_v|=O_{d,B}(n^{-1/2}),\qquad
 |x_v-x_w|=O_{d,B}(n^{-3/2})\quad(vw\in E(G)).
\end{equation}
Assign to block $j$ all its vertices except its right singleton;
these are $D$ vertices, at distance at most two from $s_{j-1}$.
The assigned sets are disjoint. Their component sums differ from
$Du_{j-1}$ by $O_{d,B}(n^{-3/2})$, and their squared component
sums differ from $Du_{j-1}^2$ by $O_{d,B}(n^{-2})$.
Summing over the $q=O(n)$ blocks gives total errors
$O_{d,B}(n^{-1/2})$ and $O_{d,B}(n^{-1})$, respectively.
The unassigned vertices, namely $s_q$ and the vertices outside the
chain, have the same total error bounds by \eqref{eq:even-chain-errors}.
Adding the one extra sample $u_q$ therefore yields
\begin{align}
 D\sum_{j=0}^{q}u_j&=O_{d,B}(n^{-1/2}),
 \label{eq:even-chain-mean}\\
 D\sum_{j=0}^{q}u_j^2&=1+O_{d,B}(n^{-1}).
 \label{eq:even-chain-mass}
\end{align}
For $\bar u=(q+1)^{-1}\sum_{j=0}^q u_j$, the mean correction is
$(q+1)\bar u^2=O_{d,B}(n^{-2})$. Thus
\begin{equation}\label{eq:even-chain-centered}
 \|\bu-\bar u\1\|^2=\frac1D+O_{d,B}(n^{-1}).
\end{equation}

Average the vector on the two middle cliques of each block and use
\Cref{lem:comparison-blockminimum}. The edge sets of different
blocks are disjoint, even though consecutive blocks share a vertex.
All omitted end-subgraph terms are nonnegative, so
\begin{equation}\label{eq:even-chain-path}
 \mu=\x^\top L(G)\x
 \ge\frac{\beta}{D}\sum_{j=0}^{q-1}(u_{j+1}-u_j)^2
 \ge\frac{\beta}{D}p_{q+1}\|\bu-\bar u\1\|^2.
\end{equation}
Since $D(q+1)=n+O_{d,B}(1)$ and
$p_{q+1}=\pi^2/(q+1)^2+O(q^{-4})$, substitution of
\eqref{eq:even-chain-centered} gives
\[
 \mu\ge\frac{\beta\pi^2}{n^2}-O_{d,B}(n^{-3}).
\]
Together with \eqref{eq:even-chain-upper}, this proves the lemma.
All estimates depend only on $d,B$, not on the internal form of the
bounded end subgraphs.
\end{proof}

\begin{proof}[Proof of \Cref{thm:evenminimum}]
Choose a graph attaining $m_d(n)$. For $d\ge6$,
\Cref{thm:evenfull} gives an uninterrupted $M_d$ chain and two
connected end subgraphs with at most $B_d$ vertices in total.
For $d=4$, the published characterization
\cite[Theorem~3.2]{AbGhQuartic} gives the same chain with bounded
end subgraphs. Apply \Cref{lem:even-chainvalue}, with $B$ depending
only on $d$, to obtain \eqref{eq:even-minimum} in every stated degree.
\end{proof}

\begin{corollary}\label{cor:even-lower}
For each fixed even $d\ge4$, every sufficiently large $d$-regular
graph satisfies
\begin{equation}\label{eq:even-sharp}
 \mu(G)\ge\frac{2(d-2)\pi^2}{n^2}-\frac{C_d}{n^3}.
\end{equation}
\end{corollary}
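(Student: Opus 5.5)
The plan is to deduce the corollary directly from \Cref{thm:evenminimum}, without any further structural work. By definition, $m_d(n)$ is the minimum of $\mu(G)$ over connected $d$-regular graphs of order $n$, so every such graph satisfies $\mu(G)\ge m_d(n)$. It therefore suffices to bound $m_d(n)$ from below.

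By \eqref{eq:even-minimum} there are constants $C_d$ and $N'_d$ such that
\[
 \Bigl|m_d(n)-\frac{2(d-2)\pi^2}{n^2}\Bigr|\le\frac{C_d}{n^3}\qquad(n\ge N'_d).
\]
Combining this with $\mu(G)\ge m_d(n)$ gives \eqref{eq:even-sharp} for every $d$-regular $G$ of order $n\ge N'_d$. For $d=4$ the formula \eqref{eq:even-minimum} rests on \cite[Theorem~3.2]{AbGhQuartic}, and for $d\ge6$ on \Cref{thm:evenfull}. In both cases the corollary is a restatement of the lower half of that asymptotic formula.

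There is also a direct route for $d\ge6$ that avoids exact minimizers. \Cref{lem:even-shifted-path} gives
\[
 \mu(G)\ge\beta\,\y^\top L(P_n)\y-C_dn^{-3},\qquad \|\y-\bar y\1\|^2=1+O_d(n^{-1}).
\]
Applying \eqref{eq:pathineq}, this becomes
\[
 \mu(G)\ge\beta p_n\bigl(1+O_d(n^{-1})\bigr)-C_dn^{-3},
\]
and $p_n=\pi^2/n^2+O(n^{-4})$. The $O_d(n^{-1})$ norm error contributes only $O_d(n^{-3})$ after multiplication by $\beta p_n$, which would give the same bound.

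The only point needing care is the hypothesis $\mu(G)\le K/n^2$ in \Cref{lem:even-shifted-path}. If instead $\mu(G)>(\beta\pi^2+1)/n^2$, the inequality \eqref{eq:even-sharp} holds trivially. I expect no real obstacle here; the first route alone suffices.
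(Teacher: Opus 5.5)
Your proposal is correct and follows the paper's proof essentially line for line. The main argument is $\mu(G)\ge m_d(n)$ together with the lower half of \Cref{thm:evenminimum}. The paper also records your alternative route for $d\ge6$: it combines \eqref{eq:even-discrete-energy}, the path inequality \eqref{eq:pathineq} and the norm estimate \eqref{eq:even-norm}, with the same case split according to whether $\mu\le(\beta\pi^2+1)/n^2$.
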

\begin{proof}
Every graph in the class has $\mu(G)\ge m_d(n)$, so the preceding
corollary gives the result. For $d\ge6$, the path comparison also
proves this directly, without exact minimality: if
$\mu\le(\beta\pi^2+1)/n^2$, then
\eqref{eq:even-discrete-energy}, the path inequality, and
\eqref{eq:even-norm} give
\[
 \mu\ge\beta p_n\|\y-\bar y\1\|^2-C_dn^{-3}
       =\frac{\beta\pi^2}{n^2}-O_d(n^{-3}).
\]
If that upper condition on $\mu$ fails, the desired inequality is
immediate. This records the numerical consequence of the local
comparison separately from its use in the structural proof.
\end{proof}

\section{Algebraic connectivity and maximum diameter}
\label{sec:diameter}
In this section, we prove \Cref{cor:diameter} by counting the distance
contributed by the $M_d$ blocks along the block-tree.

\begin{proof}[Proof of \Cref{cor:diameter}]
The proof of \Cref{thm:evenmain} retains $k$ original
intervals of length $d+1$ inducing $M_d$ blocks, with
\[
 (d+1)k\ge n-O_d((\e+n^{-1})^{1/3}n).
\]
The two singleton endpoints of each block are separating vertices,
and the selected blocks occur along one ordered path. A path
between the outside endpoints must therefore traverse every
selected block between its singleton endpoints, whose distance
inside the block is three. Hence $\dm(G)\ge3k$, and under the
hypothesis of \Cref{thm:evenmain} we obtain
\begin{equation}\label{eq:even-diameter-quant}
 \dm(G)\ge\frac{3n}{d+1}-C_d(\e+n^{-1})^{1/3}n.
\end{equation}
Exceptions between selected blocks
cannot provide shortcuts past their separating endpoints.

\Cref{thm:evenminimum} gives
$n^2m_d(n)\to2(d-2)\pi^2$. If $\mu(G_n)/m_d(n)\to1$, put
\[
 \e_n=\max\{0,n^2\mu(G_n)-2(d-2)\pi^2\}.
\]
Then $\e_n\to0$, and for all large $n$ it lies in $[0,1]$.
The quantitative diameter bound and the general upper bound
$\dm(G_n)<3n/(d+1)$ from \cite{Caccetta,Erdos} prove
\eqref{eq:diameterimplication}. For an exact minimizer,
\Cref{thm:evenfull} improves this to
$\dm(G)=3n/(d+1)+O_d(1)$, as stated in
\eqref{eq:even-fulldiameter}.
By \cite[Theorems~5.1--5.2]{AbGhDiam}, its diameter therefore differs
from the maximum by at most $O_d(1)$.

Finally, \cite[Theorem~3.2]{AbGhQuartic} describes every quartic exact
minimizer as a chain of $M_4$ middle blocks with bounded end blocks.
Each additional block adds five vertices and three to the distance
between the ends. Thus $\dm(G)=3n/5+O(1)$, which is within $O(1)$
of the maximum diameter by \cite[Theorems~5.1--5.2]{AbGhDiam}. We have used exact minimality
in this last paragraph, not just an asymptotically minimum gap.
\end{proof}

\section{Uniform bounds and relaxation time}\label{sec:evenrelaxation}
In this section, the even degree is allowed to vary with the order.
As in the uniform argument of Part~I, we treat separately the cases
$d=o(n)$ and $d$ comparable to $n$. For the first case, we extend
the shifted comparison path by constant endpoint values. For the
second, we compare the averages of a Fiedler vector on large
connected parts, retaining at least two edges across every cut
between the parts. All constants needed as the degree varies are
made explicit below.

For a $d$-regular graph, the random-walk transition matrix is $A/d$.
Its spectral gap and relaxation time are therefore $\mu(G)/d$ and
$\tau(G)=d/\mu(G)$, respectively. We first prove the uniform bound
on $n^2\mu(G)/d$, and then state the relaxation-time consequences.

\subsection{A finite-order bound}
The endpoint extension from Part~I also works with the shifted
positions in this paper. There is one additional point: the local
inequality uses the largest prefix sum in an interval. We record
that maximum by placing the whole increase of an auxiliary vector
at one position. This gives an exact comparison, without the
fixed-degree error in \Cref{lem:even-shifted-path}.

\begin{lemma}\label{lem:even-padded-bound}
Every connected simple $d$-regular graph $G$ of order $n$, with
even $d\ge6$, satisfies
\begin{equation}\label{eq:even-padded-bound}
 \mu(G)\ge 2(d-2)p_{n+6d},
 \qquad p_m=2\bigl(1-\cos(\pi/m)\bigr).
\end{equation}
The degree $d$ need not be fixed as $n$ varies.
\end{lemma}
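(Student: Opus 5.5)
The plan is to redo the shifted-path comparison of \Cref{lem:even-shifted-path} with no asymptotic error terms, so that the conclusion holds for every order and degree. Take an increasing mean-zero unit Fiedler vector $\x$ and put $a_i=x_{i+1}-x_i$. Write $c_i=\mu f_i$ as in \Cref{lem:even-prefix-identities}, so that
\[
\mu=\x^\top L(G)\x=\sum_i a_ic_i=\mu\sum_i a_if_i .
\]
Apply the partition of \Cref{lem:even-certificate} to the whole ordering. On every interval this gives $\Delta_I\le R_I\max_{i\in I}c_i$ with $R_I\le T_I$. The first and last $d$ cuts are high, so the partition points at the two ends of the ordering are free. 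Consequently $\sum_I\beta T_I=n-1$, because the boundary corrections $\psi$ telescope.

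The step I expect to be the main obstacle replaces the error $O_d(n^{-4})$ in \eqref{eq:even-localerror}, which is not uniform in $d$. I would first try an auxiliary nondecreasing vector $\z$ on a path. For each interval $I$, let $i^*(I)$ be the index where $c_i$ attains its maximum on $I$. The vector $\z$ takes the value $x$ at the left end of $I$ and places the whole increase $\Delta_I$ at the single position $i^*(I)$. Then $\sum_I\Delta_I c_{i^*(I)}$ is an exact quantity. I would relate it to $\mu\|\z\|^2$ through the prefix-sum identity $c_i=\mu f_i$, since $f$ now records partial sums of $-\z$ only up to a bounded shift of positions. The local inequality $\Delta_I\le T_I\,c_{i^*}$ then says that the jump at $i^*$ is at most $\mu T_I f_{i^*}$.

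Next, stretch each interval to $\lceil\beta T_I\rceil$ path positions, giving a path $P_N$. The ceilings, together with the shifts of at most $(d-4)/2$ at each small end and the two free end blocks, are to be bounded in total by $6d$. To keep this count independent of the number of intervals, I would pad only at the two ends of the path, repeating the endpoint values as in Part~I, and otherwise reuse the shifted positions $a-\beta\phi(k)$ with half-integer bookkeeping. This gives $N\le n+6d$. On $P_N$, the discrete form of $\Delta_I\le T_I\mu f$ is exactly the statement that the eigen-inequality $L(P_N)\z\le\beta^{-1}\mu\z$ holds componentwise, in the cumulative, prefix-summed sense, for a nondecreasing vector with mean zero.

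The conclusion then follows from a comparison principle on the path. Summing the prefix inequalities against the nonnegative jumps gives
\[
\z^\top L(P_N)\z\le\frac{\mu}{\beta}\|\z-\bar z\1\|^2 ,
\]
with no error term. The Rayleigh characterization of $p_N$ in \eqref{eq:pathineq} then yields $\mu\ge\beta p_N\ge\beta p_{n+6d}$, since $p_m$ decreases in $m$. The delicate points are the exactness of the prefix-sum inequality with the jump placed at $i^*$, and the constant $6d$ in the padding count. I would verify both first on the standard chain, where equality nearly holds.
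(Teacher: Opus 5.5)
\textbf{There is a genuine gap at the step you call exact.} After summing the local inequalities you get $\sum_I\Delta_I c_{i^*(I)}=\mu\sum_I\Delta_I f_{i^*(I)}=\mu\langle\x,\bu\rangle$, where $\bu$ is your jump vector; this follows by summation by parts, using $f_{i-1}-f_i=x_i$. That quantity is an inner product of $\x$ with $\bu$, not $\mu\|\z\|^2$.

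You then claim that on $P_N$ the inequality $\Delta_I\le\mu T_I f_{i^*}$ ``is exactly'' the prefix-summed eigen-inequality $z_{j+1}-z_j\le(\mu/\beta)\bigl(-\sum_{v\le j}(z_v-\bar z)\bigr)$. This is unsupported. Here $f$ is the prefix sum of $\x$ at the original positions, while the right side is a prefix sum of a stretched, padded, recentred vector. ``Up to a bounded shift of positions'' is exactly the kind of error this lemma must avoid. You would also need $\max_{i\in I}f_i$ to be dominated by the path prefix sum at every point of the stretched interval, and nothing gives that.

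The proposal also asks one vector to play two incompatible roles. The jump vector makes the summation identity exact, but its path energy is $\sum_I\Delta_I^2$. The linearly stretched vector has the right energy $\sum_I\Delta_I^2/\ell_I$, but it has no exact prefix identity. Separately, rounding each interval to $\lceil\beta T_I\rceil$ positions costs up to one position per interval, which is order $n/d$ in total. You leave the replacement ``half-integer bookkeeping'' unspecified, so $N\le n+6d$ is not established.

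\textbf{The missing idea is to keep the two vectors separate and link them through a variance bound.}

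\emph{Energy.} Let $Y$ be the continuous piecewise-linear interpolation through the points $(a-\beta\psi_a,x_a)$, so that $A=\int_1^nY'^2=\sum_I\Delta_I^2/\ell_I$ with $\ell_I=\beta T_I$. Multiply the local inequality by $\Delta_I/\ell_I$ and sum; this gives $\beta A\le\mu\langle\x,\bu\rangle$.

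\emph{Padding.} Extend $Y$ constantly beyond $[1,n]$ and sample $z_j=Y(j)$ for $1-L\le j\le n+L$, with $L=3d$. Intervals have length at most $5d/2$ and the shifts are at most $(d-4)/2$, so both $x_i$ and $u_i$ lie between $z_{i-L}$ and $z_{i+L}$.

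\emph{Variance.} For $\bv\in\{\x,\bu\}$, map $i\mapsto i-L$ when $v_i\le\bar z$ and $i\mapsto i+L$ otherwise. This map is injective, so $V:=\sum_j(z_j-\bar z)^2\ge\|\x\|^2=1$ and $V\ge\|\bu-\bar u\1\|^2$. Cauchy--Schwarz then gives $\langle\x,\bu\rangle=\langle\x,\bu-\bar u\1\rangle\le V$.

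\emph{Path inequality.} Sampling at integers needs no rounding. Cauchy--Schwarz on unit intervals and the path inequality give $p_NV\le\sum_j(z_{j+1}-z_j)^2\le A$ with $N=n+6d$. Combining the steps, $\beta p_NV\le\beta A\le\mu V$, and cancelling $V$ proves the lemma.
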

\begin{proof}
Put $\beta=2(d-2)$, and let $\x$ be a nondecreasing mean-zero
unit Fiedler vector. Use the partition of the whole ordering in
\Cref{lem:even-certificate}. Write its intervals as
$I=\{a,\ldots,b-1\}$, put $r_I=b-a$ and
$\Delta_I=x_b-x_a$, and retain the notation
\[
 f_i=-\sum_{j=1}^i x_j,\qquad
 T_I=\frac{r_I}{\beta}+\psi_a-\psi_b.
\]
The cited lemma gives $r_I\le5d/2$, $T_I>0$, and, by
\Cref{lem:even-prefix-identities},
\begin{equation}\label{eq:even-padded-local}
 \Delta_I\le\mu T_I\max_{i\in I}f_i.
\end{equation}
Only the non-strict part of the local inequalities is used here.

At each partition endpoint $a$, prescribe the value $x_a$ at
$s_a=a-\beta\psi_a$. At the two ends $\psi_1=\psi_n=0$.
The same endpoint bounds as in the shifted-path construction give
\[
 |s_a-a|\le\frac{d-4}{2},\qquad
 \ell_I=s_b-s_a=\beta T_I>0.
\]
In particular, the positions $s_a$ remain in order, with first
position $1$ and last position $n$. Let $Y$ be the continuous
piecewise-linear interpolation through $(s_a,x_a)$, extended
constantly to the left of $1$ and to the right of $n$. Write
\[
 A=\int_1^n Y'(t)^2\,dt
   =\sum_I\frac{\Delta_I^2}{\ell_I}.
\]

For each interval choose $h_I\in I$ where $f_i$ is largest.
Define a nondecreasing vector $\bu=(u_1,\ldots,u_n)^\top$ by
$u_1=x_1$ and
\[
 u_{i+1}-u_i=
 \begin{cases}
  \Delta_I,&i=h_I\text{ for a partition interval }I,\\
  0,&\text{otherwise}.
 \end{cases}
\]
Thus $u_a=x_a$ at every original partition endpoint, and
$x_a\le u_i\le x_b$ whenever $a\le i\le b$.
Multiplying \eqref{eq:even-padded-local} by
$\Delta_I/\ell_I$ and summing gives
\begin{align}
 \beta A
 &\le\mu\sum_I\Delta_I f_{h_I}\notag\\
 &=\mu\sum_{i=1}^{n-1}f_i(u_{i+1}-u_i)
  =\mu\sum_{i=1}^n x_i u_i.
 \label{eq:even-padded-step}
\end{align}
The last equality is summation by parts, using
$f_0=f_n=0$ and $f_{i-1}-f_i=x_i$.

We now extend the comparison path. Put $L=3d$, $N=n+2L$, and
\[
 z_j=Y(j)\qquad(1-L\le j\le n+L).
\]
This is a nondecreasing vector on $N$ consecutive positions.
If $a\le i\le b$ in an original partition interval, then
$r_I\le5d/2$ and $|s_a-a|,|s_b-b|\le(d-4)/2$ imply
\[
 i-L\le s_a\le s_b\le i+L.
\]
Both $x_i$ and $u_i$ lie between $x_a=Y(s_a)$ and
$x_b=Y(s_b)$. Hence
\begin{equation}\label{eq:even-padded-brackets}
 z_{i-L}\le x_i,u_i\le z_{i+L}\qquad(1\le i\le n).
\end{equation}

Put $c=\bar z$ and $V=\sum_j(z_j-c)^2$.
For either $\bv=\x$ or $\bv=\bu$, let $k$ count the entries
$v_i\le c$. The indices $i-L$ for $i\le k$ and $i+L$ for
$i>k$ are all distinct. By \eqref{eq:even-padded-brackets},
\begin{align*}
 V&\ge\sum_{i\le k}(z_{i-L}-c)^2
       +\sum_{i>k}(z_{i+L}-c)^2\\
  &\ge\sum_{i=1}^n(v_i-c)^2
   \ge\|\bv-\bar v\1\|^2.
\end{align*}
Consequently, $V\ge\|\x\|^2=1$ and
$V\ge\|\bu-\bar u\1\|^2$. Since $\x\perp\1$,
Cauchy--Schwarz gives
\begin{equation}\label{eq:even-padded-product}
 \sum_i x_i u_i
 =\langle\x,\bu-\bar u\1\rangle\le V.
\end{equation}

On every unit interval, Cauchy--Schwarz bounds the squared difference
of its endpoint values by the integral of $Y'^2$ there. The constant
extensions add no energy. Thus the path inequality gives
\[
 p_N V\le\sum_{j=1-L}^{n+L-1}(z_{j+1}-z_j)^2\le A.
\]
Together with \eqref{eq:even-padded-step} and
\eqref{eq:even-padded-product}, this yields
$\beta p_N V\le\beta A\le\mu V$. Cancelling $V>0$ proves
\eqref{eq:even-padded-bound}.
\end{proof}

For fixed even $d\ge6$, the right-hand side of
\eqref{eq:even-padded-bound} is
$2(d-2)\pi^2/n^2+O_d(n^{-3})$. For varying degree, we instead
use the exact statement. In particular, when $d\to\infty$ and
$d=o(n)$ it gives the stronger lower limit $2\pi^2$ for
$n^2\mu(G)/d$.

\subsection{A weighted variance inequality}
We use the path version of the weighted variance inequality from
Part~I. The weights will be the proportions of vertices in the large
connected parts.

\begin{lemma}\label{lem:groupvariance}
Let $w_1,\ldots,w_k>0$, with $k\ge2$, and put
$W=\sum_iw_i$, $m=\min_iw_i$. For real numbers $z_1,\ldots,z_k$,
write $\bar z=W^{-1}\sum_iw_i z_i$. Then
\begin{equation}\label{eq:groupvariance}
 \sum_{i=1}^k w_i(z_i-\bar z)^2
 \le\frac{W^2}{8m}\sum_{h=1}^{k-1}(z_{h+1}-z_h)^2.
\end{equation}
The constant $8$ cannot be increased.
\end{lemma}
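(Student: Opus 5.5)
We need to prove: $\sum_i w_i(z_i-\bar z)^2\le \frac{W^2}{8m}\sum_h (z_{h+1}-z_h)^2$, and show that the constant $8$ is sharp.

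The plan is to write each deviation from the weighted mean as a sum of consecutive path differences, apply a weighted Cauchy--Schwarz, and then bound the resulting coefficient of each $g_h^2$. Put $g_h=z_{h+1}-z_h$. The left side is the weighted variance, which can be rewritten as
\[
 \frac1{2W}\sum_{i,j}w_iw_j(z_i-z_j)^2 .
\]
Alternatively, it is easier to write the variance as a quadratic form in the $g_h$. Let $P_h=\sum_{i\le h}w_i$ be the weight to the left of cut $h$, and $W-P_h$ the weight to its right. Then
\[
 \sum_i w_i(z_i-\bar z)^2=\frac1W\sum_{h,h'}\min(P_h,P_{h'})\bigl(W-\max(P_h,P_{h'})\bigr)g_hg_{h'} .
\]
This is the covariance kernel of a Brownian bridge sampled at the points $P_h$, so the left side is $\frac1W\mathrm{Var}$ of the step function $\sum_h g_h\1[U>P_h]$, with $U$ uniform on $[0,W]$.

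For a bound by the diagonal, apply Cauchy--Schwarz with weights: for any $\lambda_h>0$,
\[
 \Bigl(\sum_h g_h\xi_h\Bigr)^2\le\Bigl(\sum_h g_h^2/\lambda_h\Bigr)\Bigl(\sum_h\lambda_h\xi_h^2\Bigr).
\]
More concretely, use the representation $z_i-z_j=\sum_{h\text{ between}}g_h$. In
\[
 \frac1{2W}\sum_{i,j}w_iw_j\Bigl(\sum_{h=j}^{i-1}g_h\Bigr)^2 ,
\]
apply Cauchy--Schwarz to each pair with number of terms $|i-j|$. This gives the coefficient
\[
 \frac1W\sum_{i\le h<j}w_iw_j(j-i)
\]
for $g_h^2$. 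Since $(j-i)\le$ (weight between)$/m$, bounded by $(P_{j-1}-P_i)/m+1$, this loses a factor, and sharpness suggests the cleaner route below.

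The cleaner route is a continuous comparison. Place $z$ as a step function on $[0,W]$ and construct a piecewise-linear $Z$ with the jump $g_h$ spread linearly over an interval of length $m$ centered at $P_h$; these intervals are disjoint because consecutive $P_h$ differ by at least $m$. Then $\int Z'^2=\sum_h g_h^2/m$. Moreover, replacing the step by a linear ramp over a symmetric interval only decreases the variance, and the Poincaré inequality on $[0,W]$ with Neumann constant $\pi^2/W^2$ gives a constant $W^2/(\pi^2 m)$, which is better than $W^2/(8m)$. So the claimed $W^2/(8m)$ with $8<\pi^2$ is not sharp by that route. Hence the monotonicity claim "ramp decreases variance" must fail in general: smoothing can increase the variance relative to the mean. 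I will instead use the direct bound: variance $\le$ the $L^2$ deviation from any constant. For each $h$, the step function differs from the ramp by an error $e$ supported on the $h$-th interval with $\int e^2=g_h^2m/12$. Combining, $\|\text{step}-c\|\le\|Z-c\|+\|e\|$, but this cross term is what spoils the constant.

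I therefore commit to the kernel approach. It suffices to show that the matrix
\[
 \frac{W^2}{8m}I-\frac1W\bigl[\min(P_h,P_{h'})(W-\max)\bigr]
\]
is positive semidefinite. Bound the operator norm of the bridge kernel by its norm as an integral operator: the points $P_h$ are $m$-separated, so the discrete kernel is dominated by $\frac1m$ times the continuous kernel on $[0,W]$, which has top eigenvalue $W^2/\pi^2$. The main obstacle is this discretization step, namely showing that the top eigenvalue of the sampled kernel is at most $W^3/(8m)$. I would try a Schur test with test vector $v_h=P_h(W-P_h)$. Since $\sum_{h'}\min(P_h,P_{h'})(W-\max(P_h,P_{h'}))\,m\le\int_0^W K(P_h,s)\,ds+{}$(boundary slack) and $\int_0^W K(x,s)\,ds=x(W-x)W/2$, the constant comes out as $W^2/8$ from $\max_x x(W-x)=W^2/4$ times $1/2$. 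That is exactly $8$. Sharpness follows from $k=2$ with $w_1=w_2=m$: the left side equals $m g^2/2$ and the right side equals $4m^2g^2/(8m)=mg^2/2$.
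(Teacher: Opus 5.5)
\textbf{The gap.} Your final route reduces everything to bounding the row sums of the bridge kernel, namely to
\[
 m\sum_{h'=1}^{k-1}K(P_h,P_{h'})\le\int_0^W K(P_h,s)\,ds=\frac{W P_h(W-P_h)}{2},\qquad K(x,s)=\min(x,s)\bigl(W-\max(x,s)\bigr).
\]
You only assert this with an added ``boundary slack'' and never show that the slack is nonpositive. The constant $8$ is sharp, so any positive slack destroys the statement. This discretization step is therefore the whole content of the lemma, and it is missing.

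The heuristic you give for it is also false. The sampled kernel is not dominated in operator norm by $1/m$ times the continuous Green's kernel, since that would give the constant $W^2/(\pi^2m)$. Indeed, for $k=2$ and $w_1=w_2=m$ the $1\times1$ matrix equals $m/2=W^2/(8m)>W^2/(\pi^2 m)$. Also, the computation you sketch is a Schur test with the constant test vector, that is, a row-sum bound; it does not use $v_h=P_h(W-P_h)$.

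\textbf{How to close it.} The missing step can be supplied, and doing so returns you to the Cauchy--Schwarz route you abandoned, which is exactly the paper's proof. Your row sum $\frac1W\sum_{h'}K(P_h,P_{h'})$ equals the coefficient $\frac1W\sum_{i\le h<j}w_iw_j(j-i)$, because $j-i$ counts the cuts $h'$ separating $i$ and $j$.

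The loss you feared from $j-i\le(P_{j-1}-P_i)/m+1$ disappears if you measure from the midpoints $t_i=P_{i-1}+w_i/2$ of the weight intervals. Then $t_{i+1}-t_i=(w_i+w_{i+1})/2\ge m$, so $m(j-i)\le t_j-t_i$. Using $\sum_{i\le h}w_it_i=B^2/2$ and $\sum_{j>h}w_jt_j=(W^2-B^2)/2$, with $B=P_h$ and $C=W-P_h$, you get $\sum_{i\le h<j}w_iw_j(t_j-t_i)=WBC/2\le W^3/8$.

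In your integral language the same fact reads as follows. The function $s\mapsto K(P_h,s)$ is a tent, linear away from $P_h$. Each window $[P_{h'}-m/2,P_{h'}+m/2]$ with $h'\ne h$ has integral exactly $mK(P_h,P_{h'})$. The window at $P_h$ falls short of $mK(P_h,P_h)$ by exactly $Wm^2/8$. The end windows $[0,m/2]$ and $[W-m/2,W]$ are disjoint from the others, because $P_1\ge m$ and $P_{k-1}\le W-m$, and they contribute exactly $Wm^2/8$. Since $K\ge0$ on the remaining gaps, the slack is nonpositive.

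Your sharpness example is correct and agrees with the paper's.
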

\begin{proof}
Put $b_h=z_{h+1}-z_h$. The weighted variance identity and
Cauchy--Schwarz give
\begin{align}
 \sum_iw_i(z_i-\bar z)^2
 &=\frac1W\sum_{i<j}w_iw_j(z_j-z_i)^2\notag\\
 &\le\frac1W\sum_{h=1}^{k-1}b_h^2
       \sum_{i\le h<j}w_iw_j(j-i).
 \label{eq:groupdouble}
\end{align}
Partition $[0,W]$ into consecutive intervals of lengths
$w_1,\ldots,w_k$, and let $t_i$ be their midpoints.
Since $t_{i+1}-t_i=(w_i+w_{i+1})/2\ge m$,
\[
 m(j-i)\le t_j-t_i.
\]
For a fixed $h$, put $B=\sum_{i\le h}w_i$ and $C=W-B$.
Integration of the coordinate over the two portions of $[0,W]$ gives
\[
 \sum_{i\le h}w_i t_i=\frac{B^2}{2},\qquad
 \sum_{j>h}w_j t_j=\frac{W^2-B^2}{2}.
\]
Therefore
\begin{align*}
 \sum_{i\le h<j}w_iw_j(j-i)
 &\le\frac1m\sum_{i\le h<j}w_iw_j(t_j-t_i)\\
 &=\frac1m\left(B\frac{W^2-B^2}{2}-C\frac{B^2}{2}\right)\\
 &=\frac{WBC}{2m}\le\frac{W^3}{8m}.
\end{align*}
Substitution in \eqref{eq:groupdouble} proves the inequality.
For equality take $k=2$, $w_1=w_2=1$, and $(z_1,z_2)=(-1,1)$;
both sides are $2$.
\end{proof}

\subsection{Degree comparable to the order}
We next adapt the component argument from Part~I. It is useful to
retain a general lower bound on cut sizes in this one lemma. The
application to even regular graphs will take $\kappa=2$.

\begin{lemma}\label{lem:even-dense-degree}
Fix $c>0$ and an integer $\kappa\ge1$. Let $G_n$ be connected
simple graphs of orders $n\to\infty$, with minimum degrees
$\delta_n\ge cn$, and suppose that every nontrivial edge cut has
at least $\kappa$ edges. Then
\begin{equation}\label{eq:even-dense-degree}
 \liminf_{n\to\infty}\frac{n^2\mu(G_n)}{\delta_n}\ge8\kappa.
\end{equation}
In particular, for even regular degrees $d_n\ge cn$ the lower
limit is at least $16$.
\end{lemma}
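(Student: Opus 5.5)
The plan is to argue by contradiction along a subsequence on which $n^2\mu(G_n)/\delta_n\le K$ for a fixed $K$. Let $\x$ be a mean-zero unit Fiedler vector. Since $\delta_n\le n$, we get $\mu=O(n^{-1})$, and by \Cref{lem:smallentries} $\max|x_v|=O(n^{-1/2})$. Fix a small $\eta>0$, to be sent to $0$ at the end, and call an edge \emph{long} if $|x_u-x_v|>\eta n^{-1/2}$ and \emph{short} otherwise. By \eqref{eq:quadratic} the number of long edges is at most $\mu n/\eta^2\le K\delta_n/(n\eta^2)\le K/\eta^2$, a constant. Deleting the long edges lowers each degree by at most $K/\eta^2$. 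Hence every component of the short-edge graph has minimum degree at least $\delta_n-K/\eta^2$, and therefore at least $\delta_n-O_\eta(1)$ vertices. So there are $k\le 1/c+o(1)$ components $C_1,\ldots,C_k$.

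The first key step is to show that $\x$ is nearly constant on each $C_i$. A connected graph of order $s$ with minimum degree $\delta'$ has diameter at most $3s/(\delta'+1)$, which here is $O(1/c)$. Along a shortest path each short edge changes $x$ by at most $\eta n^{-1/2}$, so every value on $C_i$ lies within $O(\eta/(c\sqrt n))$ of the average $z_i$ on $C_i$. Put $w_i=|C_i|/n$, so that $\sum_iw_i=1$ and $m=\min w_i\ge\delta_n/n-o(1)$. Replacing $x_v$ by $z_{i}$ on $C_i$ changes $\|\x\|^2=1$ and the mean by $O(\eta/c)$. This gives $n\sum_iw_i(z_i-\bar z)^2\ge1-O(\eta/c)$. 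In particular $k\ge2$: if $k=1$, all values would be within $O(\eta/\sqrt n)$ of each other, contradicting mean zero and unit norm for small $\eta$.

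The second key step uses the cut hypothesis. Order the components so that $z_1\le\cdots\le z_k$ and put $b_h=z_{h+1}-z_h$. For each $h$, the set $C_1\cup\cdots\cup C_h$ is a nontrivial cut, so at least $\kappa$ edges leave it, and all of them are long. Such an edge joins $C_i$ and $C_j$ with $i\le h<j$, so its difference is at least $\sum_{i\le g<j}b_g-O(\eta/\sqrt n)$. Squaring, and dropping the cross terms (all $b_g\ge0$), shows that it contributes at least $\sum_{i\le g<j}b_g^2$ up to the error. Summing over cross edges, each gap $h$ is covered at least $\kappa$ times. Since there are $O_\eta(1)$ long edges and $b_g=O(n^{-1/2})$, this gives
\[
 \mu\ge\kappa\sum_{h=1}^{k-1}b_h^2-O\!\left(\frac{\eta+\eta^2}{n}\right)\cdot O_\eta(1).
\]
This error term must be handled carefully. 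I would instead compare against $\max(0,\text{gap}-2\eta')$ with $\eta'$ the within-component spread, replacing $b_h$ by $(b_h-O(\eta/\sqrt n))_+$. The loss is then relative and is controlled by the main term, rather than additive.

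Finally, apply \Cref{lem:groupvariance} with weights $w_i$, $W=1$:
\[
 \frac{1-O(\eta/c)}{n}\le\sum_iw_i(z_i-\bar z)^2\le\frac1{8m}\sum_hb_h^2 .
\]
Together with $m\ge\delta_n/n-o(1)$ this yields $n^2\mu/\delta_n\ge8\kappa(1-O(\eta/c))-o(1)$. Letting $n\to\infty$ and then $\eta\to0$ proves \eqref{eq:even-dense-degree}. For even $d_n$, every nontrivial cut has a positive even number of edges, so $\kappa=2$ and the bound is $16$.

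The main obstacle is the error bookkeeping in the second step. The within-component spread $O(\eta n^{-1/2})$ is of the same order as the gaps $b_h$ themselves, so the additive errors must be shown to be small relative to $\sum_hb_h^2$. The plan is to use the truncated gaps $(b_h-C\eta n^{-1/2})_+$. Their loss against $\sum_hb_h^2\asymp n^{-1}$ is $O(\eta)$ relative, because $k$ and the number of long edges are bounded independently of $n$.
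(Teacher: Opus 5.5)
Your argument is correct and is essentially the paper's proof: you discard edges with large value differences, use the minimum-degree diameter bound to make the vector nearly constant on the boundedly many components, and combine $\kappa$ crossing edges per cut with \Cref{lem:groupvariance}. The paper differs only in two technical choices. It uses an $n$-dependent threshold ($n^{-1/4}$ in the normalization $\sum_v y_v^2=n$), so no second limit $\eta\to0$ is needed. It absorbs the oscillation error by an $\ell^2$ triangle inequality over a fixed set of at most $\kappa(k-1)$ crossing edges, which does the same job as your truncated gaps.

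Two small corrections. First, here you only have $\mu=O(n^{-1})$, so \Cref{lem:smallentries} gives merely $\max_v|x_v|=O(1)$. The bound $|z_i|=O(n^{-1/2})$ follows instead from $w_iz_i^2\le1/n$ and $w_i\ge c/2$. Second, your truncation works because its loss involves only the $k-1=O(1/c)$ gaps, not the number of long edges. That number can be as large as $K/\eta^2$, and it is exactly what makes your untruncated display fail.
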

\begin{proof}
It suffices to consider a subsequence on which
$n^2\mu(G_n)/\delta_n\le C$ for a fixed $C$; a subsequence
violating the claimed lower limit would have this property.
Write $G=G_n$ and $\delta=\delta_n$. Choose a mean-zero Fiedler
vector $\y$ normalized by $\sum_v y_v^2=n$. Then
\begin{equation}\label{eq:even-dense-energy}
 E:=\sum_{uv\in E(G)}(y_u-y_v)^2=n\mu(G)
 \le C\frac\delta n\le C.
\end{equation}

Put $a_n=n^{-1/4}$, and form a spanning subgraph $H$ by retaining
exactly the edges with $|y_u-y_v|\le a_n$. At any vertex, each
deleted incident edge contributes more than $a_n^2$ to $E$.
Consequently,
\begin{equation}\label{eq:even-dense-retained}
 \delta(H)\ge\delta-C\sqrt n\ge cn/2
\end{equation}
for sufficiently large $n$. Let $V_1,\ldots,V_k$ be the connected
components of $H$, and put $n_i=|V_i|$.
Each has at least $\delta-C\sqrt n+1$ vertices, so $k\le2/c$.
The general diameter bound in \cite{Caccetta,Erdos} applies without
fixing the minimum degree. It gives
\[
 \dm(H[V_i])<\frac{3n_i}{\delta(H)+1}\le\frac6c.
\]
Every retained edge has value difference at most $a_n$. Hence,
with $\omega_n=(6/c)n^{-1/4}$,
\begin{equation}\label{eq:even-dense-oscillation}
 \max_{u,v\in V_i}|y_u-y_v|\le\omega_n.
\end{equation}

Put $w_i=n_i/n$, and let $z_i$ be the average of $\y$ on $V_i$.
The mean and norm of $\y$ imply
\begin{align}
 \sum_iw_i&=1,\qquad \sum_iw_i z_i=0,\notag\\
 \sum_iw_i z_i^2
 &=1-\frac1n\sum_i\sum_{v\in V_i}(y_v-z_i)^2
 \ge1-\omega_n^2.
 \label{eq:even-dense-variance}
\end{align}
Thus $k\ge2$ for large $n$; otherwise the sole average is zero,
contradicting \eqref{eq:even-dense-variance}. Also
\begin{equation}\label{eq:even-dense-mass}
 m:=\min_i w_i\ge\frac\delta n-\frac C{\sqrt n}.
\end{equation}

Relabel the parts so that $z_1\le\cdots\le z_k$, keeping each
weight with its part, and write $b_h=z_{h+1}-z_h\ge0$.
For each $h=1,\ldots,k-1$, choose $\kappa$ distinct original edges
crossing the cut
\[
 V_1\cup\cdots\cup V_h\,,\qquad V_{h+1}\cup\cdots\cup V_k.
\]
These edges exist by the cut hypothesis. Let $F$ be the union of
all chosen edges, counting an edge only once even when it was chosen
for several cuts. Then $|F|\le\kappa(k-1)$, and at least $\kappa$
edges of $F$ cross each of the displayed cuts.
If $uv\in F$ has endpoints in $V_i,V_j$, where $i<j$, then
\[
 (z_j-z_i)^2
 =\left(\sum_{h=i}^{j-1}b_h\right)^2
 \ge\sum_{h=i}^{j-1}b_h^2.
\]
Summing over the distinct edges of $F$ gives
\begin{equation}\label{eq:even-dense-cut-energy}
 \sum_{uv\in F}(z_{i(u)}-z_{i(v)})^2
 \ge\kappa\sum_{h=1}^{k-1}b_h^2,
\end{equation}
where $i(u)$ is the index of the part containing $u$.
An edge may cross several cuts; the preceding inequality is exactly
what permits its contribution to be used at all of them without
counting its energy more than once.

For every chosen edge, \eqref{eq:even-dense-oscillation} gives
\[
 \big|(z_{i(u)}-z_{i(v)})-(y_u-y_v)\big|\le2\omega_n.
\]
The triangle inequality for the Euclidean norm therefore gives
\begin{align*}
 \left(\sum_{uv\in F}(z_{i(u)}-z_{i(v)})^2\right)^{1/2}
 &\le\left(\sum_{uv\in F}(y_u-y_v)^2\right)^{1/2}
     +2\omega_n\sqrt{|F|}\\
 &\le\sqrt E+2\omega_n\sqrt{\kappa(k-1)}.
\end{align*}
Since $E$ and $k$ are bounded, it follows that
\begin{equation}\label{eq:even-dense-energy-comparison}
 \sum_{uv\in F}(z_{i(u)}-z_{i(v)})^2
 \le E+O_{c,C,\kappa}(\omega_n).
\end{equation}
This is a comparison of the values of one vector, not an assertion
about eigenvalues under graph contraction.

Apply \Cref{lem:groupvariance} to the weights $w_i$, whose sum is
one. From \eqref{eq:even-dense-variance}--\eqref{eq:even-dense-energy-comparison},
\begin{align*}
 E+O_{c,C,\kappa}(\omega_n)
 &\ge\kappa\sum_h b_h^2
 \ge8\kappa m\sum_iw_i z_i^2\\
 &\ge8\kappa\left(\frac\delta n-\frac C{\sqrt n}\right)
                  (1-\omega_n^2).
\end{align*}
Multiplication by $n/\delta\le1/c$ and $E=n\mu(G)$ now give
$n^2\mu(G)/\delta\ge8\kappa-o(1)$.
For even regular graphs, every nontrivial cut has positive even size,
so $\kappa=2$ gives the final assertion.
\end{proof}

\subsection{Growing even degree}
The two comparisons yield the growing-degree conclusion needed
for the uniform theorem, together with a stronger bound when
the degree is sublinear.

\begin{corollary}\label{thm:growing}
Let $G_j$ be connected simple graphs with orders $n_j\to\infty$
and even regular degrees $d_j\to\infty$. Then
\begin{equation}\label{eq:growing}
 \liminf_{j\to\infty}\frac{n_j^2\mu(G_j)}{d_j}\ge16.
\end{equation}
If also $d_j=o(n_j)$, then
\begin{equation}\label{eq:even-sublinear-growing}
 \liminf_{j\to\infty}\frac{n_j^2\mu(G_j)}{d_j}\ge2\pi^2.
\end{equation}
\end{corollary}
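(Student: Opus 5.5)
The plan is to derive both assertions from the two comparisons already proved: \Cref{lem:even-padded-bound} handles the sublinear regime, and \Cref{lem:even-dense-degree} handles degrees comparable to the order. A subsequence argument then combines them. Since $d_j\to\infty$, we may assume $d_j\ge6$ throughout, so \Cref{lem:even-padded-bound} applies at every index.

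\emph{Sublinear part.} By \Cref{lem:even-padded-bound}, $\mu(G_j)\ge2(d_j-2)p_{n_j+6d_j}$. Use $p_m=2(1-\cos(\pi/m))=\pi^2/m^2-O(m^{-4})$ with $m=n_j+6d_j\to\infty$. This gives
\[
 \frac{n_j^2\mu(G_j)}{d_j}\ge\frac{2(d_j-2)}{d_j}\cdot\frac{n_j^2}{(n_j+6d_j)^2}\,\pi^2\,(1-o(1)).
\]
When $d_j\to\infty$ and $d_j/n_j\to0$, both ratios tend to one, so the lower limit is at least $2\pi^2$. This proves \eqref{eq:even-sublinear-growing}.

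\emph{General part.} Suppose \eqref{eq:growing} fails. Then there is a subsequence on which $n_j^2\mu(G_j)/d_j$ converges to a limit below $16$. Passing to a further subsequence, we may assume $d_j/n_j\to c\in[0,1]$.
\begin{itemize}
\item If $c=0$, the sublinear part gives a lower limit of at least $2\pi^2>16$, a contradiction.
\item If $c>0$, then $d_j\ge (c/2)n_j$ for large $j$. Apply \Cref{lem:even-dense-degree} with $\kappa=2$; this is valid because every nontrivial edge cut in an even-regular connected graph is positive and even. We get $\delta_j=d_j$ and a lower limit of at least $16$, again a contradiction.
\end{itemize}

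The only point needing care is the bookkeeping. \Cref{lem:even-dense-degree} is stated for a sequence with a fixed $c>0$ and orders tending to infinity, and the subsequence supplies exactly this. Likewise, the asymptotics of $p_{n+6d}$ must be uniform, which holds because $n+6d\to\infty$. I expect no real obstacle beyond confirming that $2\pi^2\approx19.7$ exceeds $16$, so the sublinear case never spoils the bound.
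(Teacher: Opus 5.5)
Your proposal is correct and follows essentially the same route as the paper. The paper also uses \Cref{lem:even-padded-bound} for the sublinear case, then passes to a subsequence on which $d_j/n_j$ converges. It applies the sublinear bound when the limit is zero and \Cref{lem:even-dense-degree} with $\kappa=2$ when the limit is positive, and closes with $2\pi^2>16$.
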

\begin{proof}
When $d_j=o(n_j)$, \Cref{lem:even-padded-bound} gives
\[
 \frac{n_j^2\mu(G_j)}{d_j}
 \ge2\left(1-\frac2{d_j}\right)n_j^2p_{n_j+6d_j}
 \longrightarrow2\pi^2
\]
as a lower bound. Here $d_j\ge6$ for all sufficiently large $j$.
For a general sequence, pass to a subsequence on which $d_j/n_j$
converges. If its limit is zero, use the bound just proved. If its
limit is positive, use \Cref{lem:even-dense-degree} with $\kappa=2$.
Since $2\pi^2>16$, no subsequence can have lower limit below $16$.
\end{proof}

\subsection{Sharpness at every order}
The growing-degree constant $16$ is attained by regular graphs with
two nearly equal parts and exactly two edges between them. We give
the degree and order checks explicitly.

\begin{lemma}\label{lem:even-dense-example}
For every sufficiently large $n$, there is a connected simple graph
$H_n$ of even regular degree $d_n\to\infty$ such that
\begin{equation}\label{eq:even-dense-example}
 \lim_{n\to\infty}\frac{n^2\mu(H_n)}{d_n}=16.
\end{equation}
\end{lemma}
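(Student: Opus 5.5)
The plan is to build $H_n$ from two nearly equal dense pieces joined by exactly two edges, and then bound $\mu(H_n)$ from above by a two-valued test vector. The lower bound comes for free from \Cref{lem:even-dense-degree}.

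\emph{Construction.} Write $n=n_1+n_2$ with $n_1=\lfloor n/2\rfloor$ and $n_2=\lceil n/2\rceil$. Choose an even degree $d=d_n$ with $d\le n_1-3$ and $d=n/2-O(1)$. In each part $V_i$ (order $n_i$), start from the circulant graph on $\mathbb Z_{n_i}$ with differences $\pm1,\ldots,\pm d/2$; it is $d$-regular and simple since $d<n_i-1$. In each part, delete one edge $u_iv_i$ of difference one. Then add the two edges $u_1u_2$ and $v_1v_2$. Every degree is restored to $d$. The parts stay connected, since the difference-two edges still connect the part, or for even $n_i$ the remaining difference-one edges join the two difference-two cycles, as in \Cref{lem:comparison-completions}. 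The whole graph is therefore connected, simple and $d$-regular, with $d_n\to\infty$ and $d_n/n\to1/2$. Parity is not an issue, because the circulant works for every $n_i$ once $d$ is even.

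\emph{Upper bound.} Take the vector $\y$ equal to $n_2$ on $V_1$ and $-n_1$ on $V_2$. It is orthogonal to $\1$, and exactly two edges are nonconstant, so
\[
 \y^\top L\y=2n^2,\qquad \|\y\|^2=n_1n_2n.
\]
By \eqref{eq:Rayleigh} this gives $\mu(H_n)\le 2n/(n_1n_2)=(8+o(1))/n$. Hence
\[
 \frac{n^2\mu(H_n)}{d_n}\le\frac{(8+o(1))\,n}{d_n}\longrightarrow16.
\]

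\emph{Lower bound.} The minimum degree is $d_n\ge cn$ with, say, $c=1/3$. Every nontrivial edge cut of an even-regular connected graph has at least two edges, so \Cref{lem:even-dense-degree} with $\kappa=2$ gives $\liminf n^2\mu(H_n)/d_n\ge16$. Combined with the upper bound, this yields \eqref{eq:even-dense-example}.

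The only real obstacle is the bookkeeping in the construction. One has to check that an even $d$ near $n/2$ fits below $n_1-1$ for every residue of $n$; taking $d$ to be the largest even integer at most $n_1-3$ handles this. One also has to check that deleting a single difference-one edge per part keeps each part connected. Both follow from the argument already given in \Cref{lem:comparison-completions}. No eigenvalue estimate beyond the two-valued Rayleigh quotient is needed.
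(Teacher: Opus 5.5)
Your proposal is correct and follows the paper's proof. The paper's construction is the same: two nearly equal circulant halves with differences $\pm1,\ldots,\pm d_n/2$, each with one difference-one edge replaced by a crossing edge. It uses the same two-valued mean-zero test vector, which gives $\mu(H_n)\le 2n/(n_1n_2)$, and the same even-cut argument for the matching lower limit.

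The differences are cosmetic. You take $d_n\le n_1-3$ instead of $d_n\le n_1-1$. You check connectivity through the difference-two edges, whereas the paper uses the surviving spanning path of difference-one edges. You invoke \Cref{lem:even-dense-degree} with $\kappa=2$ directly; the paper cites \Cref{thm:growing}, which reduces to exactly that lemma when $d_n/n\to1/2$.
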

\begin{proof}
Put $a=\lfloor n/2\rfloor$, $b=n-a$, and let $d_n$ be the largest
even integer at most $a-1$. Thus $d_n=n/2+O(1)$.
On each of the two disjoint vertex sets, of sizes $a,b$, take the
$d_n$-regular cyclic graph whose allowed differences are
$\pm1,\ldots,\pm d_n/2$. The condition $d_n\le a-1\le b-1$
makes both graphs simple and regular of the required degree.
In each graph delete the edge joining vertices $0$ and $1$.
The remaining edges of difference one still contain a spanning path,
so each part stays connected. Replace the two deleted edges by
$0_-0_+$ and $1_-1_+$ between the parts. Each affected vertex loses
one edge and gains one. The resulting graph $H_n$ is simple,
connected, and $d_n$-regular, with precisely two crossing edges.

Give the first part constant value $b$ and the second constant value
$-a$. The vector has mean zero, its energy is $2(a+b)^2=2n^2$,
and its squared norm is $ab^2+ba^2=abn$. Therefore
\[
 \mu(H_n)\le\frac{2n}{ab},\qquad
 \frac{n^2\mu(H_n)}{d_n}\le\frac{2n^3}{ab\,d_n}\longrightarrow16.
\]
Every nontrivial cut of $H_n$ has even positive size. Applying
\Cref{thm:growing} gives the matching lower limit.
\end{proof}

\begin{proof}[Proof of \Cref{thm:evenuniform}]
Fix an even $r\ge4$. Suppose that, for some $\varepsilon>0$,
there are graphs $G_j$ of orders $n_j\to\infty$ and even regular
degrees $d_j\ge r$ with
\[
 \frac{n_j^2\mu(G_j)}{d_j}<c_r-\varepsilon.
\]
Pass to a subsequence on which $d_j/n_j\to\rho\in[0,1]$.
If $\rho>0$, then $d_j\ge(\rho/2)n_j$ for large $j$.
\Cref{lem:even-dense-degree}, with $\kappa=2$, gives lower limit
at least $16\ge c_r$, a contradiction.

Suppose instead that $\rho=0$. If $d_j=4$ infinitely often,
restrict to those indices. Necessarily $r=4$, and the quartic
case of \Cref{thm:evenminimum}, which follows from the published
quartic theorem, gives lower limit at least $\pi^2=c_4$.
Otherwise we may assume $d_j\ge6$. Then
\Cref{lem:even-padded-bound} gives
\[
 \frac{n_j^2\mu(G_j)}{d_j}
 \ge2\left(1-\frac2{d_j}\right)n_j^2p_{n_j+6d_j}.
\]
Since $d_j=o(n_j)$, the second factor tends to $\pi^2$,
and $1-2/d_j\ge1-2/r$. The lower limit is therefore at least
$2(r-2)\pi^2/r\ge c_r$, again a contradiction.
This proves the lower bound uniformly over all allowed even degrees.

For $r=4,6,8,10$, the comparison graphs of degree $r$ exist at every
sufficiently large order by \Cref{lem:comparison-orders} and give
\[
 \frac{n^2\mu(G_{n,r})}{r}
 \le\frac{2(r-2)}r\pi^2+O_r(n^{-1}).
\]
For even $r\ge12$, use \Cref{lem:even-dense-example}; its degree is
at least $r$ for all sufficiently large $n$. These constructions
prove the matching upper bounds in all cases.
\end{proof}

\subsection{Consequences for relaxation time}
The cases $r=4$ and $r=6$, together with the cycle calculation, give
the following finite-threshold form of the two principal bounds.
For every $\e>0$, every sufficiently large graph of even regular
degree $d\ge2$ satisfies
\begin{align}
 \frac{n^2\mu(G)}d&\ge\pi^2-\e,\label{eq:evenuniform}\\
 \frac{n^2\mu(G)}d&\ge\frac{4\pi^2}{3}-\e
       &&\text{if }d\ge6.\label{eq:evennonquartic}
\end{align}
Indeed, a connected graph of degree two is a cycle, and
$n^2\mu(C_n)/2\to2\pi^2$. The other degrees are covered by
\Cref{thm:evenuniform}. Quartic and six-regular comparison graphs
attain the two constants, respectively.

\begin{corollary}\label{cor:evenrelaxation}
Among graphs of even regular degree and order $n$,
\[
 \max\tau=(1+o(1))\frac{n^2}{\pi^2}.
\]
Every maximizer at every sufficiently large order is quartic and
has the structure in \cite[Theorem~3.2]{AbGhQuartic}. In particular, for
odd $n$ this is also the unrestricted maximum over regular graphs.
Among graphs of even regular degree at least six,
\[
 \max\tau=(1+o(1))\frac{3n^2}{4\pi^2}.
\]
\end{corollary}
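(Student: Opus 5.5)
The first step is to convert the uniform bounds into bounds on relaxation time. For a $d$-regular graph, $\tau(G)=d/\mu(G)$. The finite-threshold inequalities \eqref{eq:evenuniform} and \eqref{eq:evennonquartic} therefore give, for every $\e>0$ and all sufficiently large $n$,
\[
 \tau(G)\le\frac{n^2}{\pi^2-\e}
\]
over all even regular degrees, and
\[
 \tau(G)\le\frac{n^2}{4\pi^2/3-\e}
\]
when $d\ge6$. These are the two upper bounds. For the matching lower bounds, use the comparison graphs $G_{n,4}$ and $G_{n,6}$ of \Cref{lem:comparison-orders}. They exist at every order $n\ge5D$, and \Cref{lem:comparison-upperbound} gives
\[
 \mu(G_{n,d})\le\frac{2(d-2)\pi^2}{n^2}+O_d(n^{-3}).
\]
Hence $\tau(G_{n,4})\ge(1-o(1))n^2/\pi^2$ and $\tau(G_{n,6})\ge(1-o(1))3n^2/(4\pi^2)$. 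This proves both asymptotic maxima.

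Next I will show that every maximizer is quartic at large orders. Let $G$ maximize $\tau$ at order $n$. Then
\[
 \frac{n^2\mu(G)}{d}\le\pi^2+o(1),
\]
by comparison with $G_{n,4}$. If $d\ge6$, \eqref{eq:evennonquartic} gives $n^2\mu(G)/d\ge4\pi^2/3-\e$, and this is strictly larger for small $\e$. If $d=2$, then $G=C_n$ and the ratio tends to $2\pi^2$. In both cases we get a contradiction for large $n$. I need this to be uniform over all degrees at one order, and it is, because \eqref{eq:evennonquartic} is stated with one threshold valid for every $d\ge6$. This uniformity is exactly what \Cref{thm:evenuniform} supplies, since its lower bound is a limit of a minimum over all $d\ge r$. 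So $d=4$, and among quartic graphs of order $n$ maximizing $\tau=4/\mu$ is the same as minimizing $\mu$. The maximizer is therefore $\mu$-minimal quartic, and \cite[Theorem~3.2]{AbGhQuartic} gives its structure.

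The step I expect to be the main obstacle is the odd-$n$ claim. Odd-degree regular graphs have even order, so for odd $n$ every regular graph has even degree. The maximum over all regular graphs then coincides with the maximum over even degrees, and this is a simple parity observation. I would also state the $d=2$ cycle case explicitly so that it is covered by the comparison above; it is handled by the exact cycle value $\mu(C_n)=2(1-\cos(2\pi/n))$. One further point needs checking: the quartic lower limit $\pi^2=c_4$ from \Cref{thm:evenminimum} must exceed every other case strictly. It does, since $\pi^2<4\pi^2/3<16<2\pi^2$.
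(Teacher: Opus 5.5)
Your proposal is correct and follows the paper's own argument. You convert \eqref{eq:evenuniform} and \eqref{eq:evennonquartic} via $\tau=d/\mu$ (uniformly over degrees, via \Cref{thm:evenuniform}), match them with the quartic and six-regular comparison graphs, and rule out $d\ge6$ and cycles by the strict inequalities $\pi^2<4\pi^2/3$ and $\pi^2<2\pi^2$. This makes any maximizer a $\mu$-minimal quartic graph covered by \cite[Theorem~3.2]{AbGhQuartic}, and handshaking settles odd $n$. One wording slip: your last sentence says the quartic constant must exceed the others, but what you need, and what your inequality chain actually shows, is that every other constant strictly exceeds $\pi^2$.
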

\begin{proof}
Use $\tau=d/\mu$, \Cref{thm:evenuniform}, and the matching
quartic and degree-six comparison graphs. Since $\pi^2<4\pi^2/3$
and cycles have coefficient $2\pi^2$, every sufficiently large
even-degree maximizer is quartic. Its structure is the cited
published theorem, without any additional uniqueness claim about
the end blocks. Odd order forces even regular degree by the
handshaking identity.
\end{proof}


\begin{thebibliography}{99}


\bibitem{AbGhCompanion}
M. Abdi and E. Ghorbani,
Graphs with Minimum Algebraic Connectivity I: Proofs of
Aldous--Fill and Guiduli--Mohar Conjectures,
Preprint, September 2026.

\bibitem{AbGhMin}
M. Abdi and E. Ghorbani,
Graphs of degree at least $3$ with minimum algebraic connectivity,
{\em SIAM J. Discrete Math.} {\bf 38} (2024), 2447--2467.

\bibitem{AbGhDiam}
M. Abdi and E. Ghorbani,
Minimum algebraic connectivity and maximum diameter:
Aldous--Fill and Guiduli--Mohar conjectures,
{\em J. Combin. Theory Ser. B} {\bf 167} (2024), 164--188.

\bibitem{AbGhQuartic}
M. Abdi and E. Ghorbani,
Quartic graphs with minimum spectral gap,
{\em J. Graph Theory} {\bf 102} (2023), 205--233.

\bibitem{AbGhIm}
M. Abdi, E. Ghorbani and W. Imrich,
Regular graphs with minimum spectral gap,
{\em European J. Combin.} {\bf 95} (2021), Article 103328.

\bibitem{AldousFill}
D. Aldous and J. Fill,
{\em Reversible Markov Chains and Random Walks on Graphs},
Unfinished monograph, 2002; recompiled 2014.


\bibitem{Imrich}
C. Brand, B. Guiduli and W. Imrich,
Characterization of trivalent graphs with minimal eigenvalue gap,
{\em Croat. Chem. Acta} {\bf 80} (2007), 193--201.

\bibitem{Caccetta}
L. Caccetta and W. F. Smyth,
Graphs of maximum diameter,
{\em Discrete Math.} {\bf 102} (1992), 121--141.

\bibitem{DoyleSnell}
P. G. Doyle and J. L. Snell,
{\em Random Walks and Electric Networks},
Mathematical Association of America, 1984;
online version dated July 5, 2006,
available at \url{https://math.dartmouth.edu/~doyle/docs/walks/walks.pdf}.

\bibitem{Erdos}
P. Erd\H{o}s, J. Pach, R. Pollack and Z. Tuza,
Radius, diameter, and minimum degree,
{\em J. Combin. Theory Ser. B} {\bf 47} (1989), 73--79.

\bibitem{fiedler1973algebraic}
M. Fiedler,
Algebraic connectivity of graphs,
{\em Czechoslovak Math. J.} {\bf 23} (1973), 298--305.

\bibitem{GuiduliThesis}
B. Guiduli,
{\em Spectral Extrema for Graphs},
Ph.D. Thesis, University of Chicago, 1996.
\bibitem{Guiduli}
B. Guiduli,  The structure of trivalent graphs with minimal eigenvalue gap, {\em J. Algebraic Combin.} {\bf6} (1997), 321--329.
 
\bibitem{Spielman}
D. A. Spielman,
{\em Spectral and Algebraic Graph Theory}, Yale University,
incomplete draft dated April 2, 2025,
available at \url{https://www.cs.yale.edu/homes/spielman/sagt/sagt.pdf}.

\bibitem{Zhu}
H. Zhu,
The maximum relaxation time of a random walk on regular graphs,
arXiv:2609.06818v1, 2026.

\end{thebibliography}
\end{document}